\tikzstyle{vertex}=[circle, draw, inner sep=0pt, minimum size=6pt]
\newcommand{\vertex}{\node[vertex]}
\def\altdb{\vadjust{\vbox to 0pt{\vss\hbox{\kern \hsize
\quad{\dbend}}\kern\baselineskip\kern-10pt}}}
\newcommand{\arxiv}[1]{\href{http://arxiv.org/abs/#1}{\tt arXiv:\nolinkurl{#1}}}
\newcommand{\googlebooks}[1]{(preview at \href{http://books.google.com/books?id=#1}{google books})}
\let\@@pmod\pmod
\DeclareRobustCommand{\pmod}{\@ifstar\@pmods\@@pmod}
\def\@pmods#1{\mkern4mu({\operator@font mod}\mkern 6mu#1)}
\newcommand\id{\operatorname{id}}
\newcommand\Aut{\operatorname{Aut}}
\newcommand\TenAut{\operatorname{Eq}}
\newcommand\Rep{\operatorname{Rep}}
\newcommand\Id{\operatorname{Id}}
\newcommand\Hom{\operatorname{Hom}}
\newcommand\ad{\operatorname{Ad}}
\newcommand\BrAut{\operatorname{EqBr}}
\newcommand\Z[1]{ \mathbb{Z}_{#1}}
\newcommand\lie[1]{ \mathfrak{#1}}
\newcommand\cC{ \mathcal{C}}
\newcommand\cF{ \mathcal{F}}
\newcommand\cat[3]{ \cC(   \mathfrak{#1}_{#2} , #3)    }
\theoremstyle{plain}
\newtheorem{theorem}{Theorem}[section]
\newtheorem*{theorem*}{Theorem}
\newtheorem*{prop*}{Proposition}
\newtheorem{cor}[theorem]{Corollary}
\newtheorem{lemma}[theorem]{Lemma}
\newtheorem{prop}[theorem]{Proposition}
\newtheorem{rmk}[theorem]{Remark}
\theoremstyle{remark}
\theoremstyle{definition}
\newtheorem{dfn}[theorem]{Definition}
\newtheorem{cons}[theorem]{Construction}
\newtheorem{ques}[theorem]{Question}
\newcommand\Inv{\operatorname{Inv}}
\numberwithin{equation}{section}
\DeclareRobustCommand*{\nicefrac}{\@UnitsNiceFrac}%
\newcommand{\gra}[1]{\raisebox{-.4cm}{\includegraphics[height=1cm]{PH/UP#1.pdf}}}
\newcommand{\graa}[1]{\raisebox{-.6cm}{\includegraphics[height=1.5cm]{PH/UP#1.pdf}}}
\title{Auto-equivalences of the modular tensor categories  of type $A$, $B$, $C$ and $G$}
\author{Cain Edie-Michell}
\address{Cain Edie-Michell\\
Department of Mathematics\\
Vanderbilt University, Nashville\
USA}
\email{cain.edie-michell@vanderbilt.edu}
\begin{document}

\maketitle

\begin{abstract}
We compute the monoidal and braided auto-equivalences of the modular tensor categories $\cat{sl}{r+1}{k}$, $\cat{so}{2r+1}{k}$, $\cat{sp}{2r}{k}$, and $\cat{g}{2}{k}$. Along with the expected simple current auto-equivalences, we show the existence of the charge conjugation auto-equivalence of $\cat{sl}{r+1}{k}$, and exceptional auto-equivalences of $\cat{so}{2r+1}{2}$, $\cat{sp}{2r}{r}$, $\cat{g}{2}{4}$. We end the paper with a section discussing potential applications of these computations, including the relationship of these computations to the program to classify quantum subgroups of the simple Lie algebras. Included is an appendix by Terry Gannon, which discusses the group structure of the auto-equivalences of $\cat{sl}{r+1}{k}$.
\end{abstract}

\section{Introduction}\label{sec:intro}

Modular tensor categories have increasingly become ubiquitous in many areas of modern mathematics. Several important appearances include the superselection sectors of a rational conformal field theory \cite{MR1016869}, the even parts of certain subfactor standard invariant \cite{MR1424954}, and the representation category of a quantum group \cite{MR1227098} (after an appropriate semisimplification has been applied). With these far ranging applications, structure results for modular categories are in high demand. 

Given a modular tensor category $\cC$, a particularly useful invariant one can compute is the group of auto-equivalences of $\cC$; either braided (which we denote $\BrAut(\cC)$), or plain monoidal (which we denote $\TenAut(\cC)$). The usefulness of these two invariants is mainly due to the various constructions one can perform on $\cC$ once these are computed. For example, given a finite group $G$, and a homomorphism $ G \to \BrAut(\cC)$ one can (if certain obstructions vanish) construct a new modular tensor category called the gauging of $\cC$ \cite{MR3555361}. Further, for every braided auto-equivalence of $\cC$, one can construct a ``quantum subgroup'' (in the sense of \cite{MR1907188}) of $\cC$ \cite[Corollary 3.8]{MR3022755}.

Roughly speaking, all known modular tensor categories come from two sources. The first is from Drinfeld centres of spherical fusion categories. The braided auto-equivalences of these examples have been extensively studied, with particular focus on the Drinfeld centres of fusion categories coming from exotic subfactors \cite{MR2909758, MR3449240, 1810.06076}. The second source comes from quantum groups. Let $\mathfrak{g}$ be a simple Lie algebra, and $\hat{\mathfrak{g}}$ the corresponding affine Lie algebra. Then, for every $k\geq 1$, the category of level $k$ integrable representations of $\hat{\mathfrak{g}}$ has the structure of a modular tensor category \cite{MR1227098}. These categories are usually denoted $\cat{g}{}{k}$. Unlike the Drinfeld centre examples, very little is known about the auto-equivalences of these modular tensor categories. To the authors best knowledge, the only case known is when $\mathfrak{g} = \mathfrak{sl}_2$ \cite{Cain-normal}. Hence it is a natural question to compute these auto-equivalence groups for all the higher rank $\mathfrak{g}$.

It has been a long standing open problem to classify the quantum subgroups of the simple Lie algebras, which was initially laid out by Ocneanu \cite{MR1907188} and Gannon \cite{MR1266482}. These quantum subgroups have important applications to the various areas of mathematics that modular tensor categories intertwine. For example, such quantum subgroups classify defect sectors in Chern-Simons topological quantum field theories, and can be used to construct highly non-trivial subfactors of the hyperfinite type $II_1$ factor. These vast applications has meant that an enormous amount of effort has been spent on trying to classify these quantum subgroups. However until recently, only a small amount of progress has been made, with only the $\mathfrak{sl}_2$ and $\mathfrak{sl}_3$ cases being completed \cite{MR1266482}. One of the big stumbling blocks for this program is that for each Lie algebra, there are \textit{a-priori} an infinite number of levels where a quantum subgroup may occur, which makes searching and classifying them very difficult. Recent breakthrough results of Schopieray \cite{MR3808050} and Gannon \cite{Level-Bounds} solve this issue, giving effective bounds on the levels for which exceptional quantum subgroups can occur. With these recent results, we will now very have an list exhaustive list of potential quantum subgroups of the low rank simple Lie algebras. The problem then shifts to constructing these quantum subgroups. The braided auto-equivalences classified in this paper will allow the construction of a whole new plethora of quantum subgroups of the simple Lie algebras, completing a significant portion of the program laid out by to classify all quantum subgroups of the simple Lie algebras.

In this paper we are able to compute both $\TenAut(\cC)$ and $\BrAut(\cC)$, where $\cC \simeq \cat{g}{}{k}$ with $\mathfrak{g}$ a simple Lie algebra of type $A_r$, $B_r$, $C_r$, or $G_2$ for all ranks and levels. Our main theorem is as follows.

\begin{theorem}\label{thm:main}
Let $r$ and $k$ positive integers. We have:
    
    \begin{tabular}{c | l | cc l}
    	\toprule
			\multicolumn{2}{l |}{$\mathcal{C} $}      &       $\TenAut(\mathcal{C})$     & $\BrAut(\mathcal{C})$ \\
	\midrule
	 $\cC(\mathfrak{sl}_{r+1} , k)$     &	$r = 1$ and $k = 2$				  & $\{e\}$  &$\{e\}$ \\
	  					&		$2$ exactly divides $\operatorname{gcd}(r+1,k)$		& $\Z{2}^c\times \Z{n'}^\times \times \Z{2}\times \Z{\frac{n''}{2}}$ & $\Z{2}^{c+p+t}$ \\
	  					&		otherwise					  & $\Z{2}^c\times \Z{n'}^\times \times \Z{n''}$ & $\Z{2}^{c+p+t}$ \\
		              			
		              \midrule
	
  	 $\cC(\mathfrak{so}_{2r+1} , k)$     &	$k =1$							  &$\{e\}$  & $\{e\}$ \\

		              &	$k =2 $ 	and every prime $p$ dividing 				&$\Z{2}^{\omega(2r+1)+1} $ & $ \Z{2}^{\omega(2r+1)-1}$  \\
		              &  $2r+1$ satisfies $p\equiv 1 \pmod 4$ 				&						   &								\\
		              &	$k =2 $ 	and there exists a prime $p$ dividing			&$\Z{2}^{\omega(2r+1)} $ & $ \Z{2}^{\omega(2r+1)-1}$  \\
		              &  $2r+1$ such that $p\nequiv 1 \pmod 4$					&						   &								\\
		               &	$k \geq 3$ and $k \equiv 0 \pmod 2$ 								  &$\Z{2}$  & $\{e\}$ \\
		      		  &	$k \geq 3$ and $k \equiv 1 \pmod 2$ 							      &$\Z{2}$ & $ \Z{2}$  \\    
		              \midrule
  		      $\cC(\mathfrak{sp}_{2r} , k)$   			          &    $r = 2$ and $k=1$												&$\{e\}$ & $ \{e\}$	\\
  		      	 &	$r=k$ and $r \equiv 0 \pmod 2$ 								  &$\Z{2}\times\Z{2}$ & $\{e\}$ \\
		      		  &	$r=k$ and $r \equiv 1 \pmod 2$ 								  &$\Z{2}$ & $ \{e\}$  \\    
		                   &	$r\neq k$ and $rk \equiv 1 \pmod 2$ 										  &$\{e\}$ & $ \{e\}$  \\
			          &	$r\neq k$ and $rk \equiv 0 \pmod 4$ 					          				&$\Z{2}$& $\{e\}$ \\
			          &	$r\neq k$ and $rk \equiv 2 \pmod 4$ 					          				&$\Z{2}$& $\Z{2}$ \\		
	
			          	\midrule
  		       $\cC(\mathfrak{g}_2 , k)$    &	$k=4$								  &$\Z{2}$ & $\Z{2}$ \\
		                   &	$k \neq 4$ 							&$\{e\}$ & $ \{e\}$.  \\

    	\bottomrule

	    \end{tabular}
	    
Where

\begin{itemize}

\item $n'' = \operatorname{gcd}(r+1, k^\infty)$,

\item $n' = \frac{n}{n''}$,

\item $ c :=  \begin{cases}
0, \text{ if } k \leq 2 \text{ or } r = 1 \\
1 , \text{ if } k \geq 3 \text{ and } r \neq 1,
\end{cases}$

\item $ t :=  \begin{cases}
  \resizebox{0.9\hsize}{!}{$0, \text{ if } r \text{ is even, or if } r \text{ is odd and } k \equiv 0 \pmod 4, \text{ or if } k \text{ is odd and } r \equiv 1 \pmod 4$}, \\
1 , \text{ otherwise},
\end{cases}$

\item $p$ is the number of distinct odd primes that divide $r+1$ but not $k$, and

 \item $\omega(n)$ is the number of distinct prime divisors of $n$.
 
 \end{itemize}	   
\end{theorem}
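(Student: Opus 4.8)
The plan is to reduce the computation of $\TenAut(\cC)$ and $\BrAut(\cC)$ to a combinatorial problem about permutations of the simple objects. Every monoidal auto-equivalence permutes the finitely many simple objects of $\cC \simeq \cat{g}{}{k}$ — indexed by the dominant weights in the level-$k$ Weyl alcove — and this permutation is an automorphism of the fusion ring preserving Frobenius--Perron dimension; a braided auto-equivalence additionally preserves the modular data, i.e.\ the twists $\theta_\lambda = e^{2\pi i h_\lambda}$ (the $T$-matrix) and the $S$-matrix. So I would first classify the automorphisms of the fusion ring respecting this extra data, and then determine which of them actually lift to genuine (braided) auto-equivalences. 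This separates the problem into an \emph{existence} half (exhibiting enough auto-equivalences to realise the groups in the table) and a \emph{rigidity} half (showing there are no more).

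For the existence half I would first account for the expected auto-equivalences. The invertible objects of $\cC$ form a group isomorphic to a subgroup of the centre of the simply-connected group of type $\lie{g}$, and each such simple current determines a monoidal auto-equivalence; these should generate the factors of the tabulated groups governed by $c$, $p$, and $t$, together with the gcd data $n''=\operatorname{gcd}(r+1,k^\infty)$ and $n'=\tfrac{n}{n''}$ in type $A$. Independently, the Dynkin-diagram automorphism of $A_r$ gives the charge-conjugation auto-equivalence, accounting for the $\Z{2}^c$ factor. I would then verify directly from the explicit formula $h_\lambda=\frac{\langle\lambda,\lambda+2\rho\rangle}{2(k+h^\vee)}$ which of these preserve the twist, and hence lie in $\BrAut(\cC)$, the subgroup of $\TenAut(\cC)$ consisting of braiding-preserving auto-equivalences.

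The rigidity half is where the main work lies. I would pin down an arbitrary auto-equivalence by its action on a distinguished low-dimensional generator — the defining/vector representation in types $A$, $B$, $C$, or the $7$-dimensional generator in type $G_2$. Since the image must have the same $\FPdim$ and, in the braided case, the same twist, the explicit quantum-dimension and conformal-weight formulas reduce the possibilities to a small set, which at generic levels consists only of the simple-current and Dynkin translates of the generator; this forces the auto-equivalence to be one already constructed. At the exceptional levels $\cat{so}{2r+1}{2}$, $\cat{sp}{2r}{r}$ and $\cat{g}{2}{4}$, coincidences among dimensions and twists open up an extra symmetry, which I would construct explicitly (for instance via level-rank duality or a known exceptional equivalence) and show generates precisely the additional $\Z{2}$.

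The hardest part will be making this analysis uniform across all ranks and levels while ruling out further coincidences; this is exactly where the delicate number-theoretic conditions of the statement — the parities of $r$ and $k$, the congruences modulo $4$, and the prime-divisor counts $\omega(2r+1)$ and $p$ — must be extracted from the structure of the simple-current group and the Galois action on the modular data. A secondary obstacle is the lifting step: a symmetry of the fusion ring and modular data need not a priori arise from an honest monoidal auto-equivalence, so I would need to control both the cohomological obstruction to lifting and the subgroup of auto-equivalences acting trivially on objects, showing the latter contributes nothing beyond what the table records.
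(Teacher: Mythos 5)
Your high-level decomposition (fusion-ring automorphisms $\to$ which ones are realised $\to$ gauge auto-equivalences) matches the paper's, but both places where the real work lies are missing from your plan. First, the gauge problem: you acknowledge you must show that auto-equivalences acting trivially on objects contribute nothing, but you offer no method, and an action on objects never pins down a monoidal functor. In the paper this step is the central technical input: gauge auto-equivalences are first shown to be \emph{pivotal} (Lemma~\ref{lem:gagpiv}), hence by the planar-algebra/pivotal-category correspondence of Henriques--Penneys--Tener they inject into the automorphism group of the planar algebra generated by a distinguished object ($\mathcal{P}^H(\delta,\gamma)$ in type $A$, $\operatorname{BMW}$ in types $B$ and $C$, $G_2(q)$ in type $G$), and those automorphism groups are computed explicitly from generators and relations, then fed through graded-extension theory to pass from $\ad(\cC)$ to $\cC$. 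Without a substitute for this machinery the classification cannot start.

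Second, your rigidity step fails exactly where it is most needed, because twist-preservation is an invariant only of \emph{braided} auto-equivalences; monoidal ones need not preserve the $T$-matrix. Concretely, in $\cat{so}{2r+1}{2}$ all the objects $Y_i$ have dimension $2$, so the full fusion-ring automorphism group $\Z{2}\times \Z{2r+1}^\times/(\pm)$ passes your dimension test, yet $\TenAut$ is only $\Z{2}^{\omega(2r+1)}$ or $\Z{2}^{\omega(2r+1)+1}$: the paper obtains this upper bound not from the modular data of $\cC$ but from a counting argument in the Drinfeld centre, using $|\BrAut(\mathcal{Z}(\cC))| = |\TenAut(\cC)|\cdot|\BrAut(\cC)|$ together with Lemma~\ref{lem:bopeq}. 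Likewise $\cat{g}{2}{3}$ has a fusion-ring automorphism (of order $3$) that lifts to no monoidal auto-equivalence at all; excluding it requires comparing the planar algebras generated by $\Lambda_1$ and by $\Lambda_2$, not dimensions or twists. Your expectation that each exceptional case contributes ``precisely the additional $\Z{2}$'' is also wrong for type $B$ level $2$: the exceptional braided auto-equivalences there form $\Z{2}^{\omega(2r+1)-1}$, and they are constructed by de-equivariantising to a Tambara--Yamagami category $\mathcal{TY}(\Z{2r+1},\chi,\tau)$ and applying functoriality of equivariantisation --- level-rank duality plays no role in that case. Finally, there is no general ``cohomological obstruction'' theory for lifting a fusion-ring automorphism to a monoidal equivalence; the realisations in the paper are necessarily ad hoc (Hopf-algebra automorphisms for charge conjugation, a BMW planar-algebra automorphism for $\cat{sp}{2r}{r}$, a braided planar-algebra isomorphism $\overline{G_2(e^{i\pi/24})}\cong \text{PA}(\cat{g}{2}{4},2\Lambda_2)$ for type $G$), and your proposal would need to supply arguments of this kind case by case.
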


\begin{rmk}
We remark that the result for $\mathfrak{g} = \mathfrak{g}_2$  in the above theorem is entirely reliant on results of Ostrik and Snyder that has yet to appear in print. 
\end{rmk} 

The auto-equivalences we classify in this Theorem can be broken into three classes. The first are the simple current auto-equivalences. These occur whenever there is an invertible object in modular category satisfying some mild conditions on its self-braid eigenvalue. These are the vast majority of the auto-equivalences in the classification. They should not be considered exotic, or even exciting. All constructions one can perform with simple current auto-equivalences result in essentially trivial information. 

The second class of auto-equivalences are the charge-conjugation auto-equivalences, which occur for $\cat{sl}{r+1}{k}$ at all levels for $r \geq 2$. These are the braided auto-equivalences which map $X \mapsto X^*$. These can be considered semi-exceptional, as they give rise to interesting gauged categories, and new quantum subgroups. However, they exist for all $r$ and $k$, and their existence essentially follows from classical Lie theory.

 The final class are the truly exceptional auto-equivalences, which occur sporadically, and can only be shown to exist through a variety of ad-hoc methods. The existence and classification of these exceptional auto-equivalences is the true substance of this paper. We find such exceptional auto-equivalences for the following categories:
\begin{align*}
\cat{so}{2r+1}{2}  & \text{ for all $r\geq 2$},\\
\cat{sp}{2r}{r}	 & \text{ for all $r\geq 2$, and }\\
\cat{g}{2}{4}.   &
\end{align*}
The author is still exploring the exciting consequences of the existence of these exceptional auto-equivalences. For now we wish to draw attention to the quantum subgroups that can be constructed from the exceptional auto-equivalences which are braided. These are the $\Z{2}$ worth of auto-equivalences of $\cat{g}{2}{4}$, and the $\Z{2}^{\omega(2r+1)-1}$ worth of auto-equivalences of $\cat{so}{2r+1}{2}$. For the former case, we explicitly work out the structure of the quantum subgroup of $\mathfrak{g}_2 $ in the final section of this paper. We find that it is a quantum subgroup whose existence was hypothesised in \cite{MR3339174}. For the latter case, the existence of these exceptional braided auto-equivalences gives a new massive family of quantum subgroups of $\mathfrak{so}_{2r+1}$. The general structure of this massive family is not immediately clear to the author, though preliminary analysis of the low rank cases suggests that the quantum subgroups have a very rich structure. As delving further into describing these quantum subgroups is beyond the scope of this (already large) paper, we leave it for a future publication.

Let us briefly try to outline our methods used to prove Theorem~\ref{thm:main}. As the techniques required to deal with each of the cases of Lie type $A$, $B$, $C$, and $G$ are vastly different, it is hard to give a detailed overview in this introduction. Each of these cases is dealt with in its own separate section of the paper, and a detailed outline of the method used for that case can be found at the beginning of each section.

A very rough outline for the general method is as follows. First we show that the group of gauge auto-equivalences (auto-equivalences which are trivial on objects) for each of the examples is trivial. To show this we argue that such a gauge auto-equivalence would imply the existence of a automorphism of a certain planar algebra associated to each category. We can then directly compute the planar algebra automorphisms for the relevant examples, and deduce the desired result.

Once we have shown that the gauge auto-equivalence group of the category is trivial, we know that the auto-equivalence groups are subgroups of the groups of fusion ring automorphisms of the categories. Fortunately for this paper (in fact, one of the main incentives to begin this paper) is the results of \cite{MR1887583}, which compute the fusion ring automorphisms of all the categories $\cat{g}{}{k}$. Hence our task now reduces to showing which of these automorphisms are realised as either braided, or monoidal auto-equivalences. For this task, a variety of ad-hoc techniques are developed. For the details of these techniques, see the beginning introduction of each section where we outline the construction of the auto-equivalences. Of particular note, we find that not every fusion ring automorphism of $\cat{g}{}{k}$ lifts to an auto-equivalence, a fact we found surprising. Examples of this phenomenon can be seen in the $\cat{so}{2r+1}{2}$ and $\cat{g}{2}{3}$ cases where we have fusion ring automorphisms which are not realised as auto-equivalences of the category.

Ideally the goal is to generalise the main theorem to classify the auto-equivalences for all categories $\cat{g}{}{k}$, i.e. also deduce the cases where $\mathfrak{g}\in \{ \mathfrak{so}_{2r}, \mathfrak{e}_6,\mathfrak{e}_7,\mathfrak{e}_8, \mathfrak{f}_4\}$. There are several problems the author is currently unable to deal with to solve this problem. The first is that there is no planar algebra we can study for the remaining exceptionals in order to leverage information about the gauge auto-equivalences groups. Without knowledge of the gauge auto-equivalence group, it is impossible to determine the auto-equivalences. Hence, either planar algebras for the exceptionals have to be deduced, or new techniques for computing gauge auto-equivalences have to be developed. The issue of realisability is also a problem. For the categories $\cat{so}{2r}{2}$ and $\cat{f}{4}{4}$ there exist exceptional auto-equivalences. The author was unable to find ad-hoc methods for constructing these. We expect that the techniques for constructing the exceptional auto-equivalences of $\cat{so}{2r+1}{2}$ will generalise in some sense to the $\cat{so}{2r}{2}$ case. However, there are fundamental differences between the two that need to be worked out. The exceptional auto-equivalence of $\cat{f}{4}{4}$ remains completely mysterious.

We end the paper with a section discussing potential applications of Theorem~\ref{thm:main}. In our main Theorem, we show the type $A$ categories have the charge conjugation braided auto-equivalence. As this auto-equivalence has order 2, we can potentially gauge this $\Z{2}$ action to obtain a new 2-parameter family of modular tensor categories. A quick obstruction argument shows that this gauging is always possible when $r$ is even, and we suspect that gauging is also possible when $r$ is odd. We leave determining the structure of these new modular tensor categories as a future application, as it goes far beyond the scope of this paper. In type $B$ for certain ranks at level 2 we show the existence of an exceptional auto-equivalence with the peculiar property that it is reverse braided. We observe that the categories with these peculiar auto-equivalences are precisely the categories used in the hypothetical ``modular grafting'' procedure to construct the Izumi-Haagerup doubles. We present the evidence for a connection between reverse braided auto-equivalences, and modular grafting, and end with an open question regarding the relationship between these two concepts. In type $C$, we found the existence of the exceptional level-rank duality auto-equivalence when $r = k$. We identify a new planar algebra which is constructed as fixed points of this auto-equivalence. We identify a potential generator, and give some basic relations. We leave it as an open question to determine a full presentation of this new planar algebra. Finally, for type $G$, we use the exceptional braided auto-equivalence at level 4 to construct a new quantum subgroup. The existence of this quantum subgroup was hypothesised in \cite{MR3339174}.

Finally, in an appendix written by Terry Gannon, the explicit group structure of the auto-equivalences of $\cat{sl}{r+1}{k}$ is determined.

\section*{Acknowledgements}
We are grateful to many people for their assistance throughout this project. We wish to thank Noah Snyder for his help regarding several technical aspects of this paper, and Scott Morrison for many helpful conversations, along with his insight into the modular data of the centre of $\mathcal{Z}(EH)$. We also thank Rob Culling, Cesar Galindo, Terry Gannon, and Andrew Schopieray for various conversations that helped in writing this paper. Finally we thank Pinhas Grossman for comments on an early draft of this paper.

This material is based work supported by the National Science Foundation under Grant No. DMS-1440140 while the author was in residence at the Mathematical Sciences Research Institute in Berkeley, California, during the Spring 2020 semester. In addition, the author gratefully acknowledges support from the AMS-Simons travel grant

\section{Preliminaries}

We direct the reader to \cite{EGNO} for the basics on fusion categories. For the purpose of this paper we will work with fusion categories over the field $\mathbb{C}$.

\subsection*{Modular tensor categories} \hspace{1em}

A \textit{braided fusion category} is a fusion category, along with a collection of natural isomorphisms
\[   c_{X,Y}= \raisebox{-.5\height}{ \includegraphics[scale = .3]{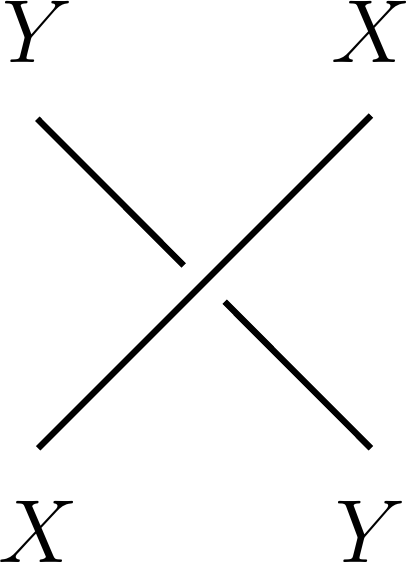}}: X\otimes Y \to Y\otimes X\]
satisfying the topologically implied coherence axioms. 

A \textit{pivotal structure} on a fusion category $\cC$ is a monoidal functor $\psi: ** \to \Id_{\cC}$. We refer to a fusion category along with a pivotal structure as a \textit{pivotal fusion category}. When using graphical calculus for pivotal fusion categories, we will often suppress $\psi$, as it makes for much cleaner pictures, and it is always clear where the component of $\psi$ should belong in a given picture.

Given a pivotal fusion category $\cC$ we can define traces on the endomorphism algebras of $\cC$. Let $f \in \Hom(X,X)$, we have 
\begin{align*} 
 \operatorname{Tr}_+(f) &:=  \raisebox{-.5\height}{ \includegraphics[scale = .3]{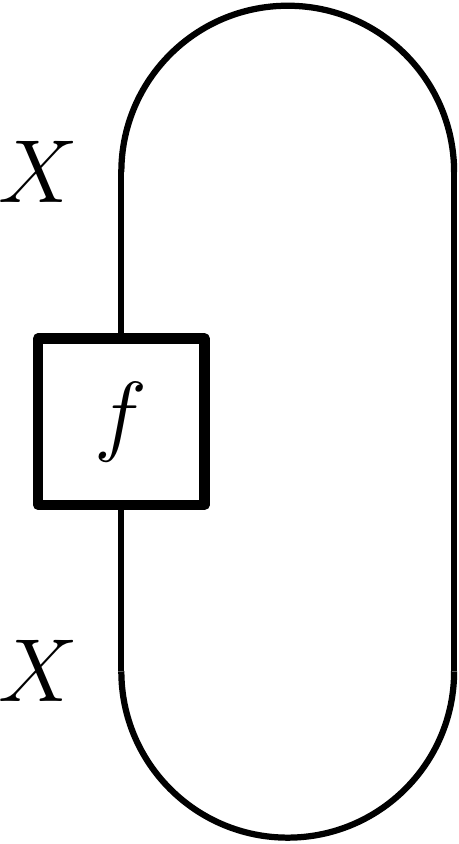}}=   \operatorname{ev}_X \circ (f \otimes \id_{X^*})\circ (\psi_X\otimes \id_{X^*} )\circ \operatorname{coev}_{X^*},\\
\operatorname{Tr}_-(f) &:=  \raisebox{-.5\height}{ \includegraphics[scale = .3]{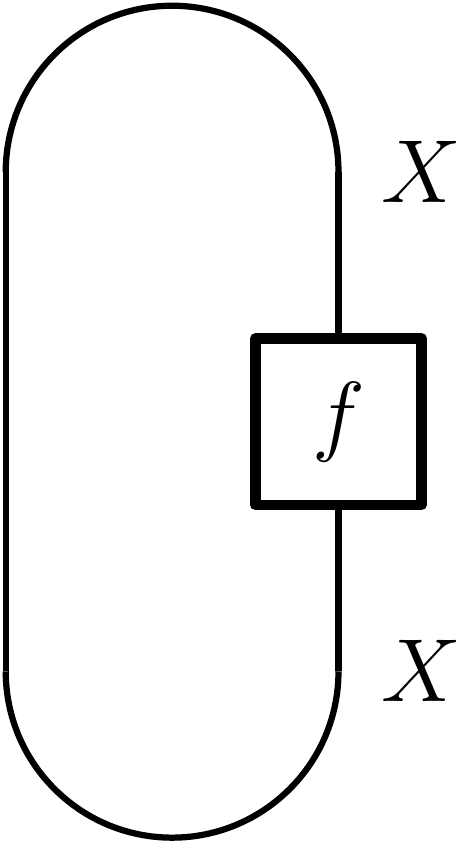}}=\operatorname{ev}_{X^*} \circ ( \id_{X^*}\otimes f )\circ ( \id_{X^*} \otimes \psi_X^{-1} )\circ \operatorname{coev}_{X}.
\end{align*}
We say the pivotal fusion category $\cC$ is \textit{spherical} if $ \operatorname{Tr}_+(f) =  \operatorname{Tr}_-(f)$ for all $f$. For a spherical fusion category, we define the categorical dimension of an object $X$ as
\[ \operatorname{dim}(X) =  \operatorname{Tr}_+(\id_X) =  \operatorname{Tr}_-(\id_X).\]

Given a spherical braided fusion category we can construct two numerical invariants. These are the $S$ and $T$ matrices, defined by
\begin{align*}
   S_{X,Y} \quad &:= \quad \operatorname{Tr}_- \otimes  \operatorname{Tr}_+ (c_{Y,X}\circ c_{X,Y}) \quad  =\quad  \raisebox{-.5\height}{ \includegraphics[scale = .5]{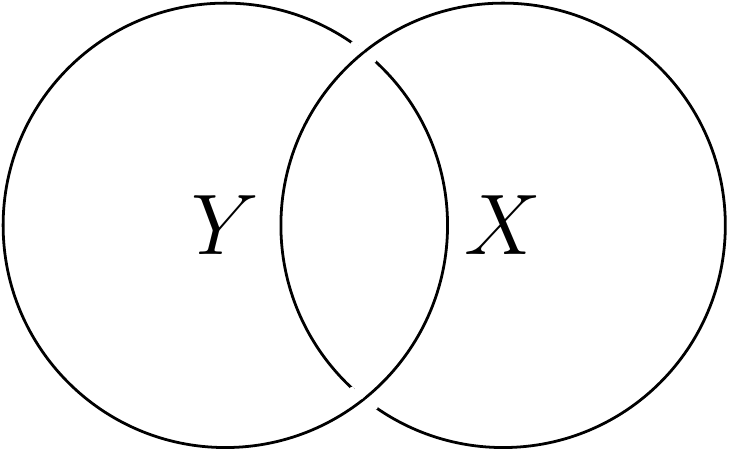}} \\
           \quad T_{X,X}\quad &:= \quad    \frac{  \operatorname{Tr}_+(c_{X,X}) }{  \operatorname{Tr}_+(\id_X)} \quad =\quad \frac{\raisebox{-.5\height}{ \includegraphics[scale = .5]{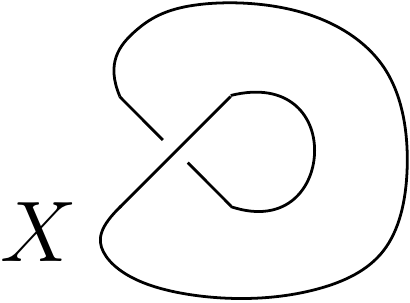}}}{\raisebox{-.5\height}{ \includegraphics[scale = .5]{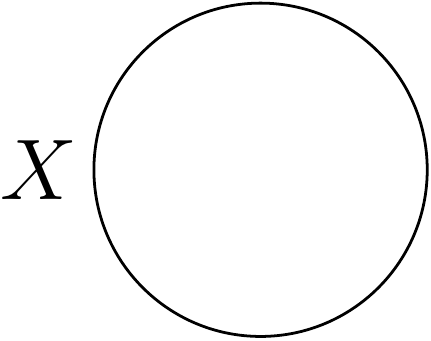}}}  . 
           \end{align*}
If the $S$ matrix of $\cC$ is invertible, we say that $\cC$ is a \textit{modular tensor category}. The main objects of study in this paper will be a certain class of modular tensor categories that we will introduce shortly.

\vspace{3em}

\subsection*{Modular tensor categories from Lie theory} \hspace{1em}

Here we give a brief recap on the modular categories $\cC(\mathfrak{g} , k)$, the main objects of study in this paper. Surprising little knowledge of these categories is required for this paper, so we keep the details to a minimum. For additional details we point the reader to \cite{quan-primer} for an expository explanation of the modular categories $\cC(\mathfrak{g} , k)$, and to \cite{MR1328736,MR1066560} for a detailed exposition.  We mainly restrict our attention to the case when $\mathfrak{g}$ is one of $\mathfrak{sl}_{r+1}$, $\mathfrak{so}_{2r+1}$, $\mathfrak{so}_{2r+1}$, or $\mathfrak{g}_2$, as these are the Lie algebras we deal with in this paper.

Let $\mathfrak{g}$ a simple Lie algebra, $\hat{\mathfrak{g}}$ its corresponding affine Lie algebra, and $k$ a positive integer. The modular category $\cC(\mathfrak{g} , k)$ consists of the level $k$ integrable highest weight modules of $\hat{\mathfrak{g}}$. Let $r$ be the rank of $\mathfrak{g}$, then the simple objects of $\cC(\mathfrak{g} , k)$ are parametrised by the set:
\[ P_+:= \left \{ 	\quad \sum_{j=1}^r  \lambda_j\Lambda_j \quad | \quad  \lambda_j \in \mathbb{N} , \quad  \sum_{j=1}^r \lambda_j a^\vee_j \leq k \right\}. \]
Here $a^\vee_j$ are the \textit{co-labels} of $\mathfrak{g}$, given in Table~\ref{tab:colab} for the relevant Lie algebras in this paper.
\begin{table}[h!]
   
\centering 
    
    \begin{tabular}{c | c }
    	\toprule
			$\mathfrak{g}  $ & $a^\vee_j$ 					   \\
			\midrule
			$\mathfrak{sl}_{r+1}$ & 1 \\
			$\mathfrak{so}_{2r+1}$   & $\begin{cases}
                                  1  & \text{ if } j \in \{1,r\} \\
 								 2  & \text{otherwise}
  \end{cases}$ \\
  $\mathfrak{sp}_{2r+1}$  &      1 \\
  $\mathfrak{g}_2$   & $\begin{cases}
                                  1  & \text{ if } j = 1 \\
 								 2  & \text{ if } j = 2
  \end{cases}$ \\
			          	                 
    	\bottomrule

	    \end{tabular}
	    	\caption{\label{tab:colab}  Co-labels of the Lie algebras $\mathfrak{sl}_{r+1}$, $\mathfrak{so}_{2r+1}$, $\mathfrak{sp}_{2r}$, and $\mathfrak{g}_{2}$}
\end{table}
\newline The fusion rules for the categories $\cC(\mathfrak{g} , k)$ can be computed using the Weyl chamber of $\mathfrak{g}$. The details of this procedure are complicated, so we direct the reader to \cite[Section 5]{quan-primer} for the details. We will recall relevant fusion rules of the categories $\cC(\mathfrak{g} , k)$ as needed throughout the paper. While in theory it is possible to compute the associator for the categories $\cC(\mathfrak{g} , k)$, for this paper we avoid using the associators almost completely. 

The categories $\cC(\mathfrak{g} , k)$ have several choices for pivotal structures, indexed by characters of the universal grading group of the category $\cC(\mathfrak{g} , k)$. For this paper we use the standard Hopf algebra pivotal structure on the categories $\cat{so}{2r+1}{k}$ and $\cat{g}{2}{k}$, and Tureav's unimodal pivotal structure on the categories $\cat{sl}{r+1}{k}$ and $\cat{sp}{2r}{k}$. Importantly for us, this ensures that the object $\Lambda_1$ satisfies certain conditions which allow us to use powerful planar algebra machinery. With these choices of pivotal structures we have that the quantum twists of the simple objects in the categories $\cat{so}{2r+1}{k}$, $\cat{sp}{2r}{k}$ and $\cat{g}{2}{k}$ are given by the formulas:
\begin{align*} 
\cat{sl}{r+1}{k}  : T_{ \sum_{ j=1}^r \lambda_j\Lambda_j } &=  \left( (-1)^{ \sum_{j=1}^r j\lambda_j}\right) e^{\frac{2 \pi i}{2 (1 + k + r)}(\lambda_1  , \lambda_2  ,\cdots , \lambda_r).K.(\lambda_1  +2 , \lambda_2 +2 ,\cdots , \lambda_r + 2)^T} \\   
\cat{so}{2r+1}{k}  : T_{ \sum_{ j=1}^r \lambda_j\Lambda_j } &= e^{\frac{2 \pi i}{4(2r-1 + k)}(\lambda_1  , \lambda_2  ,\cdots , \lambda_r).K.(\lambda_1  +2 , \lambda_2 +2 ,\cdots , \lambda_r + 2)^T} \\
\cat{sp}{2r}{k}  : T_{ \sum_{ j=1}^r \lambda_j\Lambda_j} &= \left( (-1)^{ \sum_{j=1 ,3,\cdots}^r \lambda_j}\right)  \hspace{.1cm} e^{\frac{2 \pi i}{4(r+k  + 1)}(\lambda_1  , \lambda_2  ,\cdots , \lambda_r).K.(\lambda_1  +2 , \lambda_2 +2 ,\cdots , \lambda_r + 2)^T} \\ 
\cat{g}{2}{k}  :  T_{\lambda_1\Lambda_1 + \lambda_2\Lambda_2} &=  e^{\frac{2 \pi i}{6(4+k)}(\lambda_1  , \lambda_2).K.(\lambda_1  +2 , \lambda_2 +2)^T}, 
\end{align*}
where the matrices $K$ are Killing matrices of $\mathfrak{g}$, given in Table~\ref{tab:cotwist} for the relevant Lie algebras for this paper.

\begin{table}[h!]
   
\centering 
    
    \begin{tabular}{c | c }
    	\toprule
			$\mathfrak{g}  $ & $K$ 					   \\
			\midrule
			$\mathfrak{sl}_{r+1}$   & $ \left[ \frac{(r+1)\min(i,j) - ij}{r+1}   \right]_{i,j} $ \\
			$\mathfrak{so}_{2r+1}$   & $ \left[ \frac{2\min(i,j)}{2^{(\delta_{i,r}+\delta_{j,r})}} \right]_{i,j} $ \\
  $\mathfrak{sp}_{2r}$   &    $ \left[ \min(i,j) \right]_{i,j }$ \\
  $\mathfrak{g}_2$   & $\begin{bmatrix}
   2      & 3  \\
   3		 & 6  
\end{bmatrix}$ \\
			          	                 
    	\bottomrule

	    \end{tabular}
	    	\caption{\label{tab:cotwist}  The Killing matrices for the Lie algberas $\mathfrak{sl}_{  r+1}$, $\mathfrak{so}_{2r+1}$, $\mathfrak{sp}_{2r}$, and $\mathfrak{g}_{2}$.}
\end{table}

\vspace{1em}

\subsection*{Monoidal Auto-equivalences} \hspace{1em}

Let $\cC$ be a fusion category. We say  an auto-equivalence $\cF:\cC \to \cC$ is a monoidal auto-equivalence of $\cC$ if there exists a family of natural isomorphisms
\[   \tau_{X,Y}: \cF(X)\otimes\cF(Y) \to \cF(X\otimes Y)\]
satisfying a straightforward coherence condition. We call the collection $\tau$ a \textit{tensorator} for $\cC$.

If $\cC$ is a braided fusion category, then we say $(\cF,\tau)$ is a braided auto-equivalence of $\cC$ if it is a monoidal auto-equivalence, and the following diagram commutes for all $X,Y \in \cC$.
\[\begin{CD}
\cF(X) \otimes \cF(Y) @> c_{\cF(X),\cF(Y)}>>\cF(Y) \otimes \cF(X)\\
@V \tau_{X,Y} VV @VV\tau_{Y,X} V\\
\cF(X\otimes Y)@> \cF(c_{X,Y}) >> \cF(Y\otimes X)
\end{CD} \]

 If $\cC$ is a pivotal fusion category, then we say $(\mathcal{F},\tau)$ is a \textit{pivotal auto-equivalence} if
\[  \delta_{X^*} \circ \mathcal{F}(\psi_X) = \delta_X^* \circ \psi_{\mathcal{F}(X)},\]
where 
\[  \delta_X := (( \mathcal{F}( \operatorname{ev}_X) \circ \tau_{X^*,X}) \otimes \id_{\mathcal{F}(X)^*}) \circ ( \id_{\mathcal{F}(X^*)} \otimes \operatorname{coev}_{\mathcal{F}(X)}). \]

Given a fusion category $\cC$ we write $\underline{\TenAut}(\cC)$ for the monoidal category whose objects are monoidal auto-equivalences of $\cC$, and whose morphisms are monoidal natural transformations. We can truncate this monoidal category to get $\TenAut(\cC)$, the group of monoidal auto-equivalences of $\cC$. In a similar fashion if $\cC$ is a braided fusion category we can define $\underline{\BrAut}(\cC)$ to be the monoidal category of braided auto-equivalences of $\cC$, and $\BrAut(\cC)$, the group of braided auto-equivalences of $\cC$. 

Let $K(\cC)$ be the fusion ring of $\cC$, i.e. the based ring whose objects are isomorphism classes of objects in $\cC$. A \textit{fusion ring automorphism of $\cC$} is a based ring automorphism of $K(\cC)$. We write $\operatorname{FusEq}(\cC)$ for the group of fusion ring automorphisms of $\cC$.  

 Given any monoidal auto-equivalence of $\cC$ we can forget the structure of the tensorator to get a fusion ring automorphism of $\cC$, hence we get a map
\[    \TenAut(\cC) \to  \operatorname{FusEq}(\cC).\]
In general this map is neither injective or surjective. Let us write $\widehat{\operatorname{FusEq}(\cC)}$ for the image of this homomorphism, i.e. the group of fusion ring automorphisms of $\cC$ that are realised by a monoidal auto-equivalence. We call elements of $\widehat{\operatorname{FusEq}(\cC)}$ \textit{realisable fusion ring automorphisms of $\cC$}. As the map $\TenAut(\cC) \to  \widehat{\operatorname{FusEq}(\cC)}$ is surjective we get a short exact sequence
\[     0 \to \text{Gauge}(\cC) \to \TenAut(\cC) \to \widehat{\operatorname{FusEq}(\cC)}\to 0.\]
Here $ \text{Gauge}(\cC)$ is the kernel of the map $\TenAut(\cC) \to  \widehat{\operatorname{FusEq}(\cC)}$. This kernel corresponds to the group of monoidal auto-equivalences of $\cC$ whose underlying functor is the identity i.e the trivial functor equipped with possibly interesting tensorators. We call such auto-equivalences of $\cC$, \textit{gauge auto-equivalences}.

With the above short exact sequence in mind, we see that the process of computing $\TenAut(\cC)$ for a given $\cC$ splits into three parts:
\begin{enumerate}
\item Compute the fusion ring automorphisms of $\cC$.
\item Determine which fusion ring automorphisms of $\cC$ are realisable.
\item Compute the gauge auto-equivalences of $\cC$.
\end{enumerate}
Step (1) is purely combinatorial, and is simply an exercise in ring theory given that the fusion ring of $\cC$ is known. Step (2) is more categorical, as it asks when one can equip a fusion ring automorphism of $\cC$ with a tensorator. There are many invariants one can check to show that a fusion ring automorphism can not be equipped with a tensorator, however there is no general purpose solution for the converse problem. Our tools in this paper are a ad-hoc mix of direct tensorator computations, Hopf algebra methods, and planar algebra arguments. Step (3) is purely categorical, and one can think of the group $\text{Gauge}(\cC)$ as telling us how many distinct tensorators exist for a fusion ring automorphism, given one already exists to begin with. The standard approach to computing $\text{Gauge}(\cC)$ is to use planar algebra techniques. Key to applying these planar algebra techniques will be the following Lemma.

\begin{lemma}\label{lem:gagpiv}
Let $\cC$ a pivotal category, and $(\Id_\cC, \tau)$ a gauge auto-equivalence of $\cC$, then $(\Id_\cC, \tau)$ is a pivotal functor.
\end{lemma}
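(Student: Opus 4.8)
\emph{The plan is to} show that the pivotality condition for a gauge auto-equivalence $(\Id_\cC, \tau)$ collapses to a condition that is automatically satisfied, because the underlying functor is the identity. Recall the pivotal-functor condition from the preliminaries: $(\mathcal{F}, \tau)$ is pivotal when $\delta_{X^*} \circ \mathcal{F}(\psi_X) = \delta_X^* \circ \psi_{\mathcal{F}(X)}$, where $\delta_X$ is the canonical natural isomorphism $\mathcal{F}(X^*) \to \mathcal{F}(X)^*$ built from $\mathcal{F}(\operatorname{ev}_X)$, $\tau_{X^*,X}$, and the (co)evaluations of $\mathcal{F}(X)$. \emph{First I would} specialise every piece of this equation to $\mathcal{F} = \Id_\cC$. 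Then $\mathcal{F}(X) = X$, $\mathcal{F}(\psi_X) = \psi_X$, $\psi_{\mathcal{F}(X)} = \psi_X$, and the morphism $\delta_X$ becomes
\[
  \delta_X = ((\operatorname{ev}_X \circ \tau_{X^*,X}) \otimes \id_{X^*}) \circ (\id_{X^*} \otimes \operatorname{coev}_X) : X^* \to X^*,
\]
an endomorphism of $X^*$ rather than a comparison between two different objects.

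\emph{The key step} is to identify this $\delta_X$ explicitly. Since $\tau$ is a tensorator on the identity functor, it records a gauge ambiguity; evaluating $\operatorname{ev}_X \circ \tau_{X^*, X}$ and composing with $\operatorname{coev}_X$ in the zig-zag above, I expect $\delta_X$ to equal multiplication by a scalar $\chi(X) \in \CC^\times$ determined by the component $\tau_{X^*, X}$ on the tensor unit summand of $X^* \otimes X$. Concretely, because $\tau_{X,Y}$ is a monoidal natural isomorphism of the \emph{identity} functor, its components are governed by a character-like datum, and the zig-zag identity for the duality forces $\delta_X$ to be a scalar on the simple $X^*$ (extended to all objects by naturality and additivity). \emph{Then I would} substitute back: the pivotality equation reads $\delta_{X^*} \circ \psi_X = \delta_X^* \circ \psi_X$ as maps $X^{**} \to X$, so after cancelling the isomorphism $\psi_X$ it suffices to check $\delta_{X^*} = \delta_X^*$, i.e.\ $\chi(X^*) = \chi(X)$ as scalars (noting $\delta_X^*$ is again multiplication by the same scalar, since dualising a scalar endomorphism on $X^*$ gives the same scalar on $X^{**}$).

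\emph{The main obstacle} I anticipate is precisely verifying the symmetry $\chi(X^*) = \chi(X)$, equivalently that the scalar attached to $X$ by the tensorator is invariant under taking duals. This should follow from the coherence (hexagon/associativity) axiom satisfied by $\tau$ together with the compatibility of $\tau$ with the canonical isomorphisms $X^{**} \cong X$ supplied by the pivotal structure $\psi$; the pivotal structure is exactly what is needed to compare the left and right duality data and force the two scalars to agree. \emph{I would therefore} organise the argument so that the hypothesis ``$\cC$ pivotal'' is used only at this final symmetry step, making clear that without a pivotal structure the statement need not hold. The remaining verifications — that $\delta_X$ is genuinely scalar and that the reduction of the pivotality equation is valid — are routine graphical-calculus manipulations (sliding $\tau$ through the zig-zag and using naturality), which I would present diagrammatically rather than by lengthy computation.
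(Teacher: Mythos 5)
Your opening reduction is sound and coincides with the paper's: with $\mathcal{F}=\Id_\cC$ the pivotality condition $\delta_{X^*}\circ\mathcal{F}(\psi_X)=\delta_X^*\circ\psi_{\mathcal{F}(X)}$ collapses, after cancelling the isomorphism $\psi_X$, to $\delta_{X^*}=\delta_X^*$, and for simple $X$ both sides are endomorphisms of the simple object $X^{**}$, hence scalars; writing $\operatorname{ev}_X\circ\tau_{X^*,X}=\chi(X)\operatorname{ev}_X$, everything comes down to the equality $\chi(X^*)=\chi(X)$. Up to this point, however, you have only repackaged the statement using the one-dimensionality of hom spaces (which is also how the paper sets things up).

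The gap is that you never prove $\chi(X^*)=\chi(X)$. You correctly identify it as the main obstacle and then assert it ``should follow from the coherence axiom together with the compatibility of $\tau$ with the canonical isomorphisms $X^{**}\cong X$ supplied by the pivotal structure.'' But a gauge auto-equivalence is merely a monoidal functor: there is no axiom asserting any compatibility between $\tau$ and $\psi$, since such a compatibility is precisely what the lemma claims, so this part of your plan is circular. Moreover the equality is genuinely nontrivial and cannot be dismissed as routine: for a family $\tau_{X,Y}$ that is natural but not coherent it fails. For instance, in $\operatorname{Vec}_G$ with $\tau_{g,h}=\omega(g,h)\id_{gh}$ one finds $\chi(g)=\omega(g^{-1},g)$ and $\chi(g^*)=\omega(g,g^{-1})$, and these agree exactly because the monoidal-functor axiom forces $\omega$ to be a $2$-cocycle (note also that neither scalar need equal $1$). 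The paper supplies exactly the computation your proposal lacks: it expresses $\delta_X^*$ graphically, isolates a sub-diagram (the ``koru'') lying in the one-dimensional space $\Hom(X^{**}\otimes X^*\to\mathbf{1})$, writes it as $\alpha$ times a reference morphism, and pins down $\alpha=1$ by closing the diagram and evaluating both ways --- obtaining $\alpha\cdot\dim(X)$ on one side and, after sliding $\tau$ around the closed loop by naturality, $\dim(X)$ on the other. That trace-plus-naturality argument is where all the content of the lemma lies, and your proposal has no counterpart to it.
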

\begin{proof}
From the definition of a pivotal functor, the statement of this Lemma is equivalent to showing $\delta_{X^*} = \delta^*_X$ for all simple $X$. We compute 
\begin{equation}\label{eq:gag}
\delta^*_X  = \raisebox{-.5\height}{ \includegraphics[scale = .4]{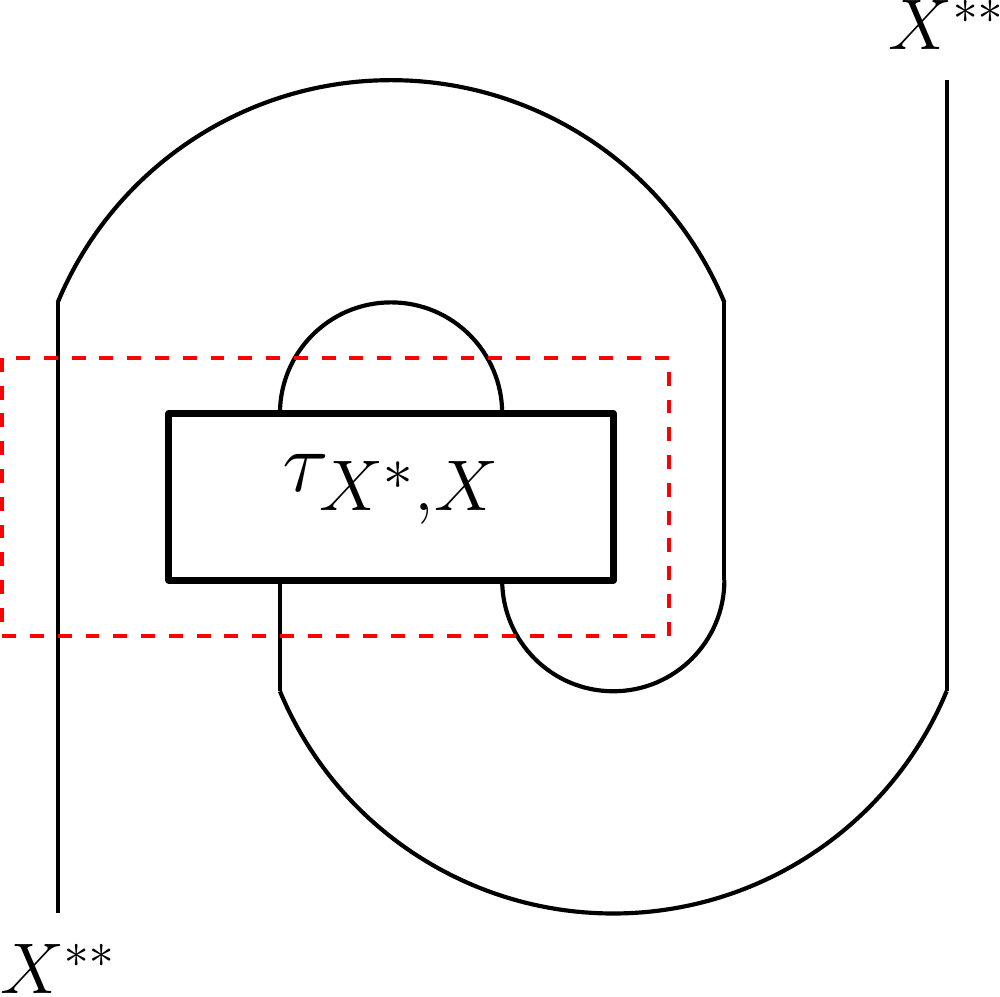}} = \raisebox{-.5\height}{ \includegraphics[scale = .4]{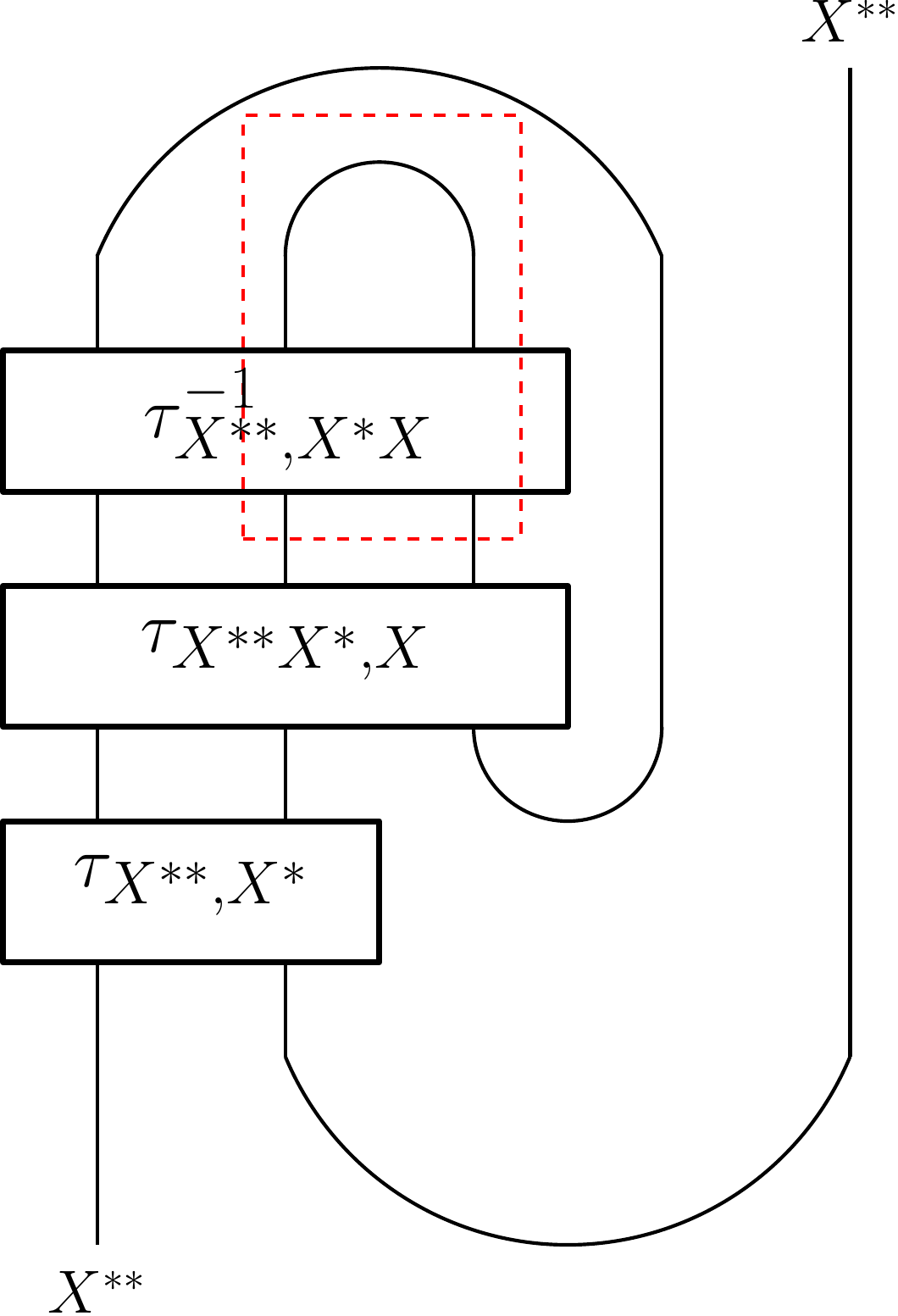}} = \raisebox{-.5\height}{ \includegraphics[scale = .4]{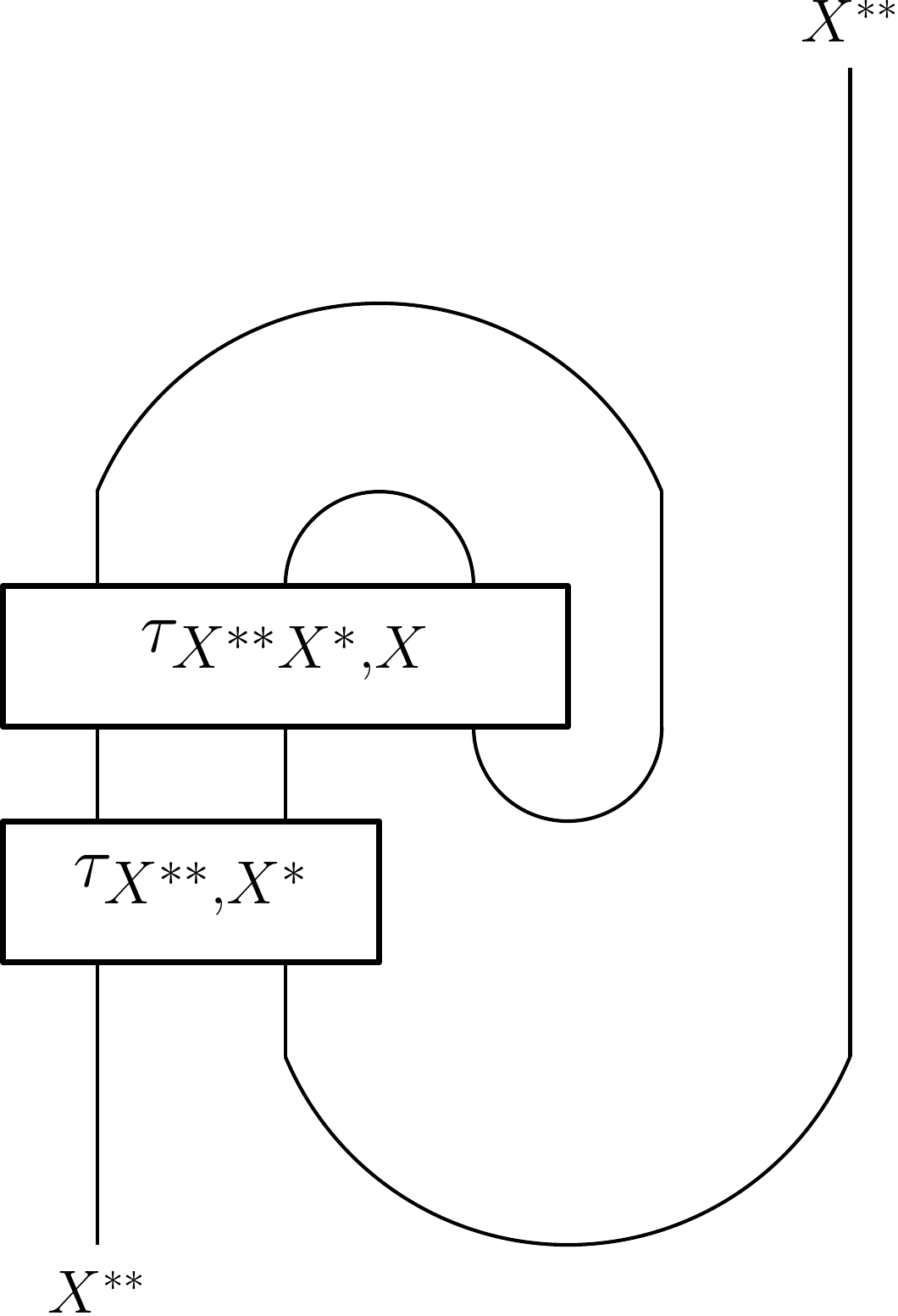}}
\end{equation}

Let us separately consider the \textit{koru} in the last diagram. As $X$ is simple, the hom space $\Hom(X^{**} \otimes X^{*} \to \mathbf{1})$ is 1-dimensional. Thus there exists a scaler $\alpha$ such that 
\[ \raisebox{-.5\height}{ \includegraphics[scale = .4]{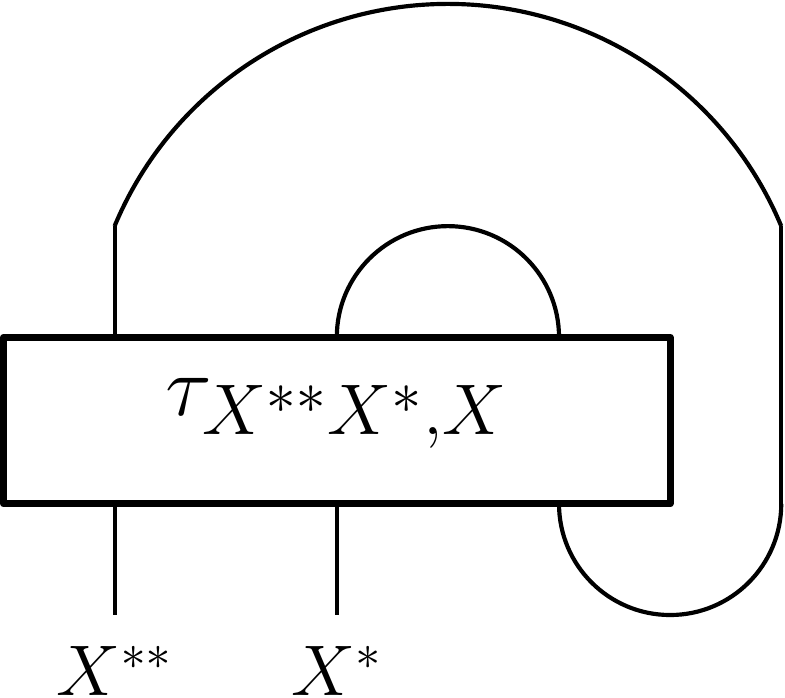}} = \alpha  \raisebox{-.5\height}{ \includegraphics[scale = .4]{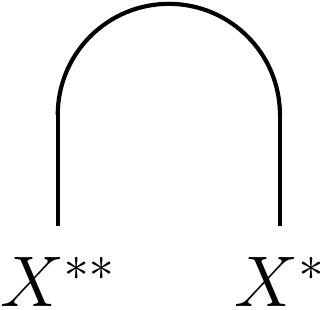}}.\]
Appending the morphism 
\[  \raisebox{-.5\height}{ \includegraphics[scale = .4]{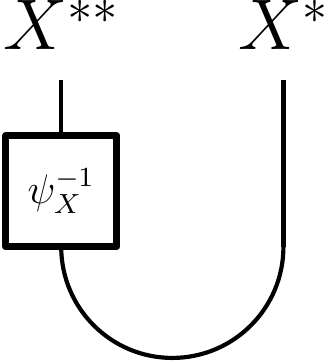}}\]
to both sides gives $\alpha \cdot \dim(X)$ on the right, and $\dim(X)$ on the left (after an application of the naturality of $\tau$). Hence, $\alpha = 1$.

Returning to Equation~\eqref{eq:gag} we thus have that 
\[\raisebox{-.5\height}{ \includegraphics[scale = .4]{gauge4}} = \raisebox{-.5\height}{ \includegraphics[scale = .4]{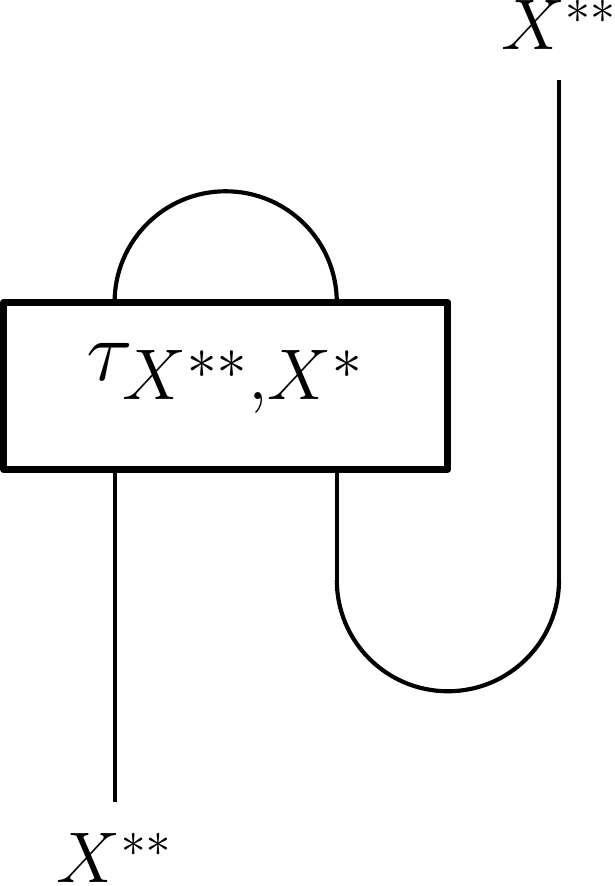}} = \delta_{X^*},\] 
completing the proof.
\end{proof}

This Lemma shows us that every gauge auto-equivalence must appear as a planar algebra automorphism of some planar algebra corresponding to $\cC$. We discuss this more in the planar algebra section of these preliminaries.

Another useful Lemma regarding auto-equivalences is as follows.

\begin{lemma}
Suppose $\cC$ is a spherical braided fusion category, and $(\cF,\tau)$ is a braided auto-equivalence of $\cC$ that preserves categorical dimensions. Then 
\[  T_{X,X} = T_{\cF(X) , \cF(X)}.\]
\end{lemma}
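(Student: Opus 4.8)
The plan is to show that the twist $T_{X,X}$ can be recovered from the braiding and the categorical dimension alone, both of which are preserved by $(\cF,\tau)$, so the equality follows formally. Recall from the preliminaries that
\[ T_{X,X} = \frac{\operatorname{Tr}_+(c_{X,X})}{\operatorname{Tr}_+(\id_X)} = \frac{\operatorname{Tr}_+(c_{X,X})}{\dim(X)}. \]
Thus the content is to check that each ingredient on the right is carried by $\cF$ to the corresponding ingredient for $\cF(X)$. The denominator is immediate: by hypothesis $\cF$ preserves categorical dimensions, so $\dim(\cF(X)) = \dim(X)$. The real work is the numerator, and this is where I expect the main obstacle to lie.

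First I would set up the numerator carefully. The quantity $\operatorname{Tr}_+(c_{X,X})$ is a scalar (an endomorphism of $\mathbf{1}$), built from the self-braiding $c_{X,X}$, the pivotal structure $\psi_X$, and the (co)evaluation maps, exactly as in the definition of $\operatorname{Tr}_+$ given in the excerpt. The strategy is to apply $\cF$ to the whole diagram defining $\operatorname{Tr}_+(c_{X,X})$ and use the structure maps to rewrite it as $\operatorname{Tr}_+(c_{\cF(X),\cF(X)})$. Concretely, I would use three facts in turn: (i) since $(\cF,\tau)$ is a \emph{braided} auto-equivalence, the square in the definition gives $\tau_{X,X}\circ c_{\cF(X),\cF(X)} = \cF(c_{X,X})\circ \tau_{X,X}$, so $\cF$ intertwines the two self-braidings up to the tensorator; (ii) $\cF$ is a monoidal functor, so it is compatible with evaluation and coevaluation via the maps $\delta_X$ appearing in the pivotal-functor condition (the same $\delta$ used in Lemma~\ref{lem:gagpiv}); and (iii) a trace is invariant under conjugation by an isomorphism, so the tensorator isomorphisms $\tau$ that appear can be slid around the closed diagram and cancel.

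The key steps, in order, would be: rewrite $\operatorname{Tr}_+(c_{\cF(X),\cF(X)})$ as a closed diagram in $\cC$; insert the braided-compatibility square to replace $c_{\cF(X),\cF(X)}$ by $\cF(c_{X,X})$ conjugated by $\tau$; then use functoriality of $\cF$ together with the pivotal compatibility to pull $\cF$ outside of the evaluation/coevaluation caps and cups; and finally use invariance of the categorical trace under the resulting conjugation by $\tau$ and $\delta$ to identify the scalar with $\cF(\operatorname{Tr}_+(c_{X,X})) = \operatorname{Tr}_+(c_{X,X})$, the last equality because $\cF$ acts as the identity on $\operatorname{End}(\mathbf{1})\cong\CC$. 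Dividing by $\dim(X)=\dim(\cF(X))$ then yields $T_{X,X}=T_{\cF(X),\cF(X)}$.

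The main obstacle I anticipate is the bookkeeping in step (ii)--(iii): carefully tracking how the tensorator $\tau$ and the comparison maps $\delta$ thread through the closed diagram, and verifying that they genuinely cancel rather than contributing a spurious scalar. This is exactly the kind of scalar that could in principle appear, and the cleanest way to control it is to mimic the argument of Lemma~\ref{lem:gagpiv}: use that $X$ (hence $\cF(X)$) is simple so that the relevant Hom-spaces are one-dimensional, pin down any ambiguous scalar as an element of $\CC^\times$, and then evaluate it on a convenient closed morphism (capping off the diagram) to show it equals $1$. Since $\cF$ preserves dimensions, this normalization argument should close exactly as it did in the preceding lemma, and no spurious scalar survives. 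I would therefore expect the proof to be short once the diagrammatic identities are written out, with the only delicate point being this scalar-tracking, which the simplicity of $X$ and the dimension-preserving hypothesis together resolve.
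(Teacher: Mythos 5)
Your proposal is correct and follows essentially the same route as the paper's proof: the paper transports $\operatorname{ev}_X$, $\operatorname{coev}_{X^*}$ and $\psi_X$ through $(\cF,\tau)$ and the comparison maps $\delta$, uses simplicity of $X$ (one-dimensional Hom spaces) to write each transported map as a scalar $\alpha$, $\beta$, $\gamma$ times the corresponding structure map of $\cF(X)$, and then pins down the spurious factor exactly as you suggest---by evaluating on the closed dimension diagram, i.e.\ expanding $\cF(\dim(X))=\dim(\cF(X))$ to force $\alpha\beta\gamma=1$, after which expanding $\cF(T_{X,X})$ gives $T_{X,X}=\alpha\beta\gamma\, T_{\cF(X),\cF(X)}=T_{\cF(X),\cF(X)}$. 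The only cosmetic difference is that the normalization comes directly from this dimension identity (only the product $\alpha\beta\gamma$ is shown to be $1$, not each scalar separately) rather than from a capping argument in the style of Lemma~\ref{lem:gagpiv}, which is precisely the resolution you anticipated.
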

\begin{proof}
Let $X \in \cC$ be a simple object. We have
\begin{align*}
\cF( \operatorname{ev}_X)\circ \tau_{X^*,X}\circ(\delta_X^{-1} \otimes \id_{\cF(X)}) &\in \operatorname{Hom}(\cF(X)^* \otimes \cF(X) \to \mathbf{1}) &\ni  \operatorname{ev}_{\cF(X)}, \\
 ( \id_{\cF(X)^*} \otimes \delta_{X}^{*-1})\circ ( \delta_X \otimes \delta_{X^*})\circ \tau_{X^*,X^{**}}^{-1} \circ \cF( \operatorname{coev}_{X^*}) &\in \operatorname{Hom}(\mathbf{1}\to \cF(X)^* \otimes \cF(X)^{**}) &\ni \operatorname{coev}_{\cF(X)^*} , \\
\cF( \psi_X) \circ \delta_{X^*}^{-1} \circ \delta_X^*&\in \operatorname{Hom}(\cF(X) \to \cF(X)^{**}) &\ni   \psi_{\cF(X)}.
\end{align*}
As $X$ is simple, the Hom spaces above are 1-dimensional. Thus there exist scalers $\alpha, \beta, \gamma$ such that
\begin{align*}
\cF( \operatorname{ev}_X)\circ \tau_{X^*,X}\circ(\delta_X^{-1} \otimes \id_{\cF(X)}) & =    \alpha  \operatorname{ev}_{\cF(X)}, \\
 ( \id_{\cF(X)^*} \otimes \delta_{X}^{*-1})\circ ( \delta_X \otimes \delta_{X^*})\circ \tau_{X^*,X^{**}}^{-1} \circ \cF( \operatorname{coev}_{X^*}) &=    \beta  \operatorname{coev}_{\cF(X)^*}, \\
\cF( \psi_X) \circ \delta_{X^*}^{-1} \circ \delta_X^* &=    \gamma  \psi_{\cF(X)}.
\end{align*}
As the functor $(\cF,\tau)$ preserves categorical dimensions, we have in particular that $\cF(\dim(X)) = \dim({\cF(X)})$. Expanding out the term $\cF(\dim(X))$ reveals that the product $\alpha \beta \gamma$ is equal to $1$.

Now consider the term $\cF(T_{X,X})$. Expanding this gives
\[\cF(T_{X,X}) = \alpha \beta \gamma T_{\cF(X), \cF(X)} =  T_{\cF(X), \cF(X)},\]
 proving the statement of the Lemma.
\end{proof}

As the categorical dimensions of the objects in the categories $\cat{g}{}{k}$ agree with their Frobenius-Perron dimensions, any auto-equivalence will automatically preserve categorical dimensions. Hence we get the following Corollary.

\begin{cor}
Let $(\cF,\tau)$ a braided auto-equivalence of $\cat{g}{}{k}$, then $(\cF,\tau)$ preserves the twists of the simple objects.
\end{cor}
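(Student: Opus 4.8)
The plan is to obtain the Corollary as an immediate consequence of the preceding Lemma, whose only non-formal hypothesis is that the braided auto-equivalence preserves categorical dimensions. The entire task therefore reduces to checking that this dimension-preservation property holds automatically for the categories $\cat{g}{}{k}$.

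First I would recall that the Frobenius--Perron dimension of a simple object is an invariant of the \emph{based ring} $K(\cC)$ alone: $\FPdim(X)$ is the largest positive eigenvalue of the matrix of left multiplication by $X$ acting on the fusion ring. Consequently $\FPdim$ is preserved by every based ring automorphism, and hence by the fusion ring automorphism underlying any monoidal (in particular, any braided) auto-equivalence $(\cF,\tau)$. That is, $\FPdim(\cF(X)) = \FPdim(X)$ for every simple object $X$.

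Second, I would invoke the pseudo-unitarity property quoted in the sentence just before the Corollary: in $\cat{g}{}{k}$ the categorical dimension of every simple object coincides with its Frobenius--Perron dimension, $\dim(X) = \FPdim(X)$. Combining the two observations yields
\[
\dim(\cF(X)) = \FPdim(\cF(X)) = \FPdim(X) = \dim(X),
\]
so $(\cF,\tau)$ preserves categorical dimensions. Having verified the hypothesis, I would then apply the previous Lemma to conclude $T_{X,X} = T_{\cF(X),\cF(X)}$ for all simple $X$, which is precisely the statement that $(\cF,\tau)$ preserves the twists of the simple objects.

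There is no genuine obstacle here: the argument is essentially formal, and the only place where a real property of $\cat{g}{}{k}$ enters (rather than abstract nonsense about based rings) is the equality of categorical and Frobenius--Perron dimensions, which is exactly the fact cited immediately before the Corollary.
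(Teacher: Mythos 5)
Your proposal is correct and matches the paper's own argument exactly: the paper derives the Corollary from the preceding Lemma by noting, in the sentence immediately before it, that categorical and Frobenius--Perron dimensions agree in $\cat{g}{}{k}$, so every (braided) auto-equivalence automatically preserves categorical dimensions. Your only addition is spelling out why $\FPdim$ is invariant under fusion ring automorphisms, which is a fair elaboration of the same reasoning rather than a different route.
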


This gives a useful obstruction for the existence of a braided auto-equivalence of $\cat{g}{}{k}$. In fact, for the examples we consider in this paper, we find that a fusion ring automorphism of $\cat{g}{}{k}$ lifts to a braided auto-equivalence precisely when it preserves the twists of the simple objects. This phenomenon is not true for arbitrary braided fusion categories, and we only show this to be true for our examples as a corollary of our main classification theorem.

\vspace{1em}

A useful construction of monoidal auto-equivalences is through the process of simple currents. This process takes an invertible object in a modular tensor category $\cC$, and constructs a monoidal auto-equivalence. This process was first investigated for fusion rings in the language of conformal field theory in \cite{MR1065268}. For modular tensor categories, the process was studied in \cite{1711.00645,1902.09498}. The construction given in \cite{1902.09498} is somewhat awkward to use for the examples considered in this paper. Instead we present the following construction, which can be proved in much the same fashion as \cite{1902.09498}.

\begin{lemma}\label{lem:simplecurrent}
Let $\cC$ be a modular tensor category, and $g$ an invertible object of order $M$. Set $q$ equal to the unique integer (modulo $2M$) such that 
\[    \sigma_{g,g} = e^{2\pi i \frac{q}{2M}}\id_{g\otimes g},\]
and choose $a \in \{0,1,\cdots , M-1\}$ such that
\[  1 + aq \quad \text{ is coprime to } M.\]
Then there exists a monoidal auto-equivalence $\cF_{g,a}$ of $\cC$ defined on objects by 
\[   \cF_{g,a}(X) = g^{-an}\otimes X , \]
where $n$ is the unique integer (modulo $M$) such that $\sigma_{X,g}\sigma_{g,X} = e^{2\pi i \frac{n}{M}}\id_{g\otimes X}$. The monoidal auto-equivalence $\cF_{g,a}$ is braided if and only if 
\[   a + \frac{a^2q}{2} \equiv 0 \pmod M.\]
\end{lemma}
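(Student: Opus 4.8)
The plan is to construct $\cF_{g,a}$ from the braiding, check that it is a monoidal auto-equivalence, and then reduce braidedness to the vanishing of a single obstruction scalar. The key device is the \emph{monodromy grading} attached to $g$. Since $g$ is invertible of order $M$, for every simple object $X$ the double braiding $c_{g,X}c_{X,g}$ is a scalar, which I write as $e^{2\pi i\, n(X)/M}$. The assignment $X\mapsto n(X)$ is additive ($n(X\otimes Y)\equiv n(X)+n(Y)$), well defined modulo $M$ because $g^{\otimes M}\cong\mathbf 1$, and its value on $g$ is determined by $q$ through $\sigma_{g,g}$. I will also use two standard facts: on the pointed braided subcategory generated by $g$ the braiding is the bicharacter $c_{g^{i},g^{j}}=\sigma_{g,g}^{\,ij}=e^{2\pi i q\, ij/(2M)}$, which follows from the hexagon axioms; and, because $\cC$ is modular (so the $S$-matrix is non-degenerate), the charges $n(X)$ exhaust all of $\ZZ/M$, as otherwise a nontrivial power of $g$ would have trivial monodromy with every object.

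Next I establish the monoidal statement. Put $\cF_{g,a}(X)=g^{-an(X)}\otimes X$, with $\cF_{g,a}$ acting on morphisms by tensoring with $\id_{g^{-an(X)}}$; since $g^{-an(X)}$ is invertible this is a bijection on each $\Hom$-space, so $\cF_{g,a}$ is fully faithful. For the tensorator I migrate the power $g^{-an(Y)}$ occurring in $\cF_{g,a}(X)\otimes\cF_{g,a}(Y)=g^{-an(X)}\otimes X\otimes g^{-an(Y)}\otimes Y$ leftward past $X$ with the braiding, arriving at $g^{-a(n(X)+n(Y))}\otimes X\otimes Y=\cF_{g,a}(X\otimes Y)$; thus $\tau_{X,Y}=\id_{g^{-an(X)}}\otimes c_{X,g^{-an(Y)}}\otimes\id_{Y}$ up to associators. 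That $\tau$ obeys the monoidal coherence axiom is a diagram chase from naturality of the braiding and the hexagons, performed just as in \cite{1902.09498}. The coprimality hypothesis is what makes $\cF_{g,a}$ essentially surjective: a direct computation shows $\cF_{g,a}$ sends the simples of monodromy degree $n_0$ bijectively onto those of degree $(1+aq)n_0$, so $\cF_{g,a}$ permutes the simples precisely when multiplication by $1+aq$ is a bijection of $\ZZ/M=\{\,n(X)\,\}$, i.e.\ when $1+aq$ is coprime to $M$. Combined with full faithfulness, this makes $\cF_{g,a}$ a monoidal auto-equivalence.

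For the braided criterion I examine the square comparing $c_{\cF_{g,a}(X),\cF_{g,a}(Y)}$ and $\cF_{g,a}(c_{X,Y})$ via $\tau_{X,Y}$ and $\tau_{Y,X}$. Both legs are composites of elementary braidings with the same source and the same target $\cF_{g,a}(Y\otimes X)$, so they differ by a scalar $\omega(X,Y)$, and $\cF_{g,a}$ is braided iff $\omega\equiv 1$. Crucially, the crossing of $X$ past $Y$ and the crossing of $X$ past the relevant $g$-power appear identically in both legs and cancel in the ratio; only two contributions survive. First, in one leg the two $g$-powers $g^{-an(X)}$ and $g^{-an(Y)}$ braid past one another while in the other they do not, contributing the bicharacter value $\sigma_{g,g}^{\,a^2 n(X)n(Y)}=e^{2\pi i\, q a^2 n(X)n(Y)/(2M)}$. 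Second, one $g$-power acquires a full monodromy with one of $X,Y$, contributing $e^{2\pi i\, a\, n(X)n(Y)/M}$. Multiplying,
\[
\omega(X,Y)=\exp\!\left(2\pi i\,\frac{n(X)\,n(Y)}{M}\left(a+\frac{a^{2}q}{2}\right)\right).
\]
Hence $\omega(X,Y)=1$ for all simple $X,Y$ if and only if $a+\tfrac{a^{2}q}{2}\equiv 0\pmod M$: necessity is seen by choosing $X=Y$ with $n(X)=1$ (available by modularity), and sufficiency is immediate from the displayed formula. This is exactly the stated condition.

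The substance of the argument is the bookkeeping in the last two paragraphs rather than any one conceptual leap. Verifying the monoidal coherence of $\tau$ means tracking the associators of $\cC$ through a pentagon-type diagram, and extracting $\omega$ means pinning down the sign and the multiplicity of every crossing---which braidings are genuine $c$'s and which are $c^{-1}$'s---so that precisely the self-braiding $\sigma_{g,g}$ and a single monodromy remain, with exponents $a^{2}q/2$ and $a$ respectively. Getting these signs right is delicate: they depend on the chosen pivotal structure and on the convention $g^{-an(X)}$, and it is exactly this that produces the combination $a+a^{2}q/2$ (and the multiplier $1+aq$) rather than a variant. That sign-tracking is where I expect the real care to be required.
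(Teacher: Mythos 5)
The paper never actually proves this lemma: its ``proof'' is the single sentence ``Nearly identical to the proof of \cite{1902.09498}'', so there is no argument in the paper to compare yours against, and your architecture (monodromy grading by $\Z{M}$, tensorator built from the braiding, braidedness reduced to a scalar obstruction) is indeed the standard one that citation points to. The genuine gap in your write-up is the assertion that ``on the pointed braided subcategory generated by $g$ the braiding is the bicharacter $c_{g^i,g^j}=\sigma_{g,g}^{ij}$, which follows from the hexagon axioms.'' This is false. A braided structure on a pointed category is classified by a \emph{quadratic form}, not a bicharacter: only the monodromies $c_{g^j,g^i}c_{g^i,g^j}$ are bimultiplicative; the individual crossings $c_{g^i,g^j}$ are not canonical scalars (they depend on the chosen isomorphisms $g^i\otimes g^j\cong g^{i+j}$), and the associator of $\langle g\rangle$ is in general a nontrivial abelian $3$-cocycle. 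The semion ($M=2$, $\sigma_{g,g}=i$) is already a counterexample: $i^{ij}$ cannot be a bicharacter on $\Z{2}$. Relatedly, $\sigma_{g,g}$ has order up to $2M$ while $n(X)$ is only defined modulo $M$, so the expression $\sigma_{g,g}^{a^2n(X)n(Y)}$ --- and hence your displayed formula for $\omega(X,Y)$ --- is not even well defined. What \emph{is} canonical is the diagonal, via the quadratic-form identity $\sigma_{g^j,g^j}=\sigma_{g,g}^{j^2}$; this legitimately yields $\omega(X,X)$ and the necessity direction (taking $n(X)=1$, which exists by modularity, as you say). For sufficiency you cannot read off-diagonal values from the unjustified formula; instead show by a coherence chase that $\omega$ descends to a bimultiplicative function of the pair of charges, and then note that a bicharacter on $\Z{M}\times\Z{M}$ that is trivial on the diagonal is trivial (evaluate at $x=y=1$). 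That repairs the proof using only the inputs you already have.

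Separately, your signs do not follow from the conventions you (and the statement) fix. Since $n(g^j)=jq$ and $\cF_{g,a}(X)=g^{-an(X)}\otimes X$, the charge of $\cF_{g,a}(X)$ is $n(X)-aq\,n(X)=(1-aq)\,n(X)$, not $(1+aq)\,n(X)$; so the coprimality your construction actually requires is that of $1-aq$ (equivalently, one should define $\cF_{g,a}(X)=g^{+an(X)}\otimes X$). Your claimed factor reproduces the lemma's statement but contradicts your own computation rules --- a compensating sign slip rather than a derivation. The same tension already sits inside the paper: for $g=k\Lambda_1$ in type $A$ one computes $q\equiv -k\pmod{r+1}$, while both the type-$A$ section and the appendix impose coprimality of $1+ka$, i.e.\ of $1-aq$. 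You rightly flagged that the sign bookkeeping is where the care lies, but the argument as written asserts the sign it wants rather than the sign its conventions force, and the braided criterion inherits the same ambiguity.
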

\begin{proof}
Nearly identical to the proof of \cite{1902.09498}.
\end{proof}

\vspace{3em}

\subsection*{Hopf algebras} \hspace{1em}

A useful tool for constructing auto-equivalences of tensor categories is by studying the automorphisms of an associated Hopf algebra. Here we define Hopf algebras, and explain how symmetries of certain Hopf algebras give rise to symmetries of certain tensor categories. Our motivation for introducing these techniques is to construct the \textit{charge conjugation} auto-equivalences of $\cC(\mathfrak{sl}_n , k)$.

A Hopf algebra $H$ is an associative algebra, along with a coassociative colagebra structure $\Delta: H \to H\otimes H$, a counit map $\varepsilon: H \to \mathbb{C}$, and an antipode map $S: H\to H$, all of which must satisfy certain conditions. Further, we say that the Hopf algebra $H$ is quasi-triangular if there exists an element $R \in H\otimes H$ satisfying the Yang-Baxtor equation. Full definitions can be found in \cite{MR1243637}. We are intentionally light on the definition of a Hopf algebra, as we are interested in specific concrete examples coming from deformed Lie algebras. These are the so called quantum groups in the sense of Drinfeld \cite{MR934283}.

\begin{dfn}\cite{MR1321145}
Let $\mathfrak{g}$ be a simple Lie algebra with simple roots $\Delta$, and let $A$ the Cartan matrix of $\mathfrak{g}$. The quasi-triangular Hopf algebra $U_q(\mathfrak{g})$ is defined by the generators: 
\[   X_i, Y_i, H_i   \text{        for  } i \in \Delta\]
These generators satisfy the relations:
\begin{align*}
[ H_i , H_j] &= 0 \\
[ X_i , X_j] &=  \delta_{i,j}\frac{ \sinh( qd_iH_i/2)}{\sinh(qd_i/2)} \\
[ H_i , X_j] &=  A_{i,j} X_j \\
[ X_i , Y_j] &= -A_{i,j} Y_j  \\
\sum_{k=0}^{1 - A_{i,j}} (-1)^k \left[   \begin{smallmatrix}
1 - A_{i,j}\\
k
\end{smallmatrix}
 \right]_{h_i} X_i^kX_jX_i^{1-A_{i,j} - k} &= 0 \text{ for } i\neq j\\
 \sum_{k=0}^{1 - A_{i,j}} (-1)^k \left[   \begin{smallmatrix}
1 - A_{i,j}\\
k
\end{smallmatrix}
 \right]_{h_i} Y_i^kY_jY_i^{1-A_{i,j} - k} &= 0 \text{ for } i\neq j
\end{align*}

The comultiplication $\Delta: U_q(\mathfrak{g}) \to U_q(\mathfrak{g})\otimes U_q(\mathfrak{g})$ is defined by
\begin{align*}
\Delta(H_i) &= H_i \otimes 1 + 1 \otimes H_i, \\
\Delta(X_i) &= X_i\otimes e^{qd_iH_i/4} + e^{-qd_iH_i/4}\otimes X_i, \\
\Delta(Y_i) &= Y_i\otimes e^{qd_iH_i/4} + e^{-qd_iH_i/4}\otimes Y_i. \\
\end{align*}

The conunit $\varepsilon : U_q(\mathfrak{g}) \to \mathbb{C}$ is defined by 
\[     \varepsilon(H_i) =  \varepsilon(X_i) = \varepsilon(Y_i) = 0.\]

The antipode $S :  U_q(\mathfrak{g}) \to U_q(\mathfrak{g})$ is defined by 
\[  S(H_i) = - H_i \quad S(X_i) = -e^{hd_i/2}X_i \quad S(Y_i) = -e^{hd_i/2}Y_i.\]

This Hopf algebra has an $R$-matrix, however we neglect to write it down.
\end{dfn} 

The representation theory of $U_q(\mathfrak{g})$ gives a braided tensor category $\operatorname{Rep}(U_q(\mathfrak{g}))$. We will be interested in the special case of when $q$ is a root of unity. In this case the category $\operatorname{Rep}(U_q(\mathfrak{g}))$ is not semisimple. However, we can take the semi-simplification (as in \cite{1801.04409}) to obtain the braided fusion category $\overline{\operatorname{Rep}(U_q(\mathfrak{g}))}$. 

We now have two ways of constructing a braided fusion category from a simple Lie algebra $\mathfrak{g}$. We can either form $\cC(\mathfrak{g} , k)$ or $\overline{\operatorname{Rep}(U_q(\mathfrak{g}))}$. A result of Finkelberg \cite{MR1384612} shows that these two constructions are essentially the same, by providing the following braided equivalence of categories:
\[   \cC(\mathfrak{g} , k) \simeq  \overline{\operatorname{Rep}\left(U_{e^{\frac{\pi i }{m(k +\check{h})}}}(\mathfrak{g})\right)}\]
where $\check{h}$ is the dual coxeter number of $\mathfrak{g}$, and $m= 1$ for the Lie algebras $ADE$, $m=2$ for the Lie algebras $BCF$, and $m=3$ for the Lie algebra $G$. We remark that there are four exceptions to the above equivalence, which are $(\mathfrak{g}, k ) = (E_6, 1) , (E_7, 1), (E_8, 1)$, or $(E_8, 2)$. These four exceptions will not feature in the results or proofs of this paper.

The above equivalence will be useful to us, as it will allow us to perform certain computations regarding the categories $\cC(\mathfrak{g} , k)$ in the Hopf algebra $U_q(\mathfrak{g})$. In particular we have that Hopf algebra automorphisms (see \cite{MR1243637}) will induce monoidal and braided auto-equivalences of the category $\operatorname{Rep}(U_q(\mathfrak{g}))$, and hence also of $\overline{\operatorname{Rep}(U_q(\mathfrak{g}))}$. To make this precise, we have the following naturality result of Hopf algebras.

\begin{theorem}
Let $H$ be a quasi-triangular Hopf algebra, and let $\Aut(H)$ be the group of Hopf algebra automorphisms of $H$ that preserve the $R$-matrix. Then there is an injection
\[  \Aut(H) \to \BrAut(\operatorname{Rep}(H)).\]
\end{theorem}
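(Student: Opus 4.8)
The plan is to construct the injection explicitly by sending a Hopf algebra automorphism $\phi \in \Aut(H)$ to the functor that pulls back representations along $\phi$. Concretely, given an $H$-module $(V,\rho)$ where $\rho\colon H \to \operatorname{End}(V)$ is the structure map, I would define $\cF_\phi(V,\rho) := (V, \rho\circ\phi^{-1})$, acting as the identity on the underlying vector space of morphisms. (Using $\phi^{-1}$ rather than $\phi$ is the bookkeeping choice that makes $\cF_{\phi\psi} = \cF_\phi\cF_\psi$ a covariant functor rather than contravariant; I would fix this convention at the outset.) The first step is to check this is a well-defined $\mathbb{C}$-linear functor: an $H$-intertwiner $f\colon V \to W$ for $(\rho_V,\rho_W)$ is automatically an intertwiner for $(\rho_V\circ\phi^{-1}, \rho_W\circ\phi^{-1})$, since $f$ commutes with $\rho(h)$ for all $h$, hence with $\rho(\phi^{-1}(h))$ for all $h$. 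It is clearly essentially surjective and fully faithful (on the same category), so it is an auto-equivalence.

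**Next I would** equip $\cF_\phi$ with a tensorator and verify monoidality. Because $\phi$ is a \emph{coalgebra} map, $\Delta\circ\phi = (\phi\otimes\phi)\circ\Delta$, so the $H$-action on $\cF_\phi(V)\otimes\cF_\phi(W)$ — namely $h$ acting via $(\rho_V\otimes\rho_W)(\Delta(\phi^{-1}(h)))$ — agrees on the nose with the action on $\cF_\phi(V\otimes W)$, which is $(\rho_V\otimes\rho_W)(\phi^{-1}\otimes\phi^{-1})(\Delta(h))$. Thus I can take $\tau_{V,W} = \id$ as the tensorator; the pentagon/coherence condition then reduces to the coassociativity already built into $H$, and compatibility with the unit follows from $\varepsilon\circ\phi = \varepsilon$ (a counit map preserves the counit). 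The upshot is that $(\cF_\phi, \id)$ is a genuine monoidal auto-equivalence, with essentially no computation required beyond unwinding definitions.

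**The key step** — and the place where the hypothesis that $\phi$ preserves the $R$-matrix is used — is verifying that $\cF_\phi$ is \emph{braided}. The braiding on $\Rep(H)$ is given by $c_{V,W} = \operatorname{flip}\circ(\rho_V\otimes\rho_W)(R)$. Since I have taken $\tau = \id$, the braided-compatibility square in the excerpt collapses to the requirement that the braiding of $\cF_\phi(V),\cF_\phi(W)$ coincides with the image under $\cF_\phi$ of the braiding of $V,W$. Writing $R = \sum R^{(1)}\otimes R^{(2)}$, the former is $\operatorname{flip}\circ(\rho_V\circ\phi^{-1}\otimes\rho_W\circ\phi^{-1})(R)$ while the latter is $\operatorname{flip}\circ(\rho_V\otimes\rho_W)(\phi^{-1}\otimes\phi^{-1})(R)$; these are equal because $(\phi^{-1}\otimes\phi^{-1})(R) = R$, which is precisely the assumption that $\phi$ preserves the $R$-matrix (equivalently $\phi^{-1}$ does). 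So the braided axiom is exactly where this hypothesis is consumed, and the verification is a one-line comparison once the notation is set up.

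**Finally I would** establish that the assignment $\phi \mapsto [(\cF_\phi,\id)]$ is a group homomorphism and is injective. Functoriality of the construction gives $\cF_\phi \circ \cF_\psi = \cF_{\phi\psi}$ strictly (the chosen $\phi^{-1}$ convention ensures covariance), so passing to isomorphism classes in $\BrAut(\Rep(H))$ yields a homomorphism. For injectivity, the main subtlety is that $\BrAut$ consists of \emph{isomorphism classes} of auto-equivalences, so I must show $\cF_\phi$ is naturally monoidally isomorphic to the identity functor only when $\phi$ is already trivial. The cleanest way is to evaluate on a faithful family of modules: if $\cF_\phi \cong \Id$ then for every representation $\rho$ the modules $(V,\rho)$ and $(V,\rho\circ\phi^{-1})$ are isomorphic naturally, and by testing against a faithful representation (or the regular representation, using that $H$ embeds into its representations) one forces $\rho(\phi^{-1}(h)) = \rho(h)$ for all $h$, hence $\phi = \id$. \textbf{I expect this injectivity step to be the main obstacle}, since it requires care about what ``naturally isomorphic to the identity'' entails and the use of faithfulness; the well-definedness, monoidality, and braidedness are comparatively mechanical once the pullback construction and the $\phi^{-1}$ convention are in place.
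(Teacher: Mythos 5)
The first thing to note is that the paper never proves this statement: it is asserted as a ``naturality result of Hopf algebras'' and immediately applied to $U_q(\mathfrak{g})$, so there is no argument in the paper to compare yours against. Judged on its own, the constructive half of your proposal is correct and is surely what the author has in mind: pulling back along $\phi^{-1}$ with the identity tensorator is monoidal because $\phi^{-1}$ is a coalgebra map, the functor is braided precisely because $(\phi\otimes\phi)(R)=R$, and $\phi\mapsto\cF_\phi$ is a strict, covariant group homomorphism.

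The injectivity step --- which you yourself flagged as the main obstacle --- is a genuine gap, and it cannot be repaired, because the statement is false in this generality. A monoidal natural isomorphism $\eta\colon \Id \Rightarrow \cF_\phi$ only says that each $\eta_V$ is a module isomorphism $(V,\rho)\to(V,\rho\circ\phi^{-1})$, i.e.\ $\rho(\phi^{-1}(h)) = \eta_V\,\rho(h)\,\eta_V^{-1}$ for all $h$; faithfulness of $\rho$ therefore yields only that the two actions are \emph{conjugate}, never equal, so your concluding ``hence $\phi=\id$'' does not follow. In fact the kernel of $\phi\mapsto[\cF_\phi]$ contains every automorphism of the form $\operatorname{Ad}_a$ with $a\in H$ grouplike such that $\operatorname{Ad}_a\otimes\operatorname{Ad}_a$ fixes $R$: setting $\eta_V := \rho_V(a^{-1})$, naturality holds because module maps commute with $\rho(a^{-1})$, monoidality holds because $\Delta(a^{-1})=a^{-1}\otimes a^{-1}$, and the intertwining identity above holds by construction, so $\cF_{\operatorname{Ad}_a}\cong\Id$ as braided monoidal functors. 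Concretely, $H=\CC[S_3]$ with $R=1\otimes 1$ and $\phi$ equal to conjugation by a transposition kills injectivity. Worse, the failure occurs in the paper's own application: conjugation by a grouplike element $e^{cH_i}\in U_q(\mathfrak{g})$ (grouplike because $H_i$ is primitive) is a nontrivial Hopf automorphism preserving $R$ --- it commutes with the Cartan factor of $R$, and it fixes each weight-homogeneous term of the quasi-$R$-matrix because the two tensor factors of such a term carry opposite weights --- yet it induces a functor monoidally isomorphic to the identity via $\eta_V=\rho_V(e^{-cH_i})$. The statement that is actually true, and that the application actually needs, is an injection of $\Aut(H)$ \emph{modulo} conjugations by grouplike elements into $\BrAut(\Rep(H))$; the paper's Corollary on $\Aut(\Gamma_{\mathfrak{g}})\to\BrAut(\overline{\Rep(U_q(\mathfrak{g}))})$ survives because a nontrivial diagram automorphism permutes the $H_i$, whereas every grouplike conjugation fixes the $H_i$ pointwise, so the subgroup of diagram automorphisms meets the kernel trivially. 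Any complete proof must either establish this corrected statement or restrict the injectivity claim to a subgroup of $\Aut(H)$ that intersects the grouplike-inner automorphisms trivially.
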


Applying this Theorem to the quasi-triangular Hopf algebras $U_q(\mathfrak{g})$ gives the following Corollary.

\begin{cor}\label{cor:cc}
Let $\mathfrak{g}$ be a simple Lie algebra, with Dynkin diagram $\Gamma_\mathfrak{g}$. Then there is an injection
\[  \Aut(\Gamma_\mathfrak{g} ) \to  \BrAut(\overline{\operatorname{Rep}(U_q(\mathfrak{g}))}).\]
\end{cor}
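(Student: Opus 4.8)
The plan is to deduce this directly from the preceding naturality theorem by specializing $H$ to the quantum group $U_q(\mathfrak{g})$ and exhibiting the symmetries of the Dynkin diagram $\Gamma_\mathfrak{g}$ as $R$-matrix-preserving Hopf algebra automorphisms of $U_q(\mathfrak{g})$. The theorem gives an injection $\Aut(U_q(\mathfrak{g})) \to \BrAut(\Rep(U_q(\mathfrak{g})))$, where $\Aut$ denotes Hopf automorphisms fixing the $R$-matrix. So the core task reduces to constructing an injective group homomorphism $\Aut(\Gamma_\mathfrak{g}) \to \Aut(U_q(\mathfrak{g}))$, and then pushing the resulting braided auto-equivalences of $\Rep(U_q(\mathfrak{g}))$ through the semisimplification functor to land in $\BrAut(\overline{\Rep(U_q(\mathfrak{g}))})$.

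First I would fix a Dynkin diagram automorphism $\phi \in \Aut(\Gamma_\mathfrak{g})$, i.e.\ a permutation of the node set $\Delta$ preserving the Cartan matrix, so $A_{\phi(i),\phi(j)} = A_{i,j}$ and $d_{\phi(i)} = d_i$. I would then define a map on generators by $X_i \mapsto X_{\phi(i)}$, $Y_i \mapsto Y_{\phi(i)}$, $H_i \mapsto H_{\phi(i)}$ and check that it respects all the defining relations of $U_q(\mathfrak{g})$. This is routine: every relation in the presentation is phrased purely in terms of $A_{i,j}$ and $d_i$ (the quantum Serre relations, the bracket relations, and the $q$-integer coefficients $\left[\begin{smallmatrix}1-A_{i,j}\\ k\end{smallmatrix}\right]_{h_i}$), so relabeling indices by $\phi$ sends relations to relations because $\phi$ preserves the Cartan data. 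Next I would verify that this algebra automorphism is in fact a Hopf automorphism, i.e.\ that it commutes with $\Delta$, $\varepsilon$, and $S$; again this is immediate from inspecting the coproduct formulas (which only involve $d_i H_i$) and the antipode, since $\phi$ preserves $d_i$. Finally, because $\phi \mapsto (\text{relabeling by }\phi)$ is visibly a group homomorphism and is injective (a nontrivial $\phi$ moves some generator), this gives the desired embedding $\Aut(\Gamma_\mathfrak{g}) \hookrightarrow \Aut(U_q(\mathfrak{g}))$ into algebra-level automorphisms.

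The main obstacle is the $R$-matrix condition: the preceding theorem only provides an injection into $\BrAut$ for Hopf automorphisms that \emph{preserve} the $R$-matrix, so I must argue that the diagram automorphisms do so. Since the excerpt deliberately declines to write down $R$ explicitly, I would argue structurally rather than by direct computation: the universal $R$-matrix of $U_q(\mathfrak{g})$ is built canonically from the Cartan data and the quantum Serre generators (via the quasi-$R$-matrix / Lusztig's construction), and a diagram automorphism acts compatibly on all of this input, so $(\phi\otimes\phi)(R) = R$. I would phrase this as the statement that $R$ is natural with respect to symmetries of the Cartan datum. Having secured $R$-invariance, the theorem yields braided auto-equivalences of $\Rep(U_q(\mathfrak{g}))$, and I would finish by noting that semisimplification is a braided monoidal functor that is natural in the input category, so these auto-equivalences descend to $\overline{\Rep(U_q(\mathfrak{g}))}$; injectivity is preserved because a nontrivial diagram automorphism permutes the fundamental representations nontrivially, hence acts nontrivially on the simple objects surviving semisimplification. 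Combined with Finkelberg's equivalence $\cC(\mathfrak{g},k) \simeq \overline{\Rep(U_q(\mathfrak{g}))}$, this is exactly the source of the charge-conjugation auto-equivalence of $\cC(\mathfrak{sl}_{r+1},k)$ coming from the order-two flip of the type $A_r$ diagram.
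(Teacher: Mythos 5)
Your proposal is correct and follows essentially the same route as the paper: relabel the generators of $U_q(\mathfrak{g})$ by the diagram automorphism to obtain a Hopf algebra automorphism, establish $R$-matrix invariance, apply the preceding naturality theorem to land in $\BrAut(\operatorname{Rep}(U_q(\mathfrak{g})))$, and descend through the negligible quotient to $\overline{\operatorname{Rep}(U_q(\mathfrak{g}))}$. The only real difference is in how $R$-invariance is justified: you appeal to the canonicity of the universal $R$-matrix in the Cartan datum (Lusztig's quasi-$R$-matrix construction), while the paper writes $R = \widehat{R}\cdot K$ and invokes a uniqueness theorem from the literature to conclude $\phi_\sigma(R)$ and $R$ agree up to a scalar, which is then pinned to $1$ by standard $R$-matrix identities — both arguments formalize the same underlying fact.
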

\begin{proof}

Let $\Delta$ be the set of simple roots of $\mathfrak{g}$. A symmetry of $\Gamma_\mathfrak{g}$ induces a symmetry $\sigma$ of $\Delta$ which preserves the Cartan matrix of $\mathfrak{g}$, i.e. $A_{i,j} = A_{\sigma(i) , \sigma(j)}$. Using the generators and relations presentation of $U_q(\mathfrak{g})$ we can see that $\sigma$ induces a Hopf algebra automorphism $\phi_\sigma: U_q(\mathfrak{g}) \to U_q(\mathfrak{g})$ defined by
\begin{align*}
\phi_\sigma(H_i) :=& H_{\sigma(i)} \\
\phi_\sigma(X_i^{\pm}) :=& X_{\sigma(i)}^{\pm}. \\
\end{align*}

We still have to show that $\phi_\sigma$ preserves the $R$-matrix of $U_q(\mathfrak{g})$. To do this we will use the results of \cite{MR2506324}. For this next portion of the proof, we will freely use the notation and conventions of this cited paper.

Following \cite{MR2506324} we can write the $R$-matrix of $U_q(\mathfrak{g})$ in the form $\widehat{R}\cdot K$, where $\widehat{R}\in T_q(b_+ \otimes b_-)$, and $K = q^{ \sum_{i,j \in \Delta} A^{-1}_{i,j}(H_i \otimes H_j )}$. Clearly $\phi_\sigma$ preserves $K$, and $\phi_\sigma( \widehat{R} )$ is again an element of $T_q(b_+ \otimes b_-)$ (as $\phi_\sigma$ preserves the signs of the generators). Hence $\phi_\sigma(R)$ is of the form $\widehat{R}'\cdot K$ for $R' \in T_q(b_+ \otimes b_-)$. As $\phi_\sigma$ preserves the comultiplication of $U_q(\mathfrak{g})$, we can apply \cite[Theorem 7.1]{MR2506324} to see that $R$ and $R'$ only differ by a scalar. From the standard properties of the $R$-matrix, we can further deduce that this scalar is the identity. Hence $\phi_\sigma(R) = R$. Hence we get an injection  

\[  \Aut(\Gamma_\mathfrak{g} ) \to  \BrAut(\operatorname{Rep}(U_q(\mathfrak{g}))).\]

Finally, any auto-equivalence of $\operatorname{Rep}(U_q(\mathfrak{g}))$ will preserve the negligible ideal. Thus the above map descends to an injection  
\[  \Aut(\Gamma_\mathfrak{g} ) \to  \BrAut(\overline{\operatorname{Rep}(U_q(\mathfrak{g}))}).\]

\end{proof}

The main purpose of this Corollary is that it gives the construction of the \textit{charge conjugation} auto-equivalences of $\cC(\mathfrak{sl}_n , k)$. These auto-equivalences occur due to the $\Z{2}$-symmetry in the Dynkin diagram for $\mathfrak{sl}_n$. 

\vspace{3em} 

\subsection*{Planar algebras} \hspace{1em}

Roughly speaking a planar algebra $P$ is a collection of vector spaces $\{P_n : n\in \mathbb{N} \}$, along with a multi-linear action of planar tangles. The full definition is fairly involved, so we point the reader towards \cite{math.QA/9909027} for the full definition. Planar algebras have appeared in many different contexts, and under many different names. Slightly different axiomitisations for the general idea of a planar algebra include monoidal algebras \cite{MR2132671}, spiders \cite{MR1403861}, and towers of algebras \cite{MR999799}.

It is well known that there is a one to one correspondence between planar algebras, and pivotal fusion categories with a distinguished $\otimes$-generating symmetrically self-dual object. Recently it was shown in that this correspondence is functorial. This functorial relationship is very useful for this paper, as it will allow us to construct and classify certain auto-equivalences of pivotal fusion categories by studying planar algebra automorphisms of the corresponding planar algebra. 

Let us describe this functorial relationship between planar algebras and pivotal categories. 

On one hand we have the category of based pivotal (braided) fusion categories. The objects of this category are pairs $(\cC,X)$, where $\cC$ is a pivotal (braided) category, and $X$ is a $\otimes$-generating symmetrically self-dual generating object. The morphisms $(\cC_1,X_1)\to (\cC_2, X_2)$ in this category are pivotal (braided) functors $\cF: \cC_1$ to $\cC_2$ such that $\mathcal{F}(X_1) \cong X_2$.

On the other hand we have the category of (braided) planar algebras. The objects of this category are (braided) planar algebras with $P_0 \cong \mathbb{C}$, and the morphisms are (braided) planar algebra homomorphisms, considered up to planar algebra natural isomorphism, see \cite[Appendix A]{MR3808052}.

Given a based pivotal (braided) category $(\cC,X)$ we can construct a (braided) planar algebra $\text{PA}(\cC, X)$ by
\[  \text{PA}(\cC, X)_n := \Hom_{\cC}(X^{\otimes n} \to \mathbf{1}).\]
Furthermore, given a pivotal (braided) functor $(\mathcal{F},\phi) : (\cC_1,X_1) \to (\cC_2, X_2)$ we define a planar algebra morphism
\[ \phi_{X_1,X_2}\phi_{X_1^{\otimes 2},X_1}\cdots \phi_{X_1^{\otimes n-1},X_1}\mathcal{F} : \text{PA}(\cC_1, X_1)_n\to \text{PA}(\cC_2, X_2)_n. \] 

Conversely, given a planar algebra $P$ we can define the based pivotal category $\cC_P$ of idempotents in $P$, and given a planar algebra homomorphism $P_1 \to P_2$, we can define a based pivotal (braided) functor $\cC_{P_1} \to \cC_{P_2}$. We are light on details of these converse constructions as they don't feature much in this paper. Additional details can be found in \cite{1607.06041}.

A powerful theorem of Henrqiques, Penneys, and Tener \cite[Theorem A]{1607.06041} shows that the above constructions give equivalences between the category of based (braided) pivotal categories, and the category of (braided) planar algebras. In particular this equivalence gives the following key Lemma for this paper.

\begin{lemma}\label{lem:gaugeauto}
Let $\cC$ be a pivotal fusion category, and $X$ a symmetrically self-dual $\otimes$-generating object. Then there is an injection
\[ \text{Gauge}(\cC) \to \Aut(   \text{PA}(\cC, X)            ).\]
\end{lemma}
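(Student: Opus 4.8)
The plan is to realize $\text{Gauge}(\cC)$ as a subgroup of the automorphism group of the based pivotal category $(\cC, X)$, and then transport this group along the equivalence $\text{PA}$ to land inside $\Aut(\text{PA}(\cC,X))$. First I would observe that any gauge auto-equivalence $(\Id_\cC, \tau)$ is a \emph{based pivotal} auto-equivalence of $(\cC, X)$: it is pivotal by Lemma~\ref{lem:gagpiv}, and since $\Id_\cC(X) = X$ with $X$ symmetrically self-dual and $\otimes$-generating by hypothesis, it is a morphism $(\cC, X) \to (\cC, X)$ in the category of based pivotal fusion categories. As $(\Id_\cC, \tau)$ is invertible (its inverse is again a gauge auto-equivalence), it is an automorphism of the object $(\cC, X)$ in that category.

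Next I would apply the functor $\text{PA}$. By the functoriality of the $\text{PA}$ construction recalled above, the based pivotal auto-equivalence $(\Id_\cC, \tau)$ induces a planar algebra homomorphism $\text{PA}(\Id_\cC, \tau)$ of $\text{PA}(\cC, X)$, given on $\text{PA}(\cC, X)_n = \Hom_\cC(X^{\otimes n} \to \mathbf{1})$ by the explicit formula built from the components of the tensorator $\tau$. Because $\text{PA}$ is a functor and $(\Id_\cC, \tau)$ is invertible, $\text{PA}(\Id_\cC, \tau)$ is a planar algebra \emph{automorphism}, and the assignment respects composition; this yields a group homomorphism $\text{Gauge}(\cC) \to \Aut(\text{PA}(\cC, X))$.

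For injectivity I would invoke the theorem of Henriques, Penneys, and Tener \cite[Theorem A]{1607.06041}, which asserts that $\text{PA}$ is an equivalence between based pivotal fusion categories and planar algebras. An equivalence of categories is faithful, and hence induces an injection on the automorphism group of any object; in particular $\text{PA}$ restricts to an injection from the automorphisms of $(\cC, X)$ into $\Aut(\text{PA}(\cC, X))$. Composing with the inclusion of $\text{Gauge}(\cC)$ into the automorphisms of $(\cC, X)$ gives the desired injection. Concretely, if $\text{PA}(\Id_\cC, \tau)$ is trivial, then $(\Id_\cC, \tau)$ is isomorphic to the identity based pivotal functor, so $\tau$ is the trivial tensorator up to monoidal natural isomorphism and the class is trivial in $\text{Gauge}(\cC)$.

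The step requiring the most care is the bookkeeping of the two equivalence relations. Elements of $\text{Gauge}(\cC)$ are taken up to monoidal natural isomorphism, whereas the morphisms in both categories appearing in the $\text{PA}$-equivalence are taken up to pivotal (resp. planar algebra) natural isomorphism. One must therefore check that passing from a gauge auto-equivalence to its class among based pivotal automorphisms of $(\cC, X)$ is both well-defined and injective, i.e. that monoidally naturally isomorphic gauge auto-equivalences have the same pivotal isomorphism class, and conversely. Since Lemma~\ref{lem:gagpiv} forces the pivotal structure on a gauge auto-equivalence, the monoidal and pivotal notions of natural isomorphism coincide in this situation, so this reconciliation is harmless; but it is precisely the point one should verify to make the chain of injections rigorous.
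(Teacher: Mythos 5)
Your proposal is correct and follows essentially the same route as the paper, whose entire proof is: gauge auto-equivalences are pivotal by Lemma~\ref{lem:gagpiv}, then apply \cite[Theorem A]{1607.06041}. Your additional care about functoriality, injectivity via faithfulness of the equivalence, and the reconciliation of the two equivalence relations simply fills in details the paper leaves implicit.
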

\begin{proof}
By Lemma~\ref{lem:gagpiv}, every gauge auto-equivalence of $\cC$ is pivotal. The result then follows by applying \cite[Theorem A]{1607.06041}.
\end{proof}

Every planar algebra $P$ has the (possible trivial) planar ideal consisting of \textit{negligible elements}. We say an element $f \in P$ is negligible if
\[  \raisebox{-.5\height}{ \includegraphics[scale = .5]{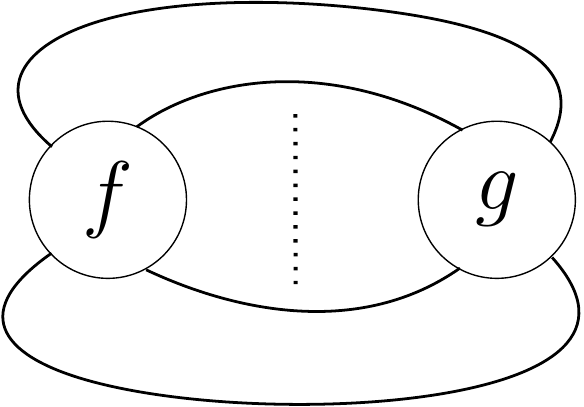}} = 0 \text{ for all } g \in P.\]
It is a straightforward exercise to show the set of all negligible elements of $P$ form a planar ideal.

\begin{dfn}
We write $\overline{P}$ for the planar quotient of $P$ by the negligible ideal.
\end{dfn}

The planar algebra $\overline{P}$ should be thought of as the ``semisimplification'' of $P$, as this construction corresponds to taking the semisimplification (in the sense of \cite{1801.04409}) of the idempotent category of $P$. Giving an explicit description of the planar ideal of $P$ can be difficult in practice. In the bulk of this paper we aim to construct planar algebra automorphisms of several examples of semisimplified planar algebras. As we won't have explicit descriptions of the negligible ideals for these examples, we instead apply the following result, which shows that every planar algebra automorphism descends to the semisimplification.

\begin{prop}\label{prop:neg}
Let $P$ be a planar algebra, and $\phi$ a planar algebra automorphism of $P$. Then $\phi$ preserves the negligible ideal of $P$.
\end{prop}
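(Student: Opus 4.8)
The plan is to show that $\phi$ preserves negligibility element-by-element, using only the defining property of negligible elements together with the fact that $\phi$ is a planar algebra automorphism. Recall that $f \in P$ is negligible precisely when the closed diagram obtained by capping $f$ off against every $g \in P$ evaluates to $0$. Since $\phi$ is an automorphism, it commutes with the action of every planar tangle; in particular it commutes with the capping-off tangle that pairs $f$ against $g$. The key observation is that this capping-off produces an element of $P_0 \cong \mathbb{C}$, and $\phi$ restricted to $P_0$ is a ring automorphism of $\mathbb{C}$, hence sends $0$ to $0$ (indeed it must be the identity on the scalar $P_0 \cong \mathbb{C}$, since planar algebra morphisms are $\mathbb{C}$-linear and fix the empty diagram).

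First I would fix a negligible element $f \in P_n$ and aim to show $\phi(f)$ is negligible, i.e. that for every $g \in P_n$ the pairing of $\phi(f)$ with $g$ vanishes. Second, for an arbitrary such $g$, since $\phi$ is surjective (being an automorphism) I can write $g = \phi(h)$ for some $h \in P_n$. Third, I would observe that the pairing tangle $T$ that glues $\phi(f)$ and $g = \phi(h)$ into a closed diagram satisfies, by the equivariance of $\phi$ under planar tangles,
\[
T\big(\phi(f), \phi(h)\big) = \phi\big(T(f,h)\big).
\]
Since $f$ is negligible, $T(f,h) = 0$ in $P_0$, and $\phi$ is linear, so $\phi(T(f,h)) = 0$. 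Hence the pairing of $\phi(f)$ with $g$ vanishes for all $g$, which is exactly the statement that $\phi(f)$ is negligible. The same argument applied to $\phi^{-1}$ shows the negligible ideal is mapped \emph{onto} itself, giving preservation as claimed.

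The only point requiring mild care is bookkeeping: the general ``negligibility'' pairing in the excerpt's definition is drawn as a single diagram involving an element $g$, and one must be sure the tangle gluing $\phi(f)$ to $\phi(h)$ is literally the image under $\phi$'s equivariance of the tangle gluing $f$ to $h$ — this is immediate from the definition of a planar algebra morphism as a natural transformation of the operad action, so no genuine computation is needed. I expect this to be the main (and essentially only) obstacle: making the surjectivity trick precise, namely that to test negligibility of $\phi(f)$ it suffices to pair against elements in the \emph{image} of $\phi$, which is all of $P$ since $\phi$ is an automorphism. Once surjectivity is invoked, the proof reduces to the one-line equivariance identity above.
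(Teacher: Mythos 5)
Your proof is correct and is essentially identical to the paper's: both fix a negligible $f$, use surjectivity of $\phi$ to write an arbitrary test element $g$ as $\phi(h)$, and then apply equivariance of $\phi$ under the pairing tangle to conclude $\phi(f)$ pairs to $\phi(0)=0$ against everything. The only (harmless) addition on your part is the closing remark that running the argument for $\phi^{-1}$ gives surjectivity onto the ideal.
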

\begin{proof}
For each $f$ in the negligible planar ideal of $P$, we have to show that $\phi(f)$ is also in the negligible planar ideal. Fix such a $f$, and let $g \in P$. As $\phi$ is a planar algebra automorphism, there exists some $h \in P$ such that $\phi(h) = g$. Therefore
\[\raisebox{-.5\height}{ \includegraphics[scale = .5]{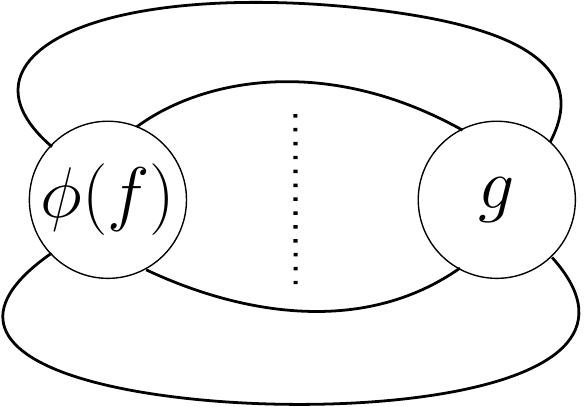}} = \raisebox{-.5\height}{ \includegraphics[scale = .5]{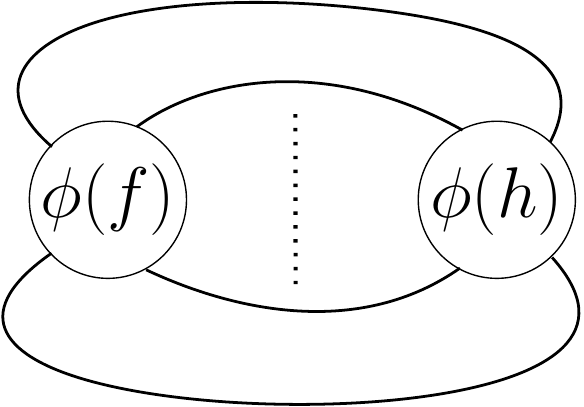}} = \phi\left( \raisebox{-.5\height}{ \includegraphics[scale = .5]{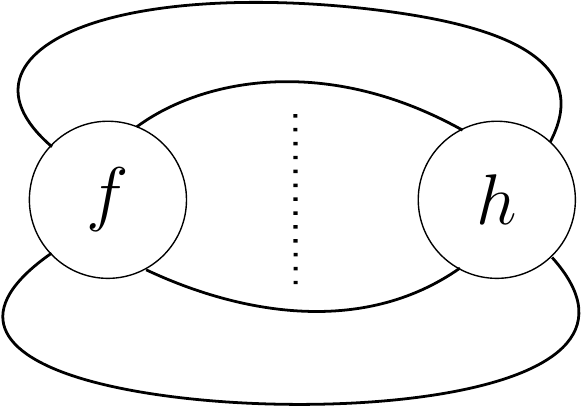}}\right) = \phi(0) = 0.\]
Thus $\phi(f)$ is in the negligible ideal of $P$.
\end{proof}

We now introduce the $\mathcal{P}^H$, BMW and $G_2$ braided planar algebras, and explain their connection to the modular categories $\cat{sl}{r+1}{k}$, $\cat{so}{2r+1}{k}$, $\cat{sp}{2r}{k}$, and $\cat{g}{2}{k}$.

\subsubsection*{The planar algebra $\mathcal{P}^H(\delta, \gamma)$}\hspace{1em}

Here we define the planar algebra $\mathcal{P}^H(\delta, \gamma)$, and explain its connection to certain endomorphism algebras in $\cat{sl}{r+1}{k}$.

\begin{dfn}\label{def:PH}
Let $\delta$ and $\gamma$ non-zero complex numbers. The braided planar algebra $\mathcal{P}^H(\delta, \gamma)$ has two generators $\gra{TLTRIV} ,\gra{SV} \in \mathcal{P}^H(\delta, \gamma)_3$, along with the relations:

\begin{itemize}
\item[(i)]$~~~\gra{S}=\gra{wS};$\\
\item[(ii)]$~~\gra{BOX1S}=\gra{BOX1TL}=0;$\\
\item[(iii)]$~\gra{BOX2STL}=0;$\\
\item[(iv)]$~~\gra{BOX2TLTL}=\frac{\delta^2-2}{\delta}\gra{VI};$\\
\item[(v)]$~~~\gra{BOX2SS}=\gamma\frac{\delta^3-2\delta}{\delta^2-1}\gra{VI};$\\
\item[(vi)]$~~\graa{BOX3TLTLTL}=\frac{\delta^2-3}{\delta}\graa{TLTRIV};$\\
\item[(vii)]$~\graa{BOX3STLTL}=-\frac{1}{\delta}\graa{SV};$\\
\item[(viii)]$\graa{BOX3TLSS}=(\gamma-1)\graa{SV}-\gamma\frac{\delta}{\delta^2-1}\graa{TLTRIV};$\\
\item[(ix)]$~~\graa{BOX3SSS}=\frac{\delta^4 \gamma+\delta^2 \left(-2 \gamma^2+\gamma-2\right)+2 (\gamma-1)^2}{\delta \left(\delta^2-1\right)}\graa{SV}+\gamma(\gamma-1)\frac{\delta^2}{\delta^2-1}\graa{TLTRIV}$.
\end{itemize}

There exists a braiding element in $\mathcal{P}^H(\delta, \gamma)_4$ which we neglect to write down, as it is complicated, and not important for this paper.
\end{dfn}

Combining several theorems gives the following isomorphism of planar algebras

\begin{equation}\label{eq:PH}
 \text{PA}( \cat{sl}{r+1}{k} , (\Lambda_1+\Lambda_r) ) \cong \overline{\mathcal{P}^H(\delta, \gamma)},
 \qquad \text{when $r \geq 2$ and $k \geq 3$}
\end{equation}
where
\[ \delta = \frac{e^\frac{2 i \pi(r+1)}{2(r + 1 + k)} - e^\frac{-2 i \pi(r+1)}{2(r + 1 + k)}}{ e^\frac{2 i \pi}{2(r + 1 + k)} - e^\frac{-2 i \pi}{2(r + 1 + k)}   } \text{ and }\gamma = \frac{e^\frac{4 i \pi}{2(r + 1 + k)}- e^\frac{(12+4(r+1)) i \pi}{2(r + 1 + k)}+ e^\frac{(8+8(r+1)) i \pi}{2(r + 1 + k)} -e^\frac{4 i \pi(r+1)}{2(r + 1 + k)}}{ e^\frac{8 i \pi}{2(r + 1 + k)}- e^\frac{(12+4(r+1)) i \pi}{2(r + 1 + k)}+ e^\frac{(4+8(r+1)) i \pi}{2(r + 1 + k)} -e^\frac{4 i \pi(r+1)}{2(r + 1 + k)}}\]

Let us quickly explain how we obtain this isomorphism. From \cite{MR1237835} we know that the Hecke algebras maps on to the endomorphism algebra of the representation $\Lambda_1$ of $U_q(\mathfrak{sl}_{r+1})$ with $q = e^\frac{2 i \pi(r+1)}{2(r + 1 + k)}$. The results of \cite{MR1710999} compute the idempotents of the semisimplified Hecke algebra. From these idempotents, we see the above map descends to an isomorphism between the semisimplified Hecke algebras, and the endomorphism algebras of $\Lambda_1 \in \overline{\Rep(U_q(\mathfrak{sl_{r+1}  }))}$. Using the braiding and duality maps of $\overline{\Rep(U_q(\mathfrak{sl_{r+1}  }))}$, we can then get an isomorphism between the semisimplified Hecke algebras, and the morphism spaces
\[   \Hom_{\overline{\Rep(U_q(\mathfrak{sl_{r+1}  }))}}(  \Lambda_1 \otimes \Lambda_r)^{\otimes n} \to \mathbf{1}).\]

The object $ \Lambda_1 \otimes \Lambda_r$ contains the sub-object $\Lambda_1 + \Lambda_r$. Hence we can cut down the semi-simplified Hecke algebra by the idempotent projecting onto $\Lambda_1 + \Lambda_r$ to obtain an isomorphism to  

\[   \Hom_{\overline{\Rep(U_q(\mathfrak{sl_{r+1}  }))}}(  (\Lambda_1 + \Lambda_r)^{\otimes n} \to \mathbf{1}).\]

The idempotent cut-down of the semisimplified Hecke algebras is, by the definition in \cite[Lemma 3.2]{MR4002229}, the box spaces of the planar algebra $\overline{\mathcal{P}^H(\delta, \gamma)}$ with $\delta$ and $\gamma$ as above. Hence, after translating to planar algebra language, we obtain a planar algebra isomorphism 
\[  \text{PA}( \overline{\Rep(U_q(\mathfrak{sl_{r+1}  }))} , (\Lambda_1+\Lambda_r) ) \cong \overline{\mathcal{P}^H(\delta, \gamma)}. \]

Finally, applying the equivalences of braided categories from \cite{MR1384612} gives the desired result.
 
\subsubsection*{The planar algebra $\text{BMW}(q,r)$}\hspace{1em}

Here we define the planar algebra $\text{BMW}(q,r)$, and explain its connection to certain endomorphism algebras in $\cat{so}{2r+1}{k}$ and $\cat{sp}{2r}{k}$.

\begin{dfn}\label{def:bmw}
Let $q \neq \pm 1$ and $r$ non-zero complex numbers. The braided planar algebra $\text{BMW}(q,r)$ has a single generator $\raisebox{-.5\height}{ \includegraphics[scale = .33]{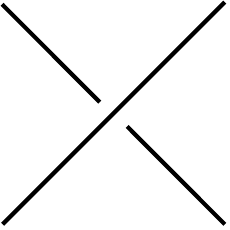}} \in \text{BMW}(q,r)_4$, satisfying Reidemiester moves 2 and 3, along with the additional relations:
\begin{itemize}
\item[ (i)] $\raisebox{-.5\height}{ \includegraphics[scale = .5]{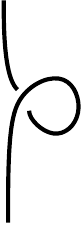}} = r \raisebox{-.5\height}{ \includegraphics[scale = .5]{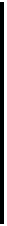}} $\\[1em]
\item[ (ii)] $\raisebox{-.5\height}{ \includegraphics[scale = .5]{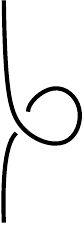}} = r^{-1} \raisebox{-.5\height}{ \includegraphics[scale = .5]{G2iso}}$\\[1em]
\item[ (iii)]$ \raisebox{-.5\height}{ \includegraphics[scale = .5]{BMWpositivecrossing}}  - \raisebox{-.5\height}{ \includegraphics[scale = .5]{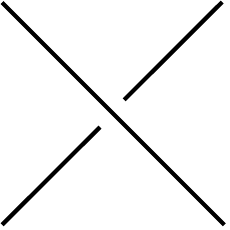}} = (q-q^{-1})\left( \raisebox{-.5\height}{ \includegraphics[scale = .5,angle = 90]{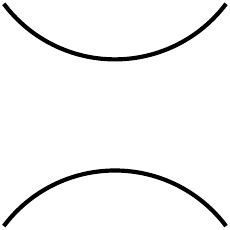}} - \raisebox{-.5\height}{ \includegraphics[scale = .5]{cupcap}}   \right) $
  \end{itemize}
The braiding in this planar algebra is simply the element $\raisebox{-.5\height}{ \includegraphics[scale = .5]{BMWpositivecrossing}}$.
\end{dfn}

An amalgamation of several theorems (\cite[Section 7.7,b]{MR2132671} and \cite{MR1384612}) gives the following isomorphisms of braided planar algebras (after some translating of languages): 
\begin{equation}\label{eq:so}
  \text{PA}( \cat{so}{2r+1}{k}\boxtimes \Rep(\Z{2}) , \Lambda_1 \boxtimes \mathbb{C}_{\operatorname{sgn} }) \cong \overline{\text{BMW}( q^2 , q^{4r} )} \text{ with } q = e^\frac{2 \pi i}{4(2r-1 + k)}. 
 \end{equation}

\begin{equation}\label{eq:sp}  
\text{PA}( \cat{sp}{2r}{k} , \Lambda_1 ) \cong \overline{\text{BMW}( q , -q^{2r+1} )} \text{ with } q = e^\frac{2 \pi i}{4(r + k + 1)}.
\end{equation}
In the latter case, the object $\Lambda_1 \in  \cat{sp}{2r}{k}$ tensor generates, and thus the idempotent completion of the $\overline{\text{BMW}( q , -q^{2r+1} )}$ braided planar algebra recovers the modular category $\cat{sp}{2r}{k}$. We have to be more careful in the former case, as the object $ \Lambda_1 \boxtimes \mathbb{C}_{\operatorname{sgn} }$ only tensor generates the subcategory
\begin{equation}\label{eq:so2} \ad(\cat{so}{2r+1}{k}  )\boxtimes \operatorname{Rep}(\Z{2}),   \end{equation}
 where
\[\ad(\cat{so}{2r+1}{k}  ) :=   \langle X\otimes X^* : X\in\cat{so}{2r+1}{k} \rangle  .\]
Hence the idempotent completion of the braided planar algebra $\overline{\text{BMW}( q , -q^{2r+1} )}$ gives the spherical braided tensor category from Equation~\ref{eq:so2}.

\subsubsection*{The planar algebra $G_2(q)$}\hspace{1em}

Here we define the planar algebra $G_2(q)$, and explain its connection to certain endomorphism algebras in $\cat{g}{2}{k}$.

\begin{dfn}
Let $q$ a non-zero complex number. The braided planar algebra $G_2(q)$ has a single generator $\raisebox{-.5\height}{ \includegraphics[scale = .33]{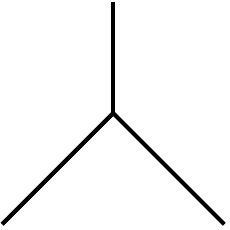}} \in G_2(q)_3$, satisfying the relations:

\begin{itemize}
\item[ (i)] $\raisebox{-.5\height}{ \includegraphics[width = .04\textwidth]{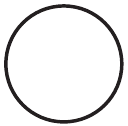}} = q^{10} + q^{8} + q^2 + 1 + q^{-2} + q^{-8} + q^{-10} $\\[1em]
\item[ (ii)]$\raisebox{-.5\height}{ \includegraphics[width = .04\textwidth]{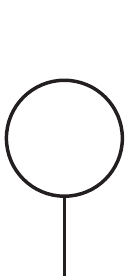}}= 0$ \\[1em]
\item[ (iii)] $\raisebox{-.5\height}{ \includegraphics[width = .04\textwidth]{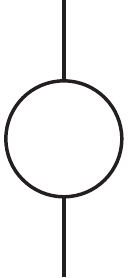}} = -\left(q^6 + q^4 + q^2 + q^{-2} + q^{-4} + q^{-6}\right) \raisebox{-.5\height}{ \includegraphics[width = .0025\textwidth]{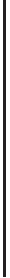}}$\\[1em]
\item[ (iv)] $\raisebox{-.5\height}{ \includegraphics[width = .06\textwidth]{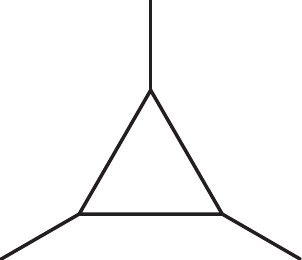}}=  \left(q^4 + 1 +q^{-4}\right)\raisebox{-.5\height}{ \includegraphics[width = .06\textwidth]{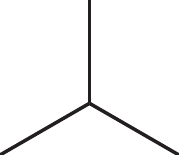}}$\\[1em]
\item[ (v)]$\raisebox{-.5\height}{ \includegraphics[width = .06\textwidth]{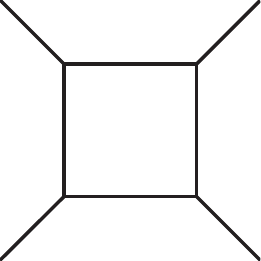}} =  -\left(q^2+q^{-2}\right) \left(\raisebox{-.5\height}{ \includegraphics[width = .04\textwidth]{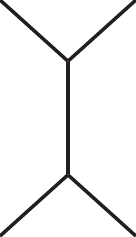}} + \raisebox{-.5\height}{ \includegraphics[width = .08\textwidth]{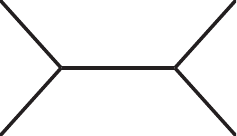}} \right)+ \left(q^2+1+q^{-2}\right) \left(\raisebox{-.5\height}{ \includegraphics[scale = .5,angle = 0]{cupcap}} +\raisebox{-.5\height}{ \includegraphics[scale = .5,angle = 90]{cupcap}} \right)$ \\[1em]
\item[ (vi)]$ \raisebox{-.5\height}{ \includegraphics[width = .06\textwidth]{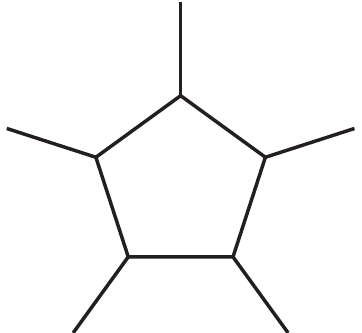}} =  \raisebox{-.5\height}{ \includegraphics[width = .06\textwidth]{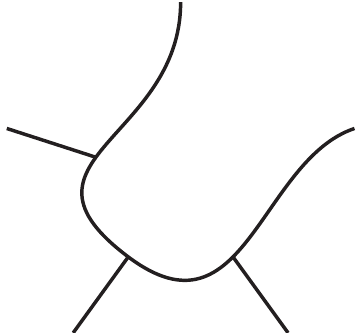}} + \raisebox{-.5\height}{ \includegraphics[width = .06\textwidth]{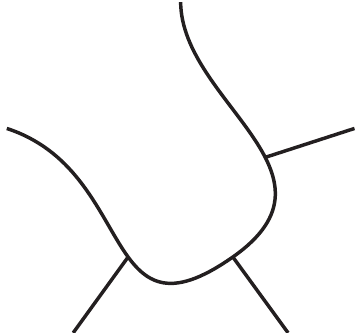}} + \raisebox{-.5\height}{ \includegraphics[width = .06\textwidth]{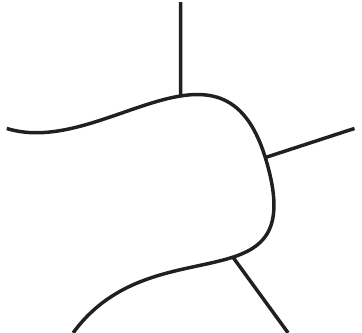}} + \raisebox{-.5\height}{ \includegraphics[width = .06\textwidth]{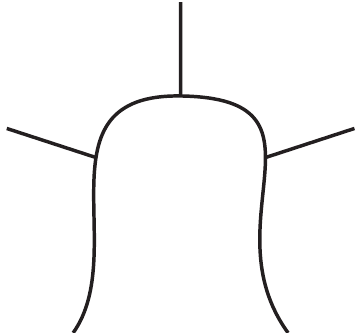}} + \raisebox{-.5\height}{ \includegraphics[width = .06\textwidth]{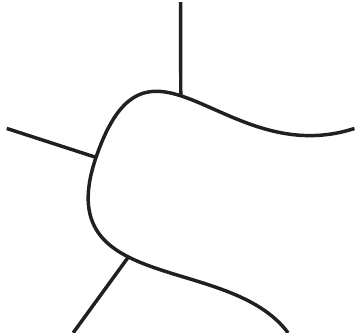}}$ \\[1em] 
\qquad$\qquad - \raisebox{-.5\height}{ \includegraphics[width = .06\textwidth]{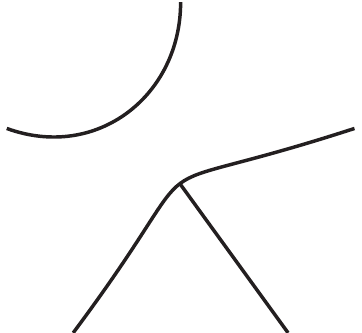}}-\raisebox{-.5\height}{ \includegraphics[width = .06\textwidth]{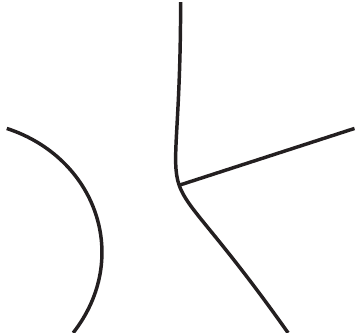}}-\raisebox{-.5\height}{ \includegraphics[width = .06\textwidth]{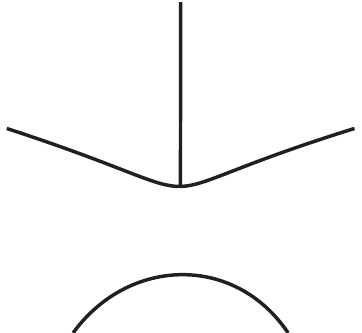}}-\raisebox{-.5\height}{ \includegraphics[width = .06\textwidth]{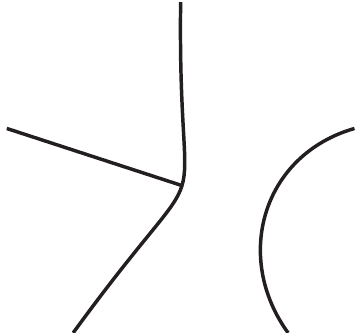}}-\raisebox{-.5\height}{ \includegraphics[width = .06\textwidth]{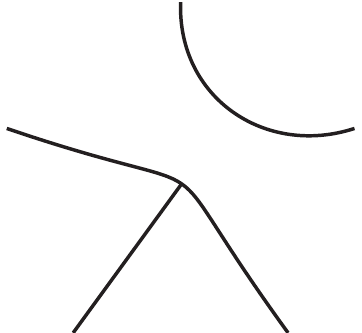}}$.
\end{itemize}
The braiding in this planar algebra is given by the element
\[ \frac{1}{1+q^{2}} \raisebox{-.5\height}{ \includegraphics[width = .04\textwidth]{I}} + \frac{1}{1+q^{-2}}\raisebox{-.5\height}{ \includegraphics[width = .08\textwidth]{H}} + \frac{1}{q^{2}+q^{4}} \raisebox{-.5\height}{ \includegraphics[scale = .5,angle = 0]{cupcap}}+ \frac{1}{q^{-2}+q^{-4}}\raisebox{-.5\height}{ \includegraphics[scale = .5,angle = 90]{cupcap}}. \] 
\end{dfn}

 In forthcoming work of Ostrik and Snyder, the following isomorphism of planar algebras is proven:

\begin{equation}\label{eq:g2}
  \text{PA}( \cat{g}{2}{k} , \Lambda_1 ) = \overline{G_2(q)}  \text{ with } q =  e^\frac{2 \pi i}{6 (4 + k)}. 
\end{equation}
In this case, the object $\Lambda_1 \in \cat{g}{2}{k}$  $\otimes$-generates the entire $\cat{g}{2}{k}$, and thus the idempotent completion of the planar algebra $ \overline{G_2(q) }$ recovers the category $\cat{g}{2}{k}$.
Additionally we have the following planar algebra isomorphisms, which will prove useful later in this paper.
\begin{lemma}\label{lem:g2iso}
There exist isomorphisms of planar algebras
\[   \overline{G_2(q) } \cong  \overline{G_2(-q) } \cong\overline{G_2(q^{-1}) } \cong \overline{G_2(-q^{-1}) }.\]
Furthermore, the isomorphism
\[   \overline{G_2(q) } \cong  \overline{G_2(-q) }\]
is an isomorphism of braided planar algebras. 
\end{lemma}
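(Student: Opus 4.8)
The plan is to prove \Cref{lem:g2iso} by directly inspecting how the defining data of the planar algebra $G_2(q)$ transforms under the four substitutions $q \mapsto q, -q, q^{-1}, -q^{-1}$, and checking that each either fixes the relations on the nose or can be corrected by an overall sign on the single trivalent generator. First I would observe that all of the scalar coefficients appearing in relations (i)--(vi) are Laurent polynomials in $q^2$, hence are manifestly invariant under $q \mapsto -q$. This immediately gives that the substitution $q \mapsto -q$ fixes relations (i)--(vi) verbatim, so that the identity on generators (possibly up to a sign on the trivalent vertex, which I would track below) extends to a planar algebra isomorphism $\overline{G_2(q)} \cong \overline{G_2(-q)}$.

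Next I would treat $q \mapsto q^{-1}$. Here each coefficient is a palindromic/symmetric Laurent polynomial: the loop value, the bigon coefficient $-(q^6+q^4+q^2+q^{-2}+q^{-4}+q^{-6})$, the triangle coefficient $(q^4+1+q^{-4})$, and the square coefficients $-(q^2+q^{-2})$ and $(q^2+1+q^{-2})$ are all invariant under $q \mapsto q^{-1}$. Relation (vi) involves only the diagrammatic trees/forests with no scalar coefficients, so it too is preserved. Thus the substitution $q \mapsto q^{-1}$ also fixes every relation, yielding $\overline{G_2(q)} \cong \overline{G_2(q^{-1})}$. Composing the two substitutions gives the fourth isomorphism $\overline{G_2(q)} \cong \overline{G_2(-q^{-1})}$, so the chain of four isomorphisms follows formally once the first two are established. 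I would phrase this as: since the assignment is the identity on the underlying diagrammatic category and each substitution leaves the defining relations invariant, it descends to the negligible quotient by \Cref{prop:neg}, giving isomorphisms of the semisimplifications.

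The one genuine subtlety, which I expect to be the main obstacle, concerns the \emph{braided} refinement of the statement: that $\overline{G_2(q)} \cong \overline{G_2(-q)}$ is an isomorphism of \emph{braided} planar algebras. The braiding element is \emph{not} invariant under $q \mapsto -q$, since its coefficients $\tfrac{1}{1+q^2}$, $\tfrac{1}{1+q^{-2}}$, $\tfrac{1}{q^2+q^4}$, $\tfrac{1}{q^{-2}+q^{-4}}$ contain odd powers of $q$ in the last two terms. The key computation is therefore to expand the braiding for $-q$ and compare it term-by-term with that for $q$; I would check that while the braiding itself changes, it changes in exactly the way forced by rescaling the trivalent generator by a suitable sign (equivalently, the crossing for $-q$ equals the image of the crossing for $q$ under the self-map rescaling the generator). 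Concretely, $\tfrac{1}{q^2+q^4}$ and $\tfrac{1}{q^{-2}+q^{-4}}$ are again even in $q$ once one notes $(-q)^2 = q^2$, so in fact all four coefficients are functions of $q^2$ and the braiding \emph{is} invariant under $q \mapsto -q$. This makes the braided statement hold for the same reason as the monoidal one; the apparent obstruction dissolves upon noting every exponent of $q$ in the braiding is even. By contrast, under $q \mapsto q^{-1}$ the braiding is \emph{not} preserved (the coefficients swap roles, which geometrically corresponds to reversing the crossing), explaining why only the $q \mapsto -q$ isomorphism is asserted to be braided. I would conclude by remarking that this sign/parity bookkeeping is precisely what distinguishes the braided from the merely monoidal isomorphisms, and that the diagrammatic relations (i)--(vi), being even in $q$ throughout, never obstruct any of the four substitutions.
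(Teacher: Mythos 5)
Your proposal is correct and takes essentially the same route as the paper: the paper's proof likewise sends the trivalent generator to itself and observes that checking the relations is routine, which is exactly the parity/palindromicity bookkeeping you spell out (all relation coefficients are even, symmetric Laurent polynomials in $q$, and the braiding coefficients are functions of $q^2$, so only $q \mapsto -q$ preserves the braiding while $q \mapsto q^{-1}$ reverses it). The only blemish is your momentary assertion that the braiding coefficients ``contain odd powers of $q$'' before you correctly retract it---that sentence should be deleted, since $\tfrac{1}{q^{2}+q^{4}}$ and $\tfrac{1}{q^{-2}+q^{-4}}$ visibly involve only even powers.
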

\begin{proof}
The isomorphism in each case simply sends the generator $\raisebox{-.5\height}{ \includegraphics[scale = .33]{trivalent}} \mapsto \raisebox{-.5\height}{ \includegraphics[scale = .33]{trivalent}}$. It is routine to check that all the relations are preserved by these isomorphisms.
\end{proof}

\vspace{3em}

\subsection*{Tambara-Yamagami categories} \hspace{1em}

A key tool for the computations in this paper are the Tambara-Yamagami categories \cite{MR1659954}. We will see later on that we can use auto-equivalences of the Tambara-Yamagami categories to construct auto-equivalences of the categories $\cat{so}{2r+1}{2}$. The Tambara-Yamagami categories are particularly nice to work with as it is possible to explicitly write down associators for these categories.

\begin{dfn}
Let $G$ a finite abelian group, $\chi$ a symmetric bicharacter $G\times G \to \mathbb{C}^\times$, and $\tau \in \{ \pm\}$. The Tambara-Yamagami category $\mathcal{TY}(G,\chi, \tau)$ is the fusion category with simple objects are $G \cup \{m\}$, and with fusion rules given by
\[     i \otimes j := i+j  \quad  \quad i \otimes m := m =: m\otimes i \quad \text{and} \quad m\otimes m := \oplus_G g.\]  
The associator morphisms are trivial, except for 
\begin{align*}
\alpha_{i,m,j} &:=  \chi(i,j) \id_m \\
(\alpha_{m,i,m})_{j,j} &:=  \chi(i,j)  \id_j  \\
(\alpha_{m,m,m})_{i,j} &:= \frac{\tau}{\sqrt{|G|}} \chi(i,j)^{-1}.
\end{align*}
\end{dfn} 

With such an explicit presentation of the Tambara-Yamagami categories we are able to compute the group of monoidal auto-equivalences. This result is certainly well-known to the experts, however we were unable to find a proof in the literature. Hence we give a quick proof.

\begin{lemma}\label{lem:TYautos}
Let $\Aut(G,\chi)$ be the group of automorphisms of $G$ preserving $\chi$. Then we have 
\[   \TenAut(   \mathcal{TY}(G,\chi, \tau) ) \cong \Aut(G,\chi).\]
\end{lemma}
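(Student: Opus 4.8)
The plan is to construct a group homomorphism $\Aut(G,\chi) \to \TenAut(\mathcal{TY}(G,\chi,\tau))$ directly and then show it is an isomorphism by producing an inverse. Given $\phi \in \Aut(G,\chi)$, I would define a monoidal functor $\cF_\phi$ acting on objects by $\cF_\phi(i) = \phi(i)$ for $i \in G$ and $\cF_\phi(m) = m$. The key point is that because $\phi$ preserves $\chi$, the explicit associator formulas $\alpha_{i,m,j} = \chi(i,j)\id_m$, $(\alpha_{m,i,m})_{j,j} = \chi(i,j)\id_j$, and $(\alpha_{m,m,m})_{i,j} = \frac{\tau}{\sqrt{|G|}}\chi(i,j)^{-1}$ are all invariant under relabeling by $\phi$. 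This means we may take the tensorator $\tau^{\cF_\phi}$ to be the identity (or a trivial choice), and the pentagon coherence condition for $\cF_\phi$ reduces to the pentagon identity already satisfied by $\mathcal{TY}(G,\chi,\tau)$, together with the equalities $\chi(\phi(i),\phi(j)) = \chi(i,j)$. Checking that $\phi \mapsto \cF_\phi$ is a group homomorphism is then routine, since composition of functors corresponds to composition in $\Aut(G,\chi)$.

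For injectivity, I would observe that if $\cF_\phi$ is monoidally naturally isomorphic to the identity, then in particular it acts trivially on isomorphism classes of simple objects, forcing $\phi(i) = i$ for all $i \in G$, hence $\phi = \id$. The more substantial direction is surjectivity: given an arbitrary monoidal auto-equivalence $\cF$, I must show it arises from some $\phi \in \Aut(G,\chi)$. First, any auto-equivalence permutes the simple objects and preserves fusion rules, so it must preserve the unique simple object $m$ whose square is $\bigoplus_{g\in G} g$ (the invertible objects are exactly $G$, and $m$ is the unique non-invertible simple), giving $\cF(m) \cong m$ and inducing a group automorphism $\phi$ of $G$ via the restriction of $\cF$ to the pointed subcategory $\mathcal{C}(G)$.

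The main obstacle will be the final step: showing that the induced automorphism $\phi$ necessarily preserves the bicharacter $\chi$. This is where the associator data must be used, since a bare fusion-ring automorphism of $G$ need not preserve $\chi$. Concretely, I would compare the associators $(\alpha_{m,i,m})_{j,j} = \chi(i,j)\id_j$ before and after applying $\cF$: monoidality of $\cF$ forces a compatibility between the tensorator components and these associator scalars, and extracting the resulting scalar identity should yield $\chi(\phi(i),\phi(j)) = \chi(i,j)$. One must be slightly careful that the tensorator components $\tau_{i,m}$, $\tau_{m,j}$ introduce gauge factors, but these cancel in the relevant hexagon/pentagon-type equation because $\chi$ enters $(\alpha_{m,i,m})$ as an honest eigenvalue on the simple summand $j$ of $m\otimes i \otimes m$, which is tensorator-independent. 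Thus $\phi \in \Aut(G,\chi)$ and $\cF \cong \cF_\phi$, establishing surjectivity and completing the proof that the homomorphism is an isomorphism.
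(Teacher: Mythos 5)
There is a genuine gap in your surjectivity step, and it is exactly the point where the paper spends the first half of its proof. After extracting the fusion-ring automorphism $\phi$ from an arbitrary monoidal auto-equivalence $\cF$ and arguing that $\phi$ preserves $\chi$, you conclude ``and $\cF \cong \cF_\phi$.'' That last assertion is not justified, and it is not automatic: $\cF$ and $\cF_\phi$ have the same underlying action on objects, so $\cF \circ \cF_\phi^{-1}$ is a \emph{gauge} auto-equivalence (the identity functor equipped with a possibly nontrivial tensorator), and you need to know that every such gauge auto-equivalence is monoidally isomorphic to the identity. Equivalently, in the exact sequence $0 \to \text{Gauge}(\cC) \to \TenAut(\cC) \to \widehat{\operatorname{FusEq}}(\cC) \to 0$, your argument only identifies the quotient $\widehat{\operatorname{FusEq}}$ with $\Aut(G,\chi)$ and splits the sequence; without proving $\text{Gauge}(\mathcal{TY}(G,\chi,\tau)) = \{e\}$ you have only shown $\TenAut/\text{Gauge} \cong \Aut(G,\chi)$. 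Triviality of the gauge group is a real statement that can fail for fusion categories — indeed this very paper exhibits $\ad(\cC(\mathfrak{sl}_3,3))$ with gauge group $\Z{2}$ — so it must be proved here. The paper does this by solving the monoidal-functor coherence equations to show any tensorator on the identity functor is determined by a function $\mu : G \to \mathbb{C}^\times$, and then writing down an explicit monoidal natural isomorphism between any two such gauge auto-equivalences, so all of them are trivial in $\TenAut$.

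A secondary, more minor imprecision: your claim that $\chi(i,j)$ appears in $(\alpha_{m,i,m})_{j,j}$ as a ``tensorator-independent'' eigenvalue is not literally correct. Under a gauge transformation with components $f_{X,Y}$, this scalar changes by the factor $f_{i,m}/f_{m,i}$; what saves the argument is that this factor is independent of $j$, so the normalization $\chi(i,0) = 1$ (valid since $\chi$ is a bicharacter and $\phi(0)=0$) forces the factor to be $1$, giving $\chi(\phi(i),\phi(j)) = \chi(i,j)$. This can be repaired with a short computation, but as written the justification is not airtight. The first gap, however, is structural: without the gauge-triviality argument the stated isomorphism does not follow.
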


\begin{proof}
We begin by classifying the gauge auto-equivalences of $\mathcal{TY}(G,\chi, \tau)$. Let $(\Id , \tau)$ be such a gauge auto-equivalence. Naively we have that $\tau$ is determined by $|G|^2 + 2|G| + |G|$ complex scalers. However solving for the hexagon commuting diagram shows that $\tau$ is completely determined by a function $\mu: G \to \mathbb{C}^\times$, with 
\begin{align*}
\tau_{i,j} &:= \frac{ \mu(i)\mu(j) }{\mu(ij) }\id_{i+j} \\
\tau_{i,m} &:= \mu(i)\id_{m} :=  \tau_{m,i} \\
(\tau_{m,m})_i &=\mu(i)^{-1} \id_i.
\end{align*}
Furthermore, each $\mu \in \Hom( G \to \mathbb{C}^\times)$ gives a gauge auto-equivalence $(\Id, \tau_\mu)$ of $\mathcal{TY}(G,\chi, \tau)$.

Let $\mu_1 , \mu_2 \in \Hom( G \to \mathbb{C}^\times)$. We define a natural isomorphism $\eta :(\Id, \tau_{\mu_1}) \to  (\Id, \tau_{\mu_2})$ by 
\[   \eta_i :=   \frac{ \mu_1(i) }{\mu_2(i)}\id_{i} \quad \text{ and } \quad \eta_m := \id_m.\]
A direct computation shows that $\eta$ is monoidal. Thus the group of gauge auto-equivalences of  $\TenAut(   \mathcal{TY}(G,\chi, \tau) )$ is trivial.

As the fusion ring automorphisms of $\mathcal{TY}(G,\chi, \tau)$ correspond to automorphisms of $G$, we have that $\TenAut(   \mathcal{TY}(G,\chi, \tau) ) \subseteq \Aut(G)$. Explicitly for $\psi \in \Aut(G)$, the corresponding fusion ring automorphism fixes $m$, and sends $i\mapsto \psi(i)$. A direct computation shows that there exists a tensor structure for this fusion ring automorphism if and only if $\psi$ preserves the symmetric bicharacter $\chi$.
\end{proof}
\begin{rmk}\label{rmk:tyautos}
An explicit isomorphism  
\[ \Aut(G,\chi) \to \TenAut(   \mathcal{TY}(G,\chi, \tau) ) \]
is given by
\[  \psi \mapsto \left( \begin{aligned}
                                   &i \mapsto \psi(i) \\
                                   &m \mapsto m
  \end{aligned}   , \quad \begin{aligned}
                                   \psi_{i,j} = \id_{i+j} &\quad {\psi_{m,m}}_i = \id_{i} \\
                                   \psi_{i,m} = \id_m      &\quad \psi_{m,i} = \id_m
  \end{aligned}   \right).\]
\end{rmk}

\vspace{3em}

\subsection*{Functorality of equivariantization} \hspace{1em}

A key tool for constructing auto-equivalences in this paper is through the functorality of equivariantization. In \cite[Theorem 4.4]{MR2609644} a 2-equivalence between the 2-category of $G$-crossed braided fusion categories, and the 2-category of braided fusion categories over $\Rep(G)$ is given. The author was unable to understand the full details of this 2-equivalence. In this subsection we present our interpretation of the functorality of equivariantization, for both the braided and monoidal cases.

We begin with the braided situation. Let $G$ be a finite group, then we can form two categories. On one hand we have the category of $G$-crossed braided fusion categories, whose objects are triples $( \cC , \rho , \sigma)$, where $\cC$ is $G$-graded fusion category, $\rho$ is a monoidal functor $ \underline{G} \to \underline{ \TenAut(\cC) }$, and $\omega$ is a family of natural isomorphisms
\[\omega_{X_g,Y}: X_g \otimes Y \to \rho_g(Y) \otimes X_g,\]
satisfying various conditions. A morphism  $( \cC^1 , \rho^1 , \sigma^1) \to ( \cC^2 , \rho^2 , \sigma^2)$ is a pair $(\mathcal{F} , \eta)$, where $\mathcal{F}$ is a monoidal functor $\cC^1 \to \cC^2$, and $\eta$ is a family of monoidal natural isomorphisms 
\[   \eta^g :   \rho^2(g)\circ \mathcal{F}\to \mathcal{F}\circ \rho^1(g),\]
such that the following diagram commutes:
\begin{equation}\label{eq:com}
\begin{tikzcd}[row sep=3em ,column sep = .2cm]
\cF(X_g)\otimes \cF(Y) \arrow[rr,"\tau_{X_g, Y}"] \arrow[dr,"\sigma^2_{\cF(X_g), \cF(Y) }"]  && \cF( X_g \otimes Y ) \arrow[rr,"\cF(\sigma^1_{X_g,Y})"]  && \cF( \rho^2_g(Y) \otimes X_g) \\
& (\rho^2\circ \mathcal{F})(Y) \otimes \cF(X_g) \arrow[rr,"\eta^g_Y \otimes \operatorname{id}_{\cF(X_g)}"] && (\cF \circ \rho^1_g)(Y) \otimes \cF(X_g) \arrow[ur,"\tau_{\rho^1_g(Y), X_g}"]
\end{tikzcd} 
\end{equation}

On the other hand we have the category of braided fusion categories over $\Rep(G)$, whose objects are pairs $(\mathcal{B} , \mathcal{H})$, where $\mathcal{B}$ is a braided fusion category, and $\mathcal{H}$ is a fully faithful braided functor $\Rep(G) \to \mathcal{B}$. A morphism $(\mathcal{B}_1,\mathcal{H_1}) \to (\mathcal{B}_2,\mathcal{H_2})$ is just a braided functor $\mathcal{B}_1 \to \mathcal{B}_2$.

We now describe a functor from the category of $G$-crossed braided fusion categories, to the category of braided fusion categories over $\Rep(G)$.
\begin{rmk}
We wish to point out that this category of braided fusion categories over $\Rep(G)$ is different from the category of braided fusion categories over $\Rep(G)$ in \cite{MR2609644}. This difference is the reason why we are only able to give a functor between category of $G$-crossed braided fusion categories, to the category of braided fusion categories over $\Rep(G)$, and not an equivalence as was done in \cite{MR2609644}.
\end{rmk}

Given $(\cC,\rho,\omega)$, then $G$-equivariantization produces a braided fusion category $\cC^G$ along with a faithful braided functor $H: \Rep(G) \to \cC^G$ defined by:
\[  (V,\pi) \mapsto ( \mathbf{1}\otimes V , \{\pi(g) \} ).\]

Given a morphism $(\mathcal{F}, \eta):  (\cC^1,\rho^1,\omega^1)\to  (\cC^2,\rho^2,\omega^2)$, we define a braided functor $\cF^G : (\cC^1)^G \to  (\cC^2)^G $ by:
\[  \cF^G ( (X , \{\mu_g : X \to \rho^1_g(X) \}   ) ) :=   (\cF(X) , \{ (\eta^g)^{-1}  \cF(\mu_g) : \cF(X) \to \rho^2_g(\cF(X)) \}   )  .       \]
The tensor structure morphisms are exactly the tensor structure morphisms of $\cF$, which are $G$-equivariant morphisms from the monoidality of $\eta$. The functor $\cF^G$ is braided due to the commutativity of \eqref{eq:com}.

In the monoidal situation we can also construct a functor. Let $G$ a finite group, then we again define two categories. On one hand we the category of fusion categories with $G$-action in a similar way to the definition of the category of $G$-crossed braided fusion categories, but without any $G$-crossed braiding structure or conditions.

On the other hand we have the category of fusion categories over $\Rep(G)$, whose objects are pairs $(\cC , \mathcal{H})$, where $\cC$ is a fusion category, and $\mathcal{H}$ is a fully faithful braided functor $\Rep(G) \to \mathcal{Z}(\cC)$. A morphism $(\cC_1 , \mathcal{H}_1)\to (\cC_2 , \mathcal{H}_2)$ is just a monoidal functor $\cC_1 \to \cC_2$.

A similar construction as in the braided case gives a functor from category of fusion categories with $G$-action, to the category of fusion categories over $\Rep(G)$.

\section{Auto-equivalences for Lie type $A$}
In this section we compute the monoidal, and braided auto-equivalences of the categories $\cat{sl}{r+1}{k}$. This case is in some sense easier than the others, as there are no exceptional auto-equivalences to construct. However, the categories $\cat{sl}{r+1}{k}$ have many invertible objects, which give rise to a vast number of simple current auto-equivalences. Also, the existence of the $\Z{2}$-symmetry of the $A_r$ Dynkin diagram induces an order $2$ braided auto-equivalence. Hence the categories $\cat{sl}{r+1}{k}$ still have a large amount of auto-equivalences that we have to construct, making this case non-trivial, even without the existence of exceptional auto-equivalences.

The outline of our computations is as follows. 

We begin by showing that the category $\cat{sl}{r+1}{k}$ has no non-trivial gauge auto-equivalences. To do this, we first study the gauge auto-equivalences of the category $\ad(\cat{sl}{r+1}{k})$. We know that $(\Lambda_1 +\Lambda_r)$ is a generating object of $\ad(\cat{sl}{r+1}{k})$, and further, the planar algebra generated by $(\Lambda_1 +\Lambda_r)$ is isomorphic to $\overline{\mathcal{P}^H(\delta, \gamma)}$ for a specific choice of $\delta$ and $\gamma$. Thus, any gauge auto-equivalence of $\ad(\cat{sl}{r+1}{k})$ will appear as a planar algebra automorphism of $\overline{\mathcal{P}^H(\delta, \gamma)}$.

Using the generators and relations presentation of $\overline{\mathcal{P}^H(\delta, \gamma)}$ we compute the automorphism group, finding several non-trivial automorphisms. In theory each of these non-trivial automorphisms could lift to a gauge auto-equivalences of the category $\ad(\cat{sl}{r+1}{k})$. However, we are able to show that each of these non-trivial automorphisms lifts to a non-gauge auto-equivalence. The proper way to do this would be to compute the idempotents of the planar algebra $\overline{\mathcal{P}^H(\delta, \gamma)}$, and to show that each non-trivial automorphism must exchange non-isomorphic idempotents. However, computing the idempotents of $\overline{\mathcal{P}^H(\delta, \gamma)}$ proved to be too computationally intensive to complete. Instead we identify known pivotal auto-equivalences of the category $\ad(\cat{sl}{r+1}{k})$ coming from charge-conjugation and level-rank duality, that fix the object $(\Lambda_1 + \Lambda_r)$. These auto-equivalences must appear as automorphisms of the planar algebra $\overline{\mathcal{P}^H(\delta, \gamma)}$, hence we are left with no room in the automorphism group of $\overline{\mathcal{P}^H(\delta, \gamma)}$ for a gauge auto-equivalence.

With the fact that the gauge auto-equivalence group of $\ad(\cat{sl}{r+1}{k})$ is trivial, we can then leverage the fact that $\cat{sl}{r+1}{k}$ is a $\Z{r+1}$-graded extension of $\ad(\cat{sl}{r+1}{k})$ to show that the gauge auto-equivalence group of $\cat{sl}{r+1}{k}$ is also trivial.  

As the category $\cat{sl}{r+1}{k}$ has no non-trivial gauge auto-equivalences, the group of monoidal auto-equivalences of $\cat{sl}{r+1}{k}$ are a subgroup of the group of fusion ring automorphisms. The results of \cite{MR1887583} compute the order of the group of fusion ring automorphisms of $\cat{sl}{r+1}{k}$, giving upper bounds on the size of the monoidal auto-equivalence group of $\cat{sl}{r+1}{k}$. We find out that these bounds are sharp, by constructing enough monoidal auto-equivalences of $\cat{sl}{r+1}{k}$ to realise this upper bound. We construct these auto-equivalences using the charge-conjugation symmetry of the $A_r$ Dynkin diagram, and simple current auto-equivalences which we get from the large number of invertible objects in $\cat{sl}{r+1}{k}$.

With the high level argument explained, let us begin with the finer details of the computation.

\begin{lemma}
Suppose $\delta \neq \pm \sqrt{2}$. The planar algebra $\overline{\mathcal{P}^H(\delta, \gamma)}$ has planar algebra automorphism group (up to planar algebra natural isomorphism) as follows
\[     \Aut(\overline{\mathcal{P}^H(\delta, \gamma)}) = \begin{cases}
\Z{2}, \text{ if $\gamma \neq 1$},\\
\Z{2}\times \Z{2}  \text{ if $\gamma = 1$}.
\end{cases}\]

\end{lemma}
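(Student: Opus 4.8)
The plan is to use that any planar algebra automorphism $\phi$ of $\overline{\mathcal{P}^H(\delta,\gamma)}$ is determined by its action on the generating box space $\overline{\mathcal{P}^H(\delta,\gamma)}_3$, which by Definition~\ref{def:PH} is spanned by the two generators; write $t$ for $\gra{TLTRIV}$ and $s$ for $\gra{SV}$. Since $\phi$ intertwines the action of all planar tangles, it fixes the loop parameter $\delta$ and every Temperley--Lieb element (in particular the identity $2$-box appearing on the right of relations (iv)--(v)), and it preserves the dimension of each box space. Hence $\phi$ restricts to an invertible linear map of the two--dimensional space $\langle t, s\rangle$, and the whole problem reduces to determining which such linear maps are compatible with relations (i)--(ix).

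First I would cut the linear map down to diagonal form. Relation (i) exhibits $s$ as an eigenvector of the rotation tangle with eigenvalue governed by $w$, while $t$ (the Temperley--Lieb trivalent generator) is rotation--invariant; as $\phi$ commutes with rotation it must preserve rotational eigenspaces, so provided these eigenvalues differ we are forced to $\phi(t)=a\,t$ and $\phi(s)=d\,s$ for scalars $a,d$. Establishing cleanly that $t$ and $s$ lie in \emph{distinct} rotational eigenspaces (so that no genuine mixing $t\leftrightarrow s$ can occur) is the first point that needs care, since it is what rules out a larger, non-diagonal automorphism group.

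Next I would pin down the two signs using the multiplicative relations. Applying $\phi$ to relation (iv) gives $a^2\,\tfrac{\delta^2-2}{\delta}=\tfrac{\delta^2-2}{\delta}$; here the hypothesis $\delta\neq\pm\sqrt2$ (together with $\delta\neq 0$) makes the coefficient nonzero, forcing $a^2=1$. Applying $\phi$ to relation (v) likewise yields $d^2=1$ (using $\gamma\neq 0$). The interaction between the two signs is then read off from the mixed relation (viii): matching the coefficient of $s$ gives $a(\gamma-1)=d(\gamma-1)$, so that $a=d$ exactly when $\gamma\neq 1$, while at $\gamma=1$ the cross term carries the factor $\gamma(\gamma-1)$ and imposes no relation between the signs; relation (ix), whose $t$--coefficient is $\gamma(\gamma-1)\tfrac{\delta^2}{\delta^2-1}$, is consistent with exactly the same dichotomy. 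This yields the candidate sets $\{(a,d):a=d\in\{\pm1\}\}\cong\Z{2}$ when $\gamma\neq 1$ and $\{\pm1\}^2\cong\Z{2}\times\Z{2}$ when $\gamma=1$. One then checks that each admissible sign choice really does preserve \emph{every} relation (relations (vi), (vii), (ix) are homogeneous of the correct parity in the signs), so each is a bona fide planar algebra automorphism.

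The main obstacle is the clause \emph{up to planar algebra natural isomorphism}: I must verify that the distinct admissible sign choices are pairwise non--isomorphic, i.e.\ that no nontrivial natural isomorphism of the identity collapses them. The natural--isomorphism freedom is a single rescaling of the image of the generating object, and the content of this step is that the normalizing relations (iv) and (v) rigidify the generators enough that such a rescaling cannot absorb the sign flips, so the above counts survive intact. Handling this bookkeeping correctly—rather than the essentially routine scalar manipulations in the previous paragraph—is where the real work lies, and it is worth cross-checking the outcome against the known pivotal auto-equivalences of $\ad(\cat{sl}{r+1}{k})$ (charge conjugation and level--rank duality) promised in the outline, which must appear among these automorphisms.
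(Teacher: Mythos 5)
Your final numbers agree with the Lemma, but the proof contains two genuine errors that happen to compensate each other, so the argument as written is not correct. The first error is the reduction to diagonal maps. Your own hedge (``provided these eigenvalues differ'') is exactly where it fails: the two generators cannot be separated by rotational eigenvalues, and mixing automorphisms genuinely exist. The paper's proof solves the full four-parameter system $\phi(t)=c_1t+c_2s$, $\phi(s)=c_3t+c_4s$ against relations (iii)--(vii) and then checks the remaining relations; besides the diagonal sign solutions $\phi^1_{\varepsilon_1,\varepsilon_2}$ it finds, for $\gamma\neq 1$, a mixing family $\phi^2_{\varepsilon_1,\varepsilon_2}$ with $c_2=c_1\delta/(\gamma-1)\neq 0$, and, for $\gamma=1$, a family $\phi^3_{\varepsilon_1,\varepsilon_2}$ with $c_1=c_4=0$ that swaps the two generators up to scalars. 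Since these maps preserve every relation, including relation (i), the rotational data of the two generators must coincide and rotation-equivariance imposes no diagonality at all. These non-diagonal automorphisms are not incidental: they are precisely the representatives of the non-trivial classes in the statement, and they are what is later matched with the charge conjugation and level--rank duality auto-equivalences of $\ad(\cat{sl}{r+1}{k})$ in Corollary~\ref{cor:Agauge}.

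The second error is in the quotient by planar algebra natural isomorphism, and it goes in the opposite direction from what you claim. The natural-isomorphism freedom is a rescaling of the generating object by a scalar $\lambda$ with $\lambda^2=1$ (forced by self-duality), which acts on the $n$-box space by $\lambda^n$; taking $\lambda=-1$ multiplies every $3$-box by $-1$, commutes with every planar tangle (each tangle has an even total number of boundary points), and is therefore an automorphism naturally isomorphic to the identity. On the generators it is exactly the simultaneous sign flip, so $(a,d)=(-1,-1)$ \emph{is} naturally isomorphic to the identity --- this is the paper's identification $\phi^\circ_{\varepsilon_1,\varepsilon_2}\cong\phi^\circ_{-\varepsilon_1,-\varepsilon_2}$ --- and relations (iv)--(v) do nothing to rigidify against it, since every defining relation is homogeneous in the number of generator insertions modulo $2$. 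Correctly quotiented, your diagonal automorphisms yield only the trivial class when $\gamma\neq 1$ and only two classes when $\gamma=1$; the counts in the Lemma are recovered only because this under-count is exactly compensated by the non-diagonal automorphisms you excluded. To repair the proof you must carry out the full computation with all four coefficients, which is essentially the paper's argument.
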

\begin{proof}
As the planar algebra $\overline{\mathcal{P}^H(\delta, \gamma)}$ is generated by the two elements $\gra{TLTRIV} ,\gra{SV} \in \overline{\mathcal{P}^H(\delta, \gamma)}_3$, any planar algebra automorphism of $\overline{\mathcal{P}^H(\delta, \gamma)}$ is completely determined by its value on these two elements. Explicitly this means that $\phi \in   \Aut(\overline{\mathcal{P}^H(\delta, \gamma)})$ is completely determined by four scalers $c_1,c_2,c_3,c_4 \in \mathbb{C}$, with 
\[   \phi\left( \gra{TLTRIV} \right) = c_1 \gra{TLTRIV} + c_2  \gra{SV}, \quad \text{ and } \quad  \phi\left( \gra{SV} \right) = c_3 \gra{TLTRIV} + c_4  \gra{SV}.\]

Applying $\phi$ to relations (iii) through (vii) of $\overline{\mathcal{P}^H(\delta, \gamma)}$ yields the following system of equations
\begin{align*}
 \delta-\frac{2}{\delta} \quad &= \quad c_1^2 \left(\delta-\frac{2}{\delta}\right)+\frac{c_2^2 \delta \left(\delta^2-2\right) \gamma}{\delta^2-1} \\
0  \quad &= \quad c_1 c_3 \left(\delta-\frac{2}{\delta}\right)+\frac{c_2 c_4 \delta \left(\delta^2-2\right) \gamma}{\delta^2-1}  \\
 \frac{\delta \left(\delta^2-2\right) \gamma}{\delta^2-1}  \quad &= \quad c_3^2 \left(\delta-\frac{2}{\delta}\right)+\frac{c_4^2 \delta \left(\delta^2-2\right) \gamma}{\delta^2-1}  \\
\frac{c_1 \left(\delta^2-3\right)}{\delta}  \quad &= \quad \frac{c_1^3 \left(\delta^2-3\right)}{\delta}-\frac{3 c_1 c_2^2 \delta \gamma}{\delta^2-1}+\frac{c_2^3 \delta^2 (\gamma-1) \gamma}{\delta^2-1}  \\
\frac{c_2 \left(\delta^2-3\right)}{\delta}   \quad &= \quad -\frac{3 c_1^2 c_2}{\delta}+3 c_1 c_2^2 (\gamma-1)+\frac{c_2^3 \left(\delta^4 \gamma+\delta^2 \left(-2 \gamma^2+\gamma-2\right)+2 (\gamma-1)^2\right)}{\delta \left(\delta^2-1\right)}  \\
-\frac{c_3}{\delta}  \quad &= \quad \frac{c_1^2 c_3 \left(\delta^2-3\right)}{\delta}-\frac{\delta \gamma \left(2 c_1 c_2 c_4+c_2^2 c_3\right)}{\delta^2-1}+\frac{c_2^2 c_4 \delta^2 (\gamma-1) \gamma}{\delta^2-1} \\
-\frac{c_4}{\delta}  \quad &= \quad -\frac{c_1^2 c_4+2 c_1 c_2 c_3}{\delta}+(\gamma-1) \left(2 c_1 c_2 c_4+c_2^2 c_3\right)+\frac{c_2^2 c_4 \left(\delta^4 \gamma+\delta^2 \left(-2 \gamma^2+\gamma-2\right)+2 (\gamma-1)^2\right)}{\delta \left(\delta^2-1\right)}
\end{align*}
With an additional assumption that $\delta \neq \sqrt{2}$, a computer algebra program\footnote{Reduce from the Mathematica library} can solve these equations to find the following solution sets, where in each case $\varepsilon_1,\varepsilon_2 \in \{+,-\}$:

\begin{itemize}

\item The solutions
\[   \phi^1_{\varepsilon_1, \varepsilon_2} : \{ c_1 =\varepsilon_1  ,
c_2 = 0,
c_3 =0 ,
c_4 = \varepsilon_2\}  ,\]
which exist for all parameter values $\delta$ and $\gamma$.

\item The solutions
\[   \phi^2_{\varepsilon_1, \varepsilon_2} : \{ c_1 =\varepsilon_1   \frac{\sqrt{\delta^2-1} (\gamma-1)}{\sqrt{\delta^4 \gamma+\delta^2 \gamma^2-2 \delta^2 \gamma+\delta^2-\gamma^2+2 \gamma-1}},
c_2 = \frac{c_1 \delta }{\gamma-1},
c_3 = \varepsilon_2 \frac{1-c_1^2}{c_2} ,
c_4 = \frac{c_1 c_2 c_3}{c_1^2-1}\}  ,\]
which exist for all $\delta$ when $\gamma \neq 1$.

\item The solutions
 \[   \phi^3_{\varepsilon_1, \varepsilon_2} : \{ c_1 =0,
c_2 =\varepsilon_1 \frac{\sqrt{\delta^2-1}}{\delta},
c_3 =\varepsilon_2  \frac{1}{c_2} ,
c_4 = 0 \}  ,\]
which exist for all $\delta$ when $\gamma = 1$.

\end{itemize}

We can apply these solutions to the remaining relations of $\overline{\mathcal{P}^H(\delta, \gamma)}$ to see that 
\[  \phi^1_{+, +}, \quad \phi^1_{-.-}, \quad  \phi^2_{+, +},\quad  \text{ and } \quad  \phi^2_{-.-}\]
 are planar algebra automorphisms of $\overline{\mathcal{P}^H(\delta, \gamma)}$ when $\gamma \neq 1$, and 
 \[ \phi^1_{+, +}, \quad \phi^1_{+, -},\quad \phi^1_{-, +}, \quad  \phi^1_{-.-}, \quad  \phi^3_{+, +}, \quad \phi^3_{+, -}, \quad \phi^3_{-, +},\quad \text{ and} \quad   \phi^3_{-.-}\]
  are planar algebra automorphisms of $\overline{\mathcal{P}^H(\delta, \gamma)}$ when $\gamma = 1$.

Finally, there exist planar algebra natural isomorphisms 
\[     \phi^\circ_{\varepsilon_1, \varepsilon_2} \to \phi^\circ_{-\varepsilon_1, -\varepsilon_2}.\]
Hence we have two planar algebra automorphisms, up to planar algebra natural isomorphism, of $\overline{\mathcal{P}^H(\delta, \gamma)}$ when $\gamma \neq 1$, and four when $\gamma = 1$. A direct computation shows that these planar algebra automorphisms compose as in the statement of the Lemma. 
\end{proof}

As a corollary, we are able to use this planar algebra computation to determine the gauge auto-equivalences of $\ad(  \cat{sl}{r+1}{k})$.

\begin{cor}\label{cor:Agauge}
We have 
\[   
 \operatorname{Gauge}( \ad(  \cat{sl}{r+1}{k}))    =      \begin{cases}
\Z{2}, \text{ if $r = 2$ and $k = 3$},\\
\{e\} \text{ otherwise}.
\end{cases}
\]
\end{cor}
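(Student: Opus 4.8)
The plan is to combine the computation of $\Aut(\overline{\mathcal{P}^H(\delta,\gamma)})$ from the preceding Lemma with Lemma~\ref{lem:gaugeauto}, which exhibits $\operatorname{Gauge}(\ad(\cat{sl}{r+1}{k}))$ as a subgroup of $\Aut(\overline{\mathcal{P}^H(\delta,\gamma)})$ via the planar algebra isomorphism \eqref{eq:PH}. The strategy is thus: first pin down which of the (at most two, or four) planar algebra automorphisms could possibly come from a \emph{gauge} auto-equivalence, and then account for all of them as arising from genuine (non-gauge) auto-equivalences, leaving no room for a non-trivial gauge automorphism except in the one exceptional case.

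First I would translate the parameters. The isomorphism \eqref{eq:PH} identifies $\ad(\cat{sl}{r+1}{k})$-data with $\overline{\mathcal{P}^H(\delta,\gamma)}$ for the specific $\delta,\gamma$ given there; so I would determine for which $(r,k)$ one has $\gamma = 1$ (the case where the automorphism group jumps from $\Z{2}$ to $\Z{2}\times\Z{2}$) and which have $\delta = \pm\sqrt{2}$ (the degenerate case excluded by the Lemma's hypothesis). A direct substitution into the formula for $\gamma$ should show that $\gamma = 1$ holds precisely at the single small level $(r,k) = (2,3)$, matching the exceptional case in the statement; the degenerate value $\delta = \pm\sqrt2$ corresponds to a value of $(r,k)$ that must be handled or excluded separately. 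This is the arithmetic bookkeeping step.

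The core categorical step is to exhibit \emph{known, non-gauge} pivotal auto-equivalences of $\ad(\cat{sl}{r+1}{k})$ that fix the generating object $(\Lambda_1 + \Lambda_r)$, so that by functoriality of $\operatorname{PA}(-,-)$ (and Lemma~\ref{lem:gagpiv}, which makes gauge auto-equivalences pivotal) they inject into $\Aut(\overline{\mathcal{P}^H(\delta,\gamma)})$. The charge-conjugation auto-equivalence from Corollary~\ref{cor:cc}, coming from the $\Z{2}$-symmetry of the $A_r$ Dynkin diagram, fixes $(\Lambda_1+\Lambda_r)$ since charge conjugation swaps $\Lambda_1 \leftrightarrow \Lambda_r$; this already accounts for the nontrivial element of the generic $\Z{2}$. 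At the level $(r,k)=(2,3)$ there is in addition a level–rank duality auto-equivalence that also fixes the generator, accounting for the extra factor of $\Z{2}$. I would argue that these non-gauge auto-equivalences are nontrivial as planar algebra automorphisms (e.g. because they act nontrivially on the morphism spaces / on the generator $\gra{SV}$), so they exhaust the image of $\Aut(\overline{\mathcal{P}^H(\delta,\gamma)})$. Since a gauge auto-equivalence maps to the \emph{identity} fusion ring automorphism, any gauge auto-equivalence whose planar algebra automorphism already equals that of charge conjugation (or level–rank duality) would force those non-gauge auto-equivalences to be gauge—a contradiction—unless the corresponding planar algebra automorphism is trivial. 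Hence the gauge group injects into the quotient of $\Aut(\overline{\mathcal{P}^H(\delta,\gamma)})$ by the subgroup generated by these realized non-gauge automorphisms, which is trivial except possibly at $(2,3)$.

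The main obstacle is precisely the realizability/identification step: showing that the full automorphism group of the planar algebra is \emph{accounted for} by non-gauge auto-equivalences, rather than computing idempotents directly (which the authors note is computationally infeasible). The delicate point is matching each abstract planar algebra automorphism $\phi^i_{\varepsilon_1,\varepsilon_2}$ to a concrete geometric auto-equivalence and verifying these candidates are genuinely non-gauge, i.e.\ that they move non-isomorphic idempotents even though we cannot list those idempotents explicitly. I expect this to hinge on a dimension or twist invariant (using the Corollary that braided auto-equivalences preserve twists) that distinguishes charge conjugation from the identity on some simple summand of a tensor power of $(\Lambda_1+\Lambda_r)$. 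Finally, the passage from $\ad(\cat{sl}{r+1}{k})$ to the full category $\cat{sl}{r+1}{k}$ is deferred, as the Corollary is stated only for the adjoint subcategory; that graded-extension argument is carried out separately.
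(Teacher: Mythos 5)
Your overall strategy (use Lemma~\ref{lem:gaugeauto} to embed the gauge group in $\Aut(\overline{\mathcal{P}^H(\delta,\gamma)})$, then fill the automorphism group with known non-gauge auto-equivalences fixing $(\Lambda_1+\Lambda_r)$) is the same as the paper's, but two of your concrete steps fail. First, your parameter bookkeeping is wrong: $\gamma = 1$ does not happen only at $(r,k) = (2,3)$; it happens precisely when $k = r+1$, i.e.\ at every level-rank self-dual point $(2,3), (3,4), (4,5), \dots$. Consequently, for every $r \geq 3$ with $k = r+1$ the planar algebra has \emph{four} automorphisms, and to rule out gauge auto-equivalences there you genuinely need the level-rank duality auto-equivalence of $\ad(\cat{sl}{r+1}{r+1})$ (and its composite with charge conjugation) to be non-gauge. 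The paper does this by a twist computation: level-rank duality is anti-braided, so it can only fix objects of twist $\pm 1$, and $(\Lambda_2 + \Lambda_{r-1})$ has twist $e^{2\pi i \frac{4r}{4(r+1)}} \neq \pm 1$ for $r \geq 3$. Your proposal, by collapsing all $\gamma=1$ cases into $(2,3)$, silently skips this entire family.

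Second, and more seriously, your handling of $(r,k)=(2,3)$ proves the wrong statement. You propose to account for the extra $\Z{2}$ at $(2,3)$ by the level-rank duality auto-equivalence being a further \emph{non-gauge} automorphism; if that were so, all four planar algebra automorphisms would come from non-gauge auto-equivalences and you would conclude $\operatorname{Gauge}(\ad(\cat{sl}{3}{3})) = \{e\}$, contradicting the Corollary you are trying to prove. What is actually needed at $(2,3)$ is a \emph{lower} bound: one must check (as the paper does, by direct computation) that $\ad(\cat{sl}{3}{3})$ has only two fusion ring automorphisms in total, so at most two of the four monoidal auto-equivalences fixing $(\Lambda_1+\Lambda_2)$ can be non-gauge, forcing the other two to be gauge; this is exactly why $r=2$ is exceptional — the twist argument that makes level-rank duality non-gauge requires $r \geq 3$. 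Finally, note that the planar algebra isomorphism \eqref{eq:PH} is only valid for $k \geq 3$, so the cases $k=1$ (where $\ad(\cat{sl}{r+1}{1}) \simeq \operatorname{Vec}$) and $k=2$ (where the paper passes through level-rank duality to $\ad(\cat{sl}{2}{r+1})$ and cites the known $\mathfrak{sl}_2$ result) need separate arguments that your proposal omits entirely.
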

\begin{proof}
We prove this statement in several cases.

\begin{trivlist}\leftskip=2em
\item \textbf{Case $k=1$:}

In the $k=1$ case we have that $\ad(  \cat{sl}{r+1}{2}) \simeq \operatorname{Vec}$ which clearly has no non-trivial gauge auto-equivalences.

\vspace{1em}

\item \textbf{Case $k=2$:}

When $k= 2$ we have that $\ad(  \cat{sl}{r+1}{2})$ is monoidaly equivalent to $\ad(  \cat{sl}{2}{r+1})$ via \cite[Theorem 5.1]{MR3162483}. The category $\ad(  \cat{sl}{2}{r+1})$ is shown to have trivial gauge auto-equivalence group in \cite[Lemma 4.2]{MR3808052}.

\vspace{1em}

\item \textbf{Case $k \geq 3$ and $(r,k) \neq (2,3)$:}

Recall that the object $(\Lambda_1 + \Lambda_r)$ is a symmetrically self-dual $\otimes$-generator of $\ad(  \cat{sl}{r+1}{k})$, and that the planar algebra generated by $(\Lambda_1 + \Lambda_r)$ is isomorphic to $\overline{\mathcal{P}^H(\delta, \gamma)}$ with 
\[ \delta = \frac{e^\frac{2 i \pi(r+1)}{2(r + 1 + k)} - e^\frac{-2 i \pi(r+1)}{2(r + 1 + k)}}{ e^\frac{2 i \pi}{2(r + 1 + k)} - e^\frac{-2 i \pi}{2(r + 1 + k)}   } \text{ and }\gamma = \frac{e^\frac{4 i \pi}{2(r + 1 + k)}- e^\frac{(12+4(r+1)) i \pi}{2(r + 1 + k)}+ e^\frac{(8+8(r+1)) i \pi}{2(r + 1 + k)} -e^\frac{4 i \pi(r+1)}{2(r + 1 + k)}}{ e^\frac{8 i \pi}{2(r + 1 + k)}- e^\frac{(12+4(r+1)) i \pi}{2(r + 1 + k)}+ e^\frac{(4+8(r+1)) i \pi}{2(r + 1 + k)} -e^\frac{4 i \pi(r+1)}{2(r + 1 + k)}}.\]

From \cite[Theorem A]{1607.06041} we know that the group of planar algebra automorphisms of $\overline{\mathcal{P}^H(\delta, \gamma)}$ corresponds to the group of pivotal auto-equivalences of $\ad(  \cat{sl}{r+1}{k})$ that fix the object $(\Lambda_1 + \Lambda_r)$. As the category $\ad(  \cat{sl}{r+1}{k})$ has a unique pivotal structure, we have that every monoidal auto-equivalence of $\ad(  \cat{sl}{r+1}{k})$ is pivotal. Hence the group of planar algebra automorphisms of $\overline{\mathcal{P}^H(\delta, \gamma)}$ also corresponds to the group of monoidal auto-equivalences of $\ad(  \cat{sl}{r+1}{k})$ that fix the object $(\Lambda_1 + \Lambda_r)$. Recalling the previous Lemma, and translating the parameters, we find the categories $\ad(  \cat{sl}{r+1}{k})$ for $k \neq r+1$ have exactly two monoidal auto-equivalences that fix the object $(\Lambda_1 + \Lambda_r)$, and the categories $\ad(  \cat{sl}{r+1}{r+1})$ have exactly four monoidal auto-equivalences that fix the object $(\Lambda_1 + \Lambda_r)$. We now attempt to identify these auto-equivalences.

From Corollary~\ref{cor:cc} we know that the category $ \cat{sl}{r+1}{k}$ has the order 2 charge-conjugation auto-equivalence, which maps 
\[      \sum_{i=1}^r  \lambda_i \Lambda_i  \mapsto \sum_{i=1}^r  \lambda_{r - i + 1} \Lambda_i.\]
In particular, this charge-conjugation auto-equivalence restricts to a monoidal auto-equivalence of $\ad(  \cat{sl}{r+1}{k})$ fixes the object $(\Lambda_1 + \Lambda_r)$. Further, the charge-conjugation auto-equivalence moves the object $(\Lambda_2 + 2\Lambda_r) \in   \ad(  \cat{sl}{r+1}{k})$. Therefore the charge-conjugation auto-equivalence restricts to a non-gauge auto-equivalence of $\ad(  \cat{sl}{r+1}{k})$ that fixes $(\Lambda_1 + \Lambda_r)$.

From \cite[Theorem 5.1]{MR3162483} there exists a level-rank duality auto-equivalence of $\ad(  \cat{sl}{r+1}{r+1})$ which has order 2. The details of this auto-equivalence are complicated, and can be found in the cited paper. For us it suffices to know that this auto-equivalence fixes the object $(\Lambda_1 + \Lambda_r)$, and the auto-equivalence is anti-braided, so it will only fix an object if its twist is equal to $\pm 1$. Consider the object $(\Lambda_2 + \Lambda_{r-1}) \in \ad(  \cat{sl}{r+1}{r+1})$. This object has twist equal to $e^{2\pi i \frac{4r}{4(r+1)} }$ when $r \geq 3$. Hence the twist of this object is not equal to  $\pm 1$ when $r \geq 3$, and so this object must be moved by the level-rank duality auto-equivalence. In particular, this auto-equivalence is not gauge if $r \geq 3$.

We are now in place to compute the gauge auto-equivalence group of $ \ad(  \cat{sl}{r+1}{k})$. When $k \neq r+1$ there are exactly two monoidal auto-equivalences of $ \ad(  \cat{sl}{r+1}{k})$ that fix the object $(\Lambda_1 + \Lambda_r)$. One of these is the identity, and the other is charge-conjugation auto-equivalence. Thus there is no room for a non-trivial gauge auto-equivalence of $ \ad(  \cat{sl}{r+1}{k})$ when $k \neq r+1$.

When $k = r+1$ there are exactly four monoidal auto-equivalences of $ \ad(  \cat{sl}{r+1}{r+1})$ that fix the object $(\Lambda_1 + \Lambda_r)$. One of these is the identity, another is the charge-conjugation auto-equivalence, and a third is the level-rank duality auto-equivalence. When $r \geq 3$ the level-rank auto-equivalence moves the object $(\Lambda_2 + \Lambda_{r-1})$, yet the charge-conjugation auto-equivalence fixes this object. Thus the charge-conjugation and level-rank duality auto-equivalences are distinct. Further these two auto-equivalences have order 2, thus their composition gives a fourth distinct auto-equivalence that moves the object $(\Lambda_2 + \Lambda_{r-1})$, and fixes the object $(\Lambda_1 + \Lambda_r)$. Thus we have constructed four non-gauge auto-equivalences of $ \ad(  \cat{sl}{r+1}{r+1})$ that fix the object $(\Lambda_1 + \Lambda_r)$, hence there is no room for a non-trivial gauge auto-equivalence of $ \ad(  \cat{sl}{r+1}{r+1})$ when $r \geq 3$.

\vspace{1em}

\item \textbf{Case $(r,k)= (2,3)$:}

A direct computation shows that there are only two fusion ring automorphisms of $ \ad(  \cat{sl}{3}{3})$. The single non-trivial fusion ring automorphism fixes the objects $\mathbf{1}$ and $(\Lambda_1 +\Lambda_2)$, and exchanges the objects $(3\Lambda_1)$ and $(3\Lambda_2)$. We can realise this non-trivial fusion ring automorphism as the charge-conjugation auto-equivalence. However there are four monoidal auto-equivalences of $ \ad(  \cat{sl}{3}{3})$ that fix $(\Lambda_1 +\Lambda_2)$. Therefore two of these four monoidal auto-equivalences must be gauge.
\end{trivlist}
\end{proof}

With the knowledge that $\ad(  \cat{sl}{r+1}{k})$ has no gauge auto-equivalences except for a single exception, we can leverage this to show that $\cat{sl}{r+1}{k}$ has no non-trivial gauge auto-equivalences, with no exceptions.

\begin{theorem}
The category $\cat{sl}{r+1}{k}$ has trivial gauge auto-equivalence group.
\end{theorem}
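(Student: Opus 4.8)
As indicated in the section outline, the plan is to exploit the faithful $\Z{r+1}$-grading $\cat{sl}{r+1}{k} = \bigoplus_{j \in \Z{r+1}} \cC_j$ whose trivial component $\cC_0$ is the adjoint subcategory $\ad(\cat{sl}{r+1}{k})$, and to bootstrap from the previous Corollary. The first observation I would record is that, because $\Z{r+1}$ is cyclic and each graded component $\cC_j$ is an indecomposable $\cC_0$-bimodule category (hence generated as a module by any one of its simple objects), the whole category is $\otimes$-generated by $\cC_0$ together with a single simple object $Y \in \cC_1$. Any gauge auto-equivalence $(\Id, \tau)$ is the identity on objects, so it restricts to a gauge auto-equivalence of $\cC_0$; this gives a homomorphism $\operatorname{Gauge}(\cat{sl}{r+1}{k}) \to \operatorname{Gauge}(\ad(\cat{sl}{r+1}{k}))$, and the whole proof amounts to controlling this map.

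The main step is to show this restriction is injective. Suppose $(\Id, \tau)$ is gauge with $\tau|_{\cC_0}$ trivial; after adjusting $\tau$ by a natural automorphism of $\Id$ supported on $\cC_0$, I may assume $\tau_{Z, Z'} = \id$ for all $Z, Z' \in \cC_0$. Since $\cC$ is generated by $\cC_0$ and $Y$, the tensorator is then pinned down by the components $\tau_{Z,Y}, \tau_{Y,Z}, \tau_{Y,Y}$, and the coherence axiom for $\tau$, restricted to the cyclic part of the grading, exhibits the only genuine obstruction to trivializing these as a class in $H^2(\Z{r+1}, \CC^\times)$. This group vanishes because $\Z{r+1}$ is cyclic and $\CC^\times$ is divisible; the remaining module-category coherence over $\cC_0$ contributes only a torsor of choices, since the $\cC_j$ are invertible bimodules. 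Choosing a natural automorphism of $\Id$ supported away from $\cC_0$ accordingly trivializes $\tau$ on all of $\cat{sl}{r+1}{k}$ without disturbing $\cC_0$. Combined with the Corollary, which gives $\operatorname{Gauge}(\ad(\cat{sl}{r+1}{k})) = \{e\}$ whenever $(r,k) \neq (2,3)$, injectivity immediately settles every case except $\cat{sl}{3}{3}$.

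The hard part is $\cat{sl}{3}{3}$, where $\operatorname{Gauge}(\ad(\cat{sl}{3}{3})) \cong \Z{2}$ and the argument above only yields $\operatorname{Gauge}(\cat{sl}{3}{3}) \hookrightarrow \Z{2}$. Here I would prove that the nontrivial gauge auto-equivalence of the four-object category $\ad(\cat{sl}{3}{3})$ does not lie in the image of restriction, i.e. does not extend to a gauge auto-equivalence of $\cat{sl}{3}{3}$. The point is that an auto-equivalence which is the identity on objects extends across the $\Z{3}$-grading only if its tensorator is compatible with the invertible $\cC_0$-bimodule categories $\cC_1$ and $\cC_2$, and this compatibility can be tested directly on this small category, whose ten simple objects and fusion rules are completely explicit. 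I expect this obstruction computation to be the main difficulty, since --- unlike the generic case --- it cannot be reduced to the vanishing of $H^2(\Z{3}, \CC^\times)$ and must instead be checked by hand in $\cat{sl}{3}{3}$. Once the nontrivial class is excluded, injectivity forces $\operatorname{Gauge}(\cat{sl}{3}{3}) = \{e\}$, completing the proof.
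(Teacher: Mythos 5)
Your generic case ($r\geq 2$, $(r,k)\neq(2,3)$) is essentially the paper's argument: the paper applies \cite[Theorem 9.1]{1711.00645} to the $\Z{r+1}$-graded extension $\cat{sl}{r+1}{k}$ of $\ad(\cat{sl}{r+1}{k})$ and concludes $\operatorname{Gauge}(\cat{sl}{r+1}{k}) = H^2(\Z{r+1},\CC^\times) = \{e\}$; your restriction-plus-$H^2$ sketch is a hand-rolled version of that theorem. Note, however, that the theorem has hypotheses beyond triviality of $\operatorname{Gauge}(\cC_0)$, which the paper verifies and you do not: one must check that no nontrivial invertible object of $\ad(\cat{sl}{r+1}{k})$ fixes, under tensoring, every simple object of some nontrivial graded component (this needs an actual fusion computation, with a special argument when $k=2$ and $i=g=\frac{r+1}{2}$), and that $\ad(\ad(\cat{sl}{r+1}{k}))\simeq\ad(\cat{sl}{r+1}{k})$. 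Without these, your assertion that the mixed tensorator components are scalars assembling into a $\Z{r+1}$-valued $2$-cocycle does not follow.

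The genuine gap is $\cat{sl}{3}{3}$. Your plan hinges on proving that the nontrivial gauge auto-equivalence of $\ad(\cat{sl}{3}{3})$ (which really exists, by the Corollary) does not extend across the $\Z{3}$-grading, and you leave exactly this step as something "to be checked by hand" on a category "whose ten simple objects and fusion rules are completely explicit." But extension of a tensorator across the grading is not a fusion-ring question: it is a question about associator-level data, so the explicit fusion rules alone cannot decide it, and no method for the check is actually supplied. This is the entire difficulty of the case, and your proposal does not contain an argument that closes it. The paper sidesteps the problem by changing generator: $\Lambda_1$ $\otimes$-generates all of $\cat{sl}{3}{3}$ (unlike the adjoint object, which only generates $\ad(\cat{sl}{3}{3})$), and the planar algebra it generates is the oriented $A_2$ (Kuperberg) planar algebra at $q=e^{\frac{2i\pi}{12}}$, whose automorphisms form a one-parameter family all naturally isomorphic to the identity; combined with the lemma that gauge auto-equivalences are pivotal, this kills $\operatorname{Gauge}(\cat{sl}{3}{3})$ outright. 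To complete your route you would either need to carry out the non-extension computation (which, in effect, requires the same kind of planar-algebra or associator input), or replace it with an argument of this sort.
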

\begin{proof}
We prove this statement in several cases

\begin{trivlist}\leftskip=2em
\item \textbf{Case $r=1$:}

If $r = 1$, then the statement of this Theorem is exactly \cite[Lemma 4.2]{MR3808052} paired with Lemma~\ref{lem:gaugeauto}.

\vspace{1em}

\item \textbf{Case $(r,k) = (2,3)$:}

If $r = 2$ and $k = 3$, then the object $\Lambda_1$ generates an orientated $A_2$ planar algebra with $q = e^\frac{2 i \pi}{12}$ \cite{MR1403861}. This planar algebra has two generators, and the automorphisms are parametrised by a single non-zero complex number. Further, every element of this 1-parameter family of planar algebra automorphisms is naturally isomorphic to the identity. Hence, the orientated version of \cite[Theorem A]{1607.06041} (which has not appeared in print, but is known as folklore) implies that the group of pivotal auto-equivalences of $\cat{sl}{3}{3}$ that fix the object $\Lambda_1$ is trivial. As every gauge auto-equivalence of $\cat{sl}{3}{3}$ is pivotal by Lemma~\ref{lem:gagpiv}, we have the result.

\vspace{1em}

\item \textbf{Case $r \geq 2$ and $(r,k) \neq (2,3)$:}

For this general case we will use \cite[Theorem 9.1]{1711.00645}. This Theorem allows us to leverage the fact that $\cat{sl}{r+1}{k}$ is a $\Z{r+1}$-graded extension of $\ad(\cat{sl}{r+1}{k})$ in order to compute the gauge auto-equivalences of $\cat{sl}{r+1}{k}$. To apply this Theorem we must show three requirements are satisfied. They are
\begin{enumerate}
\item the category $\ad( \cat{sl}{r+1}{k})$ must have no non-trivial gauge auto-equivalences,
\item if $z$ is an invertible object of $ \ad( \cat{sl}{r+1}{k} )$ such that there exists a non-trivial graded component $\mathcal{C}_g$ of $ \cat{sl}{r+1}{k}$ such that for all $X_g \in \cC_g$ we have $z\otimes X_g \cong X_g$, then $z\cong \mathbf{1}$, and 
\item the categories $\ad(\ad( \cat{sl}{r+1}{k} )    ) $ and $ \ad( \cat{sl}{r+1}{k})$ must be monoidally equivalent.
\end{enumerate}
Let us verify each of these requirements in turn.

\vspace{1em}

1) The first requirement follows from Corollary~\ref{cor:Agauge}.

\vspace{1em}

2) The category $ \cat{sl}{r+1}{k}$ has the $r$ non-trivial invertible objects $\{  (k \Lambda_i) : 1 \leq i \leq r\}$ which may, or may not live in  $\ad( \cat{sl}{r+1}{k})$ depending on $k$. Let $\cC_g$ be the $g$-graded component of $ \cat{sl}{r+1}{k}$, where $1\leq g \leq r+1$. If the invertible object $(k \Lambda_i)$ was a fixed point for all objects in $\cC_g$, then in particular it would fix the object $(\Lambda_g)$, so we would have
\[  (k \Lambda_i) \otimes (\Lambda_g) \cong  (\Lambda_g) .\]
Frobenius reciprocity implies $(k \Lambda_i)$ is a sub-object of $(\Lambda_g) \otimes  (\Lambda_g)^*$. However we can decompose this tensor product into simples to see
\[   (k \Lambda_i) \subset (\Lambda_g) \otimes  (\Lambda_g)^* \cong \bigoplus_{j = 0}^{\min(g, r+1 - g)}     (\Lambda_j + \Lambda_{r+1 - j}).\]
Hence the only possibility for $(k\Lambda_i)$ to fix $\Lambda_g$ is when $k = 2$ and $i = g =  \frac{r+1}{2}$. Thus if $k\neq 2$ we are done.

If $k=2$ then we need to find an object in $\cC_\frac{r+1}{2}$ that is not fixed by $\left(2\Lambda_{\frac{r+1}{2}}\right)$. Consider the object $\left(\Lambda_1 + \Lambda_{\frac{r-1}{2}}\right)$. We can directly compute that $\left(2\Lambda_{\frac{r+1}{2}}\right)\otimes \left(\Lambda_1 + \Lambda_{\frac{r-1}{2}}\right) \cong  \left(\Lambda_{\frac{r+3}{2}} + \Lambda_r\right)$. Hence  $\left(\Lambda_1 + \Lambda_{\frac{r-1}{2}}\right)$ is not fixed by $\left(2\Lambda_{\frac{r+1}{2}}\right)$ and we are done. Thus the second requirement is satisfied.

\vspace{1em}

3) For the third requirement, it suffices to note that $(\Lambda_1 +\Lambda_r)$ $\otimes$-generates $\ad( \cat{sl}{r+1}{k})$, and is contained in $\ad(\ad( \cat{sl}{r+1}{k} )    ) $ as $(\Lambda_1 +\Lambda_r)  \subset (\Lambda_1 +\Lambda_r) \otimes (\Lambda_1 +\Lambda_r)^*  $.

\vspace{1em}

As our three requirements are satisfied, we can apply \cite[Theorem 9.1]{1711.00645} to see 
\[      \text{Gauge}(\cat{sl}{r+1}{k}) = H^2(\Z{r+1} , \mathbb{C}^\times) = \{e\}.\]
\end{trivlist}
\end{proof}

Let us move on to realising the fusion ring automorphisms of $\cat{sl}{r+1}{k}$ as monoidal auto-equivalences. From the results of \cite{MR1887583} we know that 
\begin{align*}
| \operatorname{FusEq}(\cat{sl}{2}{2}) |  & = 1 \\
| \operatorname{FusEq}(\cat{sl}{2}{k}) | &= |\{0\leq a \leq 1  : 1+a \text{ is coprime to } 2\}| \text{ for $k \neq 2$} \\
  | \operatorname{FusEq}(\cat{sl}{r+1}{1}) | &= |\{0\leq a \leq r  : 1+a \text{ is coprime to } r+1\}| \\
    | \operatorname{FusEq}(\cat{sl}{r+1}{2}) | &= |\{0\leq a \leq r  : 1+2a \text{ is coprime to } r+1\}| \\ 
        | \operatorname{FusEq}(\cat{sl}{r+1}{k}) | &= 2|\{0\leq a \leq r  : 1+ka \text{ is coprime to } r+1\}|  \text{ for $k \geq 3$ and $r \geq 2$}.
\end{align*}
As we have shown that $\cat{sl}{r+1}{k}$ has no non-trivial gauge auto-equivalences, we know that $| \operatorname{FusEq}(\cat{sl}{r+1}{k}) |$ gives an upper bound on the order of $\TenAut(\cat{sl}{r+1}{k})$. We will soon see that this upper bound is sharp. Let us move on to constructing these $| \operatorname{FusEq}(\cat{sl}{r+1}{k}) |$ auto-equivalences.

\begin{lemma}
For every $r$ and $k$ we have that
   \[ \TenAut(\cat{sl}{r}{k})  \]
   is a group of order 
   \[  | \operatorname{FusEq}(\cat{sl}{r+1}{k}) |.\]        

\end{lemma}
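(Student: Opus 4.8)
The plan is to combine the vanishing of the gauge group with an explicit construction. Since the preceding Theorem shows $\operatorname{Gauge}(\cat{sl}{r+1}{k}) = \{e\}$, the short exact sequence from the preliminaries collapses to an isomorphism $\TenAut(\cat{sl}{r+1}{k}) \cong \widehat{\operatorname{FusEq}(\cat{sl}{r+1}{k})} \subseteq \operatorname{FusEq}(\cat{sl}{r+1}{k})$. In particular $|\TenAut(\cat{sl}{r+1}{k})| \le |\operatorname{FusEq}(\cat{sl}{r+1}{k})|$, so the entire content of the Lemma is the reverse inequality: every fusion ring automorphism must be shown to lift to a monoidal auto-equivalence. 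I would dispose of the rank-one case $r = 1$ immediately by citing \cite{Cain-normal}, and assume $r \ge 2$ from here on.

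First I would produce the simple current auto-equivalences. The invertible objects of $\cat{sl}{r+1}{k}$ form a cyclic group $\Z{r+1}$ generated by $g := (k\Lambda_1)$, so Lemma~\ref{lem:simplecurrent} applies with $M = r+1$. Using the explicit twist formula I would compute the self-braid eigenvalue of $g$ together with the monodromy charges, finding that the integer $q$ of Lemma~\ref{lem:simplecurrent} satisfies $q \equiv -k \pmod{r+1}$ and that $n(X) \equiv -\deg(X) \pmod{r+1}$, where $\deg$ denotes the $\Z{r+1}$-grading. Hence $\cF_{g,a}(X) \cong g^{a\deg(X)}\otimes X$, its effect on gradings is multiplication by $1 + ka$, and Lemma~\ref{lem:simplecurrent} produces an auto-equivalence exactly when $1 + ka$ is coprime to $r+1$, reproducing the counting set appearing in the formulas of \cite{MR1887583}. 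Distinctness is then immediate from the action on the degree-one object $\Lambda_1$, namely $\cF_{g,a}(\Lambda_1) \cong g^{a}\otimes\Lambda_1$, which takes distinct values for distinct $a \in \{0,\dots,r\}$ since $\Lambda_1$ has trivial stabiliser in the simple current group when $r \ge 2$.

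It remains to account for the factor of $2$. The charge conjugation auto-equivalence of Corollary~\ref{cor:cc} acts on a general simple object by $X \mapsto X^{*}$ and on the invertibles by inversion $g^{j}\mapsto g^{-j}$. For $k \ge 3$ and $r \ge 2$ I would show it is not of the translated form realised by any $\cF_{g,a}$, by exhibiting an object that charge conjugation moves but every simple current auto-equivalence fixes: the object $(\Lambda_2 + 2\Lambda_r)$ used already in Corollary~\ref{cor:Agauge} has grading $2 + 2r \equiv 0 \pmod{r+1}$, so it lies in the adjoint subcategory and is fixed by each $\cF_{g,a}$, while charge conjugation sends it to $(2\Lambda_1 + \Lambda_{r-1}) \ne (\Lambda_2 + 2\Lambda_r)$. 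This separates the two families $\{\cF_{g,a}\}$ and $\{\,\text{(charge conjugation)}\circ \cF_{g,a}\,\}$, doubling the count to $2\,|\{0 \le a \le r : 1 + ka \text{ coprime to } r+1\}|$ and exhausting the upper bound. For the low levels $k \in \{1,2\}$ I would instead observe that charge conjugation is itself a simple current auto-equivalence: at level $1$ the category is pointed and inversion is a quadratic-form-preserving group automorphism, while at level $2$ the identification $\ad(\cat{sl}{r+1}{2}) \simeq \ad(\cat{sl}{2}{r+1})$ of \cite{MR3162483} collapses the conjugation symmetry into the simple current ones, so no extra factor appears, matching the level $1$ and level $2$ formulas.

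The main obstacle I anticipate is the bookkeeping of the last two paragraphs rather than any single hard estimate: correctly pinning down $q \bmod (r+1)$ from the twist formula, and then proving the distinctness/absorption dichotomy for charge conjugation, which is genuinely new when $k \ge 3$ and $r \ge 2$ but is already a simple current when $k \le 2$. Once the description $\cF_{g,a}(X) \cong g^{a\deg(X)}\otimes X$ and the inversion description of charge conjugation are in hand, matching each of the four stated cases of \cite{MR1887583} becomes a routine count, and surjectivity of the map $\TenAut(\cat{sl}{r+1}{k}) \to \operatorname{FusEq}(\cat{sl}{r+1}{k})$, hence the claimed equality of orders, follows.
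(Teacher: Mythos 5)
Your proposal is correct and follows essentially the same route as the paper: triviality of the gauge group reduces everything to realising enough fusion ring automorphisms, the simple currents $\cF_{k\Lambda_1,a}$ are constructed via Lemma~\ref{lem:simplecurrent} and distinguished by their action on $\Lambda_1$, and charge conjugation supplies the extra factor of $2$ precisely when $k\geq 3$ and $r\geq 2$, matching the counts of \cite{MR1887583}. The only cosmetic difference is how charge conjugation is separated from the simple currents --- you use the degree-zero object $(\Lambda_2+2\Lambda_r)$, which every $\cF_{k\Lambda_1,a}$ fixes but conjugation moves, whereas the paper computes directly that $\cF_{k\Lambda_1,a}(\Lambda_1)=\Lambda_r$ can only happen when $k\leq 2$ --- and both arguments are valid (your vaguer aside about conjugation being ``absorbed'' at levels $1$ and $2$ is not actually needed, since for $k\leq 2$ the simple currents alone already saturate the upper bound).
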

\begin{proof}
\hspace{1em}

\begin{trivlist}\leftskip=2em
\item \textbf{Case $r=1$:}

The case of $r = 1$ is exactly \cite[Lemma 4.9]{MR3808052}.

\vspace{1em}

\item \textbf{Case $r\geq 2$:}

For $r \geq 1$ we first note that the charge-conjugation auto-equivalence of $\cat{sl}{r+1}{k}$ constructed in Corollary~\ref{cor:cc} gives a non-trivial auto-equivalence of $\cat{sl}{r+1}{k}$ for all $r \geq 2$. This charge-conjugation auto-equivalence is characterised by the fact that it maps
\[  \Lambda_1 \mapsto \Lambda_r.\]

Let us now fix an $0\leq a \leq r$ such that $1+ka$ is coprime to $r+1$, and use this to construct an auto-equivalence of $\cat{sl}{r+1}{k}$. We pick out the invertible object $k\Lambda_1 \in \cat{sl}{r+1}{k}$. This invertible object has order 
\[ M = r+1,\]
and self-braiding eigenvalue 
\[ e^{\pi i \frac{rk}{2(r+1)}}.\]

As $1 + ka$ is coprime to $r+1$, we apply the simple current construction from Lemma~\ref{lem:simplecurrent} to get monoidal auto-equivalences $\cF_{k\Lambda_1, a}$ of $\cat{sl}{r+1}{k}$. We have that
\[  \sigma_{\Lambda_1,k\Lambda_1}\circ \sigma_{k\Lambda_1,\Lambda_1} = e^{2\pi i \frac{r}{r+1}}.\]
Thus
\[   \cF_{k\Lambda_1,a}(\Lambda_1) = (k\Lambda_1)^{-ar} \otimes \Lambda_1 =  (k\Lambda_{a}) \otimes \Lambda_1  = \begin{cases}
(k-1) \Lambda_{a} + \Lambda_{a+1}, \text{ if } a \neq r\\
(k-1) \Lambda_{a} , \text{ if } a = r.
\end{cases}\]
Hence the monoidal auto-equivalences $\cF_{g,a}$ are pairwise distinct for all $a$. Thus these simple current auto-equivalences give 
\[   | \{ a: 1 + ka \text{ is coprime to } r+1\} | \]
distinct auto-equivalences of $\cat{sl}{r+1}{k}$. 

There is the possibility that these simple current auto-equivalences overlap with the charge conjugation auto-equivalences. To investigate when this occurs, we compute
\[\cF_{g,a}(\Lambda_1) = \Lambda_r\]
if and only if $k=2$ and $a = r$, or if $k=1$ and $a = r-1$. Thus, when paired with the charge-conjugation auto-equivalence of $\cat{sl}{r+1}{k}$ we have constructed
\[   | \{ a: 1 + ka \text{ is coprime to } r+1\} | \]
distinct auto-equivalences of $\cat{sl}{r+1}{k}$ when $k \in  \{1,2\}$, and 
\[      2| \{ a: 1 + ka \text{ is coprime to } r+1\} | \]
distinct auto-equivalences of $\cat{sl}{r+1}{k}$ when $k \geq 3$. Thus we have realised the upper bound on $\TenAut(\cat{sl}{r+1}{k})$ given by $ | \operatorname{FusEq}(\cat{sl}{r}{k}) | $.

\end{trivlist}
\end{proof}

Let us explicitly describe these monoidal auto-equivalences of $\cat{sl}{r+1}{k}$, by describing how they act on the object $\Lambda_1$. It follows from \cite{MR1237835} that this completely describes the auto-equivalence.

\begin{rmk}\label{lab:conc}

For an $a \in \{0\leq a \leq r  : 1+ka \text{ is coprime to } r+1\}$ and choice of sign $\varepsilon \in \{+,-\}$, we get a monoidal auto-equivalence of $\cat{sl}{r+1}{k}$ that sends
\[  \Lambda_1 \mapsto \begin{cases}
(k-1) \Lambda_{a} + \Lambda_{a+1} &\text{ if } a \neq r \text{ and } \varepsilon = +,\\
(k-1) \Lambda_{a}&  \text{ if } a = r, \text{ and } \varepsilon = +, \\
(k-1) \Lambda_{r+1  -  a} + \Lambda_{r- a}& \text{ if } a \neq r \text{ and } \varepsilon = -,\\
(k-1) \Lambda_{r+1 - a} &  \text{ if } a = r, \text{ and } \varepsilon = -.
\end{cases}\]
\end{rmk}

The composition of the monoidal auto-equivalences of $\cat{sl}{r+1}{k}$ is determined in Appendix~\ref{app:terry}. Here it is found the group structure is as given in Theorem~\ref{thm:main}.

Finally, we investigate which of the monoidal auto-equivalences of $\cat{sl}{r+1}{k}$ are braided.

\begin{lemma}
For all $r$ and $k$ we have 
\[    \BrAut( \cat{sl}{r+1}{k})    =  \Z{2}^{c+p+t}\]
where
\[ c =  \begin{cases}
-1, &\text{ if } r = 1 \text{ and } k=2\\
0, &\text{ if } k \leq 2 \\
1 ,& \text{ if } k \geq 3,
\end{cases}\]
$p$ is the number of distinct odd primes dividing $r+1$, and 
\[ t =  \begin{cases}
0, \text{ if either } r \text{ is even, or } r \text{ is odd and } k \equiv 0 \pmod 4, \text{ or } k \text{ is odd and } r \equiv 1 \pmod 4, \\
1 , \text{ otherwise}.
\end{cases}\]
\end{lemma}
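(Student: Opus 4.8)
The plan is to build on the two preceding lemmas, which identify $\TenAut(\cat{sl}{r+1}{k})$ with the abelian group $\operatorname{FusEq}(\cat{sl}{r+1}{k})$ (the gauge group being trivial) and exhibit explicit generators: the charge-conjugation auto-equivalence of Corollary~\ref{cor:cc} together with the simple-current auto-equivalences $\cF_{k\Lambda_1,a}$ of Remark~\ref{lab:conc}. Since braided auto-equivalences are closed under composition and inverse, $\BrAut(\cat{sl}{r+1}{k})$ is a subgroup of this abelian group, so the whole computation reduces to deciding which generators are braided and which of their products remain so. The rank-one case $r=1$ carries no Dynkin symmetry and is already known \cite{MR3808052, Cain-normal}, so I would record it separately and assume $r\geq 2$ throughout the main argument.

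By Corollary~\ref{cor:cc} the charge-conjugation auto-equivalence is always braided, and it is a genuinely new element of the group exactly when $c=1$; as the ambient group is abelian this splits off a tensor factor $\Z{2}^{c}$ and leaves us to count the braided simple currents. For the latter I would apply the braiding criterion of Lemma~\ref{lem:simplecurrent} with $g=k\Lambda_1$, so that $M=r+1$ and, reading the self-braiding eigenvalue $\sigma_{g,g}=e^{\pi i\frac{rk}{2(r+1)}}$, the integer $q$ of that Lemma is $q\equiv\frac{rk}{2}\pmod{2(r+1)}$. Thus $\cF_{k\Lambda_1,a}$ is braided exactly when
\[ a+\frac{a^2 rk}{4}\equiv 0\pmod{r+1}, \]
equivalently (clearing denominators) $4a+a^2 rk\equiv 0\pmod{4(r+1)}$, always subject to the standing constraint that $1+ka$ be coprime to $r+1$.

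To be certain no solutions are missed and none are spurious, I would cross-check with the earlier Corollary that every braided auto-equivalence preserves the twists of all simple objects: evaluated on $\cF_{k\Lambda_1,a}(\Lambda_1)=(k\Lambda_a)\otimes\Lambda_1$ this necessary condition reproduces precisely the congruence above, confirming that the criterion of Lemma~\ref{lem:simplecurrent} is here both necessary and sufficient. Writing the congruence in the factored form $a\bigl(1+\tfrac{aq}{2}\bigr)\equiv 0\pmod{r+1}$ shows that, at each odd prime, $a$ is forced to be one of two residues ($a\equiv 0$, giving the identity, or the root of the linear factor); together with the composition law computed in Appendix~\ref{app:terry} this exhibits the braided simple currents as an elementary abelian $2$-group.

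The heart of the proof, and its main difficulty, is the number theory of counting these solutions. I would solve $4a+a^2 rk\equiv 0\pmod{4(r+1)}$ by the Chinese Remainder Theorem over the prime-power divisors of $r+1$. At each odd prime power the factored congruence contributes exactly two admissible residues compatible with the coprimality of $1+ka$, which assembles to the factor $\Z{2}^{p}$ with $p$ the number of distinct odd primes dividing $r+1$. The prime $2$ is the delicate case: when $rk$ is odd the self-braiding is only a $4(r+1)$-th root of unity, so Lemma~\ref{lem:simplecurrent} must be applied with extra care (or the twist condition used directly), and in general the number of solutions modulo the $2$-part of $4(r+1)$ depends on the residues of $r$ and $k$ modulo $4$. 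A careful enumeration of these parity cases is exactly what produces the correction term $t$, and combining the local counts yields $\BrAut(\cat{sl}{r+1}{k})\cong\Z{2}^{c+p+t}$, with the bookkeeping value $c=-1$ accounting for the total collapse at $(r,k)=(1,2)$.
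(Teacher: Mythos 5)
Your outline follows the paper's: reduce to fusion-ring automorphisms using the triviality of the gauge group, note that charge conjugation is braided via Corollary~\ref{cor:cc}, and decide which simple currents $\cF_{k\Lambda_1,a}$ are braided using the criterion of Lemma~\ref{lem:simplecurrent}. The real divergence is in the counting step: the paper does not do the number theory at all, it quotes the analysis of \cite{MR1887583} both for the fact that there are exactly $2^{p+t}$ braided choices of $a$ and for the fact that each has order two, whereas you propose to carry out the count yourself by CRT. That substitution is where the proposal fails.

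There are two concrete problems. First, your local analysis at odd primes is wrong: you claim every odd prime power $p^{\nu}$ exactly dividing $r+1$ contributes two admissible residues. Writing the congruence as $a\cdot(\text{linear factor})\equiv 0 \pmod{p^{\nu}}$, the linear factor is $1$ plus a multiple of $k$ modulo $p$; so when $p$ divides $k$ the linear factor is a unit mod $p$ and the only solution is $a\equiv 0\pmod{p^{\nu}}$ --- one residue, not two. This is precisely why Theorem~\ref{thm:main} defines $p$ as the number of odd primes dividing $r+1$ \emph{but not} $k$ (the wording inside the present lemma, which your claim happens to match, is a typo in the paper). A concrete counterexample to your count is $\cat{sl}{3}{3}$: the two nontrivial simple currents send $\Lambda_1$ to $2\Lambda_1+\Lambda_2$ and to $2\Lambda_2$, whose twists differ from that of $\Lambda_1$ (the conformal weights are $8/9$ and $5/9$ versus $2/9$), so by the twist-preservation corollary neither is braided and $\BrAut(\cat{sl}{3}{3})\cong\Z{2}$, generated by charge conjugation alone; your count would give $\Z{2}\times\Z{2}$. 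Second, the $2$-adic part of the count --- the only place the parameter $t$ can come from, and the genuinely delicate case --- is not carried out: you state that ``a careful enumeration of these parity cases is exactly what produces the correction term $t$,'' which is an assertion of the desired answer rather than a proof. Since the entire content of the lemma \emph{is} this enumeration, the proposal as written does not establish it; you must either perform the corrected odd-prime and full $2$-adic analysis (including verifying the coprimality condition on $1+ka$ at each admissible residue and confirming every braided $\cF_{k\Lambda_1,a}$ has order two), or do as the paper does and invoke \cite{MR1887583}.
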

\begin{proof}
The simple current auto-equivalence $\cF_{g,a}$ is braided if and only if 
\[   a^2 - \frac{rka}{2} \equiv 0 \pmod {r+1}.\]
From the analysis of \cite{MR1887583}, we have that there are $2^{p+t}$ such choices for $a$, where $p$ and $t$ are in the statement of the Lemma. It is shown in \cite{MR1887583} that each of these braided $\cF_{g,a}$ has order two. 

Further, the charge-conjugation auto-equivalence is braided, giving us an additional order two braided auto-equivalence when $r \geq 2$, giving the $c$ term in the Lemma. The special case of $c=-1$ when $r = 1$ and $k = 2$ is to uncount the simple current auto-equivalence of $\cat{sl}{2}{2}$ coming from $a = 1$ that is naturally isomorphic to the identity.

\end{proof}

\section{Auto-equivalences for Lie type $B$}

In this section we compute the monoidal, and braided auto-equivalences of the categories $\cat{so}{2r+1}{k}$. The outline of our computations is as follows. 

We begin with the gauge auto-equivalences of $\cat{so}{2r+1}{k}$. We know that the planar algebra generated by the object $\Lambda_1 \boxtimes \mathbb{C}_{\operatorname{sgn}} \in \cat{so}{2r+1}{k}\boxtimes \Rep(\Z{2})$ is isomorphic to $\overline{\text{BMW}( q^2 , q^{4r} )}$ with $q = e^\frac{2 \pi i}{4(2r-1 + k)}$. However, the object $\Lambda_1 \boxtimes \mathbb{C}_{\operatorname{sgn}}$ only generates the subcategory \[\ad(\cat{so}{2r+1}{k})\boxtimes \Rep(\Z{2}) \subset  \cat{so}{2r+1}{k}\boxtimes \Rep(\Z{2}).\] Thus the planar algebra automorphism group of $\overline{\text{BMW}( q^2 , q^{4r} )}$ will only give us information about the gauge auto-equivalences of $\ad(\cat{so}{2r+1}{k})\boxtimes \Rep(\Z{2})$. 

A straightforward computation shows that the planar algebra $\overline{\text{BMW}( q^2 , q^{4r} )}$ has trivial automorphism group, except in the special case where $k = 2r+1$. A further computation shows that this non-trivial planar algebra automorphism corresponds to an auto-equivalence of
\[\ad(\cat{so}{2r+1}{k})\boxtimes \Rep(\Z{2})\]
 which exchanges simple objects. Thus, the category $\ad(\cat{so}{2r+1}{k})\boxtimes \Rep(\Z{2})$ has no non-trivial gauge auto-equivalences. From this fact, it immediately follows that $\ad(\cat{so}{2r+1}{k})$ also has no non-trivial gauge auto-equivalences. As $\cat{so}{2r+1}{k}$ is a $\Z{2}$-graded extension of $\ad(\cat{so}{2r+1}{k})$, we can now apply \cite[Theorem 9.1]{1711.00645} to see that $\cat{so}{2r+1}{k}$ also has no non-trivial gauge auto-equivalences.

The results of \cite{MR1887583} compute the fusion ring automorphisms of $\cat{so}{2r+1}{k}$ as: 
\begin{align*}
\{e\} & \text{   if } k=1, \\
\Z{2} \times (\Z{2r+1}^\times / \{\pm 1\})& \text{   if } k=2, \text{ and } \\
\Z{2} & \text{   if } k> 2. 
\end{align*}

Thus to complete the computation of the auto-equivalences of $\cat{so}{2r+1}{k}$ we have to determine which of the above fusion ring automorphisms are realised as either monoidal, or braided auto-equivalences. The $k=1$ case is trivial, as there are no non-trivial auto-equivalences. The $k>2$ case is also rather easy, as the single non-trivial fusion ring automorphism is always realised as a simple current automorphism. The difficult case is $k=2$. Here we proceed by using the modular data of $\cat{so}{2r+1}{2}$ to bound the size of the braided auto-equivalence group of $\cat{so}{2r+1}{2}$, which turns out to be significantly smaller than $\Z{2} \times (\Z{2r+1}^\times / \{\pm 1\})$. We then perform a trick to leverage this bound to also determine a bound for the size of the monoidal auto-equivalence group of $\cat{so}{2r+1}{2}$. To realise these upper bounds we observe that the de-equivariantization of $\cat{so}{2r+1}{2}$ is a Tambara-Yamagami category, which we can explicitly identify. Using Lemma~\ref{lem:TYautos} we can compute the auto-equivalence group of this Tambara-Yamagami category. We can then apply the functorality of equivariantization to extend these auto-equivalences of the Tambara-Yamagami category, up to auto-equivalences of $\cat{so}{2r+1}{2}$. These auto-equivalences realise the upper bound we computed earlier, hence the computation is complete.

With the high level argument in mind, let us proceed with the details. We begin by computing the planar algebra automorphisms of $\overline{\text{BMW}(q,r)}$. This computation will also be useful later on, for our computations in the Lie type $C$ section.

 \begin{lemma}\label{lem:paBMW}
The planar algebra $\overline{\text{BMW}(q,r)}$ has no non-trivial automorphisms unless $r = \pm i$. If $r = \pm i$, then there exists a single non-trivial automorphism sending
\[      \raisebox{-.5\height}{ \includegraphics[scale = .5]{BMWpositivecrossing}} \quad \mapsto \quad -  \raisebox{-.5\height}{ \includegraphics[scale = .5]{BMWnegitivecrossing}} .\]
This automorphism is not braided, and exchanges non-isomorphic simple objects in the idempotent completion.
\end{lemma}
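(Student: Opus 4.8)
The claim is about $\overline{\text{BMW}(q,r)}$, a planar algebra generated by a single crossing element subject to Reidemeister 2, 3, a twist relation parametrized by $r$, and the Kauffman-type skein relation (iii). I want to classify all planar-algebra automorphisms of the semisimplification. Since the whole planar algebra is generated by the positive crossing $\raisebox{-.5\height}{ \includegraphics[scale = .33]{BMWpositivecrossing}} \in \text{BMW}(q,r)_4$, any automorphism $\phi$ is determined completely by $\phi$ of this single generator. The plan is therefore: first, write $\phi(\text{crossing})$ as the most general element of the relevant box space; second, impose that $\phi$ preserve every defining relation and extract the finite list of scalar constraints; third, solve and interpret.

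**Carrying out the calculation.**

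First I would determine the dimension of the target box space. In $\overline{\text{BMW}(q,r)}_4$ a basis is given by the identity tangle, the cup-cap (Temperley--Lieb) tangle, and the crossing itself; by relation (iii) the negative crossing is a linear combination of these, so effectively $\phi(\text{crossing})$ is a linear combination of the positive crossing, the two Temperley--Lieb diagrams, and possibly the identity. So I would set $\phi(\text{crossing}) = \alpha\,(\text{pos.\ crossing}) + (\text{TL and identity corrections})$ with unknown scalars. The key observation is that $\phi$ must fix the identity tangle, the cup-cap, and any closed-loop value (the circle parameter $\delta$ is intrinsic to the planar algebra and determined by $q,r$), so the Temperley--Lieb subalgebra is rigid under $\phi$. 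This forces $\phi$ to act on the crossing by sending it to $\alpha\,(\text{pos.\ crossing}) + (\text{fixed TL part})$, and applying $\phi$ to the skein relation (iii) together with the two twist relations (i), (ii) pins down $\alpha$ and shows the only freedom is $\phi(\text{crossing}) = \pm(\text{crossing})$ up to replacing the positive by the negative crossing. I would then check the twist relations: replacing the positive crossing by (a multiple of) the \emph{negative} crossing exchanges the roles of $r$ and $r^{-1}$ in (i)--(ii), which is consistent only when $r = r^{-1}$ up to sign, i.e.\ $r^2 = \pm 1$; the nontrivial new case $r = \pm i$ arises precisely when $r^2 = -1$, where the twist eigenvalues $r$ and $r^{-1} = -r$ differ by a sign that the map $\raisebox{-.5\height}{ \includegraphics[scale = .33]{BMWpositivecrossing}} \mapsto -\raisebox{-.5\height}{ \includegraphics[scale = .33]{BMWnegitivecrossing}}$ can absorb.

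**Verifying the stated properties.**

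Once the candidate $\phi:\raisebox{-.5\height}{ \includegraphics[scale = .33]{BMWpositivecrossing}} \mapsto -\raisebox{-.5\height}{ \includegraphics[scale = .33]{BMWnegitivecrossing}}$ is identified at $r = \pm i$, I must confirm it genuinely is a planar algebra automorphism (not just a consistent assignment on the generator): this means checking that the skein relation (iii) is preserved, which follows because negating and swapping crossing signs sends the right-hand side $(q-q^{-1})(\text{difference of TL diagrams})$ to its negative while the left-hand side $\raisebox{-.5\height}{ \includegraphics[scale = .33]{BMWpositivecrossing}} - \raisebox{-.5\height}{ \includegraphics[scale = .33]{BMWnegitivecrossing}}$ also negates. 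To see it is \emph{not braided}, I observe that a braided automorphism must send the braiding element (which is literally the positive crossing) to the braiding element, whereas our $\phi$ sends it to $-$(negative crossing); equivalently, $\phi$ reverses the sign of the twist eigenvalues, so it cannot commute with the braiding. Finally, to show $\phi$ \emph{exchanges non-isomorphic simple objects} in the idempotent completion, I would exhibit two minimal idempotents whose twist values (self-braiding scalars) are interchanged by $\phi$ and are distinct; since isomorphism classes of simples are invariant under any planar algebra automorphism but $\phi$ demonstrably moves objects with different twists, these simples must be non-isomorphic and genuinely exchanged.

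**Main obstacle.**

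The delicate step is the rigidity argument that no Temperley--Lieb corrections to $\phi(\text{crossing})$ can survive: I expect the bulk of the work to be verifying that imposing Reidemeister 2 and 3 on the image forces all the extra TL coefficients to vanish (leaving only the overall sign and the positive/negative choice), and then carefully matching the twist relations to isolate exactly the constraint $r = \pm i$. The braiding and object-exchange verifications are comparatively routine once the automorphism is in hand; the real computational content is in the generator-relation bookkeeping that rules out a larger automorphism group for generic $r$.
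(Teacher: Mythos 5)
Your proposal follows essentially the same route as the paper's proof: write the image of the crossing as a general element of the $3$-dimensional box space, impose the defining relations to get polynomial equations in the unknown coefficients, solve to find the identity plus exactly one non-trivial solution, use the twist relation to force $r = \pm i$, observe non-braidedness because the braiding element \emph{is} the crossing, and exhibit exchanged non-isomorphic idempotents in the idempotent completion.

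One step in your outline is misattributed, and would fail if executed literally. You claim that the skein relation (iii) together with the twist relations (i)--(ii) already pins the image down to $\pm(\text{positive or negative crossing})$. This is not so: those relations are \emph{linear} in the three unknown coefficients, and because the loop value satisfies $\delta = 1 + \frac{r^{-1}-r}{q-q^{-1}}$, one of the resulting three linear equations is a consequence of the other two, leaving a one-parameter family of solutions. It is the \emph{quadratic} equations coming from Reidemeister 2 that cut this family down to the two discrete solutions; this is exactly how the paper proceeds (relation (iii) gives one equation, R2 gives three more, and these are solved simultaneously), with the twist relation (i) invoked only afterwards to extract the constraint $r = -r^{-1}$, i.e.\ $r = \pm i$. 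Relatedly, your expectation that R2/R3 force ``all the extra TL coefficients to vanish'' is slightly off: in the non-trivial solution the Temperley--Lieb coefficients are $\pm(q - q^{-1}) \neq 0$, and the image only takes the clean form $-(\text{negative crossing})$ after rewriting via relation (iii). Since your closing paragraph correctly identifies the R2/R3 bookkeeping as the real content of the argument, an honest execution of your plan would converge to the paper's proof; just be aware that the linear relations alone cannot do the job you assign them in the middle of your argument.
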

\begin{proof}
As the planar algebra $\overline{\text{BMW}(q,r)}$ is generated by the single element 
\[   \raisebox{-.5\height}{ \includegraphics[scale = .5]{BMWpositivecrossing}} \in \overline{\text{BMW}(q,r)}_4,\]
any planar algebra automorphism of $\overline{\text{BMW}(q,r)}$ is completely determined by its value on this element. The box space $\overline{\text{BMW}(q,r)}_4$ is 3-dimensional, so $\phi \in  \Aut(\overline{\text{BMW}(q,r)})$ is determined by three scalers $\alpha, \beta, \gamma \in \mathbb{C}$, with
\[  \phi\left( \raisebox{-.5\height}{ \includegraphics[scale = .5]{BMWpositivecrossing}} \right ) \quad =\quad  \alpha \raisebox{-.5\height}{ \includegraphics[scale = .5,angle = 90]{cupcap}} \quad  + \quad \beta \raisebox{-.5\height}{ \includegraphics[scale = .5]{cupcap}} \quad  + \quad \gamma \raisebox{-.5\height}{ \includegraphics[scale = .5]{BMWpositivecrossing}} .\]

Solving for the relation
\[\raisebox{-.5\height}{ \includegraphics[scale = .5]{BMWpositivecrossing}}  - \raisebox{-.5\height}{ \includegraphics[scale = .5]{BMWnegitivecrossing}} = (q-q^{-1})\left( \raisebox{-.5\height}{ \includegraphics[scale = .5,angle = 90]{cupcap}} - \raisebox{-.5\height}{ \includegraphics[scale = .5]{cupcap}}   \right)\]
yields the equation
\[   \beta - \alpha = (\gamma-1)(q- q^{-1}).\]

Solving for the Reidemeister 2 relation yields the additional three equations
\begin{align*}
\alpha\beta  + \gamma^2 - \alpha\gamma(q - q^{-1}) &= 1, \\
\alpha^2 + \beta^2 + \alpha\beta\left(\frac{r - r^{-1}}{q - q^{-1}}+1\right) + \frac{\alpha\gamma}{r} + \gamma\beta r  + \gamma\alpha(q - q^{-1})&= 0, \\
\gamma(\alpha + \beta)&= 0. 
\end{align*}

Solving for all four of these above equations yields two solutions:
\[    (\alpha, \beta, \gamma) \quad =\quad (0,0,1) \quad \text{or} \quad (q-q^{-1}, -(q - q^{-1}), -1).\]
The first of these solutions is just the identity, and the second we can write as
\[     \phi\left( \raisebox{-.5\height}{ \includegraphics[scale = .5]{BMWpositivecrossing}}\right) \quad = \quad -  \raisebox{-.5\height}{ \includegraphics[scale = .5]{BMWnegitivecrossing}}.\]

Solving for the relation
\[\raisebox{-.5\height}{ \includegraphics[scale = .5]{negTwist}} = r \raisebox{-.5\height}{ \includegraphics[scale = .5]{G2iso}}\]
reveals that the second is an automorphism only if $ r = -r^{-1}$, i.e. only if $r = \pm i$.

Clearly the possible non-trivial $\phi$ satisfies Reidiemester 3, thus $\phi$ is a planar algebra automorphism of $\text{BMW}(q,\pm i)$, and hence of $\overline{\text{BMW}(q,\pm i)}$ by Proposition~\ref{prop:neg}.

\vspace{1em}

As the braiding in $\overline{\text{BMW}(q,\pm i)}$ is the crossing generator, the non-trivial automorphism $\phi$ is clearly not braided.

\vspace{1em}

The idempotent completion of $\overline{\text{BMW}(q,\pm i)}$ contains the two distinct simple objects 
\[ \pm \frac{iq}{(q \pm i)^2}\raisebox{-.5\height}{ \includegraphics[scale = .5,angle = 90]{cupcap}} \mp \frac{iq}{(q \mp i)(q \pm i)^2}\raisebox{-.5\height}{ \includegraphics[scale = .5]{BMWpositivecrossing}} \mp \frac{q^2}{(q \mp i)(q\pm i)^2}\raisebox{-.5\height}{ \includegraphics[scale = .5]{BMWnegitivecrossing}}    \]
and 
\[\frac{\pm iq}{(q \pm i)^2}\raisebox{-.5\height}{ \includegraphics[scale = .5,angle = 90]{cupcap}} \pm \frac{q^2}{(q\mp i)(q \pm i)^2} \raisebox{-.5\height}{ \includegraphics[scale = .5]{BMWpositivecrossing}} \pm \frac{iq}{(q \mp i)(q \pm i)^2}\raisebox{-.5\height}{ \includegraphics[scale = .5]{BMWnegitivecrossing}} .   \]
The non-trivial automorphism $\phi$ exchanges these two idempotents, and thus exchanges the corresponding non-isomorphic simple objects in the idempotent completion.
\end{proof}

Specialising this Lemma to the $\cat{so}{2r+1}{k}$ case gives the following Corollary.

\begin{cor}\label{cor:gaugeB}
The category $\cat{so}{2r+1}{k}$ has no non-trivial gauge auto-equivalences.
\end{cor}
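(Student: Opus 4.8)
The plan is to read off the gauge group from the planar-algebra automorphism computation of Lemma~\ref{lem:paBMW}, and then to propagate triviality along the chain $\ad(\cat{so}{2r+1}{k})\boxtimes\Rep(\Z{2}) \rightsquigarrow \ad(\cat{so}{2r+1}{k}) \rightsquigarrow \cat{so}{2r+1}{k}$. The key point is that the planar algebra only sees the subcategory generated by $\Lambda_1\boxtimes\mathbb{C}_{\operatorname{sgn}}$, so the content of the proof is the two propagation steps rather than the automorphism count itself.

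First I would invoke the planar-algebra isomorphism \eqref{eq:so}: the symmetrically self-dual $\otimes$-generator $\Lambda_1\boxtimes\mathbb{C}_{\operatorname{sgn}}$ of $\ad(\cat{so}{2r+1}{k})\boxtimes\Rep(\Z{2})$ has associated planar algebra $\overline{\text{BMW}(q^2,q^{4r})}$ for $q = e^{2\pi i/(4(2r-1+k))}$. By Lemma~\ref{lem:gaugeauto} the group $\operatorname{Gauge}(\ad(\cat{so}{2r+1}{k})\boxtimes\Rep(\Z{2}))$ injects into $\Aut(\overline{\text{BMW}(q^2,q^{4r})})$, and by Lemma~\ref{lem:paBMW} the latter is trivial unless the BMW twist parameter $q^{4r} = e^{2\pi i\, r/(2r-1+k)}$ equals $\pm i$. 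Since $q^{4r}=\pm i$ forces $4r/(2r-1+k)$ to be an odd integer, and $k\geq 1$ bounds this quotient above by $2$, the only exceptional level is $k=2r+1$ (where the quotient is $1$ and $q^{4r}=i$). For all other levels the planar algebra has trivial automorphism group, so the gauge group of $\ad(\cat{so}{2r+1}{k})\boxtimes\Rep(\Z{2})$ is trivial; at $k=2r+1$ the unique nontrivial automorphism of Lemma~\ref{lem:paBMW} exchanges two non-isomorphic simple objects of the idempotent completion and therefore cannot be a gauge auto-equivalence (which by definition acts trivially on objects). Either way $\operatorname{Gauge}(\ad(\cat{so}{2r+1}{k})\boxtimes\Rep(\Z{2})) = \{e\}$.

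Next I would descend to $\ad(\cat{so}{2r+1}{k})$: a nontrivial gauge auto-equivalence $\mathcal{G}$ of $\ad(\cat{so}{2r+1}{k})$ would give the nontrivial gauge auto-equivalence $\mathcal{G}\boxtimes\Id_{\Rep(\Z{2})}$ of the Deligne product, the assignment being injective since one recovers $\mathcal{G}$ by restricting to $\ad(\cat{so}{2r+1}{k})\boxtimes\mathbf{1}$; this contradicts the previous step, so $\operatorname{Gauge}(\ad(\cat{so}{2r+1}{k})) = \{e\}$. Finally, since $\cat{so}{2r+1}{k}$ is a $\Z{2}$-graded extension of $\ad(\cat{so}{2r+1}{k})$, I would apply \cite[Theorem 9.1]{1711.00645} exactly as in the type $A$ argument to conclude $\operatorname{Gauge}(\cat{so}{2r+1}{k}) = H^2(\Z{2},\mathbb{C}^\times) = \{e\}$. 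This requires checking the three hypotheses of that theorem: triviality of $\operatorname{Gauge}(\ad(\cat{so}{2r+1}{k}))$, just proved; that the unique nontrivial invertible object of $\cat{so}{2r+1}{k}$ does not fix every object of the spinorial graded component; and that $\ad(\ad(\cat{so}{2r+1}{k}))\simeq\ad(\cat{so}{2r+1}{k})$. I expect the main obstacle to be this final lift, and in particular verifying the invertible-object hypothesis of \cite[Theorem 9.1]{1711.00645} through an explicit fusion-rule computation in $\cat{so}{2r+1}{k}$, since everything upstream is controlled cleanly by Lemma~\ref{lem:paBMW} whereas the graded-extension bookkeeping is specific to this category.
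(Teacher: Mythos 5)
Your proposal follows the paper's proof essentially line for line in the range $k\geq 2$: the same chain of reasoning (Lemma~\ref{lem:gaugeauto} plus the automorphism count of Lemma~\ref{lem:paBMW}, the observation that the exceptional automorphism at $k=2r+1$ permutes non-isomorphic simples and so cannot be gauge, descent from the Deligne product to $\ad(\cat{so}{2r+1}{k})$, and finally \cite[Theorem 9.1]{1711.00645} for the $\Z{2}$-graded extension).

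However, there is a genuine gap at $k=1$, a case your proof never separates out and for which your strategy actually fails. At level $1$ the category $\cat{so}{2r+1}{1}$ has Ising fusion rules: its simples are $\mathbf{1}$, $\Lambda_1$, $\Lambda_r$, with $\Lambda_1$ invertible, $\Lambda_1\otimes\Lambda_r\cong\Lambda_r$ and $\Lambda_r\otimes\Lambda_r\cong\mathbf{1}\oplus\Lambda_1$. Two of the three hypotheses of \cite[Theorem 9.1]{1711.00645} then fail. The invertible-object hypothesis fails because the unique non-trivial invertible object $\Lambda_1=k\Lambda_1$ of $\ad(\cat{so}{2r+1}{1})$ fixes $\Lambda_r$, which is the \emph{only} simple object of the non-trivially graded component; this is exactly the degeneration of the fusion rule $(k\Lambda_1)\otimes\Lambda_r\cong((k-1)\Lambda_1+\Lambda_r)$ at $k=1$, which is why the paper's verification of this hypothesis explicitly assumes $k>1$. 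The adjoint-stability hypothesis also fails: $\ad(\cat{so}{2r+1}{1})=\langle\mathbf{1},\Lambda_1\rangle$ is pointed with $\Z{2}$ fusion rules, so $\ad(\ad(\cat{so}{2r+1}{1}))\simeq\operatorname{Vec}\not\simeq\ad(\cat{so}{2r+1}{1})$. (One can also doubt whether the isomorphism \eqref{eq:so} is even meaningful at $k=1$, since $\Lambda_1\boxtimes\mathbb{C}_{\operatorname{sgn}}$ is then invertible and cannot generate a BMW-type planar algebra.) The paper disposes of this case separately at the outset: $\cat{so}{2r+1}{1}$ has Ising fusion rules, and such categories have no non-trivial gauge auto-equivalences by \cite{Cain-normal}. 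Adding this one special case makes your argument complete; for $k\geq 2$ the fusion-rule verifications you defer are exactly the ones the paper carries out, namely hypothesis (2) via $(k\Lambda_1)\otimes\Lambda_r\cong((k-1)\Lambda_1+\Lambda_r)$ and hypothesis (3) by splitting on the parity of $k$ and using the object $k\Lambda_r$ (for $k$ even) or $(k-1)\Lambda_r$ (for $k$ odd, $k\neq 1$).
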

\begin{proof}
Let us begin with the special case of $k=1$, which requires special treatment. The categories $\cat{so}{2r+1}{1}$ have Ising fusion rules, hence these categories have no non-trivial gauge auto-equivalences by \cite{Cain-normal}.

In the general case of $k\geq 2$, we will use \cite[Theorem 9.1]{1711.00645}. This Theorem allows us to leverage the fact that $\cat{so}{2r+1}{k}$ is a $\Z{2}$-graded extension of $\ad(\cat{so}{2r+1}{k})$, in order to compute the gauge auto-equivalences of $\cat{so}{2r+1}{k}$. To apply this Theorem we must show three requirements are satisfied. They are
\begin{enumerate}
\item The category $\ad( \cat{so}{2r+1}{k})$ must have no non-trivial gauge auto-equivalences,

\item if $z$ is an invertible object of $ \ad( \cat{so}{2r+1}{k} )$ such that for every object $X$ in the non-trivially graded component of $ \cat{so}{2r+1}{k}$ we have $z\otimes X_g \cong X_g$, then $z\cong \mathbf{1}$, and

\item the categories $\ad(\ad( \cat{so}{2r+1}{k} )    ) $ and $ \ad( \cat{so}{2r+1}{k})$ must be monoidally equivalent.
\end{enumerate}

Let us verify each of these requirements in turn.

\vspace{1em}

1) From Lemma~\ref{lem:gaugeauto}, every gauge auto-equivalence of $\ad(\cat{so}{2r+1}{k})\boxtimes \Rep(\Z{2})$ will appear as a planar algebra automorphism of the planar algebra generated by $X$, where $X$ is any $\otimes$-generator of $\ad(\cat{so}{2r+1}{k})\boxtimes \Rep(\Z{2})$. The object $\Lambda_1\boxtimes \mathbb{C}_{\operatorname{sgn}}$ is a particularly nice $\otimes$-generator of $\ad(\cat{so}{2r+1}{k})\boxtimes \Rep(\Z{2})$, as we have the following isomorphism of planar algebras from Equation~\eqref{eq:so}
\[ \text{PA}(\ad( \cat{so}{2r+1}{k})\boxtimes \Rep(\Z{2}) , \Lambda_1\boxtimes \mathbb{C}_{\operatorname{sgn}} ) \cong \overline{\text{BMW}( q^2 , q^{4r} )} \text{ with } q = e^\frac{2 \pi i}{4(2r-1 + k)}.\]

From Lemma~\ref{lem:paBMW}, the planar algebra $\overline{\text{BMW}( q^2 , q^{4r} )}$ only has a single non-trivial automorphism when $k = 2r+1$, which corresponds to an auto-equivalence of $\ad( \cat{so}{2r+1}{k}\boxtimes \Rep(\Z{2})$  that exchanges distinct simple objects. Thus, the category $\ad( \cat{so}{2r+1}{k})\boxtimes \Rep(\Z{2}) $ has no non-trivial gauge auto-equivalences. As a non-trivial gauge auto-equivalence of $\ad( \cat{so}{2r+1}{k})$ would canonically extend to a non-trivial gauge auto-equivalence of $\ad( \cat{so}{2r+1}{k})\boxtimes \Rep(\Z{2}) $, we see that $\ad( \cat{so}{2r+1}{k})$ has no non-trivial gauge auto-equivalences.

\vspace{1em}

2) The only non-trivial invertible object of $\ad( \cat{so}{2r+1}{k})$ is the object $k \Lambda_1$. The non-trivial graded component of $\cat{so}{2r+1}{k}$ contains the object $\Lambda_r$, and we have the fusion
\[     ( k \Lambda_1) \otimes (\Lambda_r) \cong (  (k-1)\Lambda_1 + \Lambda_r).\]
Thus, when $k>1$ (a case we have already covered), the invertible object $k \Lambda_1$ is not a fixed point for the non-trivially graded component of $\cat{so}{2r+1}{k}$.

\vspace{1em}

3) To show that $\ad(\ad( \cat{so}{2r+1}{k} )    )  \simeq \ad( \cat{so}{2r+1}{k})$, we have to break up into two cases depending on the parity of $k$. 

If $k$ is even, then consider the object $k\Lambda_r \in  \ad( \cat{so}{2r+1}{k})$. We have the fusion
\[ (k\Lambda_r) \otimes (k\Lambda_r)^* \supset \Lambda_1, \]
therefore $\Lambda_1 \in \ad(\ad( \cat{so}{2r+1}{k} )    )$. As $\Lambda_1$ $\otimes$-generates $ \ad( \cat{so}{2r+1}{k})$, we thus have 
\[\ad(\ad( \cat{so}{2r+1}{k} )    )  \simeq \ad( \cat{so}{2r+1}{k}).\]

If $k$ is odd and not equal to $1$, then consider the object $(k-1)\Lambda_r  \in  \ad( \cat{so}{2r+1}{k})$. We have the fusion
\[ ((k-1)\Lambda_r) \otimes ((k-1)\Lambda_r)^* \supset \Lambda_1, \]
therefore $\Lambda_1 \in \ad(\ad( \cat{so}{2r+1}{k} )    )$. As $\Lambda_1$ $\otimes$-generates $ \ad( \cat{so}{2r+1}{k})$, we thus have 
\[\ad(\ad( \cat{so}{2r+1}{k} )    )  \simeq \ad( \cat{so}{2r+1}{k}).\]

\vspace{1em}

As all three conditions are satisfied, we can apply \cite[Theorem 9.1]{1711.00645} to see that the gauge auto-equivalence group of $\cat{so}{2r+1}{k}$ is isomorphic to $H^2(\Z{2}, \mathbb{C}^\times)= \{e\}$.
\end{proof}

Now that we have shown the categories $\cat{so}{2r+1}{k}$ have no non-trivial gauge auto-equivalences, we next have to determine which fusion ring automorphisms of $\cat{so}{2r+1}{k}$ are realised as either monoidal or braided auto-equivalences. There are three cases to work through, $k=1$, $k=2$, and $k\geq 3$.

\subsection*{Case: $k=1$} \hspace{1em}

When $k= 1$, there are no non-trivial fusion ring automorphisms of $\cat{so}{2r+1}{k}$, thus 
\[ \BrAut(\cat{so}{2r+1}{1} ) =  \TenAut(\cat{so}{2r+1}{1} )  =   \text{Gauge}(\cat{so}{2r+1}{1}) = \{e\},\]
as in the statement of Theorem~\ref{thm:main}.

\subsection*{Case: $k \geq 3$}  \hspace{1em}

When $k \geq 3$, there is a single non-trivial fusion ring automorphism of $\cat{so}{2r+1}{k}$, characterised by the fact it maps
\[    (\Lambda_r) \leftrightarrow ((k-1)\Lambda_1 + \Lambda_r).\]

We will show this automorphism is always realised by a monoidal auto-equivalence of $\cat{so}{2r+1}{k}$, which we construct as a simple current auto-equivalence.

Consider the order two invertible object $(k\Lambda_1) \in \cat{so}{2r+1}{k}$. This invertible object has self braiding eigenvalue $1$ is $k$ is even, and $-1$ if $k$ is odd. Thus we can use Lemma~\ref{lem:simplecurrent} to get a monoidal auto-equivalence of $\cat{so}{2r+1}{k}$ when $k$ is even, and a braided auto-equivalence when $k$ is odd. We compute that this auto-equivalence maps 
\[    (\Lambda_r) \mapsto (\Lambda_r)\otimes(k\Lambda_1) \cong ((k-1)\Lambda_1 + \Lambda_r) .\]

Thus we have, for $k \geq 3$, 
\[   \TenAut( \cat{so}{2r+1}{k}  ) = \Z{2} \quad \text{ and } \quad  \BrAut( \cat{so}{2r+1}{k}  )  = \begin{cases}
\{e\}, \text{ if $k$ is even,}\\
\Z{2}, \text{   if $k$ is odd.}\end{cases}\]
As in the statement of Theorem~\ref{thm:main}.

\subsection*{Case: $k=2$} \hspace{1em}

By far and away the most difficult case are the categories $\cat{so}{2r+1}{2}$, due to the existence of many exotic fusion ring automorphisms. To simplify notation we give these categories explicit presentations. We label the simple objects of $\cC(\mathfrak{so}_{2r+1} , 2)$ as:
\[  1 , Z ,X_1, X_2, Y_i \]
with $1\leq i \leq r$. The objects $1$ and $Z$ have dimension 1, the objects $X_1$, and $X_2$ have dimension $\sqrt{2r+1}$, and the objects $Y_i$ all have dimension $2$. The commutative fusion rules are given by
\begin{align*}
Z\otimes Z &\cong 1 \\ 
Z\otimes X_1 &\cong X_2 \\ 
Z\otimes X_2 &\cong X_1 \\
Z\otimes Y_i &\cong Y_i \\
X_1 \otimes X_1 \cong X_2\otimes X_2 &\cong 1 \oplus \bigoplus_i Y_i \\
X_1\otimes X_2 &\cong Z \oplus \bigoplus_i Y_i \\
Y_i \otimes Y_j &\cong  Y_{\min(i+j , 2r+1 - i - j)} \oplus Y_{|i-j|}    \quad    i\neq j \\
Y_i \otimes Y_i&\cong 1 + Z + Y_{\min(2i, 4r+2 - 2i)}.
\end{align*} The twists of the simple objects $1$ and $Z$ are 1, the twists of the objects $X_1$ and $X_2$ have 8 fold symmetry with respect to $r$, which we present in Table~\ref{tab:twistB}. The twists of the simple objects $Y_j$ are given by the formula
\[   t_{Y_j} = e^{2\mathbf{i}\pi  \frac{ j^2r}{2r+1}}. \]

\begin{table}[h!]
   
\centering 
    
    \begin{tabular}{c | c c}
    	\toprule
			$r \mod 8$ & $t_{X_1}$ 					  & $t_{X_2}$  \\
			\midrule
			$0$  	   & 1       					  &   -1  	\\
			$1$  	   & $e^{2\mathbf{i}\pi \frac{1}{8}}$       &   $e^{2\mathbf{i}\pi \frac{5}{8}}$  	\\
			$2$  	   & $e^{2\mathbf{i}\pi \frac{1}{4}}$       &   $e^{2\mathbf{i}\pi \frac{3}{4}}$  	\\
			$3$  	   & $e^{2\mathbf{i}\pi \frac{3}{8}}$       &   $e^{2\mathbf{i}\pi \frac{7}{8}}$  	\\
			$4$  	   & -1      					  &   1 	\\
		    $5$  	   & $e^{2\mathbf{i}\pi \frac{5}{8}}$       &   $e^{2\mathbf{i}\pi \frac{1}{8}}$  	\\
			$6$  	   & $e^{2\mathbf{i}\pi \frac{3}{4}}$       &   $e^{2\mathbf{i}\pi \frac{1}{4}}$ 	\\
			$7$  	   & $e^{2\mathbf{i}\pi \frac{7}{8}}$       &   $e^{2\mathbf{i}\pi \frac{3}{8}}$  	\\
			          	                 
    	\bottomrule

	    \end{tabular}
	    	\caption{\label{tab:twistB}Twists of the simple objects $X_1$ and $X_2$ in $\cC(\mathfrak{so}_{2r+1},2)$ }
\end{table}

The group of automorphisms of the $\cC(\mathfrak{so}_{2r+1},2)$ fusion ring is isomorphic to $\Z{2} \times \Z{2r+1}^\times / (\pm)$. The automorphism corresponding to the $\Z{2}$ factor simply sends
\[    X_1 \leftrightarrow X_2, \]
and fixes all other objects. The automorphism corresponding to an element $m \in \Z{2r+1}^\times / (\pm)$ sends
\[  Y_i \mapsto Y_{\min( mi \pmod {2r+1}) ,  -mi \pmod {2r+1})    }, \]
and fixes all other objects.

As there are no non-trivial gauge auto-equivalences of $\cC(\mathfrak{so}_{2r+1} , 2)$, we have that
\[  \TenAut(\cC(\mathfrak{so}_{2r+1} , 2))\subseteq  \Z{2} \times \Z{2r+1}^\times / (\pm).\] 
It turns out that this upper bound for $\TenAut(\cC(\mathfrak{so}_{2r+1} , 2))$ is not sharp, as there are fusion ring automorphisms of $\cC(\mathfrak{so}_{2r+1} , 2)$ that are not realisable as monoidal auto-equivalences. We begin our computations by giving improved bounds. While not \textit{a-priori}, these new bounds will be sharp.

\begin{lemma}\label{lem:bupper}
We have
\[ \BrAut( \cC(\mathfrak{so}_{2r+1} , 2)  ) \subseteq \{n\in  \Z{2r+1}^\times : n^2 = 1 \} / (\pm).\]
\end{lemma}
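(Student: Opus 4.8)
The plan is to combine three facts already established in the paper and reduce the claim to a short combinatorial check. First, Corollary~\ref{cor:gaugeB} shows that $\cat{so}{2r+1}{2}$ has no non-trivial gauge auto-equivalences, so the forgetful map $\TenAut \to \operatorname{FusEq}$ is injective; restricting to braided auto-equivalences yields an injection $\BrAut(\cat{so}{2r+1}{2}) \hookrightarrow \operatorname{FusEq}(\cat{so}{2r+1}{2}) \cong \Z{2} \times \Z{2r+1}^\times/(\pm)$. Second, by the Corollary stating that braided auto-equivalences of $\cat{g}{}{k}$ preserve the twists of the simple objects, every element of $\BrAut(\cat{so}{2r+1}{2})$ lies in the subgroup of twist-preserving fusion ring automorphisms. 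The strategy is therefore to identify that subgroup explicitly and to observe that it equals the claimed $\{n\in\Z{2r+1}^\times : n^2 = 1\}/(\pm)$.

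I would write a general fusion ring automorphism as a pair $(\varepsilon, m)$, where $\varepsilon\in\Z{2}$ governs the possible swap $X_1\leftrightarrow X_2$ and $m\in\Z{2r+1}^\times/(\pm)$ acts by sending $Y_i$ to the unique $Y_k$ with $1\le k\le r$ and $k\equiv \pm mi \pmod{2r+1}$. The twist-preservation condition is then imposed separately on the $Y_i$-orbit and on the pair $\{X_1,X_2\}$. For the $Y_i$, using $t_{Y_j}=e^{2\pi i j^2 r/(2r+1)}$ and the fact that $k^2\equiv (mj)^2=m^2 j^2 \pmod{2r+1}$ (so the formula is insensitive to the sign ambiguity in the index), preservation amounts to $j^2 r \equiv m^2 j^2 r \pmod{2r+1}$ for all $j$. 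Taking $j=1$ gives $r(m^2-1)\equiv 0 \pmod{2r+1}$, and since $(2r+1)-2r=1$ forces $\gcd(r,2r+1)=1$, the integer $r$ is invertible modulo $2r+1$, whence $m^2\equiv 1 \pmod{2r+1}$; conversely this condition preserves every $t_{Y_j}$, so the $Y$-constraint is exactly $m^2=1$.

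For the pair $\{X_1,X_2\}$ I would inspect Table~\ref{tab:twistB}: in each residue class of $r$ modulo $8$ one has $t_{X_1}\neq t_{X_2}$. Hence the swap $X_1\leftrightarrow X_2$ never preserves twists, forcing $\varepsilon=0$. Combining the two constraints, every twist-preserving fusion ring automorphism lies in $\{0\}\times\{n\in\Z{2r+1}^\times : n^2=1\}/(\pm)$, and since $\BrAut(\cat{so}{2r+1}{2})$ injects into this twist-preserving subgroup, the desired containment follows.

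I do not anticipate a genuine obstacle: once the twist formulas of Table~\ref{tab:twistB} and of the $Y_i$ are taken as given, the argument is essentially bookkeeping. The only points needing mild care are the coprimality step $\gcd(r,2r+1)=1$, which converts $r(m^2-1)\equiv 0$ into $m^2\equiv 1$, and the table check confirming $t_{X_1}\neq t_{X_2}$ in every residue class of $r$ modulo $8$.
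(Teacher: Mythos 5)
Your proposal is correct and follows essentially the same route as the paper's proof: bounding $\BrAut$ by the twist-preserving fusion ring automorphisms, ruling out the swap $X_1 \leftrightarrow X_2$ via Table~\ref{tab:twistB}, and reducing the $Y_i$-condition to $m^2 \equiv 1 \pmod{2r+1}$. The only difference is that you spell out the computation (invertibility of $r$ modulo $2r+1$, insensitivity to the sign ambiguity) that the paper compresses into ``a calculation shows.''
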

\begin{proof}
To determine an upper bound for the group of braided auto-equivalences of $\cC(\mathfrak{so}_{2r+1},2)$ we calculate the group of fusion ring automorphisms that preserve the twists of the simple objects.

 As $X_1$ and $X_2$ always have differing twists, the fusion ring automorphism $X_1 \leftrightarrow X_2$ does not preserve twists. 
 
 Let $n \in \Z{2r+1}^\times / (\pm)$, and consider the fusion ring automorphism 
 \[  Y_i \mapsto Y_{\min( ni \pmod {2r+1}) ,  -ni \pmod {2r+1})    }. \]
A calculation shows that this fusion ring automorphism preserves the twists of all the simple objects if and only if $n^2 \equiv 1 \pmod {2r+1}$.
\end{proof}

Using similar techniques, we can prove the following Lemma, giving a necessary condition for there to be a braided equivalence between $\cC(\mathfrak{so}_{2r+1} , 2)$ and its braided opposite. While this Lemma may seem irrelevant, we will soon find an important use for it when trying to improve the bound on $\TenAut(\cC(\mathfrak{so}_{2r+1} , 2))$.
\begin{lemma}\label{lem:bopeq}
There exists a braided equivalence $\cC(\mathfrak{so}_{2r+1} , 2) \to \cC(\mathfrak{so}_{2r+1} , 2) ^\text{rev}$ only if $\Z{2r+1}^\times$ contains an element $n$ such that $n^2 = -1$.
\end{lemma}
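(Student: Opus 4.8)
The plan is to reduce the existence of such a braided equivalence to a congruence coming from the twists. The key observation is that the ribbon twist of the reverse braided category is the inverse of the original twist, so any braided equivalence $\cF : \cat{so}{2r+1}{2} \to \cat{so}{2r+1}{2}^{\text{rev}}$ must satisfy $t_{\cF(W)} = t_W^{-1}$ for every simple object $W$. Forgetting the braiding, $\cF$ is a monoidal auto-equivalence of the underlying fusion category, and hence induces a fusion ring automorphism $\phi$, which by the classification recalled above lies in $\Z{2} \times \Z{2r+1}^\times/(\pm)$.

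First I would restrict attention to the two-dimensional objects $Y_j$. The $\Z{2}$-factor of the fusion ring automorphism group only interchanges $X_1$ and $X_2$ while fixing every $Y_j$, so on the family $\{Y_j\}$ the automorphism $\phi$ is governed by a single element $n \in \Z{2r+1}^\times/(\pm)$, acting by $Y_j \mapsto Y_{nj}$ (indices read modulo $2r+1$, with $Y_j = Y_{-j}$). Feeding $Y_1$ into the twist-inversion constraint and using the twist formula $t_{Y_j} = e^{2\pi \mathbf{i} \frac{j^2 r}{2r+1}}$ gives
\[ e^{2\pi \mathbf{i}\frac{n^2 r}{2r+1}} = t_{Y_n} = t_{\phi(Y_1)} = t_{Y_1}^{-1} = e^{-2\pi \mathbf{i}\frac{r}{2r+1}}, \]
so that $n^2 r \equiv -r \pmod{2r+1}$. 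Since $(2r+1) - 2r = 1$ we have $\gcd(r, 2r+1) = 1$, so $r$ is a unit modulo $2r+1$ and may be cancelled, leaving $n^2 \equiv -1 \pmod{2r+1}$. Thus $n \in \Z{2r+1}^\times$ satisfies $n^2 = -1$, which is exactly the asserted necessary condition.

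The one point that warrants genuine care --- and what I expect to be the main obstacle --- is the conceptual input in the first paragraph: justifying that a braided equivalence into the reverse braided category necessarily inverts the twists of simple objects. This rests on the facts that reversing the braiding replaces the ribbon structure $\theta$ by $\theta^{-1}$, and that a braided equivalence intertwines ribbon structures; since the relevant morphisms are scalars on simple objects, one obtains $t_{\cF(W)} = t_W^{-1}$. Everything after that is the one-line computation above, which needs only the twists of the $Y_j$ and produces precisely a necessary condition, as the Lemma claims. Notably neither the $X_1 \leftrightarrow X_2$ swap nor any question of sufficiency enters, so no further case analysis is required.
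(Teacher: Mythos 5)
Your proposal is correct and follows essentially the same route as the paper: both use that the underlying monoidal category of the reverse is unchanged, so the equivalence acts on the $Y_j$ through some $n \in \Z{2r+1}^\times$, and both then match the twist of $Y_1$ against the (inverted) twist of $Y_n$ in the reverse category to get $n^2 r \equiv -r \pmod{2r+1}$ and hence $n^2 \equiv -1$. Your extra care in justifying twist-inversion (the paper compresses this to ``$\mathcal{F}$ preserves twists,'' leaning on its earlier corollary that dimension-preserving braided equivalences of $\cat{g}{}{k}$ preserve twists) and in noting $\gcd(r,2r+1)=1$ only makes explicit what the paper leaves implicit.
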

\begin{proof}
Let $\mathcal{F} : \cC(\mathfrak{so}_{2r+1} , 2) \to \cC(\mathfrak{so}_{2r+1} , 2) ^\text{rev}$ a braided equivalence. As the underlying monoidal categories of $\cC(\mathfrak{so}_{2r+1} , 2)$ and  $\cC(\mathfrak{so}_{2r+1} , 2) ^\text{rev}$ are the same, we have that 
\[  \mathcal{F}(Y_i)  = Y^\text{rev}_{\min( ni \pmod {2r+1}) ,  -ni \pmod {2r+1})    }\]
for some $n \in \Z{2r+1}^\times$. In particular we have that $Y_1 =  Y^\text{rev}_{\min( n \pmod {2r+1}) ,  -n \pmod {2r+1})}$. As $\mathcal{F}$ preserves twists we get the equation
\[  e^{2\mathbf{i}\pi  \frac{r}{2r+1}} = e^{2\mathbf{i}\pi  \frac{-n^2r}{2r+1}}, \]
which implies that $n^2 = -1 \pmod{2r+1}$.
\end{proof}

We now return to improving the upper bound on $\TenAut(\cat{so}{2r+1}{2})$. We achieve this bound by a combinatorial argument, counting the size of the group $\BrAut(\mathcal{Z}(\cat{so}{2r+1}{2}))$ in two ways. On one hand we give an upper bound for $\BrAut(\mathcal{Z}(\cat{so}{2r+1}{2}))$ by considering fusion rules and twists of the Drinfeld center. On the other hand we count the size of $\BrAut(\mathcal{Z}(\cat{so}{2r+1}{2}))$ using the following equation
\[   | \BrAut(\mathcal{Z}(\cat{so}{2r+1}{2})) | = |  \TenAut( \cat{so}{2r+1}{2} )| \cdot      |  \BrAut(\cat{so}{2r+1}{2})|. \]
This equation follows from the fact that the data for an invertible bimodule over $\cat{so}{2r+1}{2}$ consists of an invertible $\cat{so}{2r+1}{2}$ module, and an outer auto-equivalence of $\cat{so}{2r+1}{2}$ \cite{MR2909758}. As the category $\cat{so}{2r+1}{2}$ is braided and has no non-trivial gauge auto-equivalences, we have that $\operatorname{Out}( \cat{so}{2r+1}{2} ) =\TenAut( \cat{so}{2r+1}{2} ) $. Hence we can combine the two ways to count the size of $\BrAut(\mathcal{Z}(\cat{so}{2r+1}{2}))$ to give an upper bound for $\TenAut( \cat{so}{2r+1}{2} ) $. Working through this approach gives the following Lemma.

\begin{lemma}\label{lem:tenbound}
We have 
\[
 |\TenAut( \cat{so}{2r+1}{2})|  \leq
  \begin{cases}
                                   4|\BrAut( \cat{so}{2r+1}{2}) |& \text{if $\Z{2r+1}^\times$ contains an element $n$ such that $n^2 = -1$} \\
 								   2|\BrAut( \cat{so}{2r+1}{2})| & \text{otherwise.}
  \end{cases}
\]
\end{lemma}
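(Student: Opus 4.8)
Write $\cC := \cat{so}{2r+1}{2}$. The strategy is exactly the one announced before the statement: count $|\BrAut(\mathcal{Z}(\cC))|$ in two independent ways. The first way is the bimodule identity
\[ |\BrAut(\mathcal{Z}(\cC))| = |\TenAut(\cC)|\cdot|\BrAut(\cC)|, \]
which holds because an invertible $\cC$-bimodule category is the data of an invertible module category together with an outer auto-equivalence, and $\Out(\cC)=\TenAut(\cC)$ since $\cC$ has no non-trivial gauge auto-equivalences (Corollary~\ref{cor:gaugeB}). Rearranged this reads $|\TenAut(\cC)| = |\BrAut(\mathcal{Z}(\cC))|/|\BrAut(\cC)|$, so the whole Lemma reduces to proving an upper bound $|\BrAut(\mathcal{Z}(\cC))|\le 2|\BrAut(\cC)|^2$ in general, improved to $4|\BrAut(\cC)|^2$ precisely when $\Z{2r+1}^\times$ contains a square root of $-1$.

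For the second, independent estimate I would use modularity to identify $\mathcal{Z}(\cC)\simeq \cC\boxtimes\cC^{\mathrm{rev}}$, whose simple objects are the $A\boxtimes B$ with $A,B$ simple in $\cC$, carrying $\FPdim(A\boxtimes B)=\FPdim(A)\FPdim(B)$ and twist $\theta_{A\boxtimes B}=\theta_A\theta_B^{-1}$. Since a braided auto-equivalence preserves dimensions (categorical dimensions agree with Frobenius--Perron dimensions here) and twists, $|\BrAut(\mathcal{Z}(\cC))|$ is bounded above by the number of permutations of these simples that preserve both the fusion ring and all twists, and I would count those combinatorially, stratified by dimension.

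The decisive stratum is the set of $r^2$ objects $Y_i\boxtimes Y_j$ of dimension $4$, whose twists are $\theta_{Y_i\boxtimes Y_j}=e^{2\mathbf{i}\pi\,r(i^2-j^2)/(2r+1)}$. Because the $Y$-fusion is controlled by $\Z{2r+1}$ (with $Y_i\mapsto Y_{ni}$, $n\in\Z{2r+1}^\times/\{\pm1\}$), the congruence analysis of Lemma~\ref{lem:bupper} shows that a symmetry acting compatibly on this stratum is either \emph{factor preserving}, of the form $(i,j)\mapsto (ni,mj)$ with $n^2\equiv m^2\equiv 1$, or \emph{factor exchanging}, of the form $(i,j)\mapsto(mj,ni)$ with $n^2\equiv m^2\equiv -1$. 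The first type is always available, and the number of such symmetries is the square of the order of the group of twist-preserving fusion automorphisms of a single factor, the group identified in Lemma~\ref{lem:bupper}; the second type occurs exactly when $-1$ is a square modulo $2r+1$, which is the condition isolated in Lemma~\ref{lem:bopeq}, and this is the source of the dichotomy in the statement. It then remains to see that the low-dimensional strata — the four invertibles, the eight objects of dimension $\sqrt{2r+1}$, and the objects $X_a\boxtimes X_b$ of dimension $2r+1$ — contribute only one further factor of $2$; the key input is that $\theta_{X_1}/\theta_{X_2}=-1$ for every residue of $r$ modulo $8$ (Table~\ref{tab:twistB}), so that the $X$-twists coincide in pairs and admit exactly one extra involution. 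Granting that the per-factor order coincides with $|\BrAut(\cC)|$, multiplying the sectors gives $|\BrAut(\mathcal{Z}(\cC))|\le 2|\BrAut(\cC)|^2$, respectively $4|\BrAut(\cC)|^2$, and dividing by $|\BrAut(\cC)|$ yields the Lemma.

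\textbf{The main obstacle} is this combinatorial count of twist-preserving fusion automorphisms of the centre. A priori such an automorphism need neither respect the factorisation $\cC\boxtimes\cC^{\mathrm{rev}}$ nor preserve each dimension stratum beyond what dimension forces: for instance $X_1\boxtimes\mathbf 1$ and $\mathbf 1\boxtimes X_2$ can have equal twist and could in principle be exchanged. The real work is to show that such factor-mixing permutations, together with the freedom in the pointed and $X$-sectors, contribute no more than the single extra factor of $2$ claimed, and to confirm that the per-factor contribution collapses exactly onto $\BrAut(\cC)$ rather than onto the a priori larger group of twist-preserving fusion automorphisms of a single factor — the latter identification being where Lemma~\ref{lem:bupper}, and ultimately the sharpness supplied by the Tambara--Yamagami realisation, is used.
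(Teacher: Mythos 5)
Your proposal follows the same overall strategy as the paper (the two-way count of $|\BrAut(\mathcal{Z}(\cC))|$, the bimodule identity, and fusion/twist combinatorics on $\cC\boxtimes\cC^{\mathrm{rev}}$), but it has two genuine gaps. The first is your reduction ``$|\BrAut(\mathcal{Z}(\cC))|$ is bounded above by the number of permutations of the simples preserving the fusion ring and all twists.'' The kernel of the map $\BrAut(\mathcal{Z}(\cC))\to\operatorname{FusEq}(\mathcal{Z}(\cC))$ is exactly the group of braided gauge auto-equivalences of the centre, and triviality of $\operatorname{Gauge}(\cat{so}{2r+1}{2})$ (Corollary~\ref{cor:gaugeB}) does not formally imply that the centre has no non-trivial braided gauge auto-equivalences. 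The paper devotes the entire first half of its proof to precisely this point, and the argument is not soft: a non-trivial braided gauge auto-equivalence of $\mathcal{Z}(\cat{so}{2r+1}{2})$ would yield an invertible bimodule of maximal rank, the only such bimodule is of the form $\cat{so}{2r+1}{2}_{\mathcal{G}}$ for an outer auto-equivalence $\mathcal{G}$, and comparing the given functor with $\mathcal{G}\boxtimes\mathcal{G}^{\mathrm{op}}$ gives a contradiction. Your write-up silently assumes this injectivity, so as it stands the very first inequality of your second count is unjustified (one could instead avoid it by running an orbit--stabilizer argument on genuine auto-equivalences acting on the isomorphism classes of the two Deligne factors' generators, but that is a different argument from the one you describe).

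The second gap is that the decisive combinatorial count is exactly what you defer as ``the main obstacle,'' and that count \emph{is} the proof. Your congruence analysis on the $Y_i\boxtimes Y_j$ stratum only constrains permutations already assumed to be of factor-preserving or factor-exchanging diagonal form; Lemma~\ref{lem:bupper} is a statement about a single factor and does not rule out fusion-ring automorphisms of the product that mix the factors (e.g.\ non-diagonal symmetries of the $\Z{2r+1}\times\Z{2r+1}$ grid, or the $X$-sector mixing you yourself flag), nor does it establish that the pointed and $X$-strata contribute only one extra involution. The paper closes this by anchoring the count at the generators $X_1\boxtimes\mathbf{1}^{\mathrm{rev}}$ and $\mathbf{1}\boxtimes X_1^{\mathrm{rev}}$: any automorphism fixing both splits as $\mathcal{F}\boxtimes\mathcal{G}$; dimension considerations leave eight possible images of $X_1\boxtimes\mathbf{1}^{\mathrm{rev}}$; twist considerations (Table~\ref{tab:posmaps}) force the image of the second generator once the first is chosen; and a case analysis on $r\bmod 4$, invoking Lemma~\ref{lem:bopeq} to exclude any automorphism carrying $X_1\boxtimes\mathbf{1}^{\mathrm{rev}}$ into the second factor when $-1$ is not a square mod $2r+1$, cuts eight down to four or two. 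Finally, the issue you raise at the end --- that the per-factor group is a priori the group of twist-preserving fusion-ring automorphisms rather than $\BrAut(\cat{so}{2r+1}{2})$ --- is a real wrinkle (present in the paper's own write-up as well); it is only resolved later, when the Tambara--Yamagami equivariantization realizes every element of $\{n\in\Z{2r+1}^\times : n^2=1\}/(\pm)$ as a braided auto-equivalence, so a self-contained proof at the fusion-ring level needs either that realization or the orbit--stabilizer reformulation to express the bound in terms of $|\BrAut(\cat{so}{2r+1}{2})|$ itself.
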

\begin{proof}
Let us begin by showing that the category $\mathcal{Z}(\cat{so}{2r+1}{2})$ has no non-trivial braided gauge auto-equivalences. This fact will allow us to apply combinatorial techniques for the remainder of the proof.

Aiming for a contradiction, suppose $\mathcal{Z}(\cat{so}{2r+1}{2})$ had a non-trivial braided gauge auto-equivalence $\mathcal{F}$. Then \cite[Theorem 1.1]{MR2677836} would imply the existence of a non-trivial invertible bimodule $\mathcal{M}$ over $\cat{so}{2r+1}{2}$. As $\cF$ is gauge, \cite[Proposition 3.1]{1902.06165} tells us that the rank of $\mathcal{M}$ is equal to the rank of $ \cat{so}{2r+1}{2}$. The modular category $ \cat{so}{2r+1}{2}$ has no non-trivial gauge auto-equivalences via Corollary~\ref{cor:gaugeB}, and so in particular it has no non-trivial braided gauge auto-equivalences. Thus \cite[Theorem 5.2]{MR2677836} combined with \cite{1807.06131} show that the only invertible module category over $ \cat{so}{2r+1}{2}$ with maximal rank is the trivial module. Hence $\mathcal{M}$ is equivalent to $ \cat{so}{2r+1}{2}$ as a left $ \cat{so}{2r+1}{2}$-module, which then implies 
\[\mathcal{M} \simeq \cat{so}{2r+1}{2}_{\mathcal{G}}\]
as an $ \cat{so}{2r+1}{2}$-bimodule, with $\mathcal{G}$ an outer auto-equivalence of  $ \cat{so}{2r+1}{2}$. As $\mathcal{M}$ is non-trivial, the auto-equivalence $\mathcal{G}$ must be non-trivial, and in particular must be non-gauge by Corollary~\ref{cor:gaugeB}. However, we can now use \cite[Section 3]{1603.04318} to see that $\mathcal{G}\boxtimes \mathcal{G}^\text{op}$ and $\mathcal{F}$ are naturally isomorphic, up to an inner auto-equivalence of $\mathcal{Z}(\cat{so}{2r+1}{2})$. Due to the braiding on $\mathcal{Z}(\cat{so}{2r+1}{2})$, any inner auto-equivalence must be gauge, and so the functors $\mathcal{G}\boxtimes \mathcal{G}^\text{op}$ and $\mathcal{F}$ must act the same on objects. As $\mathcal{G}$ is non-gauge, we thus have $\mathcal{F}$ is also non-gauge, giving the contradiction. 

As the category $\mathcal{Z}(\cat{so}{2r+1}{2})$ has no non-trivial braided gauge auto-equivalences, we can bound the size of $\BrAut(\mathcal{Z}(\cat{so}{2r+1}{2}))$ by considering fusion rules and twists, as the group $\BrAut(\mathcal{Z}(\cat{so}{2r+1}{2}))$ is a subgroup of the group of fusion ring automorphisms of $\mathcal{Z}(\cat{so}{2r+1}{2})$ that preserve twists. We will refer to such fusion ring automorphisms as \textit{braided fusion ring automorphisms} for the remainder of the proof.

Consider the objects $X_1\boxtimes \mathbf{1}^\text{rev}$ and $\mathbf{1}\boxtimes X_1^\text{rev}$ in 
\[  \mathcal{Z}(\cat{so}{2r+1}{2}) =\cat{so}{2r+1}{2} \boxtimes \cat{so}{2r+1}{2}^\text{rev}.\] 
These objects $\otimes$-generate the factors $\cat{so}{2r+1}{2}$ and $\cat{so}{2r+1}{2}^\text{rev}$ respectively. Thus any braided fusion ring automorphism of $\mathcal{Z}(\cat{so}{2r+1}{2})$ that fixes these objects must be of the form $\mathcal{F} \boxtimes \mathcal{G}$ for $\mathcal{F}$ a braided fusion ring automorphism of $\cat{so}{2r+1}{2}$, and $\mathcal{G}$ a braided fusion ring automorphism of $\cat{so}{2r+1}{2}^\text{rev}$.

We now consider $\mathcal{F}$ a $\mathcal{F}$ a braided fusion ring automorphism of $\mathcal{Z}(\cat{so}{2r+1}{2})$ that moves either of the objects $X_1\boxtimes \mathbf{1}^\text{rev}$ and $\mathbf{1}\boxtimes X_1^\text{rev}$. Simply considering dimensions shows that there are eight possible objects that each of these two objects can be sent to, they are
\[   X_1\boxtimes \mathbf{1}^\text{rev}, X_1\boxtimes Z^\text{rev},X_2\boxtimes \mathbf{1}^\text{rev}, X_2\boxtimes Z^\text{rev} ,\mathbf{1}\boxtimes X_1^\text{rev}, Z\boxtimes X_1^\text{rev},\mathbf{1}\boxtimes X_2^\text{rev}, \text{ and } Z\boxtimes X_2^\text{rev}.  \]
By considering dimensions and twists, we can determine that $\mathcal{F}(  X_1 \boxtimes X_1^\text{rev} )$ must be either $X_1 \boxtimes X_1^\text{rev}$ or $X_2 \boxtimes X_2^\text{rev}$. With this information, we are able to compute Table~\ref{tab:posmaps}, showing possibilities for $\mathcal{F}( \mathbf{1} \boxtimes X_1^\text{rev})$, given $\mathcal{F}( X_1 \boxtimes \mathbf{1}^\text{rev})$. In particular the twists of the objects for the two possibilities of $\mathcal{F}( \mathbf{1}\boxtimes X_1^\text{rev} )$ always differ by $-1$. This shows that $\mathcal{F}$ is completely determined by where it sends the object $X_1\boxtimes \mathbf{1}^\text{rev}$. This gives us a naive bound
\[  |\BrAut(\mathcal{Z}( \cat{so}{2r+1}{2}))|  \leq 8 |\BrAut( \cat{so}{2r+1}{2}) |^2.    \]  
To improve this bound we now split the proof up in to cases.

\begin{table}[h!]
   
\centering 
    
    \begin{tabular}{c  c c}
    	\toprule
			$\mathcal{F}( X_1 \boxtimes \mathbf{1}^\text{rev})$ &    			      & $\mathcal{F}( \mathbf{1}\boxtimes X_1^\text{rev} )$ \\
			\midrule
			$X_1\boxtimes \mathbf{1}^\text{rev}$  	   & $\implies$ &   $\mathbf{1}\boxtimes X_1^\text{rev}$  \text{ or } $Z\boxtimes X_2^\text{rev}$	\\
			$ X_1\boxtimes Z^\text{rev}$  	  		   & $\implies$ &   $\mathbf{1}\boxtimes X_2^\text{rev}$  \text{ or } $Z\boxtimes X_1^\text{rev}$ 	\\
			$ X_2\boxtimes \mathbf{1}^\text{rev}$  	   & $\implies$ &   $\mathbf{1}\boxtimes X_2^\text{rev}$  \text{ or } $Z\boxtimes X_1^\text{rev}$ 	\\
			$X_2\boxtimes Z^\text{rev}$  	   		   & $\implies$ &   $\mathbf{1}\boxtimes X_1^\text{rev}$  \text{ or } $Z\boxtimes X_2^\text{rev}$	\\
			$\mathbf{1}\boxtimes X_1^\text{rev}$  	   & $\implies$ &   $ X_1\boxtimes \mathbf{1}^\text{rev}$ \text{ or } $ X_2\boxtimes Z^\text{rev}$	\\	
			$Z\boxtimes X_1^\text{rev}$  	   		   & $\implies$ &   $ X_2\boxtimes \mathbf{1}^\text{rev}$ \text{ or } $ X_1\boxtimes Z^\text{rev}$	\\		
			$\mathbf{1} \boxtimes X_2^\text{rev}$  	   & $\implies$ &   $ X_2\boxtimes \mathbf{1}^\text{rev}$ \text{ or } $ X_1\boxtimes Z^\text{rev}$	\\							
			$Z\boxtimes X_2^\text{rev}$  	   		   & $\implies$ &   $ X_1\boxtimes \mathbf{1}^\text{rev}$ \text{ or } $ X_2\boxtimes Z^\text{rev}$	\\          	                 
    	\bottomrule
    
	    \end{tabular}
	    	\caption{\label{tab:posmaps} Possibilities for $\mathcal{F}( \mathbf{1}\boxtimes X_1^\text{rev} )$.}

\end{table}

\hfill\begin{minipage}{\dimexpr\textwidth-1cm}

\textbf{Case: $r \equiv 0 \pmod 4$ and $\Z{2r+1}^\times$ contains an element $n$ such that $n^2 = -1$}

By considering dimensions and twists there are four possible objects in the image of $X_1\boxtimes \mathbf{1}^\text{rev}$ under $\mathcal{F}$. They are
\[ X_1\boxtimes \mathbf{1}^\text{rev}, X_1\boxtimes Z^\text{rev},\mathbf{1}\boxtimes X_1^\text{rev}, Z\boxtimes X_1^\text{rev}.
\]
\end{minipage}
\vspace{.5cm}

\hfill\begin{minipage}{\dimexpr\textwidth-1cm}
\textbf{Case: $r \equiv 2 \pmod 4$ and $\Z{2r+1}^\times$ contains an element $n$ such that $n^2 = -1$ }

By considering dimensions and twists there are four possible objects in the image of $X_1\boxtimes \mathbf{1}^\text{rev}$ under $\mathcal{F}$. They are
\[ X_1\boxtimes \mathbf{1}^\text{rev}, X_1\boxtimes Z^\text{rev},\mathbf{1}\boxtimes X_2^\text{rev}, Z\boxtimes X_2^\text{rev}.
\]
\end{minipage}
\vspace{.5cm}

\hfill\begin{minipage}{\dimexpr\textwidth-1cm}
\textbf{Case: $r \equiv 1 \pmod 2$ and $\Z{2r+1}^\times$ contains an element $n$ such that $n^2 = -1$ }

The first supplement to quadratic reciprocity shows that this is a vacuous case.
\end{minipage}
\vspace{.5cm}

\hfill\begin{minipage}{\dimexpr\textwidth-1cm}
\textbf{Case: $r \equiv 1 \pmod 2$ and $\Z{2r+1}^\times$ does not contain an element $n$ such that $n^2 = -1$ }

By considering dimensions and twists there are two possible objects in the image of $X_1\boxtimes \mathbf{1}^\text{rev}$ under $\mathcal{F}$. They are
\[ X_1\boxtimes \mathbf{1}^\text{rev} \text{ and } X_1\boxtimes Z^\text{rev}.
\]
\end{minipage}
\vspace{.5cm}

\hfill\begin{minipage}{\dimexpr\textwidth-1cm}
\textbf{Case: $r \equiv 0 \pmod 4$ and $\Z{2r+1}^\times$ does not contain an element $n$ such that $n^2 = -1$ }

By considering dimensions and twists there are four possible objects in the image of $X_1\boxtimes \mathbf{1}^\text{rev}$ under $\mathcal{F}$. They are
\[ X_1\boxtimes \mathbf{1}^\text{rev}, X_1\boxtimes Z^\text{rev},\mathbf{1}\boxtimes X_1^\text{rev}, Z\boxtimes X_1^\text{rev}.
\]
Aiming towards a contradiction, suppose that $\mathcal{F}(X_1 \boxtimes \mathbf{1}^\text{rev}) = \mathbf{1}\boxtimes X_1^\text{rev}$. Then $\mathcal{F}$ induces a braided equivalence $\cat{so}{2r+1}{2} \to \cat{so}{2r+1}{2}^\text{rev}$. However this is a contradiction by Lemma~\ref{lem:bopeq}, thus 
\[\mathcal{F}(X_1 \boxtimes \mathbf{1}^\text{rev}) \ncong \mathbf{1}\boxtimes X_1^\text{rev}.\]

Aiming towards another contradiction, suppose that there exist braided fusion ring automorphisms $\mathcal{F}_1 , \mathcal{F}_2$ such that $\mathcal{F}_1(X_1\boxtimes \mathbf{1}^\text{rev}) = X_1\boxtimes Z^\text{rev}$ and $\mathcal{F}_2(X_1\boxtimes \mathbf{1}^\text{rev}) = Z\boxtimes X_1^\text{rev}$. From Table~\ref{tab:posmaps} we see that $\mathcal{F}_1(\mathbf{1} \boxtimes X_1^\text{rev}) = Z \boxtimes X_1^\text{rev}$. Therefore  $ \mathcal{F}_1^{-1}\circ \mathcal{F}_2 $ is a braided fusion ring automorphism sending $X_1\boxtimes \mathbf{1}^\text{rev}$ to $\mathbf{1}\boxtimes X_1^\text{rev}$, which induces the same contradiction as before. 

Together, we see there are at most two objects in the image of $X_1\boxtimes \mathbf{1}^\text{rev}$ under $\mathcal{F}$ a braided fusion ring automorphism of $\cat{so}{2r+1}{2}$.
\end{minipage}
\vspace{.5cm}

\hfill\begin{minipage}{\dimexpr\textwidth-1cm}
\textbf{Case: $r \equiv 2 \pmod 4$ and $\Z{2r+1}^\times$ does not contain an element $n$ such that $n^2 = -1$ }

By considering dimensions and twists there are four possible objects that a braided fusion ring automorphism can send $X_1\boxtimes \mathbf{1}^\text{rev}$ to. They are
\[ X_1\boxtimes \mathbf{1}^\text{rev}, X_1\boxtimes Z^\text{rev},\mathbf{1}\boxtimes X_2^\text{rev}, Z\boxtimes X_2^\text{rev}.
\]
The same arguments as in the previous case show that at most two of these objects can be the image of $X_1\boxtimes \mathbf{1}^\text{rev}$ under $\mathcal{F}$.
\end{minipage}
\vspace{.5cm}

This case by case analysis gives us the following bound 
\[
 |\BrAut(\mathcal{Z}( \cat{so}{2r+1}{2}))|  \leq
  \begin{cases}
                                   4|\BrAut( \cat{so}{2r+1}{2}) |^2& \text{if $\Z{2r+1}^\times$ contains an element $n$ such that $n^2 = -1$} \\
 								   2|\BrAut( \cat{so}{2r+1}{2})|^2 & \text{otherwise.}
  \end{cases}.
\]
Combining this with the equality 
\[   | \BrAut(\mathcal{Z}(\cat{so}{2r+1}{2})) | = |  \TenAut( \cat{so}{2r+1}{2} )| \cdot      |  \BrAut(\cat{so}{2r+1}{2})|, \]
gives the statement of the Lemma.
\end{proof}

We move on to showing that the bounds on $\TenAut( \cat{so}{2r+1}{2}) $ from Lemma~\ref{lem:tenbound}, and on $\BrAut( \cat{so}{2r+1}{2}) $ from Lemma~\ref{lem:bupper} are sharp. Our main tool to construct monoidal and braided auto-equivalences of the category $\cat{so}{2r+1}{2}$ is to identify the de-equivariantization of $\cat{so}{2r+1}{2}$ by the $\operatorname{Rep}(\Z{2})$ category $\langle \mathbf{1},Z \rangle$. It turns out that computing the auto-equivalences of this de-equivariantization is much easier. We then use the functorality of equivariantization to obtain auto-equivalences of the category $\cat{so}{2r+1}{2}$.

There are two structures in play here, depending on if we think of $\cat{so}{2r+1}{2}$ as a braided category, or just a plain monoidal category. We begin by considering the braided case, which is slightly easier.

It is clear from the fusion rules and T-matrix of $\cat{so}{2r+1}{2}$ that there is unique fully faithful braided functor $\Rep(\Z{2}) \to \cat{so}{2r+1}{2}$, defined by 
\[   \text{triv} \mapsto \mathbf{1} \text{ and }  \text{sgn} \mapsto Z.\] 
By de-equivariantizating $\cat{so}{2r+1}{2}$ we get a triple of data $(\mathcal{D} , \rho , \sigma)$, where $\mathcal{D}$ is a $\Z{2}$-graded fusion category, $\rho$ a monoidal functor $\rho:\underline{\Z{2}} \to \underline{\TenAut}(\mathcal{D})$, and $\sigma$ is a family of $\Z{2}$-crossed braidings 
\[    X \otimes Y_1 \to \rho(1)[  Y_1 ] \otimes X, \]
such that the category $\mathcal{D}^{\Z{2}}$ is braided equivalent to $\cat{so}{2r+1}{2}$. Our goal is to determine as much information about this triple $(\mathcal{D} , \rho , \sigma)$ as possible.

From the results of \cite{MR3541678} we know that the category $\mathcal{D}$ is a Tambara-Yamagami category $\mathcal{TY}(\Z{2r+1},\chi,\tau)$, where $\chi$ is a symmetric bicharacter $\Z{2r+1} \times \Z{2r+1} \to \mathbb{C}^\times$, and $\tau \in \pm$. Furthermore they show that at the level of objects $\rho(1)$ fixes $m$, and sends $i \leftrightarrow -i$. Using up a degree of freedom, we can thus arrange $\Z{2r+1}$ so that the forgetful functor $\cat{so}{2r+1}{2} \to \cat{so}{2r+1}{2} // \Rep(\Z{2}) \simeq  \mathcal{TY}(\Z{2r+1},\chi,\tau)$ sends
\[   Y_i \mapsto i \oplus -i.\]

The $\Z{2}$-crossed braided category $\mathcal{TY}(\Z{2r+1},\chi,\tau)$ has a braided subcategory with $\Z{2r+1}$ fusion rules. By \cite{MR1734419} such categories are completely classified by $q$ a $2r+1$-th root of unity. The associator on $(\Z{2r+1},q )$ is trivial, and the braiding of two objects $i$ and $j$ is given by $q^{ij}$. To determine $q$ we note that the adjoint subcategory of $\cat{so}{2r+1}{2}$ is braided equivalent to $(\Z{2r+1},q )^{\Z{2}}$. Forgetting gives a braided functor that sends
\[   Y_i \mapsto  i \oplus -i. \]
In particular this implies that the twist of $1 \in (\Z{2r+1},q )$ is equal to the twist of $Y_1$. Thus we have that $q = e^{2\mathbf{i}\pi  \frac{r}{2r+1}}$.

As $\mathcal{TY}(\Z{2r+1}, \chi, \tau)$ is a $\Z{2}$-crossed braided extension of $(\Z{2r+1},q )$, we have that the single non-invertible simple object of $\mathcal{TY}(\Z{2r+1}, \chi, \tau)$ forms a module category over the braided category $(\Z{2r+1},q )$. The unique rank one module over $(\Z{2r+1},q )$, is $\operatorname{Vec}$ with trivial structure morphisms. Using the theory of $G$-crossed braided extensions \cite{MR2677836}, we can see that the associator morphism
\[   ( i \otimes m ) \otimes j \to   i \otimes( m  \otimes j) \]
in $\mathcal{TY}(\Z{2r+1}, \chi, \tau)$ is given by the braiding of $i$ and $j$ in $(\Z{2r+1},q )$. This completely determines the bicharacter $\chi$, giving us
\[ \chi(i,j) = q^{ij} = e^{2\mathbf{i}\pi  \frac{ijr}{2r+1}}. \]

We neglect to determine $\tau$.

\begin{lemma}\label{lem:tyaut}
Let $n \in \{ n \in \Z{2r+1}^\times : n^2 = 1\}$, then there exists a monoidal auto-equivalence $\mathcal{F}_n$ of $\mathcal{TY}(\Z{2r+1}, \chi, \tau)$ defined by
\[   \mathcal{F}_n(m) = m  \text{ and } \mathcal{F}_n(i) = ni \pmod{2r+1}. \]
The structure constants for $\mathcal{F}_n(m)$ are trivial. The map $n \mapsto \mathcal{F}_n(m)$ gives an isomorphism
\[ \{ n \in \Z{2r+1}^\times : n^2 = 1\} \to \TenAut(\mathcal{TY}(\Z{2r+1},\chi,\tau)). \]
\end{lemma}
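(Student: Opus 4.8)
The plan is to reduce everything to the general computation of Tambara--Yamagami auto-equivalences already carried out in Lemma~\ref{lem:TYautos}. That Lemma supplies a canonical isomorphism $\TenAut(\mathcal{TY}(\Z{2r+1},\chi,\tau)) \cong \Aut(\Z{2r+1},\chi)$, where $\Aut(\Z{2r+1},\chi)$ denotes the group of group automorphisms of $\Z{2r+1}$ preserving the bicharacter $\chi$. So the entire content of the Lemma is the computation of $\Aut(\Z{2r+1},\chi)$ for the specific bicharacter $\chi(i,j) = q^{ij}$ with $q = e^{2\pi i r/(2r+1)}$, together with transporting the explicit form of the auto-equivalences across the isomorphism recorded in Remark~\ref{rmk:tyautos}.

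First I would parametrise the group automorphisms of $\Z{2r+1}$: every such automorphism is of the form $\psi_n : i \mapsto ni$ for a unique $n \in \Z{2r+1}^\times$. The condition $\psi_n \in \Aut(\Z{2r+1},\chi)$ is $\chi(ni, nj) = \chi(i,j)$ for all $i,j$, i.e. $q^{n^2 ij} = q^{ij}$. Taking $i = j = 1$ shows this forces $q^{n^2-1} = 1$; conversely, if $q^{n^2-1}=1$ then $n^2 \equiv 1$ modulo the order of $q$, whence $n^2 ij \equiv ij$ modulo the order of $q$ for all $i,j$, so the full condition holds. Hence $\psi_n$ preserves $\chi$ if and only if $q^{n^2 - 1} = 1$.

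Next I would pin down the order of $q$. Since $\gcd(r, 2r+1) = \gcd(r,1) = 1$, the root of unity $q = e^{2\pi i r/(2r+1)}$ is a \emph{primitive} $(2r+1)$-th root of unity. Therefore $q^{n^2-1}=1$ is equivalent to $(2r+1) \mid (n^2-1)$, i.e. $n^2 \equiv 1 \pmod{2r+1}$. This identifies $\Aut(\Z{2r+1},\chi) = \{n \in \Z{2r+1}^\times : n^2 = 1\}$ as groups, and composing with the isomorphism of Lemma~\ref{lem:TYautos} yields the asserted isomorphism $\{n \in \Z{2r+1}^\times : n^2 = 1\} \to \TenAut(\mathcal{TY}(\Z{2r+1},\chi,\tau))$.

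Finally, to obtain the explicit description of $\mathcal{F}_n$ and the assertion that its structure constants on $m$ are trivial, I would simply read off the image of $\psi_n$ under the explicit isomorphism $\Aut(G,\chi) \to \TenAut(\mathcal{TY}(G,\chi,\tau))$ of Remark~\ref{rmk:tyautos}: the underlying functor sends $i \mapsto ni$ and $m \mapsto m$, and all tensorator components (in particular those attached to $m$) are identities. There is essentially no obstacle here; the only point requiring a moment's care is the verification that $q$ is primitive of order exactly $2r+1$, since if $q$ had strictly smaller order the constraint on $n$ would weaken and produce a larger automorphism group. The coprimality $\gcd(r,2r+1)=1$ settles this at once.
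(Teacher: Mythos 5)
Your proposal is correct and follows essentially the same route as the paper: the paper's proof is precisely a citation of Lemma~\ref{lem:TYautos} and Remark~\ref{rmk:tyautos} together with the ``routine'' check that $n$ preserves $\chi$ if and only if $n^2 \equiv 1 \pmod{2r+1}$, which is exactly the verification you spell out (including the key point that $\gcd(r,2r+1)=1$ makes $q$ a primitive $(2r+1)$-th root of unity).
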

\begin{proof}
This is a direct application of Lemma~\ref{lem:TYautos} and Remark~\ref{rmk:tyautos}. It is routine to check that $n \in \Z{2r+1}^\times$ preserves $\chi$ if and only if $n^2 = 1$.
\end{proof}

Now we determine $\rho: \underline{\Z{2}} \to\underline{\TenAut}(\mathcal{TY}(\Z{2r+1},\chi,\tau)  )$. Trivially we have that $\rho(0)$ is the identity on $\mathcal{TY}(\Z{2r+1},\chi,\tau)$. As mentioned before, $\rho(1)$ restricts to the $(\Z{2r+1},q)$ to give the auto-equivalence $i \mapsto -i$. From Lemma~\ref{lem:tyaut} there is a unique monoidal auto-equivalence of $\mathcal{TY}(\Z{2r+1}, \chi, \tau)$ that restricts to $i \mapsto -i$ on the $\Z{2r+1}$ subcategory. Thus $\rho(1) \cong \mathcal{F}_{2r}$. We neglect to determine the tensor structure of $\rho$.

Finally we determine the $\Z{2}$-crossed braiding $\sigma$ on $\mathcal{TY}(\Z{2r+1}, \chi, \tau)$. Solving \cite[Equations (82) and (83)]{MR2609644} using a slight alteration of the techniques used in \cite{0011037} gives that there are four possibilities for $\sigma$, depending on a choice of square root of both $\chi(1,1)$ and $\tau$.
\begin{align*}
\sigma_{i,j} &:= \chi(i,j)\operatorname{id}_{i+j} \\
\sigma_{i,m} &:=  \chi(1,1)^{\frac{i^2}{2}}\operatorname{id}_m \\
(\sigma_{m,m})_i &:= \sigma_{i,m}\chi(i,i) \sqrt{ \frac{\tau}{\sqrt{2r+1}}\sum_{j = 0}^{2r} \sigma_{j,m}} \id_i.
\end{align*}
Solving for equation  \cite[Equation (81)]{MR2609644} shows that $\chi(1,1)^{\frac{1}{2}} = \chi(1,1)^{2r^2}$, uniquely determining the choice of square root of $\chi(1,1)$. This allows us to write
\[ \sigma_{i,m} =  \chi(1,1)^{2i^2r^2}\operatorname{id}_m.\]
Without $\tau$ we are unable to determine its choice of square root. Thus we have two possible $\Z{2}$ crossed braidings. Thankfully the following key Lemma is independent from both the choice of $\tau$, and the choice of square root of $\tau$.  

\begin{lemma}\label{lem:prev}
Let $n \in \{ n \in \Z{2r+1}^\times : n^2 = 1\}$, then there exist monoidal natural isomorphisms
\[ \eta^0 : \rho(0) \circ \mathcal{F}_{n}  \to  \mathcal{F}_{n} \circ \rho(0)\]
and 
\[ \eta^1 : \rho(1) \circ \mathcal{F}_{n}  \to  \mathcal{F}_{n} \circ \rho(1)\]
defined by $\eta^0_X = \id_{ \mathcal{F}_{n}(X) }$ and $\eta^1_X = \id_{ \mathcal{F}_{2rn}(X) }$.

Furthermore, the pair $(\mathcal{F}_n, \eta )$ is a automorphism of $( \mathcal{TY}(\Z{2r+1},\chi,\pm) , \rho, \sigma)$ in the category of $\Z{2}$-crossed braided fusion categories.
\end{lemma}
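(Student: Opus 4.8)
The plan is to unwind the definition of a morphism in the $2$-category of $\Z{2}$-crossed braided fusion categories given just before Equation~\eqref{eq:com}: since we are exhibiting an \emph{auto}morphism of $(\mathcal{TY}(\Z{2r+1},\chi,\tau),\rho,\sigma)$, we set $\rho^1=\rho^2=\rho$, $\sigma^1=\sigma^2=\sigma$ and $\mathcal{F}=\mathcal{F}_n$, which is already a monoidal auto-equivalence by Lemma~\ref{lem:tyaut}. It then remains to check two things: first that $\eta^0$ and $\eta^1$ as defined are monoidal natural isomorphisms, and second that the compatibility hexagon \eqref{eq:com} commutes for every $g\in\Z{2}$ and all objects. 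I expect the first point to be routine and the second to carry the substance of the proof.

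For the first point I would record that, taking the representatives $\rho(0)=\Id$ and $\rho(1)=\mathcal{F}_{2r}$ from the discussion preceding the Lemma, the composites $\rho(g)\circ\mathcal{F}_n$ and $\mathcal{F}_n\circ\rho(g)$ agree literally as monoidal functors. Indeed, by Remark~\ref{rmk:tyautos} each $\mathcal{F}_a$ carries the trivial tensorator, so $\mathcal{F}_a\circ\mathcal{F}_b=\mathcal{F}_{ab}$ again with trivial tensorator; since $2r\equiv-1\pmod{2r+1}$, both $\rho(1)\circ\mathcal{F}_n$ and $\mathcal{F}_n\circ\rho(1)$ equal $\mathcal{F}_{2rn}$ as functors, tensorators included, and $\rho(0)\circ\mathcal{F}_n=\mathcal{F}_n=\mathcal{F}_n\circ\rho(0)$. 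Hence the identity maps $\eta^0_X=\id_{\mathcal{F}_n(X)}$ and $\eta^1_X=\id_{\mathcal{F}_{2rn}(X)}$ are genuine monoidal natural isomorphisms, with nothing further to verify.

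The heart of the argument is the hexagon \eqref{eq:com}. Because the tensorator $\tau$ of $\mathcal{F}_n$ and both $\eta^g$ are identities, the diagram collapses, after the canonical identifications $\mathcal{F}_n(X_g)\otimes\mathcal{F}_n(Y)\cong\mathcal{F}_n(X_g\otimes Y)$ and $\rho_g(\mathcal{F}_n(Y))=\mathcal{F}_n(\rho_g(Y))$, to the single scalar identity
\[ \mathcal{F}_n(\sigma_{X_g,Y})=\sigma_{\mathcal{F}_n(X_g),\mathcal{F}_n(Y)}. \]
By $\mathbb{C}$-linearity and semisimplicity it suffices to verify this on simple objects, leaving the four cases $(X_g,Y)\in\{(i,j),(i,m),(m,j),(m,m)\}$. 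Using $\mathcal{F}_n(i)=ni$, $\mathcal{F}_n(m)=m$ and the explicit crossed braiding, each case reduces to an equality of roots of unity: for $(i,j)$ one needs $\chi(ni,nj)=\chi(i,j)$, i.e. $q^{n^2ij}=q^{ij}$, which holds precisely because $q$ has order $2r+1$ and $n^2\equiv1\pmod{2r+1}$; the cases $(i,m)$ and $(m,m)$ reduce similarly to the $i\mapsto ni$ invariance of the scalars $q^{2i^2r^2}$ and $q^{i^2(2r^2+1)}$, again a direct consequence of $n^2\equiv1$.

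I anticipate the genuinely delicate points to be the two $m$-cases. First, the diagram for $X_g=m$ requires the crossed braidings $\sigma_{m,Y}$, which are not written down directly and must be extracted from the crossed-braided hexagon axioms of \cite{MR2609644} out of the listed data $\sigma_{i,j}$, $\sigma_{i,m}$, $(\sigma_{m,m})_i$; one must track how $\mathcal{F}_n$ reindexes the decomposition $m\otimes m\cong\bigoplus_i i$, sending the summand $i$ to $ni$ and using $n^{-1}=n$. Second, one must confirm the promised independence from the choices of $\sqrt{\tau}$ and $\sqrt{\chi(1,1)}$: the Gauss-sum factor $\sqrt{\tfrac{\tau}{\sqrt{2r+1}}\sum_j\sigma_{j,m}}$ appearing in $(\sigma_{m,m})_i$ is independent of $i$, so it cancels identically in the identity $\mathcal{F}_n((\sigma_{m,m})_i)=(\sigma_{m,m})_{ni}$ no matter which square roots are chosen, leaving only the $i$-dependent factor whose $i\mapsto ni$ invariance we have already noted. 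This cancellation is exactly what makes the Lemma hold independently of the undetermined data $\tau$, as claimed in the paragraph preceding it.
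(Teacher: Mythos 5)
Your proposal is correct and follows essentially the same route as the paper's proof: both observe that monoidality of $\eta^0,\eta^1$ is immediate since all tensorators involved are trivial, then reduce the commutativity of diagram~\eqref{eq:com} to the four scalar identities $\mathcal{F}_n(\sigma_{X_g,Y})=\sigma_{\mathcal{F}_n(X_g),\mathcal{F}_n(Y)}$ on simples, each of which follows from $\chi(1,1)$ being a $(2r+1)$-th root of unity and $n^2\equiv 1\pmod{2r+1}$, with the Gauss-sum factor in $(\sigma_{m,m})_i$ cancelling identically so that no choice of square root matters. If anything, your treatment of the $X_g=m$ case and the explicit square-root independence is slightly more careful than the paper's, which simply asserts these reductions.
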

\begin{proof}
As the tensor structure morphisms for the monoidal functor $\mathcal{F}_{n}$ are trivial, the natural isomorphisms $\eta^0$ and $\eta^1$ are easily checked to be monoidal.

To verify that $(\mathcal{F}_n, \eta )$ is a automorphism of $( \mathcal{TY}(\Z{2r+1},\chi,\pm) , \rho, \sigma)$ in the category of $\Z{2}$-crossed braided fusion categories, we have to check the commutativity of the diagram~\eqref{eq:com}. Working through all possibilities for all objects in $ \mathcal{TY}(\Z{2r+1},\chi,\pm)$ shows that commutativity is equivalent to the following equations
\begin{align*}
  \mathcal{F}_n( \sigma_{i,j} )  &= \sigma_{ni,nj} \text{ for all } i,j \in \Z{2r+1}, \\
  \mathcal{F}_n(\sigma_{i,m} ) &=  \sigma_{ni,m} \text{ for all } i \in \Z{2r+1}, \\
  \mathcal{F}_n(\sigma_{m,i} ) &=  \sigma_{m,ni} \text{ for all } i \in \Z{2r+1}, \\ 
  \mathcal{F}_n(\sigma_{m,m} ) &=  \sigma_{m,m} .
\end{align*}
Expanding these gives the equations:
\begin{align*}
  \chi(i,j)  &= \chi(ni,nj) , \\
  \chi(1,1)^{2i^2r^2} &=  \chi(1,1)^{2n^2i^2r^2}, \\
  \chi(1,1)^{2i^2r^2}\chi(i,i) \sqrt{ \frac{\tau}{\sqrt{2r+1}}\sum_{j = 0}^{2r} \chi(1,1)^{2j^2r^2}} &=  \chi(1,1)^{2n^2i^2r^2} \chi(ni,ni) \sqrt{ \frac{\tau}{\sqrt{2r+1}}\sum_{j = 0}^{2r} \chi(1,1)^{2j^2r^2}} , \\
    \text{ for all } i,j &\in \Z{2r+1}.
\end{align*}
Each of these follows from the fact that $\chi(1,1)$ is a $2r+1$-th root of unity, and 
\[n^2 \equiv 1 \pmod {2r+1}.\]
\end{proof}

With the above Lemma in hand we can now apply the functorality of equivariantization to get a homomorphism
\[   n \mapsto \mathcal{F}_n^{\Z{2}} : \{ n \in \Z{2r+1}^\times : n^2 = 1\} \to \BrAut(\cat{so}{2r+1}{2}) \]
We explicitly compute that $\mathcal{F}_n^{\Z{2}}$ fixes the objects $\mathbf{1},Z,X_1,X_2$, and sends the object $Y_i \mapsto  Y_{\min( ni \pmod {2r+1}) ,  -ni \pmod {2r+1})}$. This homomorphism is surjective (although not \textit{a-priori}), but not injective.

\begin{lemma}
The kernel of the homomorphism $n \mapsto \mathcal{F}_n^{\Z{2}}$ is $\{1 , -1 \}$.
\end{lemma}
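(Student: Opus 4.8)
The plan is to reduce the computation of the kernel to a statement purely about the action on isomorphism classes of objects, and from there to a one-line number-theoretic observation. The key leverage is that $\cat{so}{2r+1}{2}$ has no non-trivial gauge auto-equivalences by Corollary~\ref{cor:gaugeB}: a braided auto-equivalence of $\cat{so}{2r+1}{2}$ is trivial (naturally isomorphic to the identity) if and only if it induces the trivial fusion ring automorphism, i.e. fixes every isomorphism class of simple object. Hence $n$ lies in the kernel of $n \mapsto \mathcal{F}_n^{\Z{2}}$ precisely when $\mathcal{F}_n^{\Z{2}}$ acts as the identity on objects.

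First I would recall the explicit action on objects established just above the statement: for every admissible $n$ the functor $\mathcal{F}_n^{\Z{2}}$ fixes $\mathbf{1}, Z, X_1, X_2$, and sends $Y_i \mapsto Y_{\min( ni \pmod{2r+1},\, -ni \pmod{2r+1})}$ for $1 \le i \le r$. Thus $\mathcal{F}_n^{\Z{2}}$ is trivial on objects exactly when $Y_{\min(ni,\,-ni)} = Y_i$ for all such $i$. Since the labels $1, \ldots, r$ are representatives for the pairs $\{j, -j\}$ of non-zero elements of $\Z{2r+1}$, this is equivalent to the condition that $ni \equiv \pm i \pmod{2r+1}$ for every $i$.

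The final step is to observe that this whole family of congruences collapses to a single constraint. Specialising to $i = 1$ gives $n \equiv \pm 1 \pmod{2r+1}$ immediately; conversely, if $n \equiv \pm 1 \pmod{2r+1}$ then $ni \equiv \pm i$ holds for all $i$. Hence the kernel is exactly $\{1, -1\}$, and these two elements are distinct because $2r+1 \ge 3$.

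There is essentially no hard step here, since the content has already been front-loaded into the explicit description of $\mathcal{F}_n^{\Z{2}}$ on objects and into the vanishing of $\operatorname{Gauge}(\cat{so}{2r+1}{2})$. The only point requiring genuine care is the initial reduction: one must confirm that a braided auto-equivalence inducing the trivial fusion ring automorphism is genuinely the identity, rather than merely object-preserving up to a possibly non-trivial tensorator. This is precisely what Corollary~\ref{cor:gaugeB} rules out, so it is the load-bearing input rather than the number theory.
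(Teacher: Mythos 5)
Your proof is correct and follows essentially the same route as the paper: both arguments identify the kernel by evaluating the action on $Y_1$ to force $n \equiv \pm 1 \pmod{2r+1}$, and both invoke Corollary~\ref{cor:gaugeB} (triviality of $\operatorname{Gauge}(\cat{so}{2r+1}{2})$) to conclude that an auto-equivalence fixing every simple object is genuinely trivial, which is exactly how the paper handles $\mathcal{F}_{2r}^{\Z{2}}$.
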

\begin{proof}
Let $n \in \Z{2r+1}^\times$ such that $n^2 = 1$ such that $\mathcal{F}_n^{\Z{2}} \cong \Id_{\cat{so}{2r+1}{2}}$. Then $Y_1 = \mathcal{F}_n^{\Z{2}}(Y_1) = Y_{\min( n \pmod {2r+1}) ,  -n \pmod {2r+1})}$, giving us that $n = \pm 1$.

On the other hand the auto-equivalence $\mathcal{F}_{2r}^{\Z{2}}$ is a gauge auto-equivalence of $\cat{so}{2r+1}{2}$, as it fixes all the simple objects. Thus by Corollary~\ref{cor:gaugeB} $\mathcal{F}_{2r}^{\Z{2}}$ is trivial.
\end{proof}
\begin{cor}
There is an injection 
\[   \{ n \in \Z{2r+1}^\times : n^2 = 1\} / (\pm) \to \BrAut(\cat{so}{2r+1}{2}).\]
\end{cor}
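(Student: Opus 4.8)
The plan is simply to invoke the First Isomorphism Theorem for groups; all of the genuine content has already been assembled in the two preceding results. The functorality of equivariantization (together with Lemma~\ref{lem:prev}) produces a group homomorphism
\[ \phi : \{ n \in \Z{2r+1}^\times : n^2 = 1\} \to \BrAut(\cat{so}{2r+1}{2}), \qquad n \mapsto \mathcal{F}_n^{\Z{2}}, \]
and the immediately preceding Lemma computes its kernel to be exactly $\{\pm 1\}$.

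First I would observe that $\{\pm 1\}$ is a subgroup of $G := \{ n \in \Z{2r+1}^\times : n^2 = 1\}$, since both $1$ and $-1$ square to $1$; moreover $G$ is abelian as a subgroup of $\Z{2r+1}^\times$, so the quotient $G/(\pm)$ is a well-defined group. Then, because $\ker \phi = \{\pm 1\}$, the homomorphism $\phi$ descends through the canonical projection $G \to G/(\pm)$ to an induced homomorphism
\[ \bar\phi : G/(\pm) \to \BrAut(\cat{so}{2r+1}{2}). \]
The First Isomorphism Theorem then asserts that $\bar\phi$ is injective, as its kernel is the image of $\ker\phi = \{\pm 1\}$ under the projection, which is the trivial coset. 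This $\bar\phi$ is the claimed injection.

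I do not anticipate any real obstacle in this final step: the substance of the Corollary lives entirely in the two preceding statements, namely that $n \mapsto \mathcal{F}_n^{\Z{2}}$ is a homomorphism at all (which rests on the functorality of equivariantization and the $\Z{2}$-crossed compatibility verified in Lemma~\ref{lem:prev}) and that its kernel is precisely $\{\pm 1\}$. Granting these, the Corollary reduces to a routine application of the universal property of the quotient group, and the only thing worth recording explicitly is the subgroup check making $G/(\pm)$ meaningful.
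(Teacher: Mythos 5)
Your proposal is correct and matches the paper's reasoning: the paper states this Corollary as an immediate consequence of the preceding Lemma (kernel $= \{\pm 1\}$) together with the homomorphism $n \mapsto \mathcal{F}_n^{\Z{2}}$ obtained from functorality of equivariantization, which is precisely your quotient-and-inject argument. The only difference is that the paper leaves the First Isomorphism Theorem step implicit, while you spell it out.
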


This injection realises the upper bound on $\BrAut(\cat{so}{2r+1}{2})$ from Lemma~\ref{lem:bupper}, hence we have
\[   \BrAut(\cat{so}{2r+1}{2})   \cong  \{ n \in \Z{2r+1}^\times : n^2 = 1\} / (\pm). \] 
Let $\omega(2r+1)$ be the number of distinct prime divisors  of $2r+1$. We have that 
\[  \Z{2}^{\omega(2r+1)} \cong \{ n \in \Z{2r+1}^\times : n^2 = 1\},\]
with an explicit isomorphism sending 
\[   \{p_1^\pm , p_2^\pm\cdots , p_k^\pm \} \mapsto   \sum_{i = 1}^k \pm \frac{2r+1}{p_i} \pmod{2r+1}. \]
This allows us to write 
\[   \BrAut(\cat{so}{2r+1}{2})   \cong  \Z{2}^{\omega(2r+1) - 1} \]
as in the statement of Theorem~\ref{thm:main}.

\vspace{1cm}

 We now consider $\cat{so}{2r+1}{2}$ as just a monoidal category, forgetting the braided structure. The monoidal category $\cat{so}{2r+1}{2}$ has two structures as a category over $\operatorname{Rep}(\Z{2})$, given by the two central functors $\langle Z \rangle \to \mathcal{Z}( \cat{so}{2r+1}{2}   )$ defined by:
 \[ F^+: Z \mapsto Z\boxtimes \mathbf{1}^\text{rev}\quad \text{ and } \quad   F^-: Z \mapsto \mathbf{1} \boxtimes Z^\text{rev}.\]
 
Thus de-equivariantizing $\cat{so}{2r+1}{2}$ gives us two fusion categories with $\Z{2}$ action, corresponding to the two lifts $F^\plus$ and $F^\minus$. We call these two fusion categories with $\Z{2}$ action 
\[   ( \mathcal{D}_\plus , \rho^\plus) \qquad \text{ and } \qquad( \mathcal{D}_\minus , \rho^\minus)\]

 respectively, where $\mathcal{D}_\pm$ are fusion categories, and $\rho^\pm : \underline{\Z{2}} \to \underline{\TenAut}(\mathcal{D}_\pm)$ are monoidal functors, such that $\mathcal{D}_\pm^{\Z{2}} =\cat{so}{2r+1}{2}$.

It follows from the braided case that $\mathcal{D}_\plus = \mathcal{TY}(\Z{2r+1},\chi,\tau)$, with 
\[  \chi(i,j) =  e^{2\mathbf{i}\pi  \frac{ijr}{2r+1}}.\]
As $\mathcal{D}_\pm^{\Z{2}} = \cat{so}{2r+1}{2}$ we can count the simple objects of both categories to see that $\rho(1)$ can only fix the simple objects $0$ and $m$ in $\mathcal{TY}(\Z{2r+1},\chi,\tau)$. From Lemma~\ref{lem:tyaut} there is a unique monoidal auto-equivalence of $\mathcal{TY}(\Z{2r+1},\chi,\tau)$ with this property. Hence we have that $\rho^+(1) =\mathcal{F}_{2r}$.

As $F^\minus$ factors through $\cat{so}{2r+1}{2}^\text{rev}$, we have that $\mathcal{D}^\minus = (\mathcal{D}^\plus)^\text{op} = \mathcal{TY}(\Z{2r+1},\chi^{-1},\tau)$. The same argument as in the $\rho^\plus$ case shows that $\rho^\minus = \mathcal{F}_{2r}$.

\begin{lemma}\label{lem:Tyeq}
Let $n \in \Z{2r+1}^\times$ such that $n^2 = -1$, then there exists a monoidal equivalence $\mathcal{F}_n: \mathcal{TY}(\Z{2r+1}, \chi, \pm) \to \mathcal{TY}(\Z{2r+1}, \chi^{-1}, \pm)$ that fixes $m$, and sends $i \mapsto ni$. The structure constants for these monoidal equivalences are trivial. Furthermore there exist monoidal natural isomorphisms
\[ \eta^0 : \rho^\minus(0) \circ \mathcal{F}_{n}  \to  \mathcal{F}_{n} \circ \rho^\plus(0)\]
and 
\[ \eta^1 : \rho^\minus(1) \circ \mathcal{F}_{n}  \to  \mathcal{F}_{n} \circ \rho^\plus(1)\]
defined by $\eta^0_X = \id_{ \mathcal{F}_{n}(X) }$ and $\eta^1_X = \id_{ \mathcal{F}_{2rn}(X) }$.

Further, the pair $(\mathcal{F}_n, \eta)$ gives a morphism
\[ ( \cat{so}{2r+1}{2},F^\plus) \to  ( \cat{so}{2r+1}{2},F^\minus         )\]
in the category of fusion categories with $\Z{2}$-action.
\end{lemma}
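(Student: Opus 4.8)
The plan is to follow the template of Lemma~\ref{lem:TYautos} and Remark~\ref{rmk:tyautos} for the monoidal equivalence, and of Lemma~\ref{lem:prev} for the equivariance data, the only genuinely new feature being that $\mathcal{F}_n$ now sends the bicharacter $\chi$ to $\chi^{-1}$ rather than preserving it.

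First I would check that $\mathcal{F}_n$, defined on objects by $m\mapsto m$ and $i\mapsto ni$ and equipped with the trivial tensorator, is a monoidal equivalence $\mathcal{TY}(\Z{2r+1},\chi,\tau)\to\mathcal{TY}(\Z{2r+1},\chi^{-1},\tau)$. Because the tensorator is trivial, the only thing to verify is that $\mathcal{F}_n$ intertwines the associators. The crucial arithmetic input is that $n^2\equiv -1\pmod{2r+1}$, together with $\chi(i,j)=\chi(1,1)^{ij}$ and $\chi(1,1)$ a $(2r+1)$-th root of unity, gives $\chi(ni,nj)=\chi(1,1)^{n^2 ij}=\chi(i,j)^{-1}$, equivalently $\chi^{-1}(ni,nj)=\chi(i,j)$. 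Feeding this into the three non-trivial associator components $\alpha_{i,m,j}$, $(\alpha_{m,i,m})_{j,j}$ and $(\alpha_{m,m,m})_{i,j}$ of a Tambara--Yamagami category, and re-indexing the summands of $m\otimes m=\bigoplus_g g$ along $g\mapsto ng$, one verifies directly that the image under $\mathcal{F}_n$ of each associator of $\mathcal{TY}(\Z{2r+1},\chi,\tau)$ equals the corresponding associator of $\mathcal{TY}(\Z{2r+1},\chi^{-1},\tau)$. The matching of the $\alpha_{m,m,m}$ terms is exactly where source and target are forced to carry the same $\tau$; this is the computation of Lemma~\ref{lem:TYautos} run with $\chi^{-1}$ on the target.

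Next I would verify that $\eta^0$ and $\eta^1$ are monoidal natural isomorphisms. Since $\rho^+(0)=\rho^-(0)=\Id$ and $\rho^+(1)=\rho^-(1)=\mathcal{F}_{2r}$ (with $2r\equiv -1$), and since composing these functors merely multiplies the defining scalars modulo $2r+1$, both $\rho^-(0)\circ\mathcal{F}_n$ and $\mathcal{F}_n\circ\rho^+(0)$ are literally $\mathcal{F}_n$, while both $\rho^-(1)\circ\mathcal{F}_n$ and $\mathcal{F}_n\circ\rho^+(1)$ are literally $\mathcal{F}_{2rn}$ (using $4r^2\equiv 1$). As all these functors carry the trivial tensorator, the identity transformations $\eta^0_X=\id_{\mathcal{F}_n(X)}$ and $\eta^1_X=\id_{\mathcal{F}_{2rn}(X)}$ are well-defined and automatically monoidal, just as in Lemma~\ref{lem:prev}.

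Finally, to see that $(\mathcal{F}_n,\eta)$ is a morphism in the category of fusion categories with $\Z{2}$-action (whence, by the functoriality of equivariantization, a morphism $(\cat{so}{2r+1}{2},F^+)\to(\cat{so}{2r+1}{2},F^-)$), I would check the coherence condition relating $\eta$ to the monoidal structures of $\rho^+$ and $\rho^-$ --- the non-braided analogue of the data assembled around diagram~\eqref{eq:com}, now with no braiding constraint to impose. For $G=\Z{2}$ the only non-trivial instance is $g=h=1$, comparing $\eta^0$ with the two-fold composite of $\eta^1$ through the structure isomorphisms $\rho^\pm(1)\circ\rho^\pm(1)\cong\rho^\pm(0)$. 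I expect this to be the main obstacle, because the tensor structures of $\rho^+$ and $\rho^-$ were deliberately left undetermined. The point to establish is that, since $\mathcal{F}_n$ carries the trivial tensorator and intertwines $\chi$ with $\chi^{-1}$, it transports the structure isomorphism of $\rho^+$ onto that of $\rho^-$, so the coherence equation collapses to an identity of scalars in $\chi(1,1)$ that holds precisely because $n^2\equiv -1$ and $\chi(1,1)^{2r+1}=1$ --- the same mechanism that made Lemma~\ref{lem:prev} go through.
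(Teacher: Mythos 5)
Your proposal is correct and is essentially the paper's proof, which is given there in a single line as ``a slight alteration of the proofs of Lemmas~\ref{lem:tyaut} and~\ref{lem:prev}'': the entire content is your steps 1--2, where the identity $\chi(ni,nj)=\chi(i,j)^{-1}$ (from $n^2\equiv -1$ and $\chi(1,1)$ being a $(2r+1)$-th root of unity) replaces the invariance $\chi(ni,nj)=\chi(i,j)$ used in the $n^2\equiv 1$ case, and forces the target category to carry $\chi^{-1}$ and the same $\tau$. The one place you diverge is step 3, and it is vacuous under the paper's conventions: in its category of fusion categories with $\Z{2}$-action, a morphism is just a monoidal functor $\mathcal{F}$ together with monoidal natural isomorphisms $\eta^g:\rho^2(g)\circ\mathcal{F}\to\mathcal{F}\circ\rho^1(g)$, with no compatibility imposed between the $\eta^g$ and the compositor isomorphisms $\rho^\pm(1)\circ\rho^\pm(1)\cong\rho^\pm(0)$ (such a condition is checked neither here nor in Lemma~\ref{lem:prev}; the only extra condition in the braided case is diagram~\eqref{eq:com}, which is dropped in the monoidal case), so once your steps 1--2 are done the final claim holds by definition --- your instinct that compositor coherence matters is reasonable in a stricter 2-categorical framework, but it is not part of what this lemma, as stated in the paper, asserts.
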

\begin{proof}
This proof is a slight altercation of the proofs of Lemmas~\ref{lem:tyaut} and \ref{lem:prev}.
\end{proof}
  
Using the functorality of equivariantization, we get for each $n \in \Z{2r+1}^\times$ such that $n^2 = -1$, a morphism in the category of fusion categories over $\Rep(\Z{2})$: 
\[  \mathcal{F}_n^{\Z{2}}:  ( \cat{so}{2r+1}{2},F^\plus) \to  ( \cat{so}{2r+1}{2},F^\minus ).\]
Forgetting the central structures gives monoidal auto-equivalences of $\cat{so}{2r+1}{2}$ for each $n \in \Z{2r+1}^\times$ such that $n^2 = 1$. Combining these new monoidal auto-equivalences, with the braided auto-equivalences we have already constructed, we obtain a homomorphism
\[ n \mapsto \mathcal{F}_n^{\Z{2}} : \{ n \in \Z{2r+1}^\times : n^2 = \pm 1\} \to \TenAut(\cat{so}{2r+1}{2}).\]
Explicitly we compute that $\mathcal{F}_n^{\Z{2}}$ fixes the objects $\mathbf{1}, Z, X_1,X_2$, and sends $Y_i \mapsto  Y_{\min( ni \pmod {2r+1}) ,  -ni \pmod {2r+1})}$.

The same as in the braided case, we can show that the kernel of the homomorphism $n \mapsto \mathcal{F}_n^{\Z{2}}$ is $\{1,-1\}$, and thus we get an injection
\[  \{ n \in \Z{2r+1}^\times : n^2 = \pm 1\} / \{\pm\} \to  \TenAut(\cat{so}{2r+1}{2}).\]
We can use the boson $Z$ to construct a simple current monoidal auto-equivalence of $\cat{so}{2r+1}{2}$ sending $X_1 \leftrightarrow X_2$, and fixing all other simple objects. This gives us an injection 
\[  \Z{2} \times  \{ n \in \Z{2r+1}^\times : n^2 = \pm 1\} / \{\pm\}\to  \TenAut(\cat{so}{2r+1}{2}),\]
realising the upper bound on $\TenAut(\cat{so}{2r+1}{2})$ from Lemma~\ref{lem:tenbound}.
 
Via the first supplement to quadratic reciprocity, we have that $\Z{2r+1}$ has a solution to $n^2 = -1$ if and only if every prime $p$ dividing $2r+1$ satisfies $p\equiv 1 \pmod 4$. This allows us to write
\[\TenAut( \cat{so}{2r+1}{2})  =
  \begin{cases}
                                   \Z{2}\times \Z{2}\times \BrAut( \cat{so}{2r+1}{2}) & \text{if every prime $p$ dividing $2r+1$ satisfies $p\equiv 1 \pmod 4$} \\
 								   \Z{2} \times \BrAut( \cat{so}{2r+1}{2}) & \text{otherwise.}
  \end{cases}
\]
As in the statement of Theorem~\ref{thm:main}.

\section{Auto-equivalences for Lie type $C$}

In this section we compute the monoidal, and braided auto-equivalences of the categories $\cat{sp}{2r}{k}$. The outline of our computations is as follows. 

We begin by showing that there are no non-trivial gauge auto-equivalences of the categories $\cat{sp}{2r}{k}$. Recall that the object $\Lambda_1$ $\otimes$-generates $\cat{sp}{2r}{k}$. Thus any non-trivial gauge auto-equivalence of $\cat{sp}{2r}{k}$ will appear as a non-trivial planar algebra automorphism of the planar algebra generated by $\Lambda_1$. We know that this planar algebra is isomorphic to 
\[ \overline{\text{BMW}( q , -q^{2r+1} )} \text{ with } q = e^\frac{2 \pi i}{4(r + k + 1)}.\]
We can reuse a computation from the Lie type $B$ section to show that this planar algebra has no non-trivial automorphisms that give rise to gauge auto-equivalences of $\cat{sp}{2r}{k}$. Hence the category $\cat{sp}{2r}{k}$ has no non-trivial gauge auto-equivalences.

The results of \cite{MR1887583} compute the fusion ring automorphisms of $\cat{sp}{2r}{k}$ as:
\begin{align*}
\{e\} & \text{ if $r = 2$ and $k= 1$ }, \\ 
\{e\} & \text{ if $r \neq k$ and $rk$ is odd }, \\ 
\Z{2} & \text{ if $r \neq k$ and $rk$ is even }, \\ 
\Z{2} & \text{ if $r = k$ and $r$ is odd }, \\ 
\Z{2}\times \Z{2} & \text{ if $r = k$ and $r$ is even }.
\end{align*}

We find that all of these fusion ring automorphisms are realised as monoidal auto-equivalences, though only some of them are braided. The first four cases above are easily dealt with by constructing the auto-equivalences as simple current auto-equivalences. The latter two cases are more interesting, as there exists an exotic fusion ring automorphism. We show this exotic automorphism is realised by lifting the interesting planar algebra automorphism of $ \overline{\text{BMW}( q , i )}$ seen in Lemma~\ref{lem:paBMW}.

Let us being our computations.

\begin{lemma}
The category $\cat{sp}{2r}{k}$ has no non-trivial gauge auto-equivalences
\end{lemma}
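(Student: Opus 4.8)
The plan is to mirror the strategy from the Lie type $B$ section, but the argument is cleaner here because $\Lambda_1$ $\otimes$-generates the \emph{entire} category $\cat{sp}{2r}{k}$, so none of the graded-extension machinery of \cite{1711.00645} is needed. First I would record that, with Turaev's unimodal pivotal structure, $\Lambda_1$ is a symmetrically self-dual $\otimes$-generating object of $\cat{sp}{2r}{k}$ (this is exactly the condition the pivotal structure was chosen to guarantee). Lemma~\ref{lem:gaugeauto} then gives an injection
\[ \operatorname{Gauge}(\cat{sp}{2r}{k}) \to \Aut\bigl( \text{PA}(\cat{sp}{2r}{k}, \Lambda_1)\bigr), \]
and by the isomorphism~\eqref{eq:sp} the target is $\Aut\bigl(\overline{\text{BMW}(q, -q^{2r+1})}\bigr)$ with $q = e^{2\pi i/(4(r+k+1))}$.

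Next I would feed the parameters $(q, -q^{2r+1})$ into Lemma~\ref{lem:paBMW}. That lemma says $\overline{\text{BMW}(q, -q^{2r+1})}$ has no non-trivial planar algebra automorphisms whatsoever unless its second parameter satisfies $-q^{2r+1} = \pm i$. If $-q^{2r+1} \neq \pm i$ the automorphism group is trivial and the injection above forces $\operatorname{Gauge}(\cat{sp}{2r}{k}) = \{e\}$ at once. One could pin down exactly which pairs $(r,k)$ land in the exceptional case by solving $q^{2r+1} = \mp i$ for $q = e^{\pi i/(2(r+k+1))}$, but this is not needed for the conclusion.

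The crux is the exceptional case $-q^{2r+1} = \pm i$, and here I would lean on the \emph{object-level} half of Lemma~\ref{lem:paBMW}: the unique non-trivial automorphism of $\overline{\text{BMW}(q, \pm i)}$ exchanges two non-isomorphic simple objects in the idempotent completion. The injection of Lemma~\ref{lem:gaugeauto} is compatible with the action on objects, since it arises from the functorial equivalence of \cite{1607.06041} between based pivotal categories and planar algebras, under which the action of an automorphism on idempotents matches the action of the corresponding auto-equivalence on simple objects. A gauge auto-equivalence is by definition the identity on objects, so it can only map to a planar algebra automorphism fixing every idempotent up to isomorphism; the non-trivial automorphism permutes simple objects, hence lies outside the image of $\operatorname{Gauge}(\cat{sp}{2r}{k})$. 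Thus the image of the injection is trivial in every case, and $\operatorname{Gauge}(\cat{sp}{2r}{k}) = \{e\}$.

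The one point demanding care is precisely this last compatibility: one must justify that a functor which is the identity on isomorphism classes cannot be carried to the object-permuting automorphism. This is not a fresh BMW computation but a consequence of the functoriality already invoked in Lemma~\ref{lem:gaugeauto}, combined with the explicit pair of exchanged idempotents written down in the proof of Lemma~\ref{lem:paBMW}. So the expected main obstacle is bookkeeping about the correspondence rather than any genuinely new calculation.
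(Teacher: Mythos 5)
Your proposal is correct and follows essentially the same route as the paper: inject $\operatorname{Gauge}(\cat{sp}{2r}{k})$ into $\Aut\bigl(\overline{\text{BMW}(q,-q^{2r+1})}\bigr)$ via Lemma~\ref{lem:gaugeauto} and Equation~\eqref{eq:sp}, then invoke Lemma~\ref{lem:paBMW} to conclude that the only possible non-trivial automorphism (occurring exactly when $k=r$, i.e.\ $-q^{2r+1}=\pm i$) exchanges non-isomorphic simple objects and hence cannot arise from a gauge auto-equivalence. The only cosmetic difference is that the paper pins the exceptional case down as $k=r$ while you leave the parameter condition unsolved, which does not affect the argument.
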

\begin{proof}
Consider the object $\Lambda_1 \in \cat{sp}{2r}{k}$. Recall from Equation~\ref{eq:sp} that the planar algebra generated by $\Lambda_1$ is isomorphic to 
\[ \overline{\text{BMW}( q , -q^{2r+1} )} \text{ with } q = e^\frac{2 \pi i}{4(r + k + 1)}.\]
As the object $\Lambda_1$ $\otimes$-generates $\cat{sp}{2r}{k}$, we can apply Lemma~\ref{lem:gaugeauto} to see that every non-trivial gauge auto-equivalence of $\cat{sp}{2r}{k}$ appears as a non-trivial planar algebra automorphism of $\overline{\text{BMW}( q , -q^{2r+1} )}$. From Lemma~\ref{lem:paBMW}, the planar algebra $\overline{\text{BMW}( q , -q^{2r+1} )}$ only has a single non-trivial automorphism when $k = r$, which corresponds to an auto-equivalence of $\cat{sp}{2r}{k}$ that exchanges distinct objects. Thus the category $\cat{sp}{2r}{k}$ has no non-trivial gauge auto-equivalences.
\end{proof}

Next up we have to show that all fusion ring automorphisms of $\cat{sp}{2r}{k}$ are realisable. For this we break up into cases.

\subsection*{Case: $r=2$ and $k=1$ or $r \neq k$ and $rk$ is odd}\hspace{1em}

In this case we have no non-trivial fusion ring automorphisms of $\cat{sp}{2r}{k}$, thus
\[ \BrAut(\cat{sp}{2r}{k} ) =  \TenAut(\cat{sp}{2r}{k} )  =   \text{Gauge}(\cat{sp}{2r}{k}) = \{e\},\]
as in the statement of Theorem~\ref{thm:main}.

\subsection*{Case: $r \neq k$ and $rk$ is even}\hspace{1em}

In this case the upper bound of the auto-equivalence group is $\Z{2}$. The modular category $\cat{sp}{2r}{k}$ has an order 2 invertible object $k\Lambda_r$ which we can use to construct a simple current auto-equivalence. This invertible object has self-braiding eigenvalue $1$ when $rk \equiv 0 \pmod 4$, and $-1$ when $rk \equiv 2 \pmod 4$. Thus we can use Lemma~\ref{lem:simplecurrent} to construct a (possibly trivial) monoidal auto-equivalence of $\cat{sp}{2r}{k}$. This auto-equivalence sends 
\[\Lambda_1\mapsto \Lambda_1 \otimes (k\Lambda_r) \cong (\Lambda_{r-1} + (k-1)\Lambda_r),\]
 hence this auto-equivalence is non-trivial apart from the case $r=2$ and $k=1$, which we have already dealt with. This auto-equivalence is braided if and only if $(k\Lambda_r)$ has self-braiding eigenvalue $-1$, i.e if and only if $rk \equiv 2 \pmod 4$. Thus we have
\[   \TenAut( \cat{sp}{2r}{k}  ) = \Z{2} \quad \text{ and } \quad  \BrAut( \cat{sp}{2r}{k}  )  = \begin{cases}
\{e\}, \text{ if $rk \equiv 0 \pmod 4$,}\\
\Z{2}, \text{   if $rk \equiv 2 \pmod 4$.}\end{cases}\]
as in the statement of Theorem~\ref{thm:main}.

\subsection*{Case: $r = k$}

When $r = k$, the category $\cat{sp}{2r}{r}$ has an exotic order 2 fusion ring automorphism defined as follows. For an object $\sum_{i= 1}^r \lambda_i \Lambda_i$ consider the Young diagram whose $j$-th row consists of $r - \sum_{i = k - j}^k \lambda_i$ boxes. The transpose of this Young diagram corresponds to another simple object of $\cat{sp}{2r}{r}$, under the same combinatorial interpretation. The image of the object $\sum_{i= 1}^r \lambda_i \Lambda_i$ is exactly the simple object corresponding to this transposed Young diagram. As a quick example, we work out where $\Lambda_2$ gets sent to under this exotic automorphism.

\[   (\Lambda_2) \sim  \raisebox{-.5\height}{ \includegraphics[scale = .5]{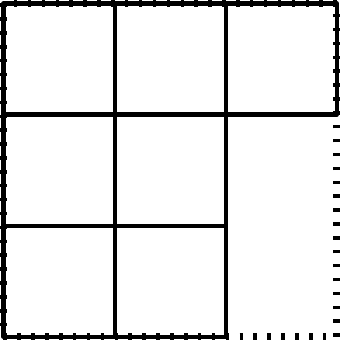}}\quad  \overset{Tr}{\longrightarrow}  \quad \raisebox{-.5\height}{ \includegraphics[scale = .5]{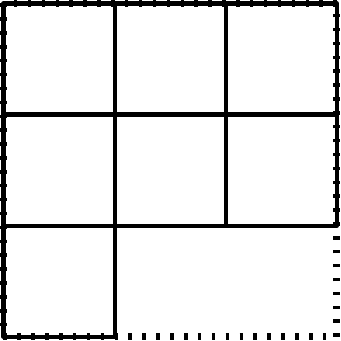}} \sim(2\Lambda_1).\]

This exotic automorphism of the $\cat{sp}{2r}{r}$ fusion ring is completely characterised by the fact that it fixes the object $\Lambda_1$, while not being gauge. The fact this automorphism fixes the object $\Lambda_1$ is particularly convenient for us, as if the automorphism is realisable, then it will appear as a non-trivial planar algebra automorphism of the planar algebra generated by $\Lambda_1$. From Equation~\ref{eq:sp} we know that the planar algebra generated by $\Lambda_1$ is isomorphic to $\overline{\text{BMW}( e^\frac{2 \pi i}{4(2r + 1)} , -i )}$.
Further, from Lemma~\ref{lem:paBMW}, there exists a planar algebra automorphism of $\overline{\text{BMW}( e^\frac{2 \pi i}{4(2r + 1)} , -i )}$ that corresponds to a non-braided monoidal auto-equivalence of $\cat{sp}{2r}{r}$ that exchanges distinct simple objects. By the unique characterisation of the above exotic fusion ring automorphism, we must have that this non-trivial planar algebra automorphism of $\overline{\text{BMW}( e^\frac{2 \pi i}{4(2r + 1)} , -i )}$ corresponds to a non-braided monoidal auto-equivalence of $\cat{sp}{2r}{r}$ that realises the exotic fusion ring automorphism. We denote this non-trivial auto-equivalence as the \textit{level-rank duality auto-equivalence} of $\cat{sp}{2r}{r}$.

When $r$ is odd, there is only a single non-trivial automorphism of the $\cat{sp}{2r}{r}$ fusion ring. The non-trivial fusion ring automorphism is realised by the level-rank duality auto-equivalence. Thus when $r$ is odd, we have
\[\TenAut( \cat{sp}{2r}{r}  ) = \Z{2} \quad \text{ and } \quad  \BrAut( \cat{sp}{2r}{k}  ) = \{e\} \]
as in the statement of Theorem~\ref{thm:main}.

When $r$ is even, there are three non-trivial automorphisms of the $\cat{sp}{2r}{r}$ fusion ring. The invertible object $k\Lambda_r \in \cat{sp}{2r}{r}$ is a has self-braiding eigenvalue $1$, as $r^2 \equiv 0\pmod 4$, so we can apply the same arguments as in the $rk \equiv 1 \pmod 2$ and $r\neq k$ case to construct an order 2, non-braided auto-equivalence that sends 
\[ \Lambda_1\mapsto  (\Lambda_{r-1} + (k-1)\Lambda_r).\] Furthermore the level-rank duality auto-equivalence gives us a non-braided order 2 auto-equivalence that fixes $\Lambda_1$. These two auto-equivalences are clearly not inverse to each other, thus their composition gives us a third non-trivial monoidal auto-equivalence. This composition is not braided, as it maps $\Lambda_1\mapsto  (\Lambda_{r-1} + (k-1)\Lambda_r)$, which have differing twists. Thus when $r$ is even, we have
\[\TenAut( \cat{sp}{2r}{r}  ) = \Z{2}\times \Z{2} \quad \text{ and } \quad  \BrAut( \cat{sp}{2r}{k}  ) = \{e\} \]
as in the statement of Theorem~\ref{thm:main}.
\section{Auto-equivalences for Lie type $G$}

In this section we compute the monoidal, and braided auto-equivalences of the categories $\cat{g}{2}{k}$. We remark that the results of this section are dependant on work that has yet to appear in print by Ostrik and Snyder. 

The outline of our computations is as follows. 

We begin with a standard planar algebra computation, showing that the $\overline{G_2(q)}$ planar algebra only has no non-trivial automorphisms, up to planar algebra natural isomorphism. Hence the category $\cat{g}{2}{k}$ has no non-trivial gauge auto-equivalences.

The results of \cite{MR1887583} compute the fusion ring automorphisms of $\cat{g}{2}{k}$ as 
\begin{align*}
\{e\} &\text{ if $k \neq \{3,4\}$},\\
\Z{3} &\text{ if $k  = 3$},\\
\Z{2} &\text{ if $k  = 4$}.
\end{align*}

When $k = 3$ or $k = 4$, we see the category $\cat{g}{2}{3}$ has exotic fusion ring automorphisms. For $k= 3$ we find that these exotic automorphisms do not lift to the category $\cat{g}{2}{k}$. Our argument here is to consider the planar algebra generated by the object $\Lambda_1$, and the planar algebra generated by the image of $\Lambda_1$ under the automorphism. We show that these two planar algebras are non-isomorphic, thus the automorphism can not lift to a monoidal auto-equivalence. For $k= 4$ we show the exotic fusion ring automorphism does lift to a braided auto-equivalence. Our argument is similar to the $k= 3$ case, where we consider the planar algebra generated by the object $\Lambda_1$, and the planar algebra generated by the image of $\Lambda_1$ under the automorphism. However in this case we show that planar algebras are isomorphic as braided planar algebras, and hence there is a braided auto-equivalence of $\cat{g}{2}{k}$ realising the fusion ring automorphism.

Let us begin our computations for this section.

\begin{lemma}\label{lem:G2}
The planar algebra $\overline{G_2(q)}$ has no non-trivial automorphisms, up to planar algebra natural isomorphism.
\end{lemma}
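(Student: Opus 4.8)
The plan is to exploit that $\overline{G_2(q)}$ is generated by the single trivalent vertex $T \in \overline{G_2(q)}_3$, so that any planar algebra automorphism $\phi$ is completely determined by the single value $\phi(T)$. The first step is to check that the box space $\overline{G_2(q)}_3$ is at most one-dimensional, spanned by $T$: any trivalent web on three boundary points can be reduced using the lollipop relation (ii) and the bigon relation (iii), leaving only $T$ itself. Hence $\phi(T) = cT$ for some $c \in \mathbb{C}^\times$ (if $T$ is negligible then $\overline{G_2(q)}$ carries no nontrivial generating data and the statement is vacuous). Since any planar algebra homomorphism fixes the through-strand and the circle value in $\overline{G_2(q)}_0 \cong \mathbb{C}$, I can pin down $c$ by applying $\phi$ to a single relation whose two sides carry different numbers of trivalent vertices, e.g.\ the bigon relation (iii) (two vertices versus none) or the triangle relation (iv) (three versus one): this forces $c^2 = 1$, so $c = \pm 1$.

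Next I would confirm that both signs genuinely give automorphisms, via a parity count. A half-edge count for a trivalent web in $\overline{G_2(q)}_n$ with $V$ internal vertices gives $3V = 2E_{\mathrm{int}} + (\#\text{boundary edges})$, whence $V \equiv n \pmod 2$; thus every diagram in a fixed box space $\overline{G_2(q)}_n$ has the same vertex parity, and the map $\sigma\colon T \mapsto -T$ rescales all of $\overline{G_2(q)}_n$ uniformly by $(-1)^n$. Because each defining relation lives in a single box space, $\sigma$ preserves every relation and is therefore a genuine planar algebra automorphism. So the raw automorphism group is exactly $\Z{2} = \{\Id, \sigma\}$.

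It remains --- and this is the real content --- to show that $\sigma$ is naturally isomorphic to the identity, so that the automorphism group collapses to the trivial group up to planar algebra natural isomorphism. The natural isomorphism to use is the one rescaling the single strand by $\mu = -1$, which acts on $\overline{G_2(q)}_n$ by $\mu^n = (-1)^n$ --- exactly the action of $\sigma$ computed above. Equivalently, passing through the equivalence of \cite[Theorem A]{1607.06041} to the pivotal category $\cat{g}{2}{k}$ with distinguished generator $\Lambda_1$, I would set $\eta_{\Lambda_1} := -\id_{\Lambda_1}$ and verify this defines a monoidal natural isomorphism $\Id \Rightarrow \sigma$: the only conditions to check are at the cup/cap $\mathbf{1} \hookrightarrow \Lambda_1^{\otimes 2}$, which demands $\mu^2 = 1$, and at the trivalent vertex $\mathbf{1} \to \Lambda_1^{\otimes 3}$, which demands $\mu^3 = -1$; both hold for $\mu = -1$. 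The main obstacle is thus not the constraint on $c$ (which is immediate) but rather making the natural-isomorphism step precise --- confirming that $\mu = -1$ really is a legitimate planar algebra natural isomorphism in the sense of \cite[Appendix A]{MR3808052}, and handling the degenerate possibility that $T$ becomes negligible in $\overline{G_2(q)}$.
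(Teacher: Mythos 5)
Your proposal is correct and follows essentially the same route as the paper's proof: both reduce an automorphism to a scalar $c$ on the trivalent generator via the one-dimensionality of the $3$-box space, use a relation such as the bigon to force $c=\pm1$, and then identify the two resulting automorphisms through the planar algebra natural isomorphism that rescales the strand by $-1$. Your vertex-parity count (every diagram in $\overline{G_2(q)}_n$ has $V\equiv n \pmod 2$ vertices, so $T\mapsto -T$ acts by $(-1)^n$ on each box space) is a clean way of making rigorous both the paper's ``routine to check'' verification of the relations and the descent to the negligible quotient, but it does not change the substance of the argument.
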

\begin{proof}
Let $\phi$ be a planar algebra automorphism of $\overline{G_2(q)}$, then $\phi$ is completely determined by how it acts on the generator \raisebox{-.5\height}{ \includegraphics[scale = .5]{trivalent}}. As the 3-box space of $\overline{G_2(q)}$ is 1-dimensional, we must have that
\[   \phi\left( \raisebox{-.5\height}{ \includegraphics[scale = .5]{trivalent}} \right ) \quad =\quad  \alpha \raisebox{-.5\height}{ \includegraphics[scale = .5]{trivalent}} \]
for some $\alpha \in \mathbb{C}$.

As $\phi$ must preserve the relations of $\overline{G_2(q)}$, we can see from relation (iii) of the $G_2(q)$ planar algebra that $\alpha = \pm 1$. It is routine to check that the automorphisms
\[   \phi\left( \raisebox{-.5\height}{ \includegraphics[scale = .5]{trivalent}} \right ) \quad =\quad  \pm \raisebox{-.5\height}{ \includegraphics[scale = .5]{trivalent}}\]
are consistent with the other relations, and the automorphisms preserve the negligible ideal by Proposition~\ref{prop:neg}. Thus we have two planar algebra automorphisms of $\overline{G_2(q)}$. However these two automorphisms are isomorphic to each other via the natural isomorphism
\[   \mu :=-\raisebox{-.5\height}{ \includegraphics[scale = .5]{G2iso}}. \]
\end{proof}

The object $\Lambda_1$ $\otimes$-generates $\cat{g}{2}{k}$, and we have the isomorphism of planar algebras
 \[ \text{PA}( \cat{g}{2}{k} , \Lambda_1 ) = \overline{G_2(q)}  \text{ with } q =  e^\frac{2 \pi i}{6 (4 + k)}, \]
 from Equation~\eqref{eq:g2}. Hence we can apply Lemma~\ref{lem:gaugeauto} to get the following Corollary.

\begin{cor}\label{lem:gaugeG2}
The categories $\cat{g}{2}{k}$ have no non-trivial gauge auto-equivalences.
\end{cor}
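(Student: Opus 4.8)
The plan is to combine the two facts already assembled: the planar-algebra automorphism computation of Lemma~\ref{lem:G2}, and the general injection of gauge auto-equivalences into planar-algebra automorphisms from Lemma~\ref{lem:gaugeauto}. The essential observation is that $\Lambda_1$ is a symmetrically self-dual $\otimes$-generator of $\cat{g}{2}{k}$ — self-duality is exactly what allows the single trivalent vertex to generate $G_2(q)$, and $\otimes$-generation was recorded immediately after Equation~\eqref{eq:g2}. This is precisely the hypothesis needed to apply Lemma~\ref{lem:gaugeauto}.

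First I would invoke Lemma~\ref{lem:gaugeauto} with $\cC = \cat{g}{2}{k}$ and $X = \Lambda_1$ to obtain an injection
\[ \text{Gauge}(\cat{g}{2}{k}) \hookrightarrow \Aut\big( \text{PA}(\cat{g}{2}{k}, \Lambda_1) \big). \]
Next I would rewrite the target using the planar-algebra isomorphism of Equation~\eqref{eq:g2}, namely $\text{PA}(\cat{g}{2}{k}, \Lambda_1) \cong \overline{G_2(q)}$ with $q = e^{\frac{2\pi i}{6(4+k)}}$, so that the injection becomes $\text{Gauge}(\cat{g}{2}{k}) \hookrightarrow \Aut(\overline{G_2(q)})$. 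Finally, Lemma~\ref{lem:G2} shows that $\Aut(\overline{G_2(q)})$ is trivial up to planar-algebra natural isomorphism; since the automorphism group appearing as the target in Lemma~\ref{lem:gaugeauto} is precisely the group taken modulo such natural isomorphisms, the source group must itself be trivial.

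There is essentially no obstacle here, as all the genuine work was carried out in establishing Lemma~\ref{lem:G2}. The only point deserving a moment's care is bookkeeping the equivalence relation: one should confirm that the ``up to planar algebra natural isomorphism'' appearing in Lemma~\ref{lem:G2} matches the notion of $\Aut$ used in the target of Lemma~\ref{lem:gaugeauto} — both are inherited from the functoriality of \cite{1607.06041} — so that triviality transfers directly. Once this matching is noted, the corollary is immediate.
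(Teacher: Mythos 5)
Your proposal is correct and is essentially identical to the paper's own argument: the paper likewise notes that $\Lambda_1$ is a $\otimes$-generator, identifies $\text{PA}(\cat{g}{2}{k},\Lambda_1)\cong \overline{G_2(q)}$ via Equation~\eqref{eq:g2}, and applies Lemma~\ref{lem:gaugeauto} together with Lemma~\ref{lem:G2} to conclude triviality of $\text{Gauge}(\cat{g}{2}{k})$. Your extra remark about matching the ``up to planar algebra natural isomorphism'' conventions is a reasonable point of care but does not change the argument.
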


Now that we have shown that the categories $\cat{g}{2}{k}$ have no non-trivial gauge auto-equivalences, we move on to determining which of the fusion ring automorphisms of $\cat{g}{2}{k}$ are realisable. We break into cases.

\subsection*{Case: $k \neq \{3,4\}$}\hspace{1em}

In this case there are no non-trivial fusion ring automorphisms. Thus
\[ \BrAut(\cat{g}{2}{k} ) =  \TenAut(\cat{g}{2}{k} )  =   \text{Gauge}(\cat{g}{2}{k}) = \{e\},\]
as in the statement of Theorem~\ref{thm:main}.

\subsection*{Case: $k =3$} \hspace{1em}

When $k= 3$, there are two non-trivial fusion ring automorphisms, which are 
\[  \Lambda_1  \mapsto \Lambda_2 \mapsto 3\Lambda_1 \mapsto \Lambda_1,     \]
and its inverse.

\begin{lemma}\label{lem:G2L3}
The fusion ring automorphism $\Lambda_1  \mapsto \Lambda_2 \mapsto 3\Lambda_1 \mapsto \Lambda_1$ does not lift to a monoidal auto-equivalence of the category $\cat{g}{2}{3}$.
\end{lemma}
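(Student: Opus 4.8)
The plan is to argue by contradiction using the functoriality of the planar algebra construction. Suppose some monoidal auto-equivalence $\mathcal{F}$ of $\cat{g}{2}{3}$ realises the fusion ring automorphism $\rho\colon \Lambda_1\mapsto\Lambda_2\mapsto 3\Lambda_1\mapsto\Lambda_1$, so that $\mathcal{F}(\Lambda_1)\cong\Lambda_2$. Since $\cat{g}{2}{3}$ has a unique pivotal structure, $\mathcal{F}$ is automatically a pivotal functor. As $\Lambda_1$ $\otimes$-generates $\cat{g}{2}{3}$ and $\rho$ is a based-ring automorphism, $\Lambda_2=\rho(\Lambda_1)$ also $\otimes$-generates; moreover $\Lambda_2$ is self-dual with trivial Frobenius--Schur indicator (being the orthogonal adjoint representation), hence symmetrically self-dual. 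Thus both $(\cat{g}{2}{3},\Lambda_1)$ and $(\cat{g}{2}{3},\Lambda_2)$ are based pivotal categories, and by \cite[Theorem A]{1607.06041} the pivotal functor $\mathcal{F}$ induces an isomorphism of planar algebras $\text{PA}(\cat{g}{2}{3},\Lambda_1)\cong \text{PA}(\cat{g}{2}{3},\Lambda_2)$. By Equation~\eqref{eq:g2} the left-hand side is $\overline{G_2(q)}$ with $q=e^{2\pi i/42}$. It therefore suffices to show that $\text{PA}(\cat{g}{2}{3},\Lambda_2)$ is \emph{not} isomorphic to $\overline{G_2(q)}$.

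The subtlety is that all the obvious invariants coincide. Because $\rho$ preserves every fusion coefficient, the box-space dimensions agree: $\dim\Hom(\Lambda_2^{\otimes n}\to\mathbf{1})=\dim\Hom(\Lambda_1^{\otimes n}\to\mathbf{1})$ for all $n$. In particular $\text{PA}(\cat{g}{2}{3},\Lambda_2)$ is again a trivalent planar algebra with one-dimensional $3$-box space and the same loop value $\dim\Lambda_2=\dim\Lambda_1$, so neither a dimension count nor the loop parameter can separate the two. The distinguishing data must therefore be a rescaling-invariant structure constant of the trivalent generator. The first I would compute is the ratio $t/b$ of the triangle coefficient (relation (iv) of $G_2(q)$) to the bigon coefficient (relation (iii)): since $P_3$ is one-dimensional, any planar algebra isomorphism sends one trivalent generator to a scalar multiple $\lambda$ of the other, and under $\tau\mapsto\lambda\tau$ both $t$ and $b$ scale by $\lambda^2$, so $t/b$ is an honest isomorphism invariant. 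For $\text{PA}(\Lambda_1)=\overline{G_2(q)}$ one reads off $t/b=-\tfrac{q^4+1+q^{-4}}{q^6+q^4+q^2+q^{-2}+q^{-4}+q^{-6}}$.

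The remaining, and genuinely hard, step is to compute this invariant directly for $\text{PA}(\cat{g}{2}{3},\Lambda_2)$ from the categorical data of $\cat{g}{2}{3}$, namely to evaluate the closed bigon and triangle networks built from the vertex in $\Hom(\Lambda_2^{\otimes 2}\to\Lambda_2)$ using the quantum dimensions and $6j$-symbols of the simple summands of $\Lambda_2^{\otimes 2}$ and $\Lambda_2^{\otimes 3}$, and then to verify that the resulting value of $t/b$ differs from the one above. Since an isomorphism would force these two scale-invariant constants to coincide, their inequality yields the contradiction and shows $\rho$ does not lift. I expect this $6j$-symbol computation to be the main obstacle: it amounts to pinning down the trivalent structure constants of $\Lambda_2$ (equivalently, identifying $\text{PA}(\Lambda_2)$ with $\overline{G_2(q')}$ for the parameter $q'$ determined by the ribbon twist and quantum dimension of $\Lambda_2$, where $q'$ is not related to $q$ by the symmetries of Lemma~\ref{lem:g2iso}) and checking the numerical mismatch. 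As an independent cross-check that sidesteps planar algebras entirely, one could instead compare a higher Frobenius--Schur indicator $\nu_n(\Lambda_1)$ with $\nu_n(\Lambda_2)$: these are preserved by any monoidal auto-equivalence and are computable from the modular data of $\cat{g}{2}{3}$, yet need not agree, since $\rho$ does not preserve the twists of the simple objects.
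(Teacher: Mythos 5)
Your framework is exactly the paper's: assume a lift $\mathcal{F}$ with $\mathcal{F}(\Lambda_1)\cong\Lambda_2$, note $\Lambda_2$ is symmetrically self-dual, invoke \cite[Theorem A]{1607.06041} to get an isomorphism $\text{PA}(\cat{g}{2}{3},\Lambda_1)\to\text{PA}(\cat{g}{2}{3},\Lambda_2)$, and then derive a contradiction by comparing the rescaling-invariant ratio of the triangle and bigon coefficients of the trivalent vertex (your $t/b$ is precisely the paper's observation that no rescaling of the vertex can match both ``popping'' values, since each scales by $\lambda^2$). However, there is a genuine gap at the decisive step, and you flag it yourself: you never actually determine the trivalent structure constants of $\text{PA}(\cat{g}{2}{3},\Lambda_2)$, you only assert that the relevant parameter $q'$ ``is not related to $q$ by the symmetries of Lemma~\ref{lem:g2iso}'' and that the values of $t/b$ will disagree. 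Without that identification and the numerical verification, no contradiction has been produced; the entire content of the lemma lives in exactly the computation you defer.

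The missing idea that makes this step tractable --- and the reason the paper never touches a $6j$-symbol --- is the Morrison--Peters--Snyder classification of trivalent categories, \cite[Theorem B]{MR3624901}. Since $\dim\Hom(\mathbf{1}\to\Lambda_2^{\otimes n})$ equals $1,0,1,1,4,10$ for $0\leq n\leq 5$ (these dimensions agree with those for $\Lambda_1$, as you note, because the fusion ring automorphism preserves fusion coefficients), that theorem hands you a planar algebra injection $\overline{G_2(q')}\to\text{PA}(\cat{g}{2}{3},\Lambda_2)$ for \emph{some} root of unity $q'$, with no structure-constant computation at all. The parameter $q'$ is then pinned down by soft numerical invariants: the categorical dimension of $\Lambda_2$ restricts $q'=e^{Ni\pi/21}$ to twelve values of $N$, the quantum twist of $\Lambda_2$ cuts this to $N\in\{5,26\}$, and Lemma~\ref{lem:g2iso} identifies those two, giving $\overline{G_2(e^{5i\pi/21})}\hookrightarrow\text{PA}(\cat{g}{2}{3},\Lambda_2)$. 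Composing with your hypothetical isomorphism yields an injection $\overline{G_2(e^{5i\pi/21})}\to\overline{G_2(e^{i\pi/21})}$, and only now does the bigon/triangle comparison (between two \emph{known} sets of skein relations, rather than between one known and one uncomputed planar algebra) finish the argument. Note also that the twist enters essentially here: your proposed cross-check via higher Frobenius--Schur indicators is plausible in spirit --- the automorphism indeed fails to preserve twists --- but it is likewise left unverified, so it cannot substitute for the missing computation either.
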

\begin{proof}
Using Frobenius-Schur indicators, computed from the modular data of $\cat{g}{2}{3}$, we can see that the object $\Lambda_2$ is symmetrically self-dual. Thus we can form the planar algebra $ \text{PA}( \cat{g}{2}{3},  \Lambda_2)$. We begin the proof by identifying this planar algebra.

As $\operatorname{dimHom}(\mathbf{1} \to \Lambda_2 ^{\otimes n})$ is equal to $1,0,1,1,4,10$ for $0 \leq n \leq 5$, we have from \cite[Theorem B]{MR3624901} that there is a planar algebra injection $\overline{G_2(q)} \to \text{PA}( \cat{g}{2}{3},  \Lambda_2)$ for $q$ some primitive root of unity. 

As the categorical dimension of  $\Lambda_2$ is equal to 
\[e^\frac{10 i\pi}{21} +e^\frac{8 i\pi}{21}+ e^\frac{2 i\pi}{21} + 1 + e^\frac{-2 i\pi}{21}+e^\frac{-8 i\pi}{21} e^\frac{-10 i\pi}{21} ,\]    
we must have that $q =  e^\frac{N i\pi}{21}$ for some $N \in \{1,4,5,16,17,20,22,25,26,37,38,41\}$. The quantum twist of $\Lambda_2$ is $e^{\frac{8 i \pi}{7}}$, which implies that $N$ has to be either $5$ or $26$. By Lemma~\ref{lem:g2iso} we know that the planar algebras $\overline{G_2(e^\frac{5 i\pi}{21})}$ and $\overline{G_2(e^\frac{26 i\pi}{21})}$ are isomorphic, thus we have a planar algebra injection
\[  \overline{G_2(e^\frac{5 i\pi}{21})} \to \text{PA}( \cat{g}{2}{3},  \Lambda_2). \]

Aiming towards a contradiction of the statement of the Lemma, suppose there was a monoidal auto-equivalence $\Gamma$ of $\cat{g}{2}{3}$ that sends $\Lambda_1  \mapsto \Lambda_2$. Then $\Gamma$ induces a monoidal equivalence of based categories $( \cat{g}{2}{3}, \Lambda_1) \to ( \cat{g}{2}{3}, \Lambda_2)$, and hence by \cite[Theorem A]{1607.06041} an isomorphism of planar algebras
\[   \text{PA}( \cat{g}{2}{3},  \Lambda_1) \to \text{PA}( \cat{g}{2}{3},  \Lambda_2)  \].

We know from Equation~\eqref{eq:g2} that $\text{PA}( \cat{g}{2}{3},  \Lambda_1) \cong \overline{G_2\left(e^\frac{i\pi}{21}\right)}$, so we have an injection of planar algebras  
\[     \overline{G_2(e^\frac{5 i\pi}{21})}\to \overline{G_2(e^\frac{i\pi}{21})}. \]
As the 3-box space of both these planar algebras is 1-dimensional, we must have that the trivalent vertex of $\overline{G_2(e^\frac{5 i\pi}{21})}$ is mapped to a scaler multiple of the trivalent vertex of $\overline{G_2(e^\frac{i\pi}{21})}$. However, the bubble and triangle popping values of the trivalent vertex in each of these planar algebras are different, and there exists no rescaling of the trivalent vertex to arrange that these two values are the same for both planar algebras. Hence we have a contradiction, and thus the fusion ring automorphism $\Lambda_1  \mapsto \Lambda_2 \mapsto 3\Lambda_2 \mapsto \Lambda_1$ does not lift to a monoidal auto-equivalence of $\cat{g}{2}{3}$.
\end{proof}

As the non-trivial fusion ring automorphism of $\cat{g}{2}{3}$ does not lift, we see
\[ \BrAut(\cat{g}{2}{3} ) =  \TenAut(\cat{g}{2}{3} )  =   \text{Gauge}(\cat{g}{2}{3}) = \{e\},\]
as in the statement of Theorem~\ref{thm:main}.

\subsection*{Case: $k=4$} \hspace{1em}

When $k = 4$ there is a single non-trivial fusion ring automorphism, which sends
\[   \Lambda_1 \leftrightarrow 2 \Lambda_2  \quad \text{and}\quad \Lambda_2 \leftrightarrow 4\Lambda_1,  \]
and fixes the other simple objects.
\begin{lemma}
This non-trivial fusion ring automorphism of $\cat{g}{2}{4}$ lifts to a braided auto-equivalence.
\end{lemma}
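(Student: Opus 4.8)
The plan is to mimic the argument of Lemma~\ref{lem:G2L3} for the $k=3$ case, but this time to show that the relevant planar algebras agree as \emph{braided} planar algebras, so that the resulting equivalence of based categories is braided. The enabling observation is a twist computation. Applying the $\cat{g}{2}{k}$ twist formula with $K = \left[\begin{smallmatrix} 2 & 3 \\ 3 & 6\end{smallmatrix}\right]$ and $k=4$ (so that $6(4+k) = 48$) gives
\[ T_{\Lambda_1} = e^{\frac{2\pi i}{48}\cdot 12} = i \qquad \text{and} \qquad T_{2\Lambda_2} = e^{\frac{2\pi i}{48}\cdot 60} = i. \]
Thus the $\otimes$-generator $\Lambda_1$ and its image $2\Lambda_2$ under the fusion ring automorphism carry the \emph{same} twist. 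This is precisely the feature that failed in the $k=3$ case, and it is what will let the planar algebra isomorphism be upgraded to a braided one. First I would record, using the modular data of $\cat{g}{2}{4}$, that $2\Lambda_2$ is symmetrically self-dual, so that the braided planar algebra $\text{PA}(\cat{g}{2}{4}, 2\Lambda_2)$ is defined, and that $2\Lambda_2$ $\otimes$-generates $\cat{g}{2}{4}$, which is automatic since the fusion ring automorphism carries the $\otimes$-generator $\Lambda_1$ to $2\Lambda_2$.

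Next I would identify $\text{PA}(\cat{g}{2}{4}, 2\Lambda_2)$. Computing $\dimHom(\mathbf{1} \to (2\Lambda_2)^{\otimes n})$ for small $n$ from the fusion rules and checking that it agrees with the $G_2$ pattern $1,0,1,1,4,10$, the trivalent category classification of \cite[Theorem B]{MR3624901} supplies a planar algebra injection $\overline{G_2(q')} \hookrightarrow \text{PA}(\cat{g}{2}{4}, 2\Lambda_2)$ for some root of unity $q'$, which is an isomorphism because $2\Lambda_2$ $\otimes$-generates. To pin down $q'$ I would match two invariants: the categorical dimension of $2\Lambda_2$ equals the loop value $q'^{10} + q'^8 + q'^2 + 1 + q'^{-2} + q'^{-8} + q'^{-10}$, which restricts $q'$ to a finite list of roots of unity, and the twist $T_{2\Lambda_2} = i$ then selects the correct value. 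Since $\text{PA}(\cat{g}{2}{4}, \Lambda_1) \cong \overline{G_2(e^{\pi i/24})}$ by Equation~\eqref{eq:g2}, and since both the loop value and the twist of the generator are invariant under $q \mapsto -q$, I expect the outcome $q' \in \{ q, -q\}$ with $q = e^{\pi i / 24}$.

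With $q' = \pm q$ established, Lemma~\ref{lem:g2iso} provides a braided isomorphism of planar algebras
\[ \text{PA}(\cat{g}{2}{4}, \Lambda_1) \cong \overline{G_2(q)} \cong \overline{G_2(q')} \cong \text{PA}(\cat{g}{2}{4}, 2\Lambda_2). \]
Applying the braided version of the equivalence between based braided categories and braided planar algebras \cite[Theorem A]{1607.06041}, this yields a braided equivalence of based categories $(\cat{g}{2}{4}, \Lambda_1) \to (\cat{g}{2}{4}, 2\Lambda_2)$, i.e.\ a braided auto-equivalence of $\cat{g}{2}{4}$ sending $\Lambda_1 \mapsto 2\Lambda_2$. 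Since this auto-equivalence moves $\Lambda_1$ it is non-trivial on the fusion ring, and as the fusion ring automorphism group is $\Z{2}$ it must realise the unique non-trivial automorphism.

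The main obstacle is the step that genuinely separates this case from $k=3$: confirming that the two planar algebras are isomorphic \emph{as braided} planar algebras and not merely as planar algebras. Concretely, this is the requirement that $q' = -q$ (the braided isomorphism in Lemma~\ref{lem:g2iso}) rather than $q' = q^{-1}$ or $-q^{-1}$ (the non-braided ones). The twist of the generating object is the decisive invariant here, since it is preserved by braided isomorphisms but not by the duality $q \mapsto q^{-1}$; so the equality $T_{\Lambda_1} = T_{2\Lambda_2} = i$ computed above is exactly what forces the braided identification to exist.
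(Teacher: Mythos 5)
Your route is the paper's route (the \cite[Theorem B]{MR3624901} injection, pinning down the parameter, the $q\mapsto -q$ braided isomorphism of Lemma~\ref{lem:g2iso}, then \cite[Theorem A]{1607.06041}), but the parameter-identification step has a genuine gap: the loop value and the twist of the generator do \emph{not} cut the candidates down to $q' \in \{q,-q\}$. Writing $q' = e^{N i \pi/24}$, the loop-value constraint allows $N \in \{1,5,19,23,25,29,43,47\}$; in particular $N=5$ survives, because the substitution $\zeta \mapsto \zeta^5$ permutes the exponent multiset $\{0,\pm 2,\pm 8,\pm 10\}$ modulo $48$, so $\overline{G_2(e^{5i\pi/24})}$ has exactly the same loop value as $\overline{G_2(e^{i\pi/24})}$. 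Moreover the generator's twist in $G_2(q')$ is $q'^{12} = i^N$, which equals $i$ for every $N \equiv 1 \pmod 4$; hence the twist test only reduces the list to $N \in \{1,5,25,29\}$. The values $N = 5, 29$ lie in a different orbit from $N=1,25$ under all the isomorphisms of Lemma~\ref{lem:g2iso} ($q \mapsto -q, q^{-1}, -q^{-1}$ gives $\{1,25,47,23\}$), so if $q'$ were $e^{5i\pi/24}$ or $e^{29 i\pi/24}$ your argument could not produce a braided (or any) identification with $\text{PA}(\cat{g}{2}{4},\Lambda_1) \cong \overline{G_2(e^{i\pi/24})}$. This is exactly the loophole the paper closes with a third invariant: a Hopf link (S-matrix) computation, whose value $3\sqrt{2}+\sqrt{6(2\sqrt{2}+3)}+2$ forces $N \in \{1,25\}$, i.e.\ $q' \in \{q,-q\}$. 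Your closing claim that the equality of twists ``is exactly what forces the braided identification to exist'' is therefore false as stated; twists rule out the inverse-parameter solutions but are blind to this second Galois-type family.

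A smaller but real problem: you justify surjectivity of the injection $\overline{G_2(q')} \hookrightarrow \text{PA}(\cat{g}{2}{4}, 2\Lambda_2)$ by saying $2\Lambda_2$ $\otimes$-generates the category. That inference is invalid: $\otimes$-generation by the object does not imply that the trivalent vertex generates the whole planar algebra (a planar subalgebra generated by one element can be proper even when the underlying object generates the category, as Temperley--Lieb inside larger planar algebras already shows). The paper's fix is the one you should use: since the fusion ring automorphism carries $\Lambda_1$ to $2\Lambda_2$, one has $\dimHom(\mathbf{1} \to (2\Lambda_2)^{\otimes n}) = \dimHom(\mathbf{1} \to \Lambda_1^{\otimes n})$ for \emph{all} $n$, so the two planar algebras have box spaces of equal finite dimension and the injection is automatically an isomorphism.
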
  
\begin{proof}
Using the modular data of $\cat{g}{2}{4}$, we can use Frobenius-Schur indicators to verify that the object $2\Lambda_2$ is symmetrically self-dual. Thus we can form the planar algebra $\text{PA}(\cat{g}{2}{4}, 2\Lambda_2)$. Using the same argument as in the proof of Lemma~\ref{lem:G2L3} we can show that there is an injection of planar algebras
\[   \overline{G_2(q)} \to \text{PA}(\cat{g}{2}{4}, 2\Lambda_2) \]
for $q$ some primitive root of unity.

The categorical dimension of  $2\Lambda_2$ is equal to 
\[e^\frac{10 i\pi}{24} +e^\frac{8 i\pi}{24}+ e^\frac{2 i\pi}{24} + 1 + e^\frac{-2 i\pi}{24}+e^\frac{-8 i\pi}{24} e^\frac{-10 i\pi}{24},\] 
so we must have that $q =  e^\frac{N i\pi}{24}$ for some $N \in \{1,5,19,23,25,29,43,47\}$. The quantum twist of $2\Lambda_2$ is $i$, which allows us to further deduce that $N \in \{1,5,25,29\}$. The Hopf link of $4\Lambda_2$ is $3 \sqrt{2}+\sqrt{6 \left(2 \sqrt{2}+3\right)}+2$, which shows that $N \in \{1,25\}$. By Lemma~\ref{lem:g2iso} we know that the planar algebras $\overline{G_2\left(e^\frac{i\pi}{24}\right)}$ and $\overline{G_2\left(e^\frac{25 i\pi}{24}\right)}$ are isomorphic as braided planar algebras, hence there is an injection of braided planar algebras
\[  \overline{G_2\left(e^\frac{i\pi}{24}\right)} \to \text{PA}(\cat{g}{2}{4}, 2\Lambda_2). \]
  
As the map exchanging $ \Lambda_1 \leftrightarrow 2 \Lambda_2$ and $\Lambda_2 \leftrightarrow 4\Lambda_1$ is a fusion ring homomorphism, we have the dimensions of the spaces
\[\operatorname{Hom}(\mathbf{1} \to (4\Lambda_2)^{n}) \quad \text{ and } \quad \operatorname{Hom}(\mathbf{1} \to (\Lambda_1)^{n})\]
agree. Hence the planar algebras $\overline{G_2\left(e^\frac{i\pi}{24}\right)}$ and $\text{PA}(\cat{g}{2}{4}, 2\Lambda_2)$ have box spaces of the same dimensions, and so the injection 
\[  \overline{G_2\left(e^\frac{i\pi}{24}\right)} \to \text{PA}(\cat{g}{2}{4}, 2\Lambda_2)\]
is in fact an isomorphism of braided planar algebras.

By \cite[Theorem A]{1607.06041}, the above isomorphism of planar algebras lifts to a braided equivalence of based categories
\[  (\cat{g}{2}{4}, \Lambda_1) \to (\cat{g}{2}{4}, 2\Lambda_2).\]
Forgetting the basing gives a braided auto-equivalence of $\cat{g}{2}{4}$ that sends $\Lambda_1$ to $2\Lambda_2$. As there is a unique non-trivial fusion ring automorphism of $\cat{g}{2}{4}$, we must have that this automorphism lifts.
\end{proof}
As the non-trivial fusion ring automorphism of $\cat{g}{2}{4}$ lifts, we thus have
\[  \TenAut(\cat{g}{2}{4}) = \BrAut( \cat{g}{2}{4}) =\Z{2}\]
as in the statement of Theorem~\ref{thm:main}.

\section{Future Directions and Applications}
\subsection*{A new family of modular tensor categories from the charge-conjugation auto-equivalence of $\cat{sl}{r+1}{k}$}\hspace{1em}

For the modular tensor categories $\cat{sl}{r+1}{k}$ we show the existence (for $r\geq 2$) of the semi-exceptional charge conjugation braided auto-equivalence. This auto-equivalence has order $2$, hence we get a group action
\[ \Z{2} \to \BrAut(\cat{sl}{r+1}{k} ).\]

The technique of gauging \cite{MR3555361} allows us to take a group action on a modular tensor category, and construct a new modular tensor category, given that two certain cohomological obstructions vanish. For the example of $\cat{sl}{r+1}{k}$, the first of these cohomological obstructions will be an element of
\[  H^3(\Z{2} , \Inv(\cat{sl}{r+1}{k})) = H^3(\Z{2} , \Z{r+1}),\]
with $\Z{2}$ acting on $\Z{r+1}$ via the inverse map, and the second will be an element of
\[  H^4(\Z{2} , \mathbb{C}^\times) = \{e\}.\]

As latter obstruction group is trivial, the corresponding obstruction vanishes for free. The former obstruction group is isomorphic to \[\Z{r+1}/( 2 \Z{r+1}) = \begin{cases}  \{e\} \text{ if $r$ is even},\\		
						 		\Z{2} \text{ if $r$ is odd}.
						\end{cases}\]
 Thus for $r$ even, we can gauge	$\cat{sl}{r+1}{k}$ by the charge conjugation auto-equivalence to obtain a new family of modular tensor categories. While we have the abstract existence of these categories, almost all other information about them remains mysterious.
 
\begin{ques}	
For even $r$, determine the structure of the modular tensor categories obtained by gauging $\cat{sl}{r+1}{k}$ by the charge conjugation auto-equivalence.
\end{ques}

Even for the simplest non-trivial case of $\cat{sl}{3}{3}$ it is hard to say anything about the resulting modular tensor category. Hence, it appears new techniques will be required to answer this question.

In another direction, it would be interesting to study the obstruction living in $H^3(\Z{2} , \Z{r+1})$ when $r$ is odd.					 		

\subsection*{Reverse-braided auto-equivalences of $\cat{so}{2r+1}{k}$, and connections with modular grafting}\hspace{1em}

When $k = 2$ and every prime $p$ dividing $2r+1$ is congruent to $1$ mod $4$, we show the existence of a distinguished exceptional monoidal auto-equivalence of $\cat{so}{2r+1}{2}$. This monoidal auto-equivalence has the peculiar property that it is reverse braided in the sense that 
\[     \mathcal{F}\left(  \raisebox{-.5\height}{ \includegraphics[scale = .5]{BMWpositivecrossing}}\right )\quad = \quad \raisebox{-.5\height}{ \includegraphics[scale = .5]{BMWnegitivecrossing}}.\]
The existence of such auto-equivalences is rare for general modular tensor categories, as it implies restrictive conditions on the central charge of the category, and on the twists of objects which are fixed points.

The first few examples of categories $\cat{so}{2r+1}{2}$ with such reverse braided auto-equivalences are

\[ \cat{so}{5}{2},\quad   \cat{so}{5}{13} ,\quad   \cat{so}{17}{2} ,\quad  \cat{so}{25}{2} ,\text{ and } \quad  \cat{so}{29}{2}.\]

Playing numerology, we notice that that (apart from $2r+1 = 25$), these categories all appear as ingredients of the hypothetical ``modular grafting'' construction \cite{MR2837122}. It is conjectured that these categories can be grafted with certain dihedral groups to obtain the Drinfeld centres of the Haagerup-Izumi categories. The difficulty in this conjecture is that there is no idea of what grafting should look like at the level of modular tensor categories. The appearance of these reverse braided auto-equivalences for these examples leads one to suspect that such auto-equivalences may play important role in making the modular grafting construction rigorous.

While this is all wild speculation, we do have some evidence for a connection. We first notice that the objects from $\cat{so}{2r+1}{2}$ that appear in the modular grafting are exactly the non-fixed simple objects of the reverse braided auto-equivalence. By this we mean precisely, that if $\{X_i: 1\leq i \leq m \}$ are the non-fixed simple objects of $\cat{so}{2r+1}{2}$, then there exists an $m\times m$ block in the S-matrix of the Drinfeld centre of the hypothetically corresponding Haagerup-Izumi category that exactly corresponds to the $m \times m$ sub-matrix of the S-matrix for $\cat{so}{2r+1}{2}$ obtained by restricting to the objects $\{X_i: 1\leq i \leq m \}$. The same fact holds true for the T-matrix as well.

We also notice that the only other quantum group modular tensor category that could possibly have a reverse braided auto-equivalence is $\cat{f}{4}{4}$. This auto-equivalence has 12 non-fixed simple objects, and we find that the $12 \times 12$ block of restricted modular data for $\cat{f}{4}{4}$ exactly appears in the modular data for the centre of extended Haagerup \cite{MR3719545}. Further, we can identify $\Z{4}\ltimes \Z{5}$ as the group component of the modular grafting, and construct a fairly natural looking formula for the modular data of centre of extended Haagerup, starting from the modular data of $\cat{f}{4}{4}$ and the Drinfeld centre of $\Z{4}\rtimes \Z{5}$. This strongly suggests that the centre of extended Haagerup should fit into the hypothetical modular grafting procedure, and that reverse braided auto-equivalences will play a key role in making this procedure precise.

While of course this could all just be coincidence, we believe there is sufficient evidence to investigate the following question.

\begin{ques}
Can reverse braided auto-equivalences be used to give a rigorous definition of modular grafting.
\end{ques}

\subsection*{A new family of planar algebras from the level-rank duality auto-equivalence of $\cat{sp}{2r}{r}$}\hspace{1em}

Recall that $\cat{sp}{2r}{r}$ has exceptional order two level-rank duality auto-equivalence, which we constructed from an exceptional planar algebra automorphism $\phi$ of the planar algebra $\overline{{\operatorname{BMW}}(q, i)}$. This exceptional planar algebra automorphism was defined by
\[      \phi\left (   \raisebox{-.5\height}{ \includegraphics[scale = .5]{BMWpositivecrossing}}\right) \quad = \quad -  \raisebox{-.5\height}{ \includegraphics[scale = .5]{BMWnegitivecrossing}} .\]
By taking fixed points of the planar algebra $\overline{{\operatorname{BMW}}(q, i)}$ under this $\Z{2}$-action, we can construct a new planar algebra $\overline{{\operatorname{BMW}}(q, i)}^{\Z{2}}$. 

While this planar algebra is easily described as the fixed points of $\phi$, is always desirable to have a presentation of a planar algebra by generators and relations. For the planar algebra $\overline{{\operatorname{BMW}}(q, i)}^{\Z{2}}$ there is a canonical 6-box generator given by 
\begin{align*}
  \raisebox{-.5\height}{ \includegraphics[scale = .5]{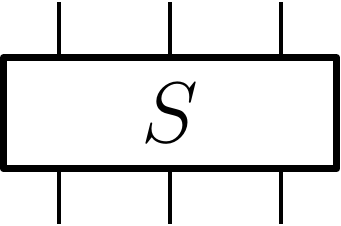}} &:= \raisebox{-.5\height}{ \includegraphics[scale = .5]{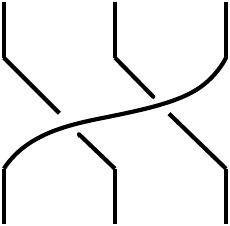}} + \raisebox{-.5\height}{ \includegraphics[scale = .5]{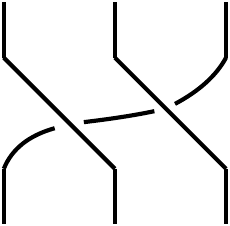}} +\raisebox{-.5\height}{ \includegraphics[scale = .5]{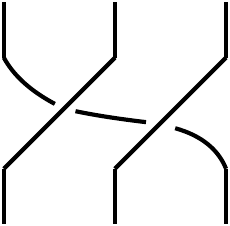}} +\raisebox{-.5\height}{ \includegraphics[scale = .5]{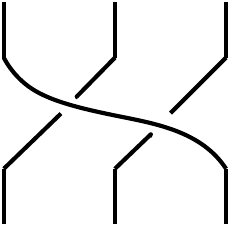}} + \raisebox{-.5\height}{ \includegraphics[scale = .5]{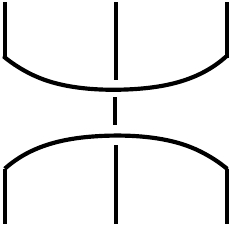}}+\raisebox{-.5\height}{ \includegraphics[scale = .5]{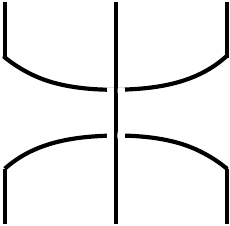}} \\
&  + \frac{2 (q-1) (q+1) \left(2 i q^2+q-i\right) (-2+q (q-i))}{q (1+q (q (2+q (q+4 i))-4 i))} \left(   \raisebox{-.5\height}{ \includegraphics[scale = .5]{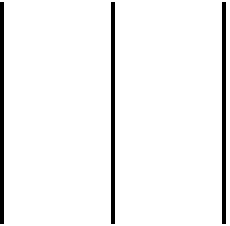}} + \raisebox{-.5\height}{ \includegraphics[scale = .5]{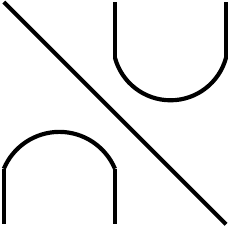}} +\raisebox{-.5\height}{ \includegraphics[scale = .5]{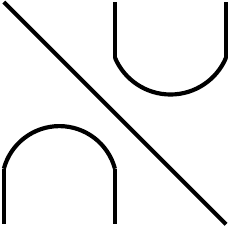}}\right) \\
&  + -\frac{2 i \left(q^2-1\right)^2 (-3+q (3 q-4 i))}{q (1+q (q (2+q (q+4 i))-4 i))} \left( \raisebox{-.5\height}{ \includegraphics[scale = .5]{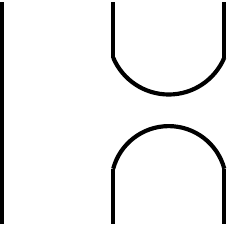}} + \raisebox{-.5\height}{ \includegraphics[scale = .5]{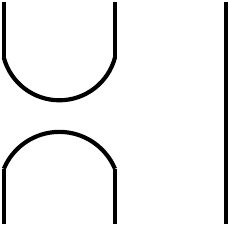}}\right).
\end{align*}
which is uniquely characterised (up to scaler) by the fact that it is uncappable in every position, and invariant under rotation by $\pi$ degrees. Finding more complicated relations that this generator satisfies however proves to be an exhausting task. However, finding such relations could reveal new ideas for skein theories involving 6-boxes, and hopefully pave the way for more exotic planar algebras based on similar skein theories.

\begin{ques}
Find a generators and relations presentation for the sub-planar algebra of  $\overline{{\operatorname{BMW}}(q, i)}^{\Z{2}}$ generated by $S$.
\end{ques}

%
%
%
%

\subsection*{ A non-commutative algebra object in $\cC( \lie{g}_2 , 4) $} \hspace{1em}

For any modular tensor category $\cC$, there exists an isomorphism from the group of braided auto-equivalences of $\cC$, to the group of invertible modules over $\cC$ \cite{MR2677836}. The goal of this Section will be to determine the invertible module over $\cC( \lie{g}_2 , 4) $ corresponding to the exceptional braided auto-equivalence
\[   \Lambda_1 \leftrightarrow 2 \Lambda_2  \quad \text{and}\quad \Lambda_2 \leftrightarrow 4\Lambda_1.  \]

We begin by describing how to take a braided auto-equivalence of a modular tensor category $\cC$, and construct an invertible module over $\cC$.

\begin{cons}
Let $\mathcal{F} \in \BrAut(\cC)$, then the object 
\[   A_{\mathcal{F}} :=  \bigoplus_{X\in \operatorname{Irr}(C)} X \otimes \mathcal{F}^{-1}(X^*) \]
has the structure of an algebra object in $\cC$. Except in the trivial case of $\cC  = \operatorname{Vec}$, the algebra $ A_{\mathcal{F}}$ is not simple. Let 
\[   A_{\mathcal{F}} = \oplus  A_i \]
be a decomposition of $A_{\mathcal{F}}$ into simple algebra objects. The invertible module corresponding to $\mathcal{F}$ is $\operatorname{Mod}_{\cC}(A_i)$ for any choice of algebra $A_i$. As the simple algebra objects $\{ A_i \}$ are pairwise Morita equivalent, the choice of $A_i$ does not affect the resulting module category. 
\end{cons}

For a given braided auto-equivalence $\mathcal{F}$, it is easy to determine the algebra object $A_{\mathcal{F}}$. In the case of the exceptional braided auto-equivalence of $\cC( \lie{g}_2 , 4) $ we have that
\begin{align*}   &A_{(\Lambda_1 \leftrightarrow 2 \Lambda_2  \hspace{ .15cm} \text{and} \hspace{ .15cm} \Lambda_2 \leftrightarrow 4\Lambda_1)} = \\ (0)^{\oplus 5} \oplus (\Lambda_1)^{\oplus 4} \oplus (2 \Lambda_1)^{\oplus 8} \oplus (3 \Lambda_1)^{\oplus 9}& \oplus (4 \Lambda_1)^{\oplus 5} \oplus (\Lambda_2)^{\oplus 5} \oplus (\Lambda_1 + \Lambda_2)^{\oplus 12} \oplus (2\Lambda_1 + \Lambda_2)^{\oplus 10} \oplus (2\Lambda_2)^{\oplus 4}. 
\end{align*}
However it is difficult in practice to determine how the algebra $A_{\mathcal{F}}$ decomposes into simple algebras. To determine how the algebra $A_{(\Lambda_1 \leftrightarrow 2 \Lambda_2  \hspace{ .15cm} \text{and} \hspace{ .15cm} \Lambda_2 \leftrightarrow 4\Lambda_1)}$ decomposes into simple algebra objects $\{A_i\}$, we apply the techniques of \cite{MR2909758} to compute an exhaustive list of possible simple algebra objects in $\cC( \lie{g}_2 , 4) $. Via \cite{1807.06131}, we know that the rank of $\operatorname{Mod}_{\cC}(A_i)$ is equal to 5 (the number of fixed points of the exceptional braided auto-equivalence). Thus we can restrict our attention to simple algebras in $\cC( \lie{g}_2 , 4) $ whose corresponding module category has rank 5. This reduction significantly reduces the complexity of the computation.

\begin{lemma}\label{lem:possibleAlgs}
Let $A \in \cC( \lie{g}_2 , 4) $ be an algebra whose corresponding module category has rank 5. Then $A$ is contained in the list
\begin{align*}
     A_1 &:= (0) \oplus (3 \Lambda_1) \oplus (4 \Lambda_1)\oplus (\Lambda_2) \\
     A_2 &:= (0) \oplus (2 \Lambda_1) \oplus (3 \Lambda_1) \oplus (4 \Lambda_1) \oplus (\Lambda_2) \oplus (\Lambda_1 + \Lambda_2)^{\oplus 2} \oplus (2\Lambda_1 + \Lambda_2) \\
     A_3 &:= (0) \oplus (2 \Lambda_1)^{\oplus 2} \oplus (3 \Lambda_1) \oplus (4 \Lambda_1) \oplus (\Lambda_2) \oplus (\Lambda_1 + \Lambda_2)^{\oplus 2} \oplus (2\Lambda_1 + \Lambda_2)^{\oplus 2} \\
     A_4 &:= (0) \oplus (\Lambda_1)\oplus (2 \Lambda_1) \oplus (3 \Lambda_1) \oplus (4 \Lambda_1) \oplus (\Lambda_2) \oplus (\Lambda_1 + \Lambda_2)^{\oplus 2} \oplus (2\Lambda_1 + \Lambda_2)^{\oplus 2} \oplus (2\Lambda_2) \\
     A_5 &:= (0) \oplus (\Lambda_1)\oplus (2 \Lambda_1)^{\oplus 2} \oplus (3 \Lambda_1)^{\oplus 3} \oplus (4 \Lambda_1) \oplus (\Lambda_2) \oplus (\Lambda_1 + \Lambda_2)^{\oplus 3} \oplus (2\Lambda_1 + \Lambda_2)^{\oplus 2} \oplus (2\Lambda_2) \\
     A_6 &:= (0) \oplus (\Lambda_1)\oplus (2 \Lambda_1)^{\oplus 3} \oplus (3 \Lambda_1)^{\oplus 3} \oplus (4 \Lambda_1)^{\oplus 2} \oplus (\Lambda_2)^{\oplus 2} \oplus (\Lambda_1 + \Lambda_2)^{\oplus 4} \oplus (2\Lambda_1 + \Lambda_2)^{\oplus 3} \oplus (2\Lambda_2) \\
     A_7 &:= (0) \oplus (\Lambda_1)^{\oplus 2} \oplus (2 \Lambda_1)^{\oplus 4} \oplus (3 \Lambda_1)^{\oplus 5} \oplus (4 \Lambda_1)^{\oplus 3} \oplus (\Lambda_2)^{\oplus 3} \oplus (\Lambda_1 + \Lambda_2)^{\oplus 4} \oplus (2\Lambda_1 + \Lambda_2)^{\oplus 4} \oplus (2\Lambda_2)^{\oplus 2} \\
     A_8 &:= (0) \oplus (\Lambda_1)^{\oplus 2} \oplus (2 \Lambda_1)^{\oplus 4} \oplus (3 \Lambda_1)^{\oplus 5} \oplus (4 \Lambda_1)^{\oplus 3} \oplus (\Lambda_2)^{\oplus 3} \oplus (\Lambda_1 + \Lambda_2)^{\oplus 5} \oplus (2\Lambda_1 + \Lambda_2)^{\oplus 4} \oplus (2\Lambda_2)^{\oplus 2} \\
     A_9 &:= (0) \oplus (\Lambda_1)^{\oplus 2} \oplus (2 \Lambda_1)^{\oplus 5} \oplus (3 \Lambda_1)^{\oplus 5} \oplus (4 \Lambda_1)^{\oplus 3} \oplus (\Lambda_2)^{\oplus 3} \oplus (\Lambda_1 + \Lambda_2)^{\oplus 6} \oplus (2\Lambda_1 + \Lambda_2)^{\oplus 5} \oplus (2\Lambda_2)^{\oplus 2}.
     \end{align*}
\end{lemma}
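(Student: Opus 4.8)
The plan is to reduce the statement to a finite search over multiplicity vectors, exploiting that $\cat{g}{2}{4}$ is modular with explicitly known $S$-matrix and (commutative) fusion ring on its nine simple objects. Write a candidate algebra as $A = \bigoplus_{X \in \operatorname{Irr}(\cat{g}{2}{4})} n_X\, X$ with $n_X = \dimHom(X, A) \in \ZZ_{\ge 0}$. Since $A$ is a connected (simple) algebra we have $n_{\mathbf 1} = 1$, and since every simple object of $\cat{g}{2}{4}$ is self-dual we automatically have $n_X = n_{X^*}$; so the only data is the vector $(n_X)$, and the task becomes to show that the rank-$5$ hypothesis forces $(n_X)$ into the nine listed vectors.

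The main engine is the action of $\cat{g}{2}{4}$ on the module category $\mathcal M = \operatorname{Mod}_{\cC}(A)$. Because the fusion ring is commutative, the nine action matrices $M_X$ (each a non-negative integer $5\times 5$ matrix once $\operatorname{rank}(\mathcal M) = 5$) commute and are simultaneously diagonalizable; modularity identifies their common eigenvalues with the ring characters $\chi_Y(X) := S_{X,Y}/S_{\mathbf 1,Y}$, so the spectrum of $\mathcal M$ is a size-$5$ multiset of labels $Y \in \operatorname{Irr}(\cat{g}{2}{4})$. Indecomposability of a simple algebra forces the Perron--Frobenius label $Y = \mathbf 1$ (whose character is $X \mapsto \FPdim X$) to occur exactly once. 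Writing $M_X = \sum_j \chi_{Y_j}(X) P_j$ for the spectral projections $P_j$ and reading off the $(\mathbf 1,\mathbf 1)$-entry gives the key linear constraint
\[ n_X = (M_X)_{\mathbf 1,\mathbf 1} = \sum_{j=0}^{4} w_j\, \chi_{Y_j}(X), \qquad w_j := (P_j)_{\mathbf 1,\mathbf 1} \ge 0,\quad \sum_{j} w_j = 1, \]
valid for every $X$. Thus, having fixed a multiset $\{Y_0 = \mathbf 1, Y_1,\dots,Y_4\}$, the multiplicities are a single affine-linear function of the five unknown weights $w_j$.

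First I would tabulate the character values $\chi_Y(X)$ and the $\FPdim$'s, then loop over all size-$4$ multisets of non-unit labels $Y_1,\dots,Y_4$. For each, I impose $n_X = \sum_j w_j \chi_{Y_j}(X) \in \ZZ_{\ge 0}$ for all nine $X$ together with $w_j \ge 0$ and $\sum_j w_j = 1$: this is a small integer-point problem whose solutions are the candidate algebra objects. I would then cross-check each surviving vector against the remaining necessary conditions of \cite{MR2909758} --- namely that $\FPdim(A) = \sum_X n_X \FPdim X$ satisfy $\FPdim(A)\cdot\FPdim(\mathcal M) = \FPdim(\cat{g}{2}{4})$ with $\FPdim(\mathcal M)$ realizable by a rank-$5$ module, and that the free-module functor $X \mapsto A\otimes X$, whose matrix is $\big(\sum_Z n_Z N^X_{ZY}\big)_{X,Y}$, resolve into exactly five simple $A$-modules --- to discard spurious solutions. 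The output is exactly the nine vectors $A_1,\dots,A_9$. Because the lemma asserts only that every rank-$5$ algebra lies in this list, it suffices to verify these necessary conditions; I need not exhibit an algebra structure on each $A_i$.

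The main obstacle is organizational: making the label/weight search provably exhaustive while keeping the casework finite. The two features that make it tractable are that all objects are self-dual (halving the unknowns and forcing $A \cong A^*$) and that the rank-$5$ constraint fixes the number of spectral labels at five, which via the displayed identity turns the a priori unbounded integer search over $(n_X)$ into a bounded one over weight vectors attached to finitely many label multisets. The one genuine subtlety to watch is that $A$ is non-commutative, so I cannot invoke the sharper classification available for \'etale (commutative) algebras; all bounds must come from the general module-category and character constraints above rather than from local-module or Lagrangian-algebra theory.
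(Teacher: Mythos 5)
Your proposal is correct and is essentially the paper's own proof: the paper likewise carries out a finite combinatorial search for potential simple algebra objects in $\cC(\lie{g}_2,4)$ using the fusion-module techniques of \cite{MR2909758} (the commuting action matrices of a rank-$5$ module are simultaneously diagonalized by $S$-matrix characters, the Perron--Frobenius character occurs exactly once, and integrality plus non-negativity bound the multiplicity vectors), and then discards those candidates whose potential module category does not have rank $5$. The one assertion you leave unjustified --- that the weights $w_j = (P_j)_{m_0,m_0}$ are non-negative --- is immediate here because all simple objects of $\cC(\lie{g}_2,4)$ are self-dual, so the action matrices are symmetric and the spectral projections $P_j$ are orthogonal.
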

\begin{proof}
We use the techniques of \cite{MR2909758} to find all potential simple algebras in $\cC( \lie{g}_2 , 4)$. We then throw out all potential simple algebras whose corresponding potential module category does not have rank 5.
\end{proof}

With this finite list of possible simple algebra objects, it now becomes a combinatorial problem to determine how $A_{(\Lambda_1 \leftrightarrow 2 \Lambda_2  \hspace{ .15cm} \text{and} \hspace{ .15cm} \Lambda_2 \leftrightarrow 4\Lambda_1)}$ decomposes. While \textit{a-priori} there should be no reason to expect a unique decomposition, in this case we are lucky, and can exactly determine the decomposition.
\begin{lemma}
The algebra $A_{(\Lambda_1 \leftrightarrow 2 \Lambda_2  \hspace{ .15cm} \text{and} \hspace{ .15cm} \Lambda_2 \leftrightarrow 4\Lambda_1)} \in \cat{g}{2}{4}$ decomposes into simple algebras as
\[   A_3 \oplus A_4 \oplus A_4 \oplus A_5 \oplus A_5.\]
\end{lemma}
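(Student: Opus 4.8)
The plan is to reduce the statement to an elementary (if fortunate) exercise in linear algebra over the nonnegative integers, leaning on the fact that the finite list of candidate simple algebras has already been produced in Lemma~\ref{lem:possibleAlgs}. First I would recall from the construction that $A_{\mathcal{F}}$ decomposes as a direct sum of pairwise Morita-equivalent simple algebra objects, each of which realises the invertible module $\operatorname{Mod}_{\cC}(A_i)$ corresponding to $\mathcal{F}$. By \cite{1807.06131} this module category has rank $5$ (the number of $\mathcal{F}$-fixed simple objects), so by Lemma~\ref{lem:possibleAlgs} every summand lies in the list $A_1, \dots, A_9$. Since each $A_i$ contains the unit object $(0)$ with multiplicity one, while $A_{\mathcal{F}}$ contains it with multiplicity five, there are exactly five summands.

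It therefore suffices to find nonnegative integers $n_1, \dots, n_9$ with $\sum_i n_i = 5$ and $\bigoplus_i A_i^{\oplus n_i} \cong A_{\mathcal{F}}$ as objects, and to show the solution is unique. Reading off the coefficient of each of the nine simple objects turns this into a linear system in the $n_i$, and the plan is to solve it directly. The key simplifications: comparing the $(4\Lambda_1)$-coefficient with the $(0)$-coefficient yields $n_6 + 2n_7 + 2n_8 + 2n_9 = 0$, forcing $n_6 = n_7 = n_8 = n_9 = 0$ by positivity; comparing the $(3\Lambda_1)$- and $(4\Lambda_1)$-coefficients then gives $2 n_5 = 4$, so $n_5 = 2$; the $(\Lambda_1)$-coefficient gives $n_4 = 2$; and finally the $(0)$- and $(2\Lambda_1)$-coefficients force $n_3 = 1$ and $n_1 = n_2 = 0$. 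I would then check that the remaining coefficients (those of $(\Lambda_2)$, $(\Lambda_1+\Lambda_2)$, $(2\Lambda_1+\Lambda_2)$ and $(2\Lambda_2)$) are automatically matched, yielding the claimed decomposition $A_{\mathcal{F}} \cong A_3 \oplus A_4^{\oplus 2} \oplus A_5^{\oplus 2}$.

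The genuinely hard work has already been discharged in Lemma~\ref{lem:possibleAlgs}: producing the exhaustive finite list of rank-$5$ simple algebras via the techniques of \cite{MR2909758} is precisely what renders the finishing argument purely combinatorial. The one conceptual subtlety I would flag is that, a priori, there is no reason the integer system should have a unique solution; so the cleanest presentation exhibits the solution explicitly and verifies from the displayed equations that no other nonnegative integer solution exists, rather than invoking any abstract uniqueness of algebra decompositions. The main (minor) obstacle is thus bookkeeping: correctly tabulating the multiplicity vectors of $A_1, \dots, A_9$ and of $A_{\mathcal{F}}$ in the basis of the nine simple objects, after which the derivation collapses to the two coefficient subtractions noted above.
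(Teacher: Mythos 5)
Your proof is correct, and it follows the same structural outline as the paper's: restrict the possible simple summands to the list of Lemma~\ref{lem:possibleAlgs} (using that each summand has module category of rank $5$), count five summands, and then match underlying objects. The only difference is in how the final combinatorial step is executed. The paper simply brute-forces all $\binom{13}{5}=1287$ multisets of five algebras drawn from the nine candidates and observes that exactly one of them has underlying object equal to $A_{\mathcal{F}}$. You instead solve the multiplicity system by hand, and your subtractions all check out: comparing the $(4\Lambda_1)$- and $(0)$-rows gives $n_6+2n_7+2n_8+2n_9=0$, hence $n_6=n_7=n_8=n_9=0$; the $(3\Lambda_1)$-$(4\Lambda_1)$ comparison gives $n_5=2$; the $(\Lambda_1)$-row gives $n_4=2$; and the $(0)$- and $(2\Lambda_1)$-rows force $(n_1,n_2,n_3)=(0,0,1)$, with the remaining four rows then matching automatically. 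What your version buys is a transparent, human-verifiable uniqueness argument in place of a computer enumeration (and it makes visible \emph{why} the solution is unique, which the paper's proof leaves as an unexplained stroke of luck); what the paper's version buys is that it requires no cleverness and extends mechanically to situations where the linear system is less cooperative. You were also right to flag that uniqueness of the nonnegative integer solution is the point that must be checked, rather than any abstract uniqueness of algebra decompositions -- this is exactly the role the exhaustive check plays in the paper.
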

\begin{proof}
We brute force check all possible 1287 different ways of choosing 5 simple algebras from the list in Lemma~\ref{lem:possibleAlgs}, and see that $A_3 \oplus A_4 \oplus A_4 \oplus A_5 \oplus A_5$ is the only one that is equal to $A_{(\Lambda_1 \leftrightarrow 2 \Lambda_2  \hspace{ .15cm} \text{and} \hspace{ .15cm} \Lambda_2 \leftrightarrow 4\Lambda_1)}$.
\end{proof}
\begin{cor}
The objects $A_3$, $A_4$, and $A_5$ in $\cC( \lie{g}_2 , 4)$ have the structure of simple algebra objects.
\end{cor}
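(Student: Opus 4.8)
The plan is to deduce this immediately from the two preceding results, since almost all of the content is bookkeeping already carried out. By the Construction, the object $A_{\mathcal{F}}$ attached to the exceptional braided auto-equivalence $\mathcal{F} = (\Lambda_1 \leftrightarrow 2\Lambda_2 \text{ and } \Lambda_2 \leftrightarrow 4\Lambda_1)$ carries the structure of an algebra object in $\cat{g}{2}{4}$, and by definition admits a decomposition $A_{\mathcal{F}} = \bigoplus_i A_i$ in which every summand $A_i$ is a \emph{simple} algebra object. The first step is simply to invoke this: whatever the summands of the simple decomposition turn out to be, they are by construction simple algebras.

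The second step is to match these abstractly obtained simple algebra summands with the concrete objects listed in Lemma~\ref{lem:possibleAlgs}. This is precisely what the preceding Lemma accomplishes: it identifies the decomposition as $A_{\mathcal{F}} \cong A_3 \oplus A_4 \oplus A_4 \oplus A_5 \oplus A_5$ at the level of underlying objects. Since a decomposition of an object into a direct sum is unique up to permutation and isomorphism of the summands, each of the objects $A_3$, $A_4$, $A_5$ must be isomorphic to one of the simple algebra summands $A_i$, and therefore inherits a simple algebra structure by transport along that object isomorphism.

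There is essentially no obstacle here. The only point requiring a word of care is that the identification furnished by the preceding Lemma is a priori only an isomorphism of underlying objects, not of algebra objects; but transporting the simple algebra structure of the relevant $A_i$ across such an isomorphism endows $A_3$, $A_4$, and $A_5$ with simple algebra structures, which is all the Corollary asserts. The substantive work — assembling the finite list of candidate simple algebras in Lemma~\ref{lem:possibleAlgs} and brute-forcing the unique decomposition in the preceding Lemma — has already been done, so the Corollary is a formal consequence of those two statements together with the Construction.
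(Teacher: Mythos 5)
Your overall route is the same as the paper's: the Corollary is read off from the preceding Lemma together with the Construction, with the simple algebra structures transported along isomorphisms of underlying objects. But the one step you flag as "requiring a word of care" is exactly where your justification fails. You match the abstract simple algebra summands of $A_{\mathcal{F}}$ with the concrete objects $A_3$, $A_4$, $A_5$ by claiming that a decomposition of an object into a direct sum is unique up to permutation and isomorphism of the summands. That principle holds only for decompositions into simple (or indecomposable) objects. The objects $A_3$, $A_4$, $A_5$ are very far from simple -- each is a direct sum of many simple objects of $\cat{g}{2}{4}$ -- and direct-sum decompositions into decomposable pieces are not unique: already in $\operatorname{Vec}$ one has $\mathbb{C}^2 \oplus \mathbb{C}^3 \cong \mathbb{C} \oplus \mathbb{C}^4$ with no matching of summands. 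So under your conservative reading of the Lemma (an isomorphism of underlying objects only), nothing you have said rules out a simple-algebra decomposition of $A_{\mathcal{F}}$ whose summands are entirely different objects that merely have the same total.

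The correct bridge is the one built into the Lemma's own proof, not a generic uniqueness statement. The simple algebra summands $B_i$ of $A_{\mathcal{F}}$ are pairwise Morita equivalent, and $\operatorname{Mod}_{\cC}(B_i)$ is the invertible module category corresponding to $\mathcal{F}$, which has rank $5$; hence each $B_i$ is itself a simple algebra whose module category has rank $5$, so Lemma~\ref{lem:possibleAlgs} applies to each $B_i$ individually and forces each $B_i$ to be, as an object, one of the nine listed candidates. The brute-force check in the preceding Lemma then shows that the only multiset of five candidates whose direct sum is object-isomorphic to $A_{\mathcal{F}}$ is $\{A_3, A_4, A_4, A_5, A_5\}$, so the summands $B_i$ are, as objects, exactly $A_3$, $A_4$ (twice), and $A_5$ (twice). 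This is the sense in which the Lemma asserts that $A_{\mathcal{F}}$ "decomposes into simple algebras as" $A_3 \oplus A_4 \oplus A_4 \oplus A_5 \oplus A_5$; once the matching is understood this way, your final transport-of-structure step is correct and the Corollary follows.
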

The module fusion graph of $\operatorname{Mod}_{\cC}(A_3)$ is

\begin{center} \begin{tikzpicture}[scale = .45,baseline={([yshift=-.5ex]current bounding box.center)}]
  \vertex(fe) at (-4,0) {};
  
        \vertex(f0) at (0,0) {};
        
        \vertex(f1) at (4,5) {};
         \vertex(f2) at (4,3) {};
          \vertex(f3) at (4,1) {};
           \vertex(f4) at (4,-1) {};
            \vertex(f5) at (4,-3) {};
            \vertex(f6) at (4,-5) {};

            \vertex(f7) at (8,3) {};
            \vertex(f8) at (8,1) {};
          \vertex(f9) at (8,-1) {};
           \vertex(f10) at (8,-3) {};

            \vertex(f11) at (12,1) {};
            \vertex(f12) at (12,-1) {};

            \Edge(fe)(f0)
            
            \Edge(f0)(f1)
            \Edge(f0)(f2)
            \Edge[style  = {double}](f0)(f3)
            \Edge[style  = {double}](f0)(f4)
            \Edge[style  = {double}](f0)(f5)
            \Edge(f0)(f6)
            
            \Edge[style  = {double}](f1)(f7)
            \Edge[style  = {double}](f1)(f8)
            \Edge[style  = {double}](f1)(f9)
            \Edge[style  = {double}](f1)(f10)
            
            \Edge(f2)(f7)
            \Edge(f2)(f8)
            \Edge(f2)(f9)
            \Edge(f2)(f10)
            
            \Edge[style  = {double}](f3)(f7)
            \Edge[style  = {double}](f3)(f8)
            \Edge[style  = {double}](f3)(f9)
            \Edge[style  = {double}](f3)(f10)
            
            \Edge[style  = {double}](f4)(f7)
            \Edge(f4)(f8)
            \Edge(f4)(f9)
            \Edge[style  = {double}](f4)(f10)
            
            \Edge[style  = {double}](f5)(f7)
            \Edge(f5)(f8)
            \Edge(f5)(f9)
            \Edge[style  = {double}](f5)(f10)
            
            \Edge(f6)(f7)
            \Edge(f6)(f8)
            \Edge(f6)(f9)
            \Edge(f6)(f10)
            
            \Edge(f11)(f7)
            \Edge(f11)(f8)
            \Edge(f11)(f9)
            \Edge(f11)(f10)
            \Edge(f12)(f7)
            \Edge(f12)(f8)
            \Edge(f12)(f9)
            \Edge(f12)(f10)
             
    \end{tikzpicture}
\end{center}

The category $\cC( \lie{g}_2 , 4)$ is unitary, hence this graph is the principal graph of a subfactor via Popa's embedding theorem \cite{MR1055708}. The existence of this subfactor was hypothesised in \cite{MR3339174}.

\appendix	

\section{The group structure of $\TenAut( \cat{sl}{r+1}{k})$.}\label{app:terry}

\begin{center}   \textbf{By Terry Gannon} \end{center}

In this appendix we explicitly determine the group structure of $\TenAut( \cat{sl}{r+1}{k})$. The goal will be to prove the following Theorem.
\begin{theorem}\label{thm:terry}
Let $r,k \geq 1$. Write $n  = r+1$ and $d =\operatorname{gcd}(n,k)$. Then we have
\[  \TenAut( \cat{sl}{r+1}{k}) \cong \Z{2}\times \{ b \in \Z{dn}^\times : b \equiv 1 \pmod d\}\]
where $ c :=  \begin{cases}
0 \text{ if } k \leq 2 \text{ or } r = 1 \\
1 \text{ if } k \geq 3 \text{ and } r \geq 2.
\end{cases}$

More explicitly, this subgroup of $\Z{dn}^\times$ is isomorphic to 
\[ \begin{cases}
 \Z{n'}^\times \times \Z{2} \times \Z{\frac{n''}{2}} \text{ if $2$ exactly divides $d$}\\
 \Z{n'}^\times \times \Z{n''} \text{ otherwise,}
\end{cases}\]
where we write $n = n'n''$ for $n'' = \operatorname{gcd}(n, k^\infty)$, so $n'$ is coprime to $k$.

\end{theorem}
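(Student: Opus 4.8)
The plan is to combine the body's results with an explicit analysis of the group law on the simple currents, reducing the whole computation to a number-theoretic description of a kernel of units. By the preceding section, $\cat{sl}{r+1}{k}$ has no gauge auto-equivalences and every fusion ring automorphism is realized, so $\TenAut(\cat{sl}{r+1}{k})\cong\operatorname{FusEq}(\cat{sl}{r+1}{k})$, and Remark~\ref{lab:conc} exhibits every element as a simple current $\cF_{g,a}$ (with $g=k\Lambda_1$) or a composite $C\circ\cF_{g,a}$, where $C$ is the charge conjugation of Corollary~\ref{cor:cc}. First I would record that $C$ is central of order two: an auto-equivalence is determined by its value on $\Lambda_1$, and since $C(g)=g^{-1}$ together with $C(\Lambda_1)=\Lambda_r$ and $n_{\Lambda_r}=-n_{\Lambda_1}$ gives $C\cF_{g,a}C^{-1}(\Lambda_1)=g^a\otimes\Lambda_1=\cF_{g,a}(\Lambda_1)$, so $C\cF_{g,a}C^{-1}=\cF_{g,a}$. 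Comparing with the body's order count then yields $\operatorname{FusEq}\cong\Z{2}^c\times S$, where $S$ is the group of simple current auto-equivalences and $c$ records exactly whether $C$ is independent of $S$ (it is absorbed into $S$ when $k\le 2$, and trivial when $r=1$, matching the definition of $c$). It thus remains to identify $S$ as a group.

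Next I would pin down the composition law on $S$. Writing $n=r+1$ and using the monodromy grading $n_X$ of Lemma~\ref{lem:simplecurrent}, the general identity $\cF_{g,a}\circ\cF_{g,b}=\cF_{g,\,a+b-ab\mu}$ holds with $\mu=n_g$ the self-monodromy charge of $g$. Since the grading comes from a bilinear form and $n_{\Lambda_1}=r\equiv-1\pmod n$ (as recorded in the body) while $g=k\Lambda_1$, one gets $\mu\equiv -k\pmod n$, so the law is $a\ast b=a+b+kab\pmod n$. The decisive observation is that $1+k(a\ast b)=(1+ka)(1+kb)$ identically, so $\Psi\colon \cF_{g,a}\mapsto 1+ka$ is multiplicative. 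Because $d=\gcd(n,k)$ divides $k$, the value $1+ka$ is well defined and a unit modulo $kn$, so $\Psi$ is a homomorphism into $\Z{kn}^\times$ landing in $H_k:=\{b\in\Z{kn}^\times: b\equiv 1\pmod k\}$; it is injective since $1+ka\equiv 1+ka'\pmod{kn}$ forces $a\equiv a'\pmod n$. A short Euler-function computation, using $\varphi(mn)\varphi(\gcd(m,n))=\varphi(m)\varphi(n)\gcd(m,n)$, shows $|S|=|H_k|=\varphi(kn)/\varphi(k)$, whence $\Psi$ is an isomorphism $S\xrightarrow{\sim}H_k$. The same identity gives $|H_k|=|H_d|$ for $H_d:=\{b\in\Z{dn}^\times:b\equiv1\pmod d\}$, and I would then check that $H_k$ and $H_d$ have isomorphic local factors at every prime, so that $S\cong H_d$, which is the form stated in the Theorem.

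Finally I would compute the structure of $H_d$ by the Chinese Remainder Theorem applied to the prime factorization of $dn$. With $\alpha=v_p(k)$ and $\beta=v_p(n)$, the local factor at $p$ is the kernel of $\Z{p^{v_p(d)+\beta}}^\times\to\Z{p^{v_p(d)}}^\times$. For primes $p\mid n$ with $p\nmid k$ this is the full group $\Z{p^{\beta}}^\times$, and their product is $\Z{n'}^\times$ with $n'=n/n''$ and $n''=\gcd(n,k^\infty)$. For odd $p\mid\gcd(n,k)$ the local factor is cyclic of order $p^{\beta}$, contributing the odd part of $\Z{n''}$. The crux is the prime $2$: the kernel is cyclic $\Z{2^{\beta}}$ when $v_2(d)\ge 2$, but is $\Z{2}\times\Z{2^{\beta-1}}$ when $v_2(d)=1$ (this is exactly where $\Z{2^m}^\times$ fails to be cyclic), and $v_2(d)=1$ is precisely the condition ``$2$ exactly divides $d$''. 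Assembling the $2$-part with the odd part of $n''$ produces $\Z{n''}$ in the cyclic case and $\Z{2}\times\Z{n''/2}$ in the non-cyclic case, which is the stated dichotomy. The main obstacle is exactly this $2$-adic bookkeeping: one must verify that the non-cyclic local structure is governed solely by $v_2(d)=\min(v_2(k),v_2(n))$ rather than by $v_2(k)$ and $v_2(n)$ separately, and reconcile the boundary cases $\beta=1$ in which the two formulas happen to coincide. I would also dispatch the single exceptional pair $(r,k)=(1,2)$ separately, where the relevant simple current is naturally isomorphic to the identity (as noted in the body), so that $S$ collapses to the trivial group rather than $\Z{2}$.
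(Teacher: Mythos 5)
Your proposal is correct, and its overall skeleton matches the appendix: reduce $\TenAut(\cat{sl}{r+1}{k})$ to charge conjugation plus simple currents, compute the twisted composition law $a \ast b = a+b+kab$ on the parameter set, identify the resulting group with a congruence subgroup of units, and decompose prime-by-prime, the only non-cyclic local contribution occurring at $p=2$ when $2$ exactly divides $d$. Where you genuinely diverge is the key identification lemma. The paper maps $G(n,d) = \{ b \in \Z{dn}^\times : b \equiv 1 \pmod d\}$ onto the parameter group directly, via $1+db \mapsto \ell b$, where $\ell k + mn = d$ is a Bezout relation and $\ell$ must first be adjusted to be coprime to $n$ — a short but fiddly argument. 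You instead exploit the identity $(1+ka)(1+kb) = 1+k(a+b+kab)$ to obtain a manifestly multiplicative, visibly injective embedding $a \mapsto 1+ka$ into $\Z{kn}^\times$, land in $H_k = \{b \in \Z{kn}^\times : b \equiv 1 \pmod k\}$, and conclude surjectivity by the Euler-function count $|S| = \varphi(kn)/\varphi(k) = |H_k|$; the cost is the extra comparison $H_k \cong H_d$, which you correctly reduce to a local check (for odd $p$ both kernels are cyclic of order $p^{v_p(n)}$; at $p=2$ non-cyclicity occurs exactly when the lower modulus has $2$-adic valuation $1$, and this is governed by $\min(v_2(k),v_2(n)) = v_2(d)$, so the two kernels agree in every case). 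Your route buys a cleaner, more conceptual homomorphism and an automatic well-definedness/injectivity argument, at the price of one additional routine local verification; the paper's route lands directly on the $d$-form at the price of the Bezout gymnastics. Two further points in your favour: you supply an actual argument that charge conjugation is central (the paper merely asserts it commutes with everything), and you explicitly dispatch the exceptional pair $(r,k)=(1,2)$, where the $a=1$ simple current is isomorphic to the identity and the general formula would overcount — a case the appendix statement leaves implicit and which is only recorded in Theorem~\ref{thm:main}.
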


We have seen that auto-equvialences of $\cat{sl}{r+1}{k}$ come in two forms, the charge conjugation auto-equivalences, and the simple current auto-equivalences. The charge conjugation auto-equivalences of $\cat{sl}{r+1}{k}$ are already well understood. They exist and are distinct from the simple current auto-equivalences when both $k\geq 3$ and $r\geq 2$. Each of these charge conjugation auto-equivalences has order $2$, and commutes with all other auto-equivalences. More complicated is the composition of the simple current auto-equivalences. Let us study these in detail.

Recall that the simple current auto-equivalences of $ \cat{sl}{r+1}{k}$ are parameterised by the set $G := \{ a \in  \Z{n} : 1+ka \text{ is coprime to } r+1    \}$ via the map $a \mapsto  \cF_{k\Lambda_1, a}$. We begin by determining the group structure on this set which corresponds to the composition of simple current auto-equivalences.

Let $a,b$ two elements in the set $G$. Let $X \in \cat{sl}{r+1}{k}$, and $n$ the unique integer (modulo $r+1$) such that 
\[ \sigma_{X,k\Lambda_1}\circ \sigma_{k\Lambda_1,X} = e^{2\pi i \frac{n}{r+1}}\id_{(k\Lambda_1)\otimes X}.\]
 Then by the definition of the simple current auto-equivalence we have $ \cF_{k\Lambda_1, b}  (X) = (k\Lambda_{-bn})\otimes X$, and so $ \cF_{k\Lambda_1, a}(  \cF_{k\Lambda_1, b}  (X)) =  (k\Lambda_{-am})\otimes (k\Lambda_{-bn})\otimes X$, where $m$ is the unique integer (modulo $r+1$) such that 
\[\sigma_{ (k\Lambda_{-bn})\otimes X,k\Lambda_1}\circ \sigma_{k\Lambda_1, (k\Lambda_{-bn})\otimes X} = e^{2\pi i \frac{m}{r+1}}\id_{(k\Lambda_1)\otimes  (k\Lambda_{-bn})\otimes X}.\]
A direct computation gives that $m = n + nbk$. Hence
\[  \cF_{k\Lambda_1, a}(  \cF_{k\Lambda_1, b}  (X)) =\left(k\Lambda_{-an(1 + bk)}\right)\otimes (k\Lambda_{-bn})\otimes X= \left(k\Lambda_{n( -a - b - kab)}\right)\otimes X.\]
Thus the auto-equivalence $\cF_{k\Lambda_1, a}\circ  \cF_{k\Lambda_1, b}$ behaves the same on objects as the auto-equivalence $\cF_{k\Lambda_1, a+b+kab}$. As the category $\cat{sl}{r+1}{k}$ has no non-trivial gauge auto-equivalences, this implies that 
\[    \cF_{k\Lambda_1, a}\circ  \cF_{k\Lambda_1, b} \cong \cF_{k\Lambda_1, a+b+kab}.\] 

Hence we can endow the set $G$ with the group structure 
\[a \cdot a' := a + a' + aa'k\]
to capture the composition of the simple current auto-equivalences.

\begin{lemma}
The group $G$ is isomorphic to the subgroup $ G(n,d) := \{b \in \Z{dn}^\times : b \equiv 1 \pmod d\}$ of $\Z{dn}^\times$.
\end{lemma}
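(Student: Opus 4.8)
The plan is to exhibit an explicit isomorphism $\phi\colon G \to G(n,d)$, built from the elementary identity
\[(1+ex)(1+ey) = 1 + e\,(x+y+exy),\]
which says precisely that the twisted multiplication $x\cdot x' = x+x'+exx'$ is carried to ordinary multiplication by the assignment $x \mapsto 1+ex$. Thus the group law $a\cdot a' = a+a'+kaa'$ on $G$ invites the map $a \mapsto 1+ka$. The naive version $a\mapsto 1+ka \pmod{dn}$ does fail, however: it is not injective as soon as $\gcd(k/d,d)>1$ (for instance when $n=2$, $k=4$), because modulo $dn$ the coefficient $k$ carries too little information. The remedy is to replace $k$ by a carefully chosen multiple $dw$ of $d$ that agrees with $k$ modulo $n$ but is better behaved modulo $dn$.

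First I would record that, since $d=\gcd(n,k)$, we may write $k = d\kappa$ with $\gcd(\kappa, n/d)=1$, and then choose $w$ with $w\equiv \kappa \pmod{n/d}$ and $\gcd(w,n)=1$; such $w$ exists by the surjectivity of the reduction map $\Z{n}^\times \to \Z{n/d}^\times$. By construction $dw\equiv k \pmod n$, while $dw$ is a multiple of $d$. I would then define
\[\phi\colon G \longrightarrow \Z{dn}^\times, \qquad \phi(a) = 1 + dw\,a \pmod{dn},\]
and check the following points in turn. Well-definedness mod $dn$: replacing $a$ by $a+n$ changes $1+dwa$ by $dwn\equiv 0 \pmod{dn}$. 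Landing in $G(n,d)$: the congruence $\phi(a)\equiv 1 \pmod d$ is immediate, and since $dw\equiv k \pmod n$ we get $\phi(a)\equiv 1+ka \pmod n$, so $\gcd(\phi(a),n)=\gcd(1+ka,n)=1$ by the defining condition on $G$; together with $\gcd(\phi(a),d)=1$ this gives $\phi(a)\in \Z{dn}^\times$. Homomorphism property: expanding $\phi(a)\phi(a')$ and $\phi(a\cdot a')$ reduces the claim to $dw(dw-k)\equiv 0\pmod n$, which holds because $dw\equiv k\pmod n$.

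Finally I would establish bijectivity by writing down the inverse directly rather than by counting. Every element of $G(n,d)$ has the form $1+dc \pmod{dn}$ for a unique $c\in\Z{n}$ with $\gcd(1+dc,n)=1$, and solving $1+dwa\equiv 1+dc\pmod{dn}$ amounts to $wa\equiv c\pmod n$, which has the unique solution $a\equiv w^{-1}c\pmod n$ since $\gcd(w,n)=1$; one verifies $a\in G$ exactly as in the previous step. Hence $\phi$ is a bijective homomorphism, proving $G\cong G(n,d)$. The main obstacle is the passage between the two moduli: modulo $n$ the class of $k$ only remembers the subgroup $d\,\Z{n}$, so $k$ and $d$ look indistinguishable there, yet modulo $dn$ one must retain enough of $k$ to keep the map injective and to preserve the coprimality constraint defining $G$. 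The choice of coefficient $dw$ — a multiple of $d$ that is congruent to $k$ modulo $n$ and coprime to $n$ after dividing out $d$ — is exactly what reconciles these competing demands, and confirming its existence (the surjectivity of $\Z{n}^\times\to\Z{n/d}^\times$) is the one genuinely nontrivial input.
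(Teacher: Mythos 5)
Your proof is correct, and it is essentially the paper's argument run in the opposite direction: the paper constructs the bijection $G(n,d)\to G$, $1+db\mapsto \ell b$, where $\ell$ is coprime to $n$ and satisfies $k\ell\equiv d\pmod n$ (obtained from Bezout's identity plus a CRT-style adjustment), while you construct $a\mapsto 1+dwa \pmod{dn}$ with $dw\equiv k\pmod n$ and $\gcd(w,n)=1$ (obtained from surjectivity of $\Z{n}^\times\to\Z{n/d}^\times$, which is proved by the same kind of adjustment). Both arguments rest on the identity $(1+ex)(1+ey)=1+e(x+y+exy)$ together with producing a unit of $\Z{n}$ relating $k$ and $d$, so the two proofs coincide in substance.
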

\begin{proof}
There exists integers $\ell, m$ such that $\ell k + mn = d$. Since $\ell \frac{k}{d} \equiv 1 \pmod {\frac{n}{d}}$, we have $\ell$ is coprime to $\frac{n}{d}$. In fact, we can choose $\ell$ so that it is coprime to $n$: choose an integer $L$ so that 
\[   L \equiv \begin{cases}
1 \pmod p \text{ if } p \divides \operatorname{gcd}(\ell, n) \\
0 \pmod  p \text{ if } p \divides n \text{ and } p \not\divides \ell 
\end{cases}
\]
then $\ell' := \ell +L\frac{n}{d}$ and $m' :=m - L\frac{k}{d}$ also satisfy $d = \ell'k + m'n$, though $\ell'$ is coprime to $n$.

Since $\ell$ is coprime to $n$, the map $b \mapsto \ell b$ on $\Z{n}$ is a bijection. Because $1 + db \equiv 1 + k \ell n \pmod n$, it restricts to a bijection $G(n,d) \to G$ as sets. Comparing the product in $G(n,d)$, i.e $(1 + db)(1 + db') = 1 + d(b + b') + d^2bb'$, with that of $G$, i.e. $(\ell b) \cdot (\ell b') = \ell b + \ell b' + d\ell bb' \pmod n$, we see then that this map defines a group isomorphism.
\end{proof}

All that remains is to determine $G(n,d)$ as a product of cyclic groups. Let us write $\nu_p$ for the multiplicity of the prime $p$ in $n$, and $\eta_p$ for the multiplicity of the prime $p$ in $d$. Then $\Z{n}^\times \cong  \prod_p \Z{p^{\nu_p}}^\times$ and $G(n,d) = \prod_p G(p^{\nu_p}, p^{\eta_p})$, and it suffices to work locally. Trivially, $G(p^\nu,1) \cong  \Z{p^\nu}^\times$, and so $G(n,d) \cong \Z{n'} \times G(n'',d)$.

\begin{lemma}
Assume $ 0 < \eta < \nu$ and $p$ is prime. As groups,
\[   G(p^\nu, p^\eta) \cong  \begin{cases}
\Z{2}\times \Z{2^{\nu - 1}} \quad \text{ if $p= 2$ and $\eta =1$}\\
\Z{p^{\nu}} \quad \text{ otherwise.}
\end{cases}\] 
\end{lemma}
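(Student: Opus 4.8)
The plan is to realise $G(p^\nu,p^\eta)$ as the kernel of the natural reduction homomorphism $\pi:\Z{p^{\nu+\eta}}^\times \to \Z{p^\eta}^\times$. This $\pi$ is surjective, so a first-pass order count gives $|G(p^\nu,p^\eta)| = \phi(p^{\nu+\eta})/\phi(p^\eta) = p^\nu$, uniformly in the parity of $p$. With the order pinned down, the remaining task is purely to identify the isomorphism type, and I would split into the three regimes appearing in the statement.

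First, for $p$ odd I would invoke the classical fact that $\Z{p^{\nu+\eta}}^\times$ is cyclic. Since every subgroup of a cyclic group is cyclic, $G(p^\nu,p^\eta)$ is then cyclic of order $p^\nu$, hence $\cong \Z{p^\nu}$, which is the ``otherwise'' branch. For $p=2$ I would instead use the standard decomposition $\Z{2^m}^\times = \langle -1\rangle \times \langle 5\rangle$, with $-1$ of order $2$ and $5$ of order $2^{m-2}$ for $m \geq 3$, applied with $m=\nu+\eta$ (note $\nu > \eta \geq 1$ forces $m \geq 3$). The key elementary input is the $2$-adic valuation $v_2(5^t-1) = 2 + v_2(t)$, which holds because $5 \equiv 1 \pmod 4$; from this $5^t \equiv 1 \pmod{2^\eta}$ if and only if $2^{\eta-2}\mid t$ when $\eta \geq 2$. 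Combined with the observation that $-1 \equiv 1 \pmod{2^\eta}$ if and only if $\eta \leq 1$, this lets me read off $G$ directly.

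Concretely, when $\eta \geq 2$ the congruence $b \equiv 1 \pmod{2^\eta}$ kills the $\langle -1\rangle$-component (since $-1 \not\equiv 1 \pmod{2^\eta}$ and no product $(-1)5^t$ can be $\equiv 1 \pmod 4$), so $G(2^\nu,2^\eta) = \langle 5^{2^{\eta-2}}\rangle$, which is cyclic of order $2^\nu$, giving $\Z{2^\nu}$. When $\eta = 1$, every unit mod $2^{\nu+1}$ is already odd and hence $\equiv 1 \pmod 2$, so $G(2^\nu,2) = \Z{2^{\nu+1}}^\times \cong \Z{2}\times \Z{2^{\nu-1}}$ by the same decomposition (using $\nu \geq 2$). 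This is exactly the exceptional branch of the statement.

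The genuinely conceptual point, and the step I would take care to spell out, is why $p=2,\ \eta=1$ is the sole exception: it is precisely the place where $-1$ survives the congruence condition, so the $\Z{2}$ torsion factor of $\Z{2^{\nu+1}}^\times$ is retained and produces a non-cyclic group, whereas for every larger $\eta$ (and for all odd $p$) the $-1$ is annihilated and cyclicity is restored. I expect the only fiddly bookkeeping to be the boundary case $\eta = 2$ of the valuation formula, where $5 \equiv 1 \pmod 4$ already makes $\langle 5^{2^{\eta-2}}\rangle = \langle 5\rangle$; I would verify this directly so that the order-$2^\nu$ count matches in that edge case.
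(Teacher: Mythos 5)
Your proof is correct and follows essentially the same route as the paper: both establish $|G(p^\nu,p^\eta)| = p^\nu$, invoke the structure of $\Z{p^{\nu+\eta}}^\times$ (cyclic for odd $p$, and $\langle -1\rangle \times \langle 5\rangle$ for $p=2$), and identify the key point that the condition $b \equiv 1 \pmod {2^\eta}$ eliminates the $-1$ factor precisely when $\eta \geq 2$, leaving $p=2$, $\eta=1$ as the sole non-cyclic case. The only cosmetic differences are your order count via the kernel of the surjection $\Z{p^{\nu+\eta}}^\times \to \Z{p^\eta}^\times$ (the paper instead uses the explicit bijection $b \mapsto 1 + p^\eta b$) and your explicit generator $5^{2^{\eta-2}}$ obtained from the $2$-adic valuation, which the paper bypasses by simply noting that a subgroup of a cyclic group of known order is determined up to isomorphism.
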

\begin{proof}
First note that for any $p$, $b \mapsto 1 + p^\eta b : \Z{p^\nu} \to G(p^\nu, p ^\eta)$ is a bijection of sets, so $| G(p^\nu, p^\eta)| = p^\nu$.

If $p$ is odd, then it is a classical group theory result that $\Z{p^{\nu+\eta}}^\times \cong \Z{p-1} \times  \Z{p^{\nu + \eta - 1}}$. Then $G(p^\nu, p^\eta)$ will be a subgroup of the cyclic subgroup $\Z{p^{\nu + \eta - 1}}$, hence is itself cyclic with $G(p^\nu, p^\eta) \cong \Z{p^\nu}$ as groups.

When $p = 2$ we have that $\Z{2^{\nu+\eta}}^\times \cong \Z{2}\times \Z{2^{\nu + \eta - 2}}$, where the $\Z{2}$ factor can be taken to be generated by $-1 \in \Z{2^{\nu+\eta}}^\times$, and $\Z{2^{\nu  +\eta-2}}$ taken to be all $c \in \Z{\nu  +\eta}$ with $c \equiv 1 \pmod 4$. When $\eta \geq 2$, $G(2^\nu, 2^\eta)$ must then be a subgroup of the cyclic subgroup $\Z{2^{\nu + \eta - 2}}$, and so must be isomorphic to $\Z{2^\nu}$. When $\eta =1$, the condition $c \equiv 1 \pmod 2^\eta$ in $G(2^\nu, 2^\eta)$ is automatically satisfied, so $G(2^\nu, 2) = \Z{\nu +1}^\times$ and we're done.  
\end{proof}

\bibliography{bibliography} 
\bibliographystyle{plain}
\end{document}